\newtheorem{theorem}{Theorem}
\theoremstyle{plain}
\newtheorem{axiom}[theorem]{Axiom}
\newtheorem{conjecture}[theorem]{Conjecture}
\newtheorem{corollary}[theorem]{Corollary}
\newtheorem{definition}[theorem]{Definition}
\newtheorem{example}[theorem]{Example}
\newtheorem{exercise}[theorem]{Exercise}
\newtheorem{lemma}[theorem]{Lemma}
\newtheorem{proposition}[theorem]{Proposition}
\newtheorem{remark}[theorem]{Remark}
\numberwithin{equation}{section}
\chardef\@x10\chardef\@xv60
\def\tcitime{
\def\@time{%
  \@minute\time\@hour\@minute\divide\@hour\@xv
  \ifnum\@hour<\@x 0\fi\the\@hour:%
  \multiply\@hour\@xv\advance\@minute-\@hour
  \ifnum\@minute<\@x 0\fi\the\@minute
  }}%
\def\x@hyperref#1#2#3{%
   \catcode`\~ = 12
   \catcode`\$ = 12
   \catcode`\_ = 12
   \catcode`\# = 12
   \catcode`\& = 12
   \y@hyperref{#1}{#2}{#3}%
}
\def\y@hyperref#1#2#3#4{%
   #2\ref{#4}#3
   \catcode`\~ = 13
   \catcode`\$ = 3
   \catcode`\_ = 8
   \catcode`\# = 6
   \catcode`\& = 4
}
\def\QCTOpt[#1]#2{%
  \def\QCTOptB{#1}
  \def\QCTOptA{#2}
}
\def\QCTNOpt#1{%
  \def\QCTOptA{#1}
  \let\QCTOptB\empty
}
\def\Qct{%
  \@ifnextchar[{%
    \QCTOpt}{\QCTNOpt}
}
\def\QCBOpt[#1]#2{%
  \def\QCBOptB{#1}%
  \def\QCBOptA{#2}%
}
\def\QCBNOpt#1{%
  \def\QCBOptA{#1}%
  \let\QCBOptB\empty
}
\def\Qcb{%
  \@ifnextchar[{%
    \QCBOpt}{\QCBNOpt}%
}
\def\PrepCapArgs{%
  \ifx\QCBOptA\empty
    \ifx\QCTOptA\empty
      {}%
    \else
      \ifx\QCTOptB\empty
        {\QCTOptA}%
      \else
        [\QCTOptB]{\QCTOptA}%
      \fi
    \fi
  \else
    \ifx\QCBOptA\empty
      {}%
    \else
      \ifx\QCBOptB\empty
        {\QCBOptA}%
      \else
        [\QCBOptB]{\QCBOptA}%
      \fi
    \fi
  \fi
}
\def\GRAPHICSPS#1{%
 \ifcase\GRAPHICSTYPE
   \special{ps: #1}%
 \or
   \special{language "PS", include "#1"}%
 \fi
}%
\def\graffile#1#2#3#4{%
    \bgroup
       \@inlabelfalse
       \leavevmode
       \@ifundefined{bbl@deactivate}{\def~{\string~}}{\activesoff}%
        \raise -#4 \BOXTHEFRAME{%
           \hbox to #2{\raise #3\hbox to #2{\null #1\hfil}}}%
    \egroup
}%
\def\draftbox#1#2#3#4{%
 \leavevmode\raise -#4 \hbox{%
  \frame{\rlap{\protect\tiny #1}\hbox to #2%
   {\vrule height#3 width\z@ depth\z@\hfil}%
  }%
 }%
}%
\let\nographics=\@msidraft
\newif\ifwasdraft
\def\GRAPHIC#1#2#3#4#5{%
   \ifnum\@msidraft=\@ne\draftbox{#2}{#3}{#4}{#5}%
   \else\graffile{#1}{#3}{#4}{#5}%
   \fi
}
\def\addtoLaTeXparams#1{%
    \edef\LaTeXparams{\LaTeXparams #1}}%
\newif\ifBoxFrame \BoxFramefalse
\newif\ifOverFrame \OverFramefalse
\newif\ifUnderFrame \UnderFramefalse
\def\BOXTHEFRAME#1{%
   \hbox{%
      \ifBoxFrame
         \frame{#1}%
      \else
         {#1}%
      \fi
   }%
}
\def\doFRAMEparams#1{\BoxFramefalse\OverFramefalse\UnderFramefalse\readFRAMEparams#1\end}%
\def\readFRAMEparams#1{%
 \ifx#1\end%
  \let\next=\relax
  \else
  \ifx#1i\dispkind=\z@\fi
  \ifx#1d\dispkind=\@ne\fi
  \ifx#1f\dispkind=\tw@\fi
  \ifx#1t\addtoLaTeXparams{t}\fi
  \ifx#1b\addtoLaTeXparams{b}\fi
  \ifx#1p\addtoLaTeXparams{p}\fi
  \ifx#1h\addtoLaTeXparams{h}\fi
  \ifx#1X\BoxFrametrue\fi
  \ifx#1O\OverFrametrue\fi
  \ifx#1U\UnderFrametrue\fi
  \ifx#1w
    \ifnum\@msidraft=1\wasdrafttrue\else\wasdraftfalse\fi
    \@msidraft=\@ne
  \fi
  \let\next=\readFRAMEparams
  \fi
 \next
 }%
\def\IFRAME#1#2#3#4#5#6{%
      \bgroup
      \let\QCTOptA\empty
      \let\QCTOptB\empty
      \let\QCBOptA\empty
      \let\QCBOptB\empty
      #6%
      \parindent=0pt
      \leftskip=0pt
      \rightskip=0pt
      \setbox0=\hbox{\QCBOptA}%
      \@tempdima=#1\relax
      \ifOverFrame
          \typeout{This is not implemented yet}%
          \show\HELP
      \else
         \ifdim\wd0>\@tempdima
            \advance\@tempdima by \@tempdima
            \ifdim\wd0 >\@tempdima
               \setbox1 =\vbox{%
                  \unskip\hbox to \@tempdima{\hfill\GRAPHIC{#5}{#4}{#1}{#2}{#3}\hfill}%
                  \unskip\hbox to \@tempdima{\parbox[b]{\@tempdima}{\QCBOptA}}%
               }%
               \wd1=\@tempdima
            \else
               \textwidth=\wd0
               \setbox1 =\vbox{%
                 \noindent\hbox to \wd0{\hfill\GRAPHIC{#5}{#4}{#1}{#2}{#3}\hfill}\\%
                 \noindent\hbox{\QCBOptA}%
               }%
               \wd1=\wd0
            \fi
         \else
            \ifdim\wd0>0pt
              \hsize=\@tempdima
              \setbox1=\vbox{%
                \unskip\GRAPHIC{#5}{#4}{#1}{#2}{0pt}%
                \break
                \unskip\hbox to \@tempdima{\hfill \QCBOptA\hfill}%
              }%
              \wd1=\@tempdima
           \else
              \hsize=\@tempdima
              \setbox1=\vbox{%
                \unskip\GRAPHIC{#5}{#4}{#1}{#2}{0pt}%
              }%
              \wd1=\@tempdima
           \fi
         \fi
         \@tempdimb=\ht1
         \advance\@tempdimb by -#2
         \advance\@tempdimb by #3
         \leavevmode
         \raise -\@tempdimb \hbox{\box1}%
      \fi
      \egroup%
}%
\def\DFRAME#1#2#3#4#5{%
  \hfil\break
  \bgroup
     \leftskip\@flushglue
     \rightskip\@flushglue
     \parindent\z@
     \parfillskip\z@skip
     \let\QCTOptA\empty
     \let\QCTOptB\empty
     \let\QCBOptA\empty
     \let\QCBOptB\empty
     \vbox\bgroup
        \ifOverFrame
           #5\QCTOptA\par
        \fi
        \GRAPHIC{#4}{#3}{#1}{#2}{\z@}%
        \ifUnderFrame
           \break#5\QCBOptA
        \fi
     \egroup
   \egroup
   \break
}%
\def\FFRAME#1#2#3#4#5#6#7{%
  \@ifundefined{floatstyle}
    {
     \begin{figure}[#1]%
    }
    {
     \ifx#1h
      \begin{figure}[H]%
     \else
      \begin{figure}[#1]%
     \fi
    }
  \let\QCTOptA\empty
  \let\QCTOptB\empty
  \let\QCBOptA\empty
  \let\QCBOptB\empty
  \ifOverFrame
    #4
    \ifx\QCTOptA\empty
    \else
      \ifx\QCTOptB\empty
        \caption{\QCTOptA}%
      \else
        \caption[\QCTOptB]{\QCTOptA}%
      \fi
    \fi
    \ifUnderFrame\else
      \label{#5}%
    \fi
  \else
    \UnderFrametrue%
  \fi
  \begin{center}\GRAPHIC{#7}{#6}{#2}{#3}{\z@}\end{center}%
  \ifUnderFrame
    #4
    \ifx\QCBOptA\empty
      \caption{}%
    \else
      \ifx\QCBOptB\empty
        \caption{\QCBOptA}%
      \else
        \caption[\QCBOptB]{\QCBOptA}%
      \fi
    \fi
    \label{#5}%
  \fi
  \end{figure}%
 }%
\def\makeactives{
  \catcode`\"=\active
  \catcode`\;=\active
  \catcode`\:=\active
  \catcode`\'=\active
  \catcode`\~=\active
}
   \gdef\activesoff{%
      \def"{\string"}%
      \def;{\string;}%
      \def:{\string:}%
      \def'{\string'}%
      \def~{\string~}%
    }
\def\FRAME#1#2#3#4#5#6#7#8{%
 \bgroup
 \ifnum\@msidraft=\@ne
   \wasdrafttrue
 \else
   \wasdraftfalse%
 \fi
 \def\LaTeXparams{}%
 \dispkind=\z@
 \def\LaTeXparams{}%
 \doFRAMEparams{#1}%
 \ifnum\dispkind=\z@\IFRAME{#2}{#3}{#4}{#7}{#8}{#5}\else
  \ifnum\dispkind=\@ne\DFRAME{#2}{#3}{#7}{#8}{#5}\else
   \ifnum\dispkind=\tw@
    \edef\@tempa{\noexpand\FFRAME{\LaTeXparams}}%
    \@tempa{#2}{#3}{#5}{#6}{#7}{#8}%
    \fi
   \fi
  \fi
  \ifwasdraft\@msidraft=1\else\@msidraft=0\fi{}%
  \egroup
 }%
\def\TEXUX#1{"texux"}
\long\def\QQQ#1#2{%
     \long\expandafter\def\csname#1\endcsname{#2}}%
\long\def\QQA#1#2{}%
\def\QTR#1#2{{\csname#1\endcsname {#2}}}%
\def\EXPAND#1[#2]#3{}%
\def\NOEXPAND#1[#2]#3{}%
\def\LaTeXparent#1{}%
\def\ChildStyles#1{}%
\def\ChildDefaults#1{}%
\def\QTagDef#1#2#3{}%
  \providecommand{\UNICODE}[2][]{\protect\rule{.1in}{.1in}}
  \providecommand{\U}[1]{\protect\rule{.1in}{.1in}}
\def\QQfnmark#1{\footnotemark}
 \def\abstract{%
  \if@twocolumn
   \section*{Abstract (Not appropriate in this style!)}%
   \else \small
   \begin{center}{\bf Abstract\vspace{-.5em}\vspace{\z@}}\end{center}%
   \quotation
   \fi
  }%
   \def\registered{\relax\ifmmode{}\r@gistered
                    \else$\m@th\r@gistered$\fi}%
 \def\r@gistered{^{\ooalign
  {\hfil\raise.07ex\hbox{$\scriptstyle\rm\text{R}$}\hfil\crcr
  \mathhexbox20D}}}}{}%
\newdimen\theight
\def\newfmtname{LaTeX2e}
  \DeclareOldFontCommand{\rm}{\normalfont\rmfamily}{\mathrm}
  \DeclareOldFontCommand{\sf}{\normalfont\sffamily}{\mathsf}
  \DeclareOldFontCommand{\tt}{\normalfont\ttfamily}{\mathtt}
  \DeclareOldFontCommand{\bf}{\normalfont\bfseries}{\mathbf}
  \DeclareOldFontCommand{\it}{\normalfont\itshape}{\mathit}
  \DeclareOldFontCommand{\sl}{\normalfont\slshape}{\@nomath\sl}
  \DeclareOldFontCommand{\sc}{\normalfont\scshape}{\@nomath\sc}
\def\alpha{{\Greekmath 010B}}%
\def\beta{{\Greekmath 010C}}%
\def\gamma{{\Greekmath 010D}}%
\def\delta{{\Greekmath 010E}}%
\def\epsilon{{\Greekmath 010F}}%
\def\zeta{{\Greekmath 0110}}%
\def\eta{{\Greekmath 0111}}%
\def\theta{{\Greekmath 0112}}%
\def\iota{{\Greekmath 0113}}%
\def\kappa{{\Greekmath 0114}}%
\def\lambda{{\Greekmath 0115}}%
\def\mu{{\Greekmath 0116}}%
\def\nu{{\Greekmath 0117}}%
\def\xi{{\Greekmath 0118}}%
\def\pi{{\Greekmath 0119}}%
\def\rho{{\Greekmath 011A}}%
\def\sigma{{\Greekmath 011B}}%
\def\tau{{\Greekmath 011C}}%
\def\upsilon{{\Greekmath 011D}}%
\def\phi{{\Greekmath 011E}}%
\def\chi{{\Greekmath 011F}}%
\def\psi{{\Greekmath 0120}}%
\def\omega{{\Greekmath 0121}}%
\def\varepsilon{{\Greekmath 0122}}%
\def\vartheta{{\Greekmath 0123}}%
\def\varpi{{\Greekmath 0124}}%
\def\varrho{{\Greekmath 0125}}%
\def\varsigma{{\Greekmath 0126}}%
\def\varphi{{\Greekmath 0127}}%
\def\nabla{{\Greekmath 0272}}
\def\FindBoldGroup{%
   {\setbox0=\hbox{$\mathbf{x\global\edef\theboldgroup{\the\mathgroup}}$}}%
}
\def\Greekmath#1#2#3#4{%
    \if@compatibility
        \ifnum\mathgroup=\symbold
           \mathchoice{\mbox{\boldmath$\displaystyle\mathchar"#1#2#3#4$}}%
                      {\mbox{\boldmath$\textstyle\mathchar"#1#2#3#4$}}%
                      {\mbox{\boldmath$\scriptstyle\mathchar"#1#2#3#4$}}%
                      {\mbox{\boldmath$\scriptscriptstyle\mathchar"#1#2#3#4$}}%
        \else
           \mathchar"#1#2#3#4%
        \fi
    \else
        \FindBoldGroup
        \ifnum\mathgroup=\theboldgroup 
           \mathchoice{\mbox{\boldmath$\displaystyle\mathchar"#1#2#3#4$}}%
                      {\mbox{\boldmath$\textstyle\mathchar"#1#2#3#4$}}%
                      {\mbox{\boldmath$\scriptstyle\mathchar"#1#2#3#4$}}%
                      {\mbox{\boldmath$\scriptscriptstyle\mathchar"#1#2#3#4$}}%
        \else
           \mathchar"#1#2#3#4%
        \fi
      \fi}
\newif\ifGreekBold  \GreekBoldfalse
\let\SAVEPBF=\pbf
\def\pbf{\GreekBoldtrue\SAVEPBF}%
  \newcounter{equationnumber}
  \def\mathletters{%
     \addtocounter{equation}{1}
     \edef\@currentlabel{\theequation}%
     \setcounter{equationnumber}{\c@equation}
     \setcounter{equation}{0}%
     \edef\theequation{\@currentlabel\noexpand\alph{equation}}%
  }
    \def\BibTeX{{\rm B\kern-.05em{\sc i\kern-.025em b}\kern-.08em
                 T\kern-.1667em\lower.7ex\hbox{E}\kern-.125emX}}}{}%
\def\AmS{{\protect\usefont{OMS}{cmsy}{m}{n}%
                A\kern-.1667em\lower.5ex\hbox{M}\kern-.125emS}}}{}%
\def\@@eqncr{\let\@tempa\relax
    \ifcase\@eqcnt \def\@tempa{& & &}\or \def\@tempa{& &}%
      \else \def\@tempa{&}\fi
     \@tempa
     \if@eqnsw
        \iftag@
           \@taggnum
        \else
           \@eqnnum\stepcounter{equation}%
        \fi
     \fi
     \global\tag@false
     \global\@eqnswtrue
     \global\@eqcnt\z@\cr}
\def\TCItag{\@ifnextchar*{\@TCItagstar}{\@TCItag}}
\def\@TCItag#1{%
    \global\tag@true
    \global\def\@taggnum{(#1)}}
\def\@TCItagstar*#1{%
    \global\tag@true
    \global\def\@taggnum{#1}}
\def\ExitTCILatex{\makeatother }
\let\DOTSI\relax
\def\RIfM@{\relax\ifmmode}%
\def\FN@{\futurelet\next}%
\def\iint{\DOTSI\intno@\tw@\FN@\ints@}%
\def\iiint{\DOTSI\intno@\thr@@\FN@\ints@}%
\def\iiiint{\DOTSI\intno@4 \FN@\ints@}%
\def\idotsint{\DOTSI\intno@\z@\FN@\ints@}%
\def\ints@{\findlimits@\ints@@}%
\newif\iflimtoken@
\newif\iflimits@
\def\findlimits@{\limtoken@true\ifx\next\limits\limits@true
 \else\ifx\next\nolimits\limits@false\else
 \limtoken@false\ifx\ilimits@\nolimits\limits@false\else
 \ifinner\limits@false\else\limits@true\fi\fi\fi\fi}%
\def\multint@{\int\ifnum\intno@=\z@\intdots@                          
 \else\intkern@\fi                                                    
 \ifnum\intno@>\tw@\int\intkern@\fi                                   
 \ifnum\intno@>\thr@@\int\intkern@\fi                                 
 \int}
\def\multintlimits@{\intop\ifnum\intno@=\z@\intdots@\else\intkern@\fi
 \ifnum\intno@>\tw@\intop\intkern@\fi
 \ifnum\intno@>\thr@@\intop\intkern@\fi\intop}%
\def\intic@{%
    \mathchoice{\hskip.5em}{\hskip.4em}{\hskip.4em}{\hskip.4em}}%
\def\negintic@{\mathchoice
 {\hskip-.5em}{\hskip-.4em}{\hskip-.4em}{\hskip-.4em}}%
\def\ints@@{\iflimtoken@                                              
 \def\ints@@@{\iflimits@\negintic@
   \mathop{\intic@\multintlimits@}\limits                             
  \else\multint@\nolimits\fi                                          
  \eat@}
 \else                                                                
 \def\ints@@@{\iflimits@\negintic@
  \mathop{\intic@\multintlimits@}\limits\else
  \multint@\nolimits\fi}\fi\ints@@@}%
\def\intkern@{\mathchoice{\!\!\!}{\!\!}{\!\!}{\!\!}}%
\def\plaincdots@{\mathinner{\cdotp\cdotp\cdotp}}%
\def\intdots@{\mathchoice{\plaincdots@}%
 {{\cdotp}\mkern1.5mu{\cdotp}\mkern1.5mu{\cdotp}}%
 {{\cdotp}\mkern1mu{\cdotp}\mkern1mu{\cdotp}}%
 {{\cdotp}\mkern1mu{\cdotp}\mkern1mu{\cdotp}}}%
\def\RIfM@{\relax\protect\ifmmode}
\def\text{\RIfM@\expandafter\text@\else\expandafter\mbox\fi}
\let\nfss@text\text
\def\text@#1{\mathchoice
   {\textdef@\displaystyle\f@size{#1}}%
   {\textdef@\textstyle\tf@size{\firstchoice@false #1}}%
   {\textdef@\textstyle\sf@size{\firstchoice@false #1}}%
   {\textdef@\textstyle \ssf@size{\firstchoice@false #1}}%
   \glb@settings}
\def\textdef@#1#2#3{\hbox{{%
                    \everymath{#1}%
                    \let\f@size#2\selectfont
                    #3}}}
\newif\iffirstchoice@
\def\Let@{\relax\iffalse{\fi\let\\=\cr\iffalse}\fi}%
\def\vspace@{\def\vspace##1{\crcr\noalign{\vskip##1\relax}}}%
\def\multilimits@{\bgroup\vspace@\Let@
 \baselineskip\fontdimen10 \scriptfont\tw@
 \advance\baselineskip\fontdimen12 \scriptfont\tw@
 \lineskip\thr@@\fontdimen8 \scriptfont\thr@@
 \lineskiplimit\lineskip
 \vbox\bgroup\ialign\bgroup\hfil$\m@th\scriptstyle{##}$\hfil\crcr}%
\def\Sb{_\multilimits@}%
\def\endSb{\crcr\egroup\egroup\egroup}%
\def\Sp{^\multilimits@}%
\newdimen\ex@
\def\rightarrowfill@#1{$#1\m@th\mathord-\mkern-6mu\cleaders
 \hbox{$#1\mkern-2mu\mathord-\mkern-2mu$}\hfill
 \mkern-6mu\mathord\rightarrow$}%
\def\leftarrowfill@#1{$#1\m@th\mathord\leftarrow\mkern-6mu\cleaders
 \hbox{$#1\mkern-2mu\mathord-\mkern-2mu$}\hfill\mkern-6mu\mathord-$}%
\def\leftrightarrowfill@#1{$#1\m@th\mathord\leftarrow
\mkern-6mu\cleaders
 \hbox{$#1\mkern-2mu\mathord-\mkern-2mu$}\hfill
 \mkern-6mu\mathord\rightarrow$}%
\def\overrightarrow{\mathpalette\overrightarrow@}%
\def\overrightarrow@#1#2{\vbox{\ialign{##\crcr\rightarrowfill@#1\crcr
 \noalign{\kern-\ex@\nointerlineskip}$\m@th\hfil#1#2\hfil$\crcr}}}%
\def\overleftarrow{\mathpalette\overleftarrow@}%
\def\overleftarrow@#1#2{\vbox{\ialign{##\crcr\leftarrowfill@#1\crcr
 \noalign{\kern-\ex@\nointerlineskip}$\m@th\hfil#1#2\hfil$\crcr}}}%
\def\overleftrightarrow{\mathpalette\overleftrightarrow@}%
\def\overleftrightarrow@#1#2{\vbox{\ialign{##\crcr
   \leftrightarrowfill@#1\crcr
 \noalign{\kern-\ex@\nointerlineskip}$\m@th\hfil#1#2\hfil$\crcr}}}%
\def\underrightarrow{\mathpalette\underrightarrow@}%
\def\underrightarrow@#1#2{\vtop{\ialign{##\crcr$\m@th\hfil#1#2\hfil
  $\crcr\noalign{\nointerlineskip}\rightarrowfill@#1\crcr}}}%
\def\underleftarrow{\mathpalette\underleftarrow@}%
\def\underleftarrow@#1#2{\vtop{\ialign{##\crcr$\m@th\hfil#1#2\hfil
  $\crcr\noalign{\nointerlineskip}\leftarrowfill@#1\crcr}}}%
\def\underleftrightarrow{\mathpalette\underleftrightarrow@}%
\def\underleftrightarrow@#1#2{\vtop{\ialign{##\crcr$\m@th
  \hfil#1#2\hfil$\crcr
 \noalign{\nointerlineskip}\leftrightarrowfill@#1\crcr}}}%
\def\qopnamewl@#1{\mathop{\operator@font#1}\nlimits@}
\let\nlimits@\displaylimits
\def\setboxz@h{\setbox\z@\hbox}
\def\varlim@#1#2{\mathop{\vtop{\ialign{##\crcr
 \hfil$#1\m@th\operator@font lim$\hfil\crcr
 \noalign{\nointerlineskip}#2#1\crcr
 \noalign{\nointerlineskip\kern-\ex@}\crcr}}}}
 \def\rightarrowfill@#1{\m@th\setboxz@h{$#1-$}\ht\z@\z@
  $#1\copy\z@\mkern-6mu\cleaders
  \hbox{$#1\mkern-2mu\box\z@\mkern-2mu$}\hfill
  \mkern-6mu\mathord\rightarrow$}
\def\leftarrowfill@#1{\m@th\setboxz@h{$#1-$}\ht\z@\z@
  $#1\mathord\leftarrow\mkern-6mu\cleaders
  \hbox{$#1\mkern-2mu\copy\z@\mkern-2mu$}\hfill
  \mkern-6mu\box\z@$}
\def\projlim{\qopnamewl@{proj\,lim}}
\def\injlim{\qopnamewl@{inj\,lim}}
\def\varinjlim{\mathpalette\varlim@\rightarrowfill@}
\def\varprojlim{\mathpalette\varlim@\leftarrowfill@}
\def\varliminf{\mathpalette\varliminf@{}}
\def\varliminf@#1{\mathop{\underline{\vrule\@depth.2\ex@\@width\z@
   \hbox{$#1\m@th\operator@font lim$}}}}
\def\varlimsup{\mathpalette\varlimsup@{}}
\def\varlimsup@#1{\mathop{\overline
  {\hbox{$#1\m@th\operator@font lim$}}}}
\def\align{\@verbatim \frenchspacing\@vobeyspaces \@alignverbatim
You are using the "align" environment in a style in which it is not defined.}
\let\csname endalign*\endcsname =\endtrivlist
\def\alignat{\@verbatim \frenchspacing\@vobeyspaces \@alignatverbatim
You are using the "alignat" environment in a style in which it is not defined.}
\let\csname endalignat*\endcsname =\endtrivlist
\def\xalignat{\@verbatim \frenchspacing\@vobeyspaces \@xalignatverbatim
You are using the "xalignat" environment in a style in which it is not defined.}
\let\csname endxalignat*\endcsname =\endtrivlist
\def\gather{\@verbatim \frenchspacing\@vobeyspaces \@gatherverbatim
You are using the "gather" environment in a style in which it is not defined.}
\let\csname endgather*\endcsname =\endtrivlist
\def\multiline{\@verbatim \frenchspacing\@vobeyspaces \@multilineverbatim
You are using the "multiline" environment in a style in which it is not defined.}
\let\csname endmultiline*\endcsname =\endtrivlist
\def\arrax{\@verbatim \frenchspacing\@vobeyspaces \@arraxverbatim
You are using a type of "array" construct that is only allowed in AmS-LaTeX.}
\def\tabulax{\@verbatim \frenchspacing\@vobeyspaces \@tabulaxverbatim
You are using a type of "tabular" construct that is only allowed in AmS-LaTeX.}
\let\csname endarrax*\endcsname =\endtrivlist
\let\csname endtabulax*\endcsname =\endtrivlist
 \def\endequation{%
     \ifmmode\ifinner 
      \iftag@
        \addtocounter{equation}{-1} 
        $\hfil
           \displaywidth\linewidth\@taggnum\egroup \endtrivlist
        \global\tag@false
        \global\@ignoretrue
      \else
        $\hfil
           \displaywidth\linewidth\@eqnnum\egroup \endtrivlist
        \global\tag@false
        \global\@ignoretrue
      \fi
     \else
      \iftag@
        \addtocounter{equation}{-1} 
        \eqno \hbox{\@taggnum}
        \global\tag@false%
        $$\global\@ignoretrue
      \else
        \eqno \hbox{\@eqnnum}
        $$\global\@ignoretrue
      \fi
     \fi\fi
 }
 \newif\iftag@ \tag@false
 \def\TCItag{\@ifnextchar*{\@TCItagstar}{\@TCItag}}
 \def\@TCItag#1{%
     \global\tag@true
     \global\def\@taggnum{(#1)}}
 \def\@TCItagstar*#1{%
     \global\tag@true
     \global\def\@taggnum{#1}}
     \def\tag{\@ifnextchar*{\@tagstar}{\@tag}}
     \def\@tag#1{%
         \global\tag@true
         \global\def\@taggnum{(#1)}}
     \def\@tagstar*#1{%
         \global\tag@true
         \global\def\@taggnum{#1}}
\begin{document}
\title{General Rough integration, L\'evy Rough paths and a L\'evy--Kintchine
type formula}
\author{Peter K. Friz, Atul Shekhar}

\begin{abstract}
We consider rough paths with jumps. In particular, the analogue of Lyons'
extension theorem and rough integration are established in a jump setting,
offering a pathwise view on stochastic integration against c\'adl\'ag
processes. A class of L\'evy rough paths is introduced and characterized by
a sub-ellipticity condition on the left-invariant diffusion vector fields
and and a certain integrability property of the Carnot--Caratheodory norm
with respect to the L\'evy measure on the group, using Hunt's framework of
Lie group valued L\'evy processes. Examples of L\'evy rough paths include
standard multi-dimensional L\'evy process enhanced with stochastic area as
constructed by D. Williams, the pure area Poisson process and Brownian
motion in a magnetic field. An explicit formula for the expected signature
is given.
\end{abstract}

\subjclass{Primary 60H99}
\keywords{Young integration, Rough Paths, L\'{e}vy processes, general theory
of processes}
\thanks{ The first author is supported by European Research Council under
the European Union's Seventh Framework Programme (FP7/2007-2013) / ERC grant
agreement nr. 258237. The second author is supported by Berlin Mathematical
School. Date \today}
\maketitle
\tableofcontents


\part{Introduction and background}

\section{Motivation and contribution of this paper}

An important aspect of \textquotedblleft general" theory of stochastic
processes \cite{Jacod, RogersWilliams, Protter} is its ability to deal with
jumps. On the other hand, the (deterministic) theory of rough paths \cite%
{Lyons, LyonsQian, LyonsStFlour, FrizVictoir, FrizHairer} has been very
successful in dealing with \textit{continuous} stochastic processes (and
more recently random fields arising from SPDEs \cite{Lectures, FrizHairer}).
It is a natural question to what extent there is a \textquotedblleft
general" rough path theory i.e. allowing for jumps. To the best of our
knowledge, there has been only one discussion, by Williams \cite{williams}
in the spirit of Marcus canonical equations, and essentially nothing else.
(We should however Mikosch--Norvai{\v{s}}a \cite{MN00} and Simon \cite{Simon}%
, noting that these works take place in the Young regime of finite $p$%
-variation, $p<2 $.) We will comment in more detail in Section \ref%
{sec:williamsintro} about Williams' work and the relation to ours. 

Postponing the exact definition of ``general" (a.k.a. c\'adl\'ag) rough
path, let us start with a list of desirable properties and natural questions.

\begin{itemize}
\item An analogue of Lyons' fundamental \textit{extension theorem} (Section %
\ref{sec:gRP} below for a recall) should hold true. That is, any general
geometric $p$-rough path $\mathbf{X}$ should admit canonically defined
higher iterated integrals, thereby yielding a group-like element (the
``signature" of $\mathbf{X}$).

\item A general rough path $\mathbf{X}$ should allow the integration of $1$%
-forms, and more general suitable ``controlled rough paths" $Y$ in the sense
of Gubinelli \cite{FrizHairer}, leading to rough integrals of the form 
\begin{equation*}
\int f\left( X^-\right) d\mathbf{X}\text{ and }\int Y^- d\mathbf{X}.
\end{equation*}

\item Every semimartingale $X=X\left( \omega \right) $ with (rough path)
It\^o-lift $\mathbf{X}^I=\mathbf{X}^I\left( \omega \right) $, should give
rise to a (random) rough integral that coincides under reasonable
assumptions with the It\^o-integral, so that a.s. 
\begin{equation*}
\mathrm{(It\hat{o})}\int f\left( X^{-}\right) dX=\int f\left( X\right) d%
\mathbf{X}^I.
\end{equation*}

\item As model case for both semi-martingales and jump Markov process, what
is the precisely rough path nature of L\'evy processes? In particular, it
would be desirable to have a class of ``L\'evy rough paths" that captures
natural (but ``non-canoncial") examples such as the pure area Poisson
process or the Brownian rough path in a magnetic field?

\item To what extent can we compute the \textit{expected signature} of such
processes? And we do we get from it?
\end{itemize}

In essence, we will give reasonable answers to all these points. We have not
tried to push for maximal generality. For instance, in the spirit of
Friz--Hairer \cite[Chapter 3-5]{FrizHairer}, we develop general rough
integration only in the level $2$-setting, which is what matters most for
probability. But that said, the required algebraic and geometric picture to
handle the level $N$-case is still needed in this paper, notably when we
discuss the extension theorem and signatures. 
For the most, we have chosen to work with (both ``canonically" and
``non-canonically" lifted) L\'evy processes as model case for random
c\'adl\'ag rough paths, this choice being similar to choosing Brownian
motion over continuous semimartingales. In the final chapter we give discuss
some extensions, notably to Markov jump diffusions and some simple Gaussian
examples.

\textbf{Applications and outlook.} In his landmark paper \cite[p.220]{Lyons}%
, Lyons gave a long and visionary list of advantages (to a probabilist) of
constructing stochastic objects in a pathwise fashion most of which extend
mutatis mutandis to situations with jump noise. 
We also note that integration against general rough paths can be considered
as a generalization of the F\"{o}llmer integral \cite{F81} and, to
some extent, Karandikar \cite{K95}, (see also Soner et al. \cite{STZ11} and the 
references therein), but now free of implicit semimartingale features. 

\section{Preliminaries}

\subsection{``General" Young integration}

\cite{Young,dudley}

We briefly review Young's integration theory. Consider a path $X:\left[ 0,T%
\right] \rightarrow \mathbb{R}^{d}$ of finite $p$-variation, that is 
\begin{equation*}
\left\Vert X\right\Vert _{p\text{-var;}\left[ 0,T\right] }:=\left( \sup_{%
\mathcal{P}}\sum_{\left[ s,t\right] \in P}\left\vert X_{s,t}\right\vert
^{p}\right) ^{1/p}<\infty
\end{equation*}%
with $X_{s,t}=X_{t}-X_{s}$ and $\sup $ (here and later on) taken over all
for finite partitions $\mathcal{P}$ of $\left[ 0,T\right] $. As is
well-known tsuch paths are \textit{regulated} in the sense of admitting
left- and right-limits. In particular, $X_{t}^{-}:=\lim_{s\uparrow t}X_{s}$
is c\'{a}gl\'{a}d and $X_{t}^{+}:=\lim_{s\downarrow t}X_{s}$ c\'{a}dl\'{a}g
(by convention: $X_{0}^{-}\equiv X_{0},\,X_{T}^{+}\equiv X_{T}$). Let us
write $X\in W^{p}([0,T])$ for the space of c\'{a}dl\'{a}g path of finite $p$%
-variation. A generic c\'{a}gl\'{a}d path of finite $q$-variation is then
given by $Y^{-}$ for $Y\in W^{q}([0,T])$. Any such pair~$(X,Y^{-}$) has no
common points of discontinuity on the same side of a point and the Young
integral of $Y^{-}$ against $X$,%
\begin{equation*}
\int_{0}^{T}Y^{-}dX\equiv \int_{0}^{T}Y_{r}^{-}dX_{r}\equiv
\int_{0}^{T}Y_{s-}dX_{s},
\end{equation*}
is well-defined (see below) provided $1/p+1/q>1$ (or $p<2$, in case $p=q$).
We need

\begin{definition}
\label{def:RRSMRS} Assume $S=S\left( \mathcal{P}\right) $ is defined on the
partitions of $\left[ 0,T\right] $ and takes values in some normed space.%
\newline
(i) Convergence in \textit{Refinement Riemann--Stieltjes} (RRS) sense: we
say (RRS) $\lim_{\left\vert \mathcal{P}\right\vert \rightarrow 0}S\left( 
\mathcal{P}\right) =L$ if for every $\varepsilon >0$ there exists $\mathcal{P%
}_{0}$ such that for every \textquotedblleft refinement" $\mathcal{P}\supset 
\mathcal{P}_{0}$ one has $\left\vert S\left( \mathcal{P}\right)
-L\right\vert <\varepsilon .$\newline
(ii) Convergence in \textit{Mesh Riemann--Stieltjes} (MRS) sense: we say
(MRS) $\lim_{\left\vert \mathcal{P}\right\vert \rightarrow 0}S\left( 
\mathcal{P}\right) =L$ if for every $\varepsilon >0$ there exists $\delta >0$
s.t. $\forall \mathcal{P}$ with mesh $\left\vert \mathcal{P}\right\vert
<\delta $, one has $\left\vert S\left( \mathcal{P}\right) -L\right\vert
<\varepsilon $.
\end{definition}

\begin{theorem}[Young]
If $X\in W^{p}$ and $Y\in W^{q}$ with $\frac{1}{p}+\frac{1}{q}>1$ , then the
Young integral is given by 
\begin{equation}
\int_{0}^{T}Y^{-}dX:=\lim_{\left\vert \mathcal{P}\right\vert \rightarrow
0}\sum_{\left[ s,t\right] \in \mathcal{P}}Y_{s}^{-}X_{s,t}=\lim_{\left\vert 
\mathcal{P}\right\vert \rightarrow 0}\sum_{\left[ s,t\right] \in \mathcal{P}%
}Y_{s}X_{s,t}  \label{equ:YoungIntegralRS}
\end{equation}%
where both limit exist in (RRS) sense. Moreover, Young's inequality holds in
either form%
\begin{eqnarray}
\left\vert \int_{s}^{t}Y^{-}dX-Y_{s}^{-}X_{s,t}\right\vert &\lesssim
&\left\Vert Y^{-}\right\Vert _{q\text{-var;}\left[ s,t\right] }\left\Vert
X\right\Vert _{p\text{-var;}\left[ s,t\right] },  \label{equ:Y1} \\
\left\vert \int_{s}^{t}Y^{-}dX-Y_{s}X_{s,t}\right\vert &\lesssim &\left\Vert
Y\right\Vert _{q\text{-var;}\left[ s,t\right] }\left\Vert X\right\Vert _{p%
\text{-var;}\left[ s,t\right] }.  \label{equ:Y2}
\end{eqnarray}%
At last, if $X,Y$ are continuous (so that in particular $Y^{-}\equiv Y$),
the defining limit of the Young integral exists in (MRS) sense.
\end{theorem}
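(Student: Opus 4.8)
The plan is to base everything on a single maximal (Young--Loève) estimate and then read off the two Riemann-sum representations, the two inequalities, and the (MRS) refinement as consequences.

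\textbf{Step 1: the Young--Loève estimate.} I would fix a left-point germ $\Xi_{s,t}=\hat Y_sX_{s,t}$ with $\hat Y\in\{Y,Y^-\}$ and compute its second-order defect; since $X_{s,t}=X_{s,u}+X_{u,t}$,
\[
\Xi_{s,t}-\Xi_{s,u}-\Xi_{u,t}=-\,(\hat Y_u-\hat Y_s)\,X_{u,t},
\]
whence $|\Xi_{s,t}-\Xi_{s,u}-\Xi_{u,t}|\le\|\hat Y\|_{q\text{-var};[s,u]}\|X\|_{p\text{-var};[u,t]}\le\omega(s,t)^{\theta}$ for the superadditive control $\omega(a,b)=\|\hat Y\|_{q\text{-var};[a,b]}^{q}+\|X\|_{p\text{-var};[a,b]}^{p}$ and $\theta=\tfrac1p+\tfrac1q>1$. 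Then I run the classical successive point-dropping argument on an arbitrary partition of $[s,t]$: removing an interior point changes the Riemann sum exactly by the above defect, and by superadditivity and pigeonhole some interior point $u_i$ satisfies $\omega(u_{i-1},u_{i+1})\le\frac{2}{n-1}\omega(s,t)$, so dropping it costs at most $(2/(n-1))^{\theta}\omega(s,t)^{\theta}$. Summing the convergent series $\sum_n n^{-\theta}$ (where $\theta>1$ is essential) collapses the partition to a single interval and gives $|\sum \hat Y_uX_{u,v}-\hat Y_sX_{s,t}|\le C_\theta\,\omega(s,t)^{\theta}$; a routine homogenisation (optimising the relative weights of the two controls inside $\omega$) upgrades this to the product form $C_\theta\|\hat Y\|_{q\text{-var};[s,t]}\|X\|_{p\text{-var};[s,t]}$. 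Taking $\hat Y=Y^-$ yields \eqref{equ:Y1} and $\hat Y=Y$ yields \eqref{equ:Y2}. This step uses only superadditivity of the controls, never continuity, so it is valid for c\`adl\`ag paths.

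\textbf{Step 2: existence in (RRS) sense.} For any refinement $\mathcal P\supset\mathcal P_0$, grouping the added points inside each interval of $\mathcal P_0$ and applying Step 1 there gives
\[
|S(\mathcal P)-S(\mathcal P_0)|\le C_\theta\!\!\sum_{[s,t]\in\mathcal P_0}\!\!\|\hat Y\|_{q\text{-var};[s,t]}\|X\|_{p\text{-var};[s,t]},
\]
a bound independent of $\mathcal P$; so it suffices to produce, for each $\varepsilon$, a base partition $\mathcal P_0$ making the right-hand side $<\varepsilon$, after which $(S(\mathcal P))$ is Cauchy along the refinement net (by the triangle inequality through $S(\mathcal P_0)$) and converges to some $L$. \textbf{This is the main obstacle:} at a time $\tau$ where $X$ and $Y$ jump simultaneously, the summand stays bounded below by a multiple of $|\Delta X_\tau|\,|\Delta Y_\tau|$ whenever $\tau$ is interior to an interval, so refinement inside a single interval cannot help. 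The way out is precisely the observation recorded before the theorem: $X$ is c\`adl\`ag and $Y^-$ c\`agl\`ad, hence their discontinuities never lie on the same side of $\tau$. I would therefore put into $\mathcal P_0$ the finitely many times at which $|\Delta X|$ or $|\Delta Y|$ exceeds a threshold $\eta(\varepsilon)$, each together with points just to its left and right. A shared jump at such a $\tau$ is then split so that the jump of $X$ sits in an interval ending at $\tau$ (on which $\|Y^-\|_{q\text{-var}}$ is small, by left-continuity of $Y^-$ at $\tau$) while the jump of $Y^-$ sits in an interval starting at $\tau$ (on which $\|X\|_{p\text{-var}}$ is small, by right-continuity of $X$ at $\tau$), killing each product factor on its own side. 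On every remaining interval the variation is uniformly small, and the exponent $\theta>1$ turns this into smallness of the whole sum via $\sum\omega^{\theta}\le(\sup\omega)^{\theta-1}\,\omega(0,T)$.

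\textbf{Step 3: the two representations and the (MRS) statement.} The difference of the two Riemann sums is $S_Y(\mathcal P)-S_{Y^-}(\mathcal P)=\sum_{[s,t]\in\mathcal P}(Y_s-Y_s^-)X_{s,t}$, supported on partition points that are jumps of $Y$; since $X$ is right-continuous, the same base partition (a point just to the right of each large jump of $Y$) forces $X_{s,t}\to0$ there, while the small jumps contribute little because $\sum_\tau|\Delta Y_\tau|^{q}<\infty$, so this difference tends to $0$ in (RRS) and the two sums share the limit $L$. Finally, if $X$ and $Y$ are continuous then $Y^-\equiv Y$ and the controls $\omega(\cdot,\cdot)$ are continuous, hence uniformly continuous on the simplex $\{0\le s\le t\le T\}$ with $\omega(t,t)=0$; thus $\sup_{\,t-s\le\delta}\omega(s,t)\to0$ as $\delta\to0$, the jump-isolation of Step 2 is no longer needed, and the estimate $|S(\mathcal P')-S(\mathcal P)|\le C_\theta(\sup_{[s,t]\in\mathcal P}\omega(s,t))^{\theta-1}\omega(0,T)$ (valid for every refinement $\mathcal P'\supseteq\mathcal P$) shows, via common refinements, that the sums are Cauchy along any sequence of partitions with mesh tending to $0$, i.e.\ the limit exists in the stronger (MRS) sense.
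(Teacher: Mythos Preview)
Your approach is the classical one and matches the paper, which in fact treats Steps~1, 2 and the (MRS) statement as standard (citing Young and Dudley--Norvai\v{s}a) and only supplies a proof of the equality of the two Riemann sums (your Step~3), carried out in Proposition~\ref{youngleft=right}. There the paper uses the same large/small-jump split: for large jumps right-continuity of $X$ kills $\Delta Y_s X_{s,t}$, and for small jumps the paper bounds $\bigl|\sum \Delta Y_s 1_{|\Delta Y_s|\le\epsilon}X_{s,t}\bigr|^2\le\epsilon^{2-p}\|Y\|_{p\text{-var}}^p\|X\|_{2\text{-var}}^2$ via Cauchy--Schwarz (stated in the special case $p=q<2$).

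Two small points to tighten. In Step~3, ``small jumps contribute little because $\sum|\Delta Y_\tau|^q<\infty$'' needs an explicit pairing with the $X$-side; H\"older with exponents $q$ and $q^*=q/(q-1)$ does it since $1/p+1/q>1$ forces $p<q^*$ and hence $\|X\|_{q^*\text{-var}}<\infty$, which in fact covers the general $p\neq q$ case the paper's Cauchy--Schwarz does not. In Step~2, the claim ``on every remaining interval the variation is uniformly small'' is not literally true: many small jumps can accumulate to large $p$-variation on a short interval. The correct mechanism (which the paper formalises later for rough integration via ``compatible pairs'' and an interpolation $\|\cdot\|_{p'}\le\mathrm{Osc}^{1-p/p'}\|\cdot\|_p^{p/p'}$ with $p'>p$) is that on each interval of a suitable base partition \emph{one of the two factors} has small oscillation, not that both variations are small; your jump-isolation construction already delivers exactly this, you just need to combine it with the interpolation rather than with $\sum\omega^\theta\le(\sup\omega)^{\theta-1}\omega(0,T)$.
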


Everything is well-known here, although we could not find the equality of
the limits in (\ref{equ:YoungIntegralRS}) pointed out explicitly in the
literature. The reader can find the proof in Proposition \ref%
{youngleft=right} below.

\subsection{``General" It\^o stochastic integration}

\cite{Jacod,RogersWilliams,Protter}

Subject to the usual conditions, any semimartingale $X=X\left( \omega
\right) $ may (and will) be taken with c\'{a}dl\'{a}g sample paths. A
classical result of Monroe allows to write any (real-valued) martingale as a
time-change of Brownian motion. As an easy consequence, semimartingales
inherit a.s. finite $2^{+}$ variation of sample paths from Brownian sample
paths. See \cite{lepingle} for much more in this direction, notably a
quantification of $\left\Vert X\right\Vert _{p\text{-var;}\left[ 0,T\right]
} $ for any $p>2$ in terms of a BDG\ inequality. Let now $Y$ be another (c%
\'{a}dl\'{a}g) semimartingale, so that $Y^-$ is previsible. The It\^o
integral of $Y^-$ against $X$ is then well-defined, and one has the
following classical Riemann--Stieltjes type description,

\begin{theorem}[It\^o]
The It\^{o} integral of $Y^{-}$ against $X$ has the presentation, with $%
t_{i}^{n}=\frac{i\,T}{2^{n}}$, \textbf{\ } 
\begin{equation}  \label{equ:ItoIntegralRS}
\int_{0}^{T}Y^{-}dX=\lim_{n}%
\sum_{i}Y_{t_{i-1}^{n}}^{-}X_{t_{i-1}^{n},t_{i}^{n}}=\lim_{n}%
\sum_{i}Y_{t_{i-1}^{n}}X_{t_{i-1}^{n},t_{i}^{n}}
\end{equation}%
where the limits exists in probability, uniformly in $T$ over compacts.
\end{theorem}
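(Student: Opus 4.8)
The plan is to identify each Riemann sum in \eqref{equ:ItoIntegralRS} with the terminal value of the It\^o integral of an explicit simple previsible integrand, and then to pass to the limit via the dominated convergence theorem for stochastic integrals. First I would recall that, $Y$ being a c\`adl\`ag semimartingale, its left-limit process $Y^-$ is c\`agl\`ad and adapted, hence previsible and locally bounded; thus $Y^- \in L(X)$ and the It\^o integral $\int_0^\cdot Y^-\, dX$ is well defined as a c\`adl\`ag semimartingale.

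For the $n$-th dyadic partition I would introduce the two simple previsible processes
\[
H^n := \sum_i Y^-_{t^n_{i-1}}\,\mathbf 1_{(t^n_{i-1},\,t^n_i]},\qquad \widetilde H^n := \sum_i Y_{t^n_{i-1}}\,\mathbf 1_{(t^n_{i-1},\,t^n_i]}.
\]
By the very definition of the integral against $X$ of a simple integrand, $(H^n\cdot X)_T = \sum_i Y^-_{t^n_{i-1}} X_{t^n_{i-1},t^n_i}$, and likewise $(\widetilde H^n\cdot X)_T = \sum_i Y_{t^n_{i-1}} X_{t^n_{i-1},t^n_i}$, so the two integral processes evaluated at $T$ are exactly the two Riemann sums in question.

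The heart of the matter is the pointwise convergence of these integrands. Fixing $(\omega,t)$ and letting $t^n_{i-1}$ denote the left endpoint of the dyadic interval containing $t$, one has $t^n_{i-1}\uparrow t$ with $t^n_{i-1}<t$. Since $Y$ is c\`adl\`ag this gives $Y_{t^n_{i-1}}\to Y_{t-}=Y^-_t$, and a short $\varepsilon$–$\delta$ argument on the left limits shows $Y^-_{t^n_{i-1}}\to Y^-_t$ as well; hence both $H^n$ and $\widetilde H^n$ converge pointwise to $Y^-$. Both are dominated by $G_t:=\sup_{s<t}|Y_s|$, which is c\`agl\`ad, adapted, nondecreasing and locally bounded, hence a previsible process lying in $L(X)$. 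After the usual localization rendering $G$ bounded, the hypotheses of the stochastic dominated convergence theorem are met, and that theorem yields $(H^n\cdot X)\to\int_0^\cdot Y^-\, dX$ and $(\widetilde H^n\cdot X)\to\int_0^\cdot Y^-\, dX$ in the topology of uniform convergence on compacts in probability. This is precisely convergence in probability uniformly in the terminal time over compacts, and reading off the value at $T$ gives both equalities in \eqref{equ:ItoIntegralRS}.

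The hard part will not be the limit itself, which is a black-box application of the dominated convergence theorem for stochastic integrals, but the careful bookkeeping of the pointwise convergence at the (at most countably many) jump times of $Y$ and at the dyadic rationals, where the left value $Y_{t^n_{i-1}}$ and the left limit $Y^-_{t^n_{i-1}}$ genuinely differ. The key observation is that both left-sampled quantities nevertheless converge to the same limit $Y^-_t$, which is exactly why the two Riemann sums, though differing term by term, share the single limit $\int_0^T Y^-\, dX$.
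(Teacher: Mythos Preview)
Your proposal is correct. The paper does not give its own proof of this statement; it treats the result as well known and simply refers the reader to Protter \cite[Chapter 2, Theorem 21]{Protter} for the less obvious second equality. Your argument via the stochastic dominated convergence theorem, with the previsible dominator $G_t=(Y^\ast)_{t-}=\sup_{s<t}|Y_s|$, is precisely the standard route taken there, so you have effectively reconstructed the cited proof.
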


Again, this is well-known but perhaps the equality of the limits in (\ref%
{equ:ItoIntegralRS}) which the reader can find in Protter \cite[Chapter 2,
Theorem 21]{Protter}.

\subsection{Marcus canonical integration}

\label{sec:Marcus}

\cite{Marcus1,Marcus2,kpp,applebaum} Real (classical) particles do not jump,
but may move at extreme speed. In this spirit, transform $X\in W^{p}\left( %
\left[ 0,T\right] \right) $ into $\tilde{X}\in C^{p\text{-var}}([0,\tilde{T}%
])$, by "streching" time whenever%
\begin{equation*}
X_{t}-X_{t-}\equiv \Delta _{t}X\neq 0,
\end{equation*}%
followed by replacing the jump by a straight line connecting $X_{s-}$ with $%
X_{s}$, say%
\begin{equation*}
\left[ 0,1\right] \ni \theta \mapsto X_{t-}+\theta \Delta _{t}X.
\end{equation*}%
Implemented in a (c\'{a}dl\'{a}g) semimartingale context , this leads to
Marcus integration 
\begin{eqnarray*}
\int_{0}^{T}f\left( X\right) \diamond dX:= &&\int_{0}^{T}f\left(
X_{t-}\right) dX_{t}+\frac{1}{2}\int_{0}^{T}Df\left( X_{t-}\right)
d[X,X]_{t}^{c} \\
&&+\sum_{t\in (0,T]}\Delta _{t}X\left\{ \int_{0}^{1}f\left( X_{t-}+\theta
\Delta _{t}X\right) -f\left( X_{t-}\right) \right\} d\theta .
\end{eqnarray*}%
(A Young canonical integral, providied $p<2$ and $f\in C^1$, is defined
similarly, it suffices to omit the continuous quadratic variation term.) A
useful consequence, for $f \in C^3(\mathbb{R}^d)$, say, is the chain-rule 
\begin{equation*}
\int_{0}^{t} \partial_i f(X)\diamond X^i = f(X_t) - f(X_0).
\end{equation*}


It is also possible to implement this idea in the context of SDE's, 
\begin{equation}
dZ_{t}=f(Z_{t})\diamond dX_{t}  \label{Marcus SDE}
\end{equation}%
for $f:\mathbb{R}^{d}\rightarrow \mathbb{R}^{d\times k}$ where $X$ is a
semi-martingale, \cite{kpp}. The precise meaning of this Marcus canoncial
equation is given by 
\begin{align*}
Z_{t}=& Z_{0}+\int_{0}^{t}f(Z_{s-})dX_{s}+\frac{1}{2}\int_{0}^{t}f^{\prime
}f(Z_{s})d[X,X]_{s}^{c} \\
& \qquad \qquad +\sum_{0<s\leq t}\{\phi (f\Delta
X_{s},Z_{s-})-Z_{s-}-f(Z_{s-})\Delta X_{s}\} \\
& =Z_{0}+\int_{0}^{t}f(Z_{s-})dX_{s}+\frac{1}{2}\int_{0}^{t}f^{\prime
}f(Z_{s})d[X,X]_{s} \\
& +\sum_{0<s\leq t}\{\phi (f\Delta X_{s},Z_{s-})-Z_{s-}-f(Z_{s-})\Delta
X_{s}-f^{\prime }f(Z_{s})\frac{1}{2}\left( \Delta X_{s}\right) ^{\otimes 2}\}
\end{align*}%
where $\phi (g,x)$ is the time $1$ solution to $\dot{y}=g(y),\hspace{2mm}%
y(0)=x$. As one would expected fromt the afore-mentioned (first order)
chain-rule, such SDEs respect the geometry.

\begin{theorem}[\protect\cite{kpp}]
\label{manifoldstability} If $X$ is a c\'{a}dl\'{a}g semi-martingale and $f$
and $f^{\prime }f$ are globally Lipchitz, then solution to the Marcus
canoncial SDE \eqref{Marcus SDE} exists uniquely and it is a c\'{a}dl\'{a}g
semimartingale. Also, if $M$ is manifold without boundary embedded in $%
\mathbb{R}^{d}$ and $\{f_{i}(x):x\in M\}_{1\leq i\leq k}$ are vector fields
on $M$, then 
\begin{equation*}
\mathbb{P}(Z_{0}\in M)=1\implies \mathbb{P}(Z_{t}\in M\hspace{2mm}\forall
t\geq 0)=1.
\end{equation*}
\end{theorem}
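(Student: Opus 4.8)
The plan is to prove the two assertions of Theorem~\ref{manifoldstability} separately: first the existence–uniqueness of a càdlàg semimartingale solution to the Marcus canonical SDE \eqref{Marcus SDE}, then the invariance of the manifold $M$ under the flow. For the first part, the strategy is to exploit the defining expansion of the Marcus integral, which rewrites the canonical equation as an ordinary Itô SDE plus a correction sum over jumps. Concretely, I would read off from the displayed expansion that
\[
Z_t = Z_0 + \int_0^t f(Z_{s-})\,dX_s + \tfrac12\int_0^t f'f(Z_s)\,d[X,X]^c_s + \sum_{0<s\le t}\bigl\{\phi(f\Delta X_s, Z_{s-}) - Z_{s-} - f(Z_{s-})\Delta X_s\bigr\},
\]
and observe that under the hypothesis that $f$ and $f'f$ are globally Lipschitz, each term on the right is a Lipschitz functional of the solution path. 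I would then invoke the standard existence–uniqueness theory for SDEs driven by semimartingales (as in Protter~\cite{Protter} or Kurtz–Pardoux–Protter~\cite{kpp}); the jump sum is absolutely summable because $\phi(g,x)-x-g = O(|g|^2)$ for the smooth flow $\phi$ and $\sum_s |\Delta X_s|^2 < \infty$ for a semimartingale, so the correction term is well-defined and Lipschitz in $Z_{s-}$. Uniqueness and the càdlàg semimartingale property then follow from the usual Picard-iteration or fixed-point argument in the appropriate $\mathcal{H}^p$-type norm.

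\textbf{For the manifold-invariance assertion,} the key geometric observation is that the Marcus solution, unlike the Itô solution, respects the first-order chain rule stated just above the theorem. The plan is to show that $M$-invariance holds jump-by-jump and continuous-part-by-continuous-part. Away from jump times, $Z$ solves (in the Stratonovich sense) the continuous SDE $dZ = \sum_i f_i(Z)\circ dX^i_c$, and since the $f_i$ are tangent to $M$, the Stratonovich flow preserves $M$ by the classical Stratonovich chain rule. At a jump time $s$, the solution is updated by $Z_s = \phi(f\,\Delta X_s,\, Z_{s-})$, i.e.\ by following for unit time the ODE flow of the vector field $x\mapsto \sum_i f_i(x)\,\Delta X^i_s$; because each $f_i$ is tangent to $M$, this vector field is tangent to $M$, so its ODE flow maps $M$ into $M$. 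Hence if $Z_{s-}\in M$ then $Z_s\in M$.

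\textbf{The hard part will be} making the ``piece-by-piece'' decomposition rigorous in a way that covers the (generically infinitely many, though summable) jumps simultaneously and controls the interaction between continuous and jump contributions. Concretely, I would establish $M$-invariance for the approximating equations in which only jumps of size exceeding $\epsilon$ are retained (finitely many on compacts), where the above decomposition is literally a finite alternation of Stratonovich flow and ODE jump-updates, each preserving $M$. Then I would pass to the limit $\epsilon \downarrow 0$ using the stability of Marcus solutions under this truncation together with the fact that $M$, being embedded without boundary, is closed in the relevant sense so that the a.s.\ limit of $M$-valued processes remains $M$-valued. The two technical points to watch are the convergence of the truncated solutions (which rests on the Lipschitz estimates already used for existence) and the closedness argument ensuring the limiting path stays on $M$; granting these, the implication $\mathbb{P}(Z_0\in M)=1 \implies \mathbb{P}(Z_t\in M\ \forall t\ge 0)=1$ follows.
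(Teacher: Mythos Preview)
The paper does not prove this theorem: it is stated with attribution to Kurtz--Pardoux--Protter \cite{kpp} and no proof is given, as it is quoted background material in the preliminaries section. So there is nothing in the paper to compare your argument against.

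That said, your sketch is essentially the argument of the original reference \cite{kpp}. The existence/uniqueness part via rewriting the Marcus equation as an It\^o SDE with a jump-correction sum and applying standard Lipschitz SDE theory is exactly how \cite{kpp} proceeds; likewise the manifold-invariance via (i) Stratonovich chain rule for the continuous part, (ii) ODE flow along tangent vector fields at jump times, and (iii) an $\epsilon$-truncation and limit argument, is the approach taken there. Your identification of the two delicate points---convergence of truncated solutions and closedness of $M$---is accurate. One small caveat: for the closedness step you should be explicit that an embedded submanifold without boundary need not be a closed subset of $\mathbb{R}^d$ (e.g.\ an open interval embedded in $\mathbb{R}$), so the argument really uses that the limiting process stays in $M$ by local estimates or by working with a tubular neighbourhood, not merely that $M$ is topologically closed; \cite{kpp} handles this carefully.
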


\subsection{``Continuous" rough integration}

\cite{Lyons, Max, FrizHairer}

\label{sec:CRI}

Young integration of (continuous) paths has been the inspiration for the
(continuous) rough integration, elements of which we now recall. Consider $%
p\in \lbrack 2,3)$ and $\mathbf{X}=\left( X,\mathbb{X}\right) \in \mathcal{C}%
^{p\text{-var}}([0,T])$ which in notation of \cite{FrizHairer} means
validity of \textit{Chen's relation} 
\begin{equation}
\mathbb{X}_{s,u}=\mathbb{X}_{s,t}+\mathbb{X}_{t,u}+X_{s,t}\otimes X_{t,u}
\label{Chen}
\end{equation}%
and $\left\Vert \mathbf{X}\right\Vert _{p\text{-var}}:=$ $\left\Vert
X\right\Vert _{p\text{-var}}+\Vert \mathbb{X}\Vert _{p/2\text{-var}%
}^{1/2}<\infty $, where 
\begin{equation*}
\Vert \mathbb{X}\Vert _{p/2\text{-var}}:=\left( \sup_{\mathcal{P}}\sum_{%
\left[ s,t\right] \in \mathcal{P}}\left\vert \mathbb{X}_{s,t}\right\vert
^{p/2}\right) ^{2/p} .
\end{equation*}

For nice enough $F$ (e.g. $F \in C^{2}$ ), both $Y_s:=F(X_s)$ and $Y^{\prime
}:=DF(X_s)$ are in $C^{p\text{-var}}$ and we have%
\begin{equation}
\Vert R\Vert _{p/2\text{-var}}=\left( \sup_{\mathcal{P}}\sum_{\left[ s,t%
\right] \in \mathcal{P}}\left\vert R_{s,t}\right\vert ^{p/2}\right)
^{2/p}<\infty \text{ where }R_{s,t}:=Y_{s,t}-Y_{s}^{\prime }X_{s,t}.
\label{CRP}
\end{equation}

\begin{theorem}[Lyons, Gubinelli]
Write $\mathcal{P}$ for finite partitions of $\left[ 0,T\right] $. Then%
\begin{equation*}
\exists \lim_{\left\vert \mathcal{P}\right\vert \rightarrow 0}\sum_{\left[
s,t\right] \in \mathcal{P}}Y_{s}X_{s,t}+Y_{s}^{\prime }\mathbb{X}%
_{s,t}=:\int_{0}^{T}Yd\mathbf{X}
\end{equation*}
where the limit exists in (MRS) sense, cf. Definition \ref{def:RRSMRS}.
\end{theorem}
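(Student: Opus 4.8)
The plan is to realise $\int_0^T Y\,d\mathbf{X}$ as the \emph{sewing} of the local germ
$$\Xi_{s,t} := Y_s X_{s,t} + Y'_s\,\mathbb{X}_{s,t},$$
that is, to show that the compensated Riemann sums $S(\mathcal{P}):=\sum_{[s,t]\in\mathcal{P}}\Xi_{s,t}$ converge in (MRS) sense to $I_{0,T}$, where $I_{s,t}$ is the unique additive ($I_{s,u}+I_{u,t}=I_{s,t}$) two-parameter map lying close to $\Xi$. The mechanism, exactly as in Young's theorem but one order higher, is that $\Xi$ fails to be additive only by a defect that decays fast enough (because $p<3$) that repeatedly deleting partition points produces a convergent series.

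First I would compute the second-order defect $\delta\Xi_{s,u,t}:=\Xi_{s,t}-\Xi_{s,u}-\Xi_{u,t}$ for $s<u<t$. Substituting $X_{s,t}=X_{s,u}+X_{u,t}$ and Chen's relation \eqref{Chen}, and then rewriting $Y_s-Y_u=-Y_{s,u}=-(Y'_sX_{s,u}+R_{s,u})$ via the controlled expansion \eqref{CRP}, the term $Y'_s(X_{s,u}\otimes X_{u,t})$ cancels against $-Y'_sX_{s,u}X_{u,t}$, leaving the clean identity
$$\delta\Xi_{s,u,t}=-\,R_{s,u}X_{u,t}-Y'_{s,u}\,\mathbb{X}_{u,t}.$$
Using the four controls supplied by $X\in C^{p\text{-var}}$, by $\mathbb{X}$ of finite $p/2$-variation, by $Y'=DF(X)\in C^{p\text{-var}}$, and by $R$ of finite $p/2$-variation \eqref{CRP}, both products carry total exponent $\tfrac{2}{p}+\tfrac{1}{p}=\tfrac{3}{p}$, so that with a single superadditive control $\omega$,
$$|\delta\Xi_{s,u,t}|\lesssim \omega(s,t)^{3/p},\qquad \tfrac{3}{p}>1.$$
The exponent exceeds $1$ precisely because $p<3$; this is the sole place where the rough-path regularity is used.

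The heart of the matter is the sewing step. Fixing a partition with $N$ subintervals, superadditivity of $\omega$ together with a pigeonhole argument over the $N-1$ interior points produces a point $u$ whose removal costs $\omega(t_{j-1},t_{j+1})\le \tfrac{2}{N-1}\omega(0,T)$; deleting it changes $S(\mathcal{P})$ by $\delta\Xi$ at $u$, of size $\lesssim(\tfrac{2}{N-1}\omega(0,T))^{3/p}$. Iterating down to the trivial partition and summing the convergent series $\sum_N N^{-3/p}<\infty$ gives the uniform bound $|S(\mathcal{P})-\Xi_{0,T}|\lesssim\omega(0,T)^{3/p}$ and, applied on every subinterval, the existence of an additive $I$ with $|I_{s,t}-\Xi_{s,t}|\lesssim\omega(s,t)^{3/p}$; uniqueness follows since any additive map enjoying this bound must vanish. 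Defining $\int_0^T Y\,d\mathbf{X}:=I_{0,T}$, the Riemann error telescopes, $S(\mathcal{P})-I_{0,T}=\sum_i(\Xi_{t_i,t_{i+1}}-I_{t_i,t_{i+1}})$, so that $|S(\mathcal{P})-I_{0,T}|\lesssim(\max_i\omega(t_i,t_{i+1}))^{3/p-1}\,\omega(0,T)$.

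The main obstacle, and the step I would treat most carefully, is upgrading this estimate to genuine (MRS) convergence. It requires $\max_i\omega(t_i,t_{i+1})\to 0$ as the mesh $|\mathcal{P}|\to0$, i.e. the \emph{continuity} of $\omega$, which rests on the continuity of $X$ and $\mathbb{X}$. This is exactly the property that will break down in the càdlàg theory developed later, where only (RRS) convergence survives; isolating it here pins down the precise role of continuity in the statement.
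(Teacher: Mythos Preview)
Your argument is correct and is the standard sewing-lemma proof of the continuous rough integral. Note, however, that the paper does not actually prove this theorem: it appears in the preliminaries (Section~\ref{sec:CRI}) as a recall of a known result, attributed to Lyons and Gubinelli with references to \cite{Lyons, Max, FrizHairer}. There is nothing in the paper to compare your proof against for this particular statement.

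That said, the paper \emph{does} prove the c\`adl\`ag generalisation, Theorem~\ref{roughintegration}, and the core mechanism there is exactly the point-deletion argument you give: the same identity $\delta\Xi_{s,u,t}=-R_{s,u}X_{u,t}-Y'_{s,u}\mathbb{X}_{u,t}$ (up to the $\tilde{\mathbb{X}}$ modification), the same superadditive control, the same pigeonhole selection of a point to remove. The one technical difference is that the paper introduces an auxiliary exponent $p<p'<3$ and works with $p'$-variation norms, reserving the gap between $p$ and $p'$ to extract a small factor $\epsilon$ from the compatible-pair structure; this is precisely what replaces your final ``continuity of $\omega$'' step and downgrades (MRS) to (RRS) in the jump setting. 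Your closing paragraph identifies this transition exactly right.
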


Rough integration extends immediately to the integration of so-called 
\textit{controlled rough paths}, that is, pairs $(Y,Y^{\prime })$ subject to
(\ref{CRP}). 
This gives meaning to a rough differential equation (RDE) 
\begin{equation*}
dY=f\left( Y\right) d\mathbf{X}
\end{equation*}%
provided $f \in C^2$, say: A solution is simply as a path $Y$ such that $%
(Y,Y^\prime ) := (Y,f(Y))$ satisfies (\ref{CRP}) and such that the above RDE
is satisfied in the (well-defined!) integral sense, i.e. for all $t\in [0,T]$%
, 
\begin{equation*}
Y_t - Y_0 =\int_0^ t f\left( Y\right) d\mathbf{X}.
\end{equation*}

\subsection{Geometric rough paths and signatures}

\label{sec:gRP}

\cite{Lyons, LyonsQian, LyonsStFlour, FrizVictoir}

A \textit{geometric} rough path $\mathbf{X=}\left( X,\mathbb{X}\right) $ is
rough path with $\mathrm{Sym}\left( \mathbb{X}_{s,t}\right) =\frac{1}{2}%
X_{s,t}\otimes X_{s,t}$ ; 
and we write $\mathbf{X}=\left( X,\mathbb{X}\right) \in \mathcal{C}_{\mathrm{%
g}}^{p\text{-var}}([0,T])$ accordingly. We work with generalized increments
of the form $\mathbf{X}_{s,t}=\left( X_{s,t},\mathbb{X}_{s,t}\right) $ where
we write $X_{s,t}=X_{t}-X_{s}$ for the path increment, while second order
increments $\mathbb{X}_{s,t}$ are determined from $(\mathbf{X}_{0,t})$ by
Chen's relation 
\begin{equation*}
\mathbb{X}_{0,s}+X_{0,s}\otimes X_{s,t}+\mathbb{X}_{s,t}=\mathbb{X}_{0,t}.
\end{equation*}

Behind all this is the picture that $\mathbf{X}_{0,t} := \left( 1,X_{0,t},%
\mathbb{X}_{0,t}\right) $ takes values in a Lie group $T_{1}^{\left(
2\right) }\left( \mathbb{R}^{d}\right) \equiv \left\{ 1\right\} \oplus 
\mathbb{R}^{d}\oplus \mathbb{R}^{d\times d}$, embedded in the (truncated)\
tensor algebra $T^{\left( 2\right) }\left( \mathbb{R}^{d}\right) $, and $%
\mathbf{X}_{s,t}=\mathbf{X}_{0,s}^{-1}\otimes \mathbf{X}_{0,t}$. From the
usual power series in this tensor algebra one defines, for $a+b\in \mathbb{R}%
^{d}\oplus \mathbb{R}^{d\times d}$, 
\begin{eqnarray*}
\log \left( 1+a+b\right) &=&a+b-\frac{1}{2}a\otimes a, \\
\exp \left( a+b\right) &=&1+a+b+\frac{1}{2}a\otimes a.
\end{eqnarray*}%
The linear space $\mathfrak{g}^{(2)}(\mathbb{R}^{d})=\mathbb{R}^{d}\oplus 
\mathfrak{so}\left( d\right) $ is a Lie algebra under%
\begin{equation*}
\left[ a+b,a^{\prime }+b^{\prime }\right] =a\otimes a^{\prime }-a^{\prime
}\otimes a;
\end{equation*}%
its exponential image $G^{(2)}(\mathbb{R}^{d}):=\exp \left( \mathfrak{g}%
^{(2)}(\mathbb{R}^{d})\right) $ is then a Lie (sub) group under 
\begin{equation*}
\left( 1,a,b\right) \otimes \left( 1,a^{\prime },b^{\prime }\right) =\left(
1,a+a^{\prime },b+a\otimes a^{\prime }+b^{\prime }\right) .
\end{equation*}%
At last we recall that $G^{(2)}(\mathbb{R}^{d})$ admits a so-called
Carnot--Caratheodory norm (abbreviated as CC norm henceforth), with infimum
taken over all curves $\gamma :\left[ 0,1\right] \rightarrow \mathbb{R}^{d}$
of finite length $L$,%
\begin{eqnarray*}
\left\Vert1+ a+b\right\Vert _{CC} &:&=\inf \left( L\left( \gamma \right)
:\gamma _{1}-\gamma _{0}=a,\int_{0}^{1}\left( \gamma _{t}-\gamma _{0}\right)
\otimes d\gamma _{t}=b\right) \\
&\asymp &\left\vert a\right\vert +\left\vert b\right\vert ^{1/2} \\
&\asymp &\left\vert a\right\vert +\left\vert \mathrm{Anti}\left( b\right)
\right\vert ^{1/2}.
\end{eqnarray*}%
A left-invariant distance is induced by the group structure,%
\begin{equation*}
d_{CC}\left( g,h\right) =\left\Vert g^{-1}\otimes h\right\Vert _{CC}
\end{equation*}%
which turns $G^{(2)}(\mathbb{R}^{d})$ into a Polish space. Geometric rough
paths with roughness parameter $p\in \lbrack 2,3)$ are \textit{precisely }%
classical paths of finite $p$-variation with values in this metric space.

\begin{proposition}
$\mathbf{X}=\left( X,\mathbb{X}\right) \in \mathcal{C}_{\mathrm{g}}^{p\text{%
-var}}([0,T])$ iff $\mathbf{X}=\left( 1,X,\mathbb{X}\right) \in C^{p\text{%
-var}}\left( \left[ 0,T\right] ,G^{(2)}(\mathbb{R}^{d})\right) $. Moreover, 
\begin{equation*}
\left\Vert \mathbf{X}\right\Vert _{p\text{-var}}\asymp \left( \sup_{\mathcal{%
P}}\sum_{\left[ s,t\right] \in P}\left\Vert \mathbf{X}_{s,t}\right\Vert
_{CC}^{p}\right) ^{1/p}.
\end{equation*}
\end{proposition}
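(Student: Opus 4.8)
The plan is to reduce everything to the homogeneous estimate $\left\Vert 1+a+b\right\Vert _{CC}\asymp |a|+|b|^{1/2}$ recalled just above the statement, after first recording that the two \emph{algebraic} ingredients in the definition of $\mathcal{C}_{\mathrm{g}}^{p\text{-var}}$ are precisely the conditions that place each increment $\mathbf{X}_{s,t}=\left( 1,X_{s,t},\mathbb{X}_{s,t}\right) $ inside the group $G^{(2)}(\mathbb{R}^{d})$ and make $t\mapsto \mathbf{X}_{0,t}$ a genuine group-valued path. First I would verify the group membership: since $\exp (a+b)=1+a+b+\tfrac{1}{2}a\otimes a$ with $b\in \mathfrak{so}(d)$ antisymmetric, the level-two component $c$ of any $\left( 1,a,c\right) \in G^{(2)}(\mathbb{R}^{d})$ satisfies $\mathrm{Sym}(c)=\tfrac{1}{2}a\otimes a$, and conversely this identity forces $c-\tfrac{1}{2}a\otimes a\in \mathfrak{so}(d)$, i.e. $\left( 1,a,c\right) \in G^{(2)}$. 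Applied to the increment, the geometric condition $\mathrm{Sym}(\mathbb{X}_{s,t})=\tfrac{1}{2}X_{s,t}\otimes X_{s,t}$ is thus \emph{equivalent} to $\mathbf{X}_{s,t}\in G^{(2)}(\mathbb{R}^{d})$.

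Next, a direct computation with the stated group law $\left( 1,a,b\right) \otimes \left( 1,a^{\prime },b^{\prime }\right) =\left( 1,a+a^{\prime },b+a\otimes a^{\prime }+b^{\prime }\right) $ shows that Chen's relation \eqref{Chen} is exactly the statement $\mathbf{X}_{s,t}=\mathbf{X}_{0,s}^{-1}\otimes \mathbf{X}_{0,t}$; hence validity of Chen's relation is equivalent to $t\mapsto \mathbf{X}_{0,t}$ being a path in $G^{(2)}$ with $\mathbf{X}_{s,t}$ as its left-invariant increment, and consequently $\left\Vert \mathbf{X}_{s,t}\right\Vert _{CC}=d_{CC}\left( \mathbf{X}_{0,s},\mathbf{X}_{0,t}\right) $, so that $\sup_{\mathcal{P}}\sum \left\Vert \mathbf{X}_{s,t}\right\Vert _{CC}^{p}$ is literally the $p$-variation of the group-valued path in the metric $d_{CC}$.

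It then remains to match regularity and the two norms. For regularity I would check that the $d_{CC}$-topology coincides with the subspace topology from $\{1\}\oplus \mathbb{R}^{d}\oplus \mathbb{R}^{d\times d}$: computing $\mathbf{g}^{-1}\otimes \mathbf{h}$ for $\mathbf{g}=\left( 1,a,b\right) ,\ \mathbf{h}=\left( 1,a^{\prime },b^{\prime }\right) $ gives level-one part $a^{\prime }-a$ and level-two part $b^{\prime }-b+a\otimes (a-a^{\prime })$, and feeding these into the homogeneous estimate shows $d_{CC}(\mathbf{g},\mathbf{h})\to 0$ iff $a^{\prime }\to a$ and $b^{\prime }\to b$; hence càdlàg in one sense is càdlàg in the other, identifying the two path classes. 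For the norm comparison, apply $\left\Vert \mathbf{X}_{s,t}\right\Vert _{CC}\asymp |X_{s,t}|+|\mathbb{X}_{s,t}|^{1/2}$ termwise together with the elementary inequality $(u+v)^{p}\asymp u^{p}+v^{p}$ (constants depending only on $p\geq 1$) to obtain, for each fixed partition, $\sum \left\Vert \mathbf{X}_{s,t}\right\Vert _{CC}^{p}\asymp \sum |X_{s,t}|^{p}+\sum |\mathbb{X}_{s,t}|^{p/2}$. Taking suprema and using $\sup_{\mathcal{P}}(A(\mathcal{P})+B(\mathcal{P}))\asymp \sup_{\mathcal{P}}A+\sup_{\mathcal{P}}B$ (subadditivity for one inequality; for the other, each summand dominates $A$ and $B$ separately) yields $\sup_{\mathcal{P}}\sum \left\Vert \mathbf{X}_{s,t}\right\Vert _{CC}^{p}\asymp \left\Vert X\right\Vert _{p\text{-var}}^{p}+\left\Vert \mathbb{X}\right\Vert _{p/2\text{-var}}^{p/2}$. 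A final application of $(u^{p}+w^{p})^{1/p}\asymp u+w$ with $w=\left\Vert \mathbb{X}\right\Vert _{p/2\text{-var}}^{1/2}$ delivers the asserted equivalence, and in particular makes finiteness of one side equivalent to finiteness of the other, closing the biconditional.

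The genuinely hard analytic input is the homogeneous (ball–box) equivalence $\left\Vert 1+a+b\right\Vert _{CC}\asymp |a|+|b|^{1/2}$ comparing the sub-Riemannian CC norm to the graded Euclidean quantity, but this is already recalled in the statement; granting it, the argument is pure bookkeeping. The only points needing care are the order of quantifiers when splitting the supremum over partitions and the tracking of the $p$-dependent constants in $(u+v)^{p}\asymp u^{p}+v^{p}$ and $(u^{p}+w^{p})^{1/p}\asymp u+w$ — neither of which poses a real obstacle.
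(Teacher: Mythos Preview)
The paper does not prove this proposition; it appears in the preliminaries section on geometric rough paths (Section~\ref{sec:gRP}), stated as background with citations to \cite{Lyons, LyonsQian, LyonsStFlour, FrizVictoir}. Your argument is correct and is essentially the standard one: reduce the algebraic conditions (geometricity, Chen) to group membership and the group-increment structure, then use the ball--box equivalence $\Vert 1+a+b\Vert_{CC}\asymp |a|+|b|^{1/2}$ together with $(u+v)^p\asymp u^p+v^p$ to compare the two variation quantities. One small remark: this proposition lives in the \emph{continuous} rough path preliminaries, so $\mathcal{C}_{\mathrm{g}}^{p\text{-var}}$ denotes continuous geometric rough paths rather than c\`adl\`ag ones; your topology-matching step (that $d_{CC}$ induces the subspace topology from $\{1\}\oplus\mathbb{R}^d\oplus\mathbb{R}^{d\times d}$) of course handles continuity just as well, so this is only a wording slip.
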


The theory of geometric rough paths extends to all $p\geq 1$, and a
geometric p-rough path is a path with values in $G^{(\left[ p\right])}\left( 
\mathbb{R}^{d}\right) $, the step-$\left[ p\right] $ nilpotent Lie group
with $d$ generators, embedded in $T^{\left( \left[ p\right] \right) }\left( 
\mathbb{R}^{d}\right) $, where%
\begin{equation*}
T^{(m)}\left( \mathbb{R}^{d}\right) =\oplus _{k=0}^{m}\left( \mathbb{R}%
^{d}\right) ^{\otimes k}\subset \oplus _{k=0}^{\infty }\left( \mathbb{R}%
^{d}\right) ^{\otimes k}\subset T((\mathbb{R}^{d}))
\end{equation*}%
(the last inclusion is strict, think polyomials versus power-series) and
again of finite $p$-variation with respect to the Carnot--Caratheodory
distance (now defined on $G^{(\left[ p\right])}$).

\begin{theorem}[Lyons' extension]
\label{Lyonsfirsttheorem} Let $1\leq m:=\left[ p\right] \leq p\leq N<\infty $%
. A (continuous) geometric rough path $\mathbf{X}^{\left( m\right) }\in C^{p%
\text{-var}}\left( \left[ 0,T\right] ,G^{(m)}\right) $ admits an extension
to a path $\mathbf{X}^{\left( N\right) }$ with values $G^{(N)}\subset
T^{(N)} $, unique in the class of $G^{(N)}$-valued path starting from $1$
and of finite $p $-variation with respect to CC metric on $G^{(N)}$. In
fact, 
\begin{equation*}
\left\Vert \mathbf{X}^{\left( N\right) }\right\Vert _{p\text{-var;}\left[ s,t%
\right] }\lesssim \left\Vert \mathbf{X}^{\left( m\right) }\right\Vert _{p%
\text{-var;}\left[ s,t\right] }.
\end{equation*}
\end{theorem}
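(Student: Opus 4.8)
The plan is to build the extension one tensor level at a time, by induction on $n$ from $m+1$ up to $N$, producing the new level-$n$ component at each step via a sewing (almost-multiplicative) argument. Suppose that $\mathbf{X}^{(n-1)}$, a $G^{(n-1)}$-valued multiplicative functional of finite $p$-variation with respect to $d_{CC}$, has already been constructed (for $n-1=m$ this is the given datum). Writing $\pi_k$ for the projection of $T^{(N)}(\mathbb{R}^d)$ onto its $k$-th tensor level, I would first record the homogeneity estimates $|\pi_k(\mathbf{X}^{(n-1)}_{s,t})|\lesssim \omega(s,t)^{k/p}$ for $1\le k\le n-1$, where $\omega(s,t):=\left\Vert \mathbf{X}^{(m)}\right\Vert_{p\text{-var};[s,t]}^{p}$ is a superadditive control. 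These follow from $\left\Vert \mathbf{X}^{(n-1)}_{s,t}\right\Vert_{CC}\le \omega(s,t)^{1/p}$ together with the homogeneity of the CC norm, $|\pi_k(g)|\lesssim \left\Vert g\right\Vert_{CC}^{k}$, which extends the level-$2$ equivalence recalled above.

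Next I set $\hat{\mathbf{X}}_{s,t}:=(\mathbf{X}^{(n-1)}_{s,t},0)\in T^{(n)}(\mathbb{R}^d)$, the naive extension carrying the zero tensor at level $n$. Its failure to be multiplicative is concentrated in level $n$, and Chen's relation \eqref{Chen} (in its level-$N$ form) gives the explicit defect
\begin{equation*}
\pi_n\big(\hat{\mathbf{X}}_{s,u}\otimes \hat{\mathbf{X}}_{u,t}-\hat{\mathbf{X}}_{s,t}\big)=\sum_{k=1}^{n-1}\pi_k(\mathbf{X}^{(n-1)}_{s,u})\otimes \pi_{n-k}(\mathbf{X}^{(n-1)}_{u,t}).
\end{equation*}
By the homogeneity bounds each summand is $\lesssim \omega(s,u)^{k/p}\omega(u,t)^{(n-k)/p}\le \omega(s,t)^{n/p}$, and crucially $n/p>1$, since $n\ge m+1=[p]+1>p$. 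This is precisely the regime $\theta:=n/p>1$ in which the sewing lemma applies, so I would invoke it to obtain a unique level-$n$ tensor with $\pi_n(\mathbf{X}^{(n)}_{s,t})=\lim_{|\mathcal{P}|\to 0}\pi_n\big(\bigotimes_{[u,v]\in\mathcal{P}}\hat{\mathbf{X}}_{u,v}\big)$, the limit being over partitions of $[s,t]$, such that the corrected object $\mathbf{X}^{(n)}_{s,t}=(\mathbf{X}^{(n-1)}_{s,t},\pi_n\mathbf{X}^{(n)}_{s,t})$ is genuinely multiplicative and obeys $|\pi_n\mathbf{X}^{(n)}_{s,t}|\lesssim \omega(s,t)^{n/p}$. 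Iterating from $n=m+1$ to $N$ yields $\mathbf{X}^{(N)}\in T^{(N)}$ and, after converting the homogeneous level bounds back into the CC norm, the claimed estimate $\left\Vert \mathbf{X}^{(N)}\right\Vert_{p\text{-var};[s,t]}\lesssim \left\Vert \mathbf{X}^{(m)}\right\Vert_{p\text{-var};[s,t]}$.

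Two points remain. For group-valuedness I would use that $\mathbf{X}^{(m)}$, as a path of finite $p$-variation in $(G^{(m)},d_{CC})$, is approximable in $p$-variation by lifts of bounded-variation (e.g.\ geodesic or piecewise-linear) paths, whose iterated-integral extensions lie in $G^{(N)}$ by construction; since the extension map just built is continuous for the $p$-variation metric and $G^{(N)}$ is closed in $T^{(N)}$, the limit $\mathbf{X}^{(N)}$ is $G^{(N)}$-valued. Uniqueness follows by a parallel induction: if $\mathbf{X}^{(N)}$ and $\tilde{\mathbf{X}}^{(N)}$ are two finite-$p$-variation extensions and $n$ is the lowest level at which they disagree, then $\pi_n\mathbf{X}-\pi_n\tilde{\mathbf{X}}$ is additive (both satisfy Chen's relation with identical lower levels) and of finite $p/n$-variation with $p/n<1$; an additive function of finite $q$-variation with $q<1$ vanishing on the diagonal is identically zero, a contradiction.

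The analytic heart, and the step I expect to be the main obstacle, is the convergence-and-bound estimate underlying the sewing step: controlling how the level-$n$ partition sum behaves under refinement while respecting the noncommutative group product, and summing the resulting defects so that the series converges precisely because $n/p>1$. In Lyons' original treatment this is where the neo-classical (factorial) inequality enters; here, with $N$ finite and the induction carried out one level at a time, plain superadditivity of the control $\omega$ suffices, but the bookkeeping of the multiplicative rather than additive structure is the point requiring care.
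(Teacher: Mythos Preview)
The paper does not supply its own proof of this theorem. Theorem~\ref{Lyonsfirsttheorem} appears in the preliminaries (Section~\ref{sec:gRP}) as a known result, attributed to Lyons and cited from \cite{Lyons, LyonsQian, LyonsStFlour, FrizVictoir}; no argument is given in the paper. Your sketch is essentially the standard proof from that literature: build the extension level by level via the sewing/almost-multiplicative argument, using that $n/p>1$ for $n\ge [p]+1$, and conclude uniqueness from the fact that the difference at the first disagreeing level is additive of finite $q$-variation with $q<1$. This is correct in outline and matches Lyons' original approach.

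One remark: the paper does prove a genuine extension of this result, Theorem~\ref{minimaljumpextension}, for c\`adl\`ag geometric rough paths. There the method is quite different from yours: jumps are traversed by log-linear segments to produce a \emph{continuous} $G^{(m)}$-valued path $\tilde{\mathbf{X}}^{(m)}$ on a stretched time interval, Theorem~\ref{Lyonsfirsttheorem} is then invoked as a black box on $\tilde{\mathbf{X}}^{(m)}$, and the result is pulled back by undoing the time change. The uniqueness argument there (and in Remark~\ref{uniqueextensionremark}) is, however, exactly the one you give.
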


\begin{remark}
In view of this theorem, any $\mathbf{X}\in C^{p\text{-var}}\left( \left[ 0,T%
\right] ,G^{(m)}\right) $ may be regarded as $\mathbf{X}\in C^{p\text{-var}%
}\left( \left[ 0,T\right] ,G^{(N)}\right) $, any $N\geq m$, and there is no
ambiguity in this notation.
\end{remark}

\begin{definition}
Write $\pi _{(N)}$ resp. $\pi _{M}$ for the projection $T((\mathbb{R}^{d}))%
\rightarrow T^{(N)}\left( \mathbb{R}^{d}\right) $ resp. $\left( \mathbb{R}%
^{d}\right) ^{\otimes M}$. Call $g\in T((\mathbb{R}^{d}))$ group-like, if $\pi
_{(N)}\left( g\right) \in $ $G^{(N)}$ for all $N$. Consider a geometric
rough path $\mathbf{X}\in C^{p\text{-var}}\left( \left[ 0,T\right] ,G^{\left[
p\right] }\right) $. Then, thanks to the extension theorem, 
\begin{equation*}
S\left( \mathbf{X}\right) _{0,T}:=\left( 1,\pi _{1}\left( \mathbf{X}\right)
,\dots ,\pi _{m}\left( \mathbf{X}\right) ,\pi _{m+1}\left( \mathbf{X}\right)
,\dots \right) \in T((\mathbb{R}^{d}))
\end{equation*}%
defines a group-like element, called the signature of $\mathbf{X}$.
\end{definition}

The signature solves a rough differential equation (RDE, ODE if $p=1$) in
the tensor-algebra,%
\begin{equation}
d {S}= {S}\otimes d \mathbf{X},\, {S}_{0}=1.  \label{dSequals}
\end{equation}

To a significant extent, the signature determines the underlying path $X$,
if of bounded variation, cf. \cite{HL10}. (The rough path case was recently
obtained in \cite{2014arXiv1406.7871B}). A basic, yet immensely useful fact
is that multiplication in $T((\mathbb{R}^{d}))$, if restricted to group-like
elements, can be linearized.

\begin{proposition}
{(Shuffle product formula)} Consider two multiindices $v=\left( i_{1},\dots
,i_{m}\right) ,w=\left( j_{1},\dots ,j_{n}\right) $%
\begin{equation*}
\mathbf{X}^{v}\mathbf{X}^{w}=\sum \mathbf{X}^{z}
\end{equation*}%
where the (finite) sum runs over all shuffles $z$ of $v,w$.
\end{proposition}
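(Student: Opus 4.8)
The plan is to read each $\mathbf{X}^{v}$, $v=(i_1,\dots,i_m)$, as the depth-$m$ iterated integral
\[
\mathbf{X}^{v}=\mathbf{X}^{(i_1,\dots,i_m)}_{0,T}=\int_{0<r_1<\cdots<r_m<T} dX^{i_1}_{r_1}\cdots dX^{i_m}_{r_m},
\]
i.e.\ the coordinate of the signature $S(\mathbf{X})_{0,T}$ along the basis tensor $e_{i_1}\otimes\cdots\otimes e_{i_m}$, which exists to all orders by Lyons' extension theorem (Theorem~\ref{Lyonsfirsttheorem}). Since the claimed identity is a (degree-two) relation among these coordinates, my strategy is to prove it first for paths of bounded variation and then transfer it to a general geometric $p$-rough path by continuity and density.

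For $X$ of bounded variation the two iterated integrals are genuine Lebesgue--Stieltjes integrals over the ordered simplices $\Delta^{v}=\{0<r_1<\cdots<r_m<T\}$ and $\Delta^{w}=\{0<s_1<\cdots<s_n<T\}$. Their product is an integral over $\Delta^{v}\times\Delta^{w}\subset(0,T)^{m+n}$, and by Fubini I would decompose this product region, up to the diagonal null set $\{r_i=s_j\}$, into the disjoint union of the simplices obtained by totally ordering the $m+n$ variables in all ways compatible with $r_1<\cdots<r_m$ and $s_1<\cdots<s_n$. Each such total order is exactly a shuffle $z$ of $v$ and $w$, and integrating $dX^{i_1}\cdots dX^{i_m}\,dX^{j_1}\cdots dX^{j_n}$ over the corresponding simplex reproduces $\mathbf{X}^{z}$; summing proves $\mathbf{X}^{v}\mathbf{X}^{w}=\sum_{z}\mathbf{X}^{z}$ in this case (Chen's classical computation).

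To pass to arbitrary $\mathbf{X}\in\mathcal{C}_{\mathrm{g}}^{p\text{-var}}([0,T])$ I would invoke density of (canonical lifts of) bounded variation paths in the $p$-variation rough path metric, together with the fact that each signature coordinate $\mathbf{X}\mapsto\mathbf{X}^{v}$ depends continuously on $\mathbf{X}$ in that metric; this continuity is the quantitative content of the extension theorem, since the bound $\|\mathbf{X}^{(N)}\|_{p\text{-var}}\lesssim\|\mathbf{X}^{(m)}\|_{p\text{-var}}$ controls all levels simultaneously. The shuffle identity, holding along the approximating sequence, then passes to the limit. I expect this density/continuity step to be the main point requiring care: one must ensure the approximation takes place in $G^{(N)}$ for $N$ large enough to see both $\mathbf{X}^{v}\mathbf{X}^{w}$ and every $\mathbf{X}^{z}$, and that convergence is uniform across the finitely many levels involved.

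A purely analytic alternative, avoiding density, is an induction on $m+n$: writing $v=(v',i_m)$ and $w=(w',j_n)$ and using the recursion $\mathbf{X}^{w}_{0,\cdot}=\int\mathbf{X}^{w'}_{0,r}\,dX^{j_n}_r$ together with the Leibniz rule $d(\mathbf{X}^{v}\mathbf{X}^{w})=\mathbf{X}^{v}\,d\mathbf{X}^{w}+\mathbf{X}^{w}\,d\mathbf{X}^{v}$ --- valid with no quadratic correction precisely because $\mathbf{X}$ is geometric --- which reduces the degree-$(m+n)$ identity to the two degree-$(m+n-1)$ identities for $\mathbf{X}^{v}\mathbf{X}^{w'}$ and $\mathbf{X}^{v'}\mathbf{X}^{w}$; the recursive description of shuffles (those ending in $j_n$ versus those ending in $i_m$) then matches the two resulting sums. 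Here the obstacle is justifying the Leibniz rule and the well-definedness of the auxiliary integrals at the relevant levels in the rough regime, which again is cleanest to underwrite via the extension theorem.
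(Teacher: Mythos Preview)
The paper does not prove this proposition; it is recalled in the preliminaries as a classical fact about signatures of geometric rough paths (equivalently, about group-like elements in the tensor algebra). Your proposal is correct and follows the standard route: the Fubini/simplex-decomposition argument for bounded variation paths is Chen's original computation, and the passage to a general $\mathbf{X}\in\mathcal{C}_{\mathrm g}^{p\text{-var}}$ by density of lifted smooth paths together with continuity of the step-$N$ signature map (which is exactly the quantitative content of the extension theorem) is the usual way to propagate such polynomial identities in the signature coordinates. Your inductive alternative via the first-order chain rule is equally legitimate --- the geometric condition is precisely what kills any second-order correction in $d(\mathbf{X}^v\mathbf{X}^w)$ --- and in fact this is how one typically argues directly at the rough level without invoking approximation.

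One remark: there is also a purely algebraic proof that bypasses paths entirely. An element $g\in T((\mathbb{R}^d))$ is group-like precisely when it is a character on the shuffle algebra, i.e.\ $\langle g, u\shuffle w\rangle=\langle g,u\rangle\langle g,w\rangle$ for all words $u,w$; unwinding this in the monomial basis is exactly the shuffle product formula. Since the paper's definition of the signature guarantees $\pi_{(N)}(S(\mathbf{X}))\in G^{(N)}$ for every $N$, this algebraic characterization applies immediately and avoids both the density argument and the inductive Leibniz computation. This is the shortest proof, though yours has the advantage of being self-contained within the analytic framework the paper has already set up.
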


\subsection{Checking $p$-variation}

\cite{LLQ02, FV06var, Manstavicius}

\label{sec:Man}

(i) As in whenever $\gamma >p-1>0$, with $t_{k}^{n}=k2^{-n}T$, one has%
\begin{equation}
\left\Vert X\right\Vert _{p\text{-var;}\left[ 0,T\right] }^{p}\lesssim
\sum_{n=1}^{\infty }n^{\gamma }\sum_{k=1}^{2^{n}}\left\vert
X_{t_{k}^{n}}-X_{t_{k-1}^{n}}\right\vert ^{p}.  \label{PVarEstimateLLQ}
\end{equation}%
This estimate has been used \cite{LLQ02} to verify finite (sample path) $p$%
-variation, simply by taking expectation, e.g. for the case of Brownian
motion by using that $\mathbb{E}\left[ \left\vert B_{s,t}\right\vert ^{p}%
\right] =|t-s|^{1+\epsilon }$ for $\epsilon >0$, provided $p>2$.
Unfortunately, this argument does not work for jump processes. Even for the
standard Poisson process one only has $\mathbb{E}\left[ \left\vert
N_{s,t}\right\vert ^{p}\right] \sim C_{p}\left\vert t-s\right\vert $ as $%
t-s\rightarrow 0$, so that the \textit{expected value} of the right-hande
side of (\ref{PVarEstimateLLQ}) is infinity. 
An extension of (\ref{PVarEstimateLLQ}) to rough path is%
\begin{equation*}
\left\Vert \mathbf{X}\right\Vert _{p\text{-var;}\left[ 0,T\right]
}^{p}\lesssim \sum_{n=1}^{\infty }n^{\gamma }\sum_{k=1}^{2^{n}}\left\{
\left\vert X_{t_{k-1}^{n},t_{k}^{n}}\right\vert ^{p}+\left\vert \mathbb{X}%
_{t_{k-1}^{n},t_{k}^{n}}\right\vert ^{p/2}\right\}
\end{equation*}%
and we note that for a \textit{geometric} rough path $\mathbf{X=}\left( X,%
\mathbb{X}\right) $, i.e. when $\mathrm{Sym}\left( \mathbb{X}_{s,t}\right) =%
\frac{1}{2}X_{s,t}\otimes X_{s,t}$, we may replace $\mathbb{X}$ on the
right-hand side by the area $\mathbb{A}=\mathrm{Anti}\left( \mathbb{X}%
\right) $. This has been used in \cite{LyonsQian}, again by taking
expecations, to show that Brownian motion $B$ enhanced with $\mathbb{B}%
_{s,t}:=\int_{s}^{t}B_{s,r}\otimes \circ dB_{r}$ constitutes a.s. an element
in the rough path space $\mathcal{C}^{p\text{-var}}([0,T],G^{(2)})$, for $%
p\in \left( 2,3\right) $.

(ii) In \cite{FV06var} an embedding result $W^{\delta ,q}\hookrightarrow C^{p%
\text{-var}}$ is shown, more precisely%
\begin{equation*}
\left\Vert X\right\Vert _{p\text{-var;}\left[ 0,T\right] }^{q}\lesssim
\int_{0}^{T}\int_{0}^{T}\frac{\left\vert X_{t}-X_{s}\right\vert ^{q}}{%
\left\vert t-s\right\vert ^{1+\delta q}}dsdt,
\end{equation*}%
provided $1<p<q<\infty $ with $\delta =1/p\in (0,1)$. The extension to rough
paths reads%
\begin{equation*}
\left\Vert \mathbf{X}\right\Vert _{p\text{-var;}\left[ 0,T\right]
}^{q}\lesssim \int_{0}^{T}\int_{0}^{T}\left\{ \frac{\left\vert
X_{s,t}\right\vert ^{q}}{\left\vert t-s\right\vert ^{1+\delta q}}+\frac{%
\left\vert \mathbb{X}_{s,t}\right\vert ^{q/2}}{\left\vert t-s\right\vert
^{1+\delta q}}\right\} dsdt.
\end{equation*}
Since elements in $W^{\delta ,q}$ are also $\alpha $-H\"{o}lder, with $%
\alpha =\delta -1/q>0$, these embeddings are not suitable for non-continuous
paths.

(iii) In case of a strong Markov process $X$ with values in some Polish
space $\left( E,d\right) $, a powerful criterion has been established by
Manstavicius \cite{Manstavicius}. Define%
\begin{equation*}
\alpha \left( h,a\right) :=\sup \left\{ \mathbb{P}\left( d\left(
X_{t}^{s,x},x\right) \geq a\right) \right\}
\end{equation*}%
with $\sup $ taken over all $x\in E$, and $s<t$ in$\left[ 0,T\right] $ with $%
t-s\leq h$. Under the assumption%
\begin{equation*}
\alpha \left( h,a\right) \lesssim \frac{h^{\beta }}{a^{\gamma }},
\end{equation*}%
uniformly for $h,a$ in a right neighbourhood of zero, the process $X$ has
finite $p$-variation for any $p>\gamma /\beta $. In the above Poisson
example, noting $\mathbb{E}\left[ \left\vert N_{s,t}\right\vert \right]
=O\left( h\right) $ whenever $t-s\leq h$, \ Chebychev inequality immediately
gives $\alpha \left( h,a\right) \leq h/a$, and we find finite $p$-variation,
any $p>1$. (Of course $p=1$ here, but one should not expect this borderline
case from a general criterion.) The Manstavicius criterion will play an
important role for us.

\subsection{Expected signatures}

\cite{fawcett, HL10, LeJQ13, chevyrev}

Recall that for a smooth path $X:\left[ 0,T\right] \rightarrow \mathbb{R}%
^{d} $, its signature $\mathbf{S=S}\left( X\right) $ is given by the
group-like element

\begin{equation*}
\left( 1,\int_{0<t_{1}<T}dX_{r_1},\int_{0<t_{1}<t_{2}<T}dX_{r_1}\otimes
dX_{r_2},\dots \right) \in T((\mathbb{R}^{d})).
\end{equation*}%
The signature solves an ODE in the tensor-algebra,%
\begin{equation}
d\mathbf{S}=\mathbf{S}\otimes dX,\,\mathbf{S}_{0}=\mathbf{1.}
\label{dSequals}
\end{equation}%
Generalizations to semimartingales are immediate, by interpretation of (\ref%
{dSequals}) as It\^{o}, Stratonovich or Marcus stochastic differential
equation. In the same spirit $X$ can be replaced by a generic (continuous)
geometric \ rough path with the according interpretation of (\ref{dSequals})
as (linear) rough differential equation.

Whenever $X=X\left( \omega \right) $, or $\mathbf{X=X}\left( \omega \right) $
is granted sufficient integrability, we may consider the expected signature,
that is%
\begin{equation*}
\mathbb{E}\mathbf{S}_{T}\in T((\mathbb{R}^{d}))
\end{equation*}%
defined in the obvious componentwise fashion. To a significant extent, this
object behaves like a moment generating function. In a recent work \cite%
{chevyrev}, it is shown that under some mild condition, the expected
signature determines the law of the $S_T(\omega)$.

\subsection{L\'{e}vy Processes}

\label{sec:Levy}

\cite{sato, bertoin, applebaum, Hunt}

Recall that a $d$-dimensional L\'{e}vy process $\left( X_{t}\right) $ is a
stochastically continuous process such that (i) for all $0<s<t<\infty $, the
law of $X_{t}-X_{s}$ depends only on $t-s$; (ii) for all $t_{1},\dots ,t_{k}$
such that $0<t_{1}<\dots <t_{k}$ the random variables $X_{t_{i+1}}-X_{t_{i}}$
are independent. L\'{e}vy process can (and will) be taken with c\'{a}dl\'{a}%
g sample paths and are characterized by the L\'{e}vy triplet $\left(
a,b,K\right) $, where $a=(a^{i,j})$ is a positive semidefinite symmetric
matrix, $b=(b^{i})$ a vector and $K(dx)$ a L\'{e}vy measure on $\mathbb{R}%
^{d}$ (no mass at $0$, integrates $\min (|x|^{2},1)$) so that 
\begin{equation}
\mathbb{E}\left[ e^{i\left\langle u,X_{t}\right\rangle }\right] =\exp \left(
-\frac{1}{2}\left\langle u,au\right\rangle +i\left\langle u,b\right\rangle
+\int_{\mathbb{R}^{d}}(e^{iu\,y}-1-iu\,y1_{\left\{ \left\vert y\right\vert
<1\right\} })K\left( dy\right) \right) .  \label{LKclassic}
\end{equation}%
The It\^{o}--L\'{e}vy decomposition asserts that any such L\'{e}vy process
may be written as,%
\begin{equation}
X_{t}=\sigma B_{t}+bt+\int_{(0,t]\times \left\{ \left\vert y\right\vert
<1\right\} }y\tilde{N}\left( ds,dy\right) +\int_{(0,t]\times \left\{
\left\vert y\right\vert \geq 1\right\} }yN\left( ds,dy\right)
\label{ItoLevyDecomp}
\end{equation}%
where $B$ is a $d$-dimensional Browbian motion, $\sigma \sigma ^{T}=a$, and $%
N$ (resp. $\tilde{N}$) is the Poisson random meausre (resp. compensated PRM)
with intensity $ds\,K\left( dy\right) $. A Markovian description of a L\'{e}%
vy process is given in terms of its generator 
\begin{equation}
\left( \mathcal{L}f\right) \left( x\right) =\frac{1}{2}%
\sum_{i,j=1}^{d}a^{i,j}\partial _{i}\partial
_{j}f+\sum_{i=1}^{d}b^{i}\partial _{i}f+\int_{\mathbb{R}^{d}}\left( f\left(
x+y\right) -f\left( x\right) -1_{\left\{ \left\vert y\right\vert <1\right\}
}\sum_{i=1}^{d}y^{i}\partial _{i}f\right) K\left( dy\right) .
\label{LKclassicMarkov}
\end{equation}%
By a classical result of Hunt \cite{Hunt}, this characterization extends to L%
\'{e}vy process with values in a Lie group $G$, defined as above, but with $%
X_{t}-X_{s}$ replaced by $X_{s}^{-1}X_{t}$. Let $\left\{ \mathfrak{u}%
_{1},\dots ,\mathfrak{u}_{m}\right\} $ be a basis of the Lie algebra $%
\mathfrak{g}$, thought of a left-invariant first order differential
operators. In the special case of exponential Lie groups, meaning that $\exp
:\mathfrak{g}\rightarrow G$ is an analytical diffeomorphism (so that $g=\exp
\left( x^{i}\mathfrak{u}_{i}\right) $ for all $g\in G$, with canonical
coordinates $x^{i}=x^{i}\left( g\right) $ of the first kind) the generator
reads%
\begin{equation}
\left( \mathcal{L}f\right) \left( x\right) =\frac{1}{2}%
\sum_{v,w=1}^{m}a^{v,w}\mathfrak{u}_{v}\mathfrak{u}_{w}f+\sum_{v=1}^{m}b^{v}%
\mathfrak{u}_{v}f+\int_{G}\left( f\left( xy\right) -f\left( x\right)
-1_{\left\{ \left\vert y\right\vert <1\right\} }\sum_{v=1}^{m}y^{v}\mathfrak{%
u}_{v}f\right) K\left( dy\right) .  \label{LKLieMarkov}
\end{equation}%
As before the L\'{e}vy triplet $\left( a,b,K\right) $ consists of $\left(
a^{v,w}\right) $ positive semidefinite symmetric, $b=(b^{v})$ and $K(dx)$ a L%
\'{e}vy measure on $G$ (no mass at the unit element, integrates $\min
(|x|^{2},1)$, with $\left\vert x\right\vert ^{2}:=\sum_{i=v}^{m}\left(
x^{v}\right) ^{2}$.)

\subsection{The work of D. Williams}

\label{sec:williamsintro}

\cite{williams}

Williams first considers the Young regime $p\in \lbrack 1,2)$ and shows that
every $X\in W^{p}\left( [0,T]\right) $ may be turned into $\tilde{X}\in C^{p%
\text{-var}} ( [0,\tilde{T}] ) $, by replacing jumps by segments of straight
lines (in the spirit of Marcus canonical equations, via some time change $%
\left[ 0,T\right] \rightarrow [ 0,\tilde{T} ] $) Crucially, this can be done
with a uniform estimate $||\tilde{X}||_{p\text{-var}}\lesssim ||X||_{p\text{%
-var}}$. In the rough regime $p\geq 2$, Williams considers a generic $d$%
-dimensional L\'{e}vy process $X$ enhanced with stochastic area%
\begin{equation*}
\mathbb{A}_{s,t}:=\mathrm{Anti}\int_{(s,t]}(X_{r}^{-}-X_{s})\otimes dX_{r}
\end{equation*}%
where the stochastic integration is understood in It\^{o}-sense. On a
technical level his main results \cite[p310-320]{williams} are summarized in

\begin{theorem}[Williams]
\label{theo:will} Assume $X$ is a $d$-dimensional L\'{e}vy process $X$ with
triplet $\left( a,b,K\right) $. \newline
(i) Assume $K$ has compact support. Then%
\begin{equation*}
\mathbf{E}\left[ |\mathbb{A}_{s,t}|^{2}\right] \lesssim \left\vert
t-s\right\vert ^{2}.
\end{equation*}%
(ii)\ For any $p>2$, with $\sup $ taken over all partitions of $\left[ 0,T%
\right] ,$%
\begin{equation*}
\sup_{\mathcal{P}}\sum_{\left[ s,t\right] \in \mathcal{P}}\left\vert \mathbb{%
A}_{s,t}\right\vert ^{p/2}<\infty \text{ a.s.}
\end{equation*}
\end{theorem}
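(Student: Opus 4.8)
The plan is to handle the two parts with different tools: a direct second-moment computation for (i), and the Manstavicius criterion of Section~\ref{sec:Man} for (ii). For (i) I would exploit that a compactly supported $K$ makes all moments of $X$ finite and lets one compensate every jump. After reducing to $s=0$, $X_0=0$ by stationarity, write $X_t=\beta t+M_t$ with $\beta=b+\int_{|y|\ge1}y\,K(dy)$ and $M=\sigma B+\int y\,\tilde N$ a square-integrable martingale satisfying $\langle M^i,M^j\rangle_t=c^{ij}t$, where $c^{ij}=a^{ij}+\int y^iy^j\,K(dy)$. Substituting into $A^{ij}:=\int_0^h X_{r-}^i\,dX_r^j$ and expanding $(\beta^i r+M^i_{r-})(\beta^j\,dr+dM^j_r)$ splits $A^{ij}$ into a drift--drift term $\frac12\beta^i\beta^j h^2$, two drift--martingale terms, and a martingale--martingale term $\int_0^h M^i_{r-}\,dM^j_r$. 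Since $\mathbb{A}^{ij}_{0,h}=\frac12(A^{ij}-A^{ji})$, the symmetric drift--drift term cancels, which is the decisive simplification.

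It then remains to bound the second moments of the surviving terms. The principal one is the martingale $N_h:=\int_0^h(M^i_{r-}\,dM^j_r-M^j_{r-}\,dM^i_r)$: the isometry $\mathbb{E}[N_h^2]=\mathbb{E}[\langle N\rangle_h]$ together with $\mathbb{E}[(M^i_{r-})^2]=c^{ii}r$ and $\mathbb{E}[M^i_{r-}M^j_{r-}]=c^{ij}r$ gives $\mathbb{E}[N_h^2]=(c^{ii}c^{jj}-(c^{ij})^2)h^2$, exactly the claimed order. The drift--martingale terms $\beta^j\int_0^h M^i_{r-}\,dr$ and $\beta^i\int_0^h r\,dM^j_r$ are $O(h^3)$ by Cauchy--Schwarz in time and by the isometry respectively, hence $\lesssim h^2$ on the bounded interval $[0,T]$; the mixed cross terms are controlled by Cauchy--Schwarz and are of the same or smaller order. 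Summing over $i,j$ yields $\mathbb{E}[|\mathbb{A}_{0,h}|^2]\lesssim h^2$.

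For (ii), which carries no compact-support hypothesis, I would regard the area-enhanced process $\mathbf{X}_t=(X_t,\mathbb{A}_{0,t})$ as a L\'evy process on $G^{(2)}(\mathbb{R}^d)$ in Hunt's sense (Section~\ref{sec:Levy}); its increments are then stationary and left-invariant, and $\|\mathbf{X}_{s,t}\|_{CC}\asymp|X_{s,t}|+|\mathbb{A}_{s,t}|^{1/2}$, so that finite $p$-variation of $\mathbf{X}$ in $d_{CC}$ is equivalent to the asserted finite $p/2$-variation of $\mathbb{A}$. Stationarity reduces the Manstavicius quantity to $\alpha(h,a)=\sup_{u\le h}\mathbb{P}(\|\mathbf{X}_{0,u}\|_{CC}\ge a)$. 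To reach a general $K$, I would split $X=\hat X+J$, where $J$ gathers the jumps of size $\ge1$ (a compound Poisson process of finite rate $\lambda=K(\{|y|\ge1\})$) and $\hat X$ carries the compactly supported L\'evy measure $K|_{\{|y|<1\}}$, so that part (i) applies to $\hat X$ with a fixed constant. On the event that $J$ has no jump in $(0,u]$ one has $\mathbf{X}_{0,u}=\hat{\mathbf{X}}_{0,u}$, whence Markov's inequality, part (i), and $\mathbb{E}|\hat X_{0,u}|^2\lesssim u$ give $\mathbb{P}(\|\hat{\mathbf{X}}_{0,u}\|_{CC}\ge a)\lesssim u/a^2$; the complementary event has probability at most $\lambda u$. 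Thus $\alpha(h,a)\lesssim h/a^2+\lambda h$, which for $a$ in a right neighbourhood of zero is $\lesssim h/a^2$, i.e. $\beta=1$ and $\gamma=2$, and Manstavicius delivers finite $p$-variation for every $p>2$.

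The step I expect to be the main obstacle is exactly this passage to general $K$. Truncating instead at a level comparable to $a$ would force the drift of the truncated process to diverge as $a\to0$ when the small jumps have infinite variation, destroying the uniformity of the constant coming from part (i). The fix is to truncate at the fixed level $1$ and to use that the Manstavicius criterion only needs the bound for small $a$, where the $a$-independent big-jump term $\lambda h$ is absorbed into $h/a^2$. A second point requiring care is the verification that the It\^o area really enhances $X$ to a process with stationary, left-invariant increments, so that $\alpha$ depends only on $t-s$; this rests on the It\^o-area Chen relation $\mathbb{A}_{s,u}=\mathbb{A}_{s,t}+\mathbb{A}_{t,u}+\mathrm{Anti}(X_{s,t}\otimes X_{t,u})$.
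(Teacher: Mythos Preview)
Your argument is correct, and both parts take a genuinely different route from the paper's own proof (given as Corollary~\ref{cor:will3}).

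For (i), the paper does \emph{not} compute directly. Instead it invokes the L\'evy--Kintchine formula for the expected signature (Theorem~\ref{lk_formula}): writing $(\mathbb{A}^{ij}_{s,t})^2$ via the shuffle product as $\mathbf{X}^{iijj}-\mathbf{X}^{ijji}-\mathbf{X}^{jiij}+\mathbf{X}^{jjii}$ and using $\mathbb{E}\mathbf{X}_{s,t}=\exp((t-s)C)=1+(t-s)C+O((t-s)^2)$, the $O(t-s)$ contribution vanishes because $\pi_4 C=\tfrac{1}{4!}\int y^{\otimes4}K(dy)$ is totally symmetric. Your direct It\^o-isometry computation is more elementary and self-contained, and makes the cancellation mechanism (antisymmetrization kills the drift--drift term) transparent; the paper's route, in exchange, exhibits the area estimate as a special case of a structural fact about signatures and generalizes without extra work to higher-level estimates and to the Markov jump-diffusion setting of Section~\ref{sec:Markov}.

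For (ii), both arguments end with Manstavicius applied to the $G^{(2)}$-valued lift, but the reduction to compactly supported $K$ is different. The paper simply removes the big-jump part as a bounded-variation perturbation of $X$, noting that this leaves the $p$-variation of $(X,\mathbb{X}^{\mathrm M})$ unchanged by elementary Young/Riemann--Stieltjes estimates, and then applies Proposition~\ref{prop:SMRP} to the truncated process. Your approach keeps the big jumps inside the Manstavicius estimate and absorbs the event of a large jump into the $a$-independent term $\lambda h$, which is dominated by $h/a^2$ for small $a$. Both are clean; the paper's version is more in the spirit of rough path stability, yours stays purely within the probabilistic tail-bound framework and avoids having to argue that BV perturbations preserve area $p/2$-variation.
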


Clearly, $(X,\mathbb{A)}\left( \omega \right) $ is all the information one
needs to have a (in our terminology) cadlag geometric $p$-rough path $%
\mathbf{X=X}\left( \omega \right) $, any $p\in (2,3)$. However, Williams
does not discuss rough integration, nor does he give meaning (in the sense
of an integral equation) to a rough differential equations driven by cadlag $%
p$-rough paths. Instead he constructs, again in the spirit of Marcus, $%
\mathbf{\tilde{X}}\in \mathcal{C}^{p\text{-var}}( [0,\tilde{T}]) $, and then
goes on to \textit{define} a solution $Y$ to an RDE driven by $\mathbf{X}%
\left( \omega \right) $ as reverse-time change of a (classical) RDE solution
driven by the (continuous) geometric $p$-rough path $\mathbf{\tilde{X}}$.
While this construction is of appealing simplicity, the time-change depends
in a complicated way on the jumps of $X\left( \omega \right) $ and the
absence of quantitative estimates, makes any local analysis of so-defined
RDE solution difficult (starting with the identification of $Y$ as solution
to the corresponding Marcus canonical equation). We shall not rely on any of
Williams' result, although his ideas will be visible at various
places in this paper.  A simplified proof of Theorem \ref{theo:will} will be
given below.

\newpage

\part{Rough paths in presence of jumps: deterministic theory}

\section{General rough paths: definition and first examples}

The following definitions are fundamental.

\begin{definition}
\label{definitionofRP} Fix $p\in \lbrack 2,3)$. We say that $\mathbf{X}%
=\left( X,\mathbb{X}\right) $ is a \textit{general (c\'{a}dl\'{a}g) rough
path over } $\mathbb{R}^d$ if

\begin{itemize}
\item[(i)] Chen's relation holds, i.e. for all $s \leq u \leq t $, $\mathbb{X%
}_{s,t} - \mathbb{X}_{s,u} - \mathbb{X}_{u,t} = X_{s,u}\otimes X_{u,t}$;

\item[(ii)] the following map 
\begin{equation*}
\left[ 0,T\right] \ni t\mapsto \mathbf{X}_{0,t}=\left( X_{0,t},\mathbb{X}%
_{0,t}\right) \in \mathbb{R}^{d}\oplus \mathbb{R}^{d\times d}
\end{equation*}%
is c\'{a}dl\'{a}g;

\item[(iii)] $p$-variation regularity in rough path sense holds, that is 
\begin{equation*}
\left\Vert X\right\Vert _{p\text{-var;}\left[ 0,T\right] }+\Vert \mathbb{X}%
\Vert _{p/2\text{-var;}\left[ 0,T\right] }^{1/2}<\infty \text{.}
\end{equation*}
\end{itemize}

We then write%
\begin{equation*}
\mathbf{X}\in \mathcal{W}^{p}=\mathcal{W}^{p}\left( \left[ 0,T\right] ,%
\mathbb{R}^{d}\right) .
\end{equation*}
\end{definition}

\begin{definition}
We call $\mathbf{X}\in \mathcal{W}^{p}$\underline{ \textit{g}}\textit{%
eometric if it takes values in }$G^{(2)}(\mathbb{R}^{d})$, in symbols $%
\mathbf{X}\in \mathcal{W}_{\text{\textrm{g}}}^{p}$. If, in addition, 
\begin{equation*}
(\Delta _{t}X,\Delta _{t}\mathbb{A}):=\log \Delta _{t}\mathbf{X}\in \mathbb{R%
}^{d}\oplus \left\{ 0\right\} \subset \mathfrak{g}^{(2)}(\mathbb{R}^{d})
\end{equation*}%
we call $\mathbf{X}$ \ \underline{\textit{M}}\textit{arcus-like, in symbols }%
$\mathbf{X\in }$ $\mathcal{W}_{\text{\textrm{M}}}^{p}$.
\end{definition}

As in the case of (continuous) rough paths, cf. Section \ref{sec:gRP},%
\begin{equation*}
\mathcal{W}_{\text{\textrm{g}}}^{p}:=\mathcal{W}_{\text{\textrm{g}}%
}^{p}\left( \left[ 0,T\right] ,\mathbb{R}^{d}\right) =W^{p}\left(
[0,T],G^{(2)}(\mathbb{R}^{d})\right)
\end{equation*}%
so that general geometric $p$-rough paths are precisely paths of finite $p$%
-variation in $G^{(2)}(\mathbb{R}^{d})$ equipped with CC\ metric. We can
generalize the definition to general $p\in \lbrack 1,\infty )$ at the price
of working in the step-$\left[ p\right] $ free nilpotent group, 
\begin{equation*}
\mathcal{W}_{\text{\textrm{g}}}^{p}=W^{p}\left( \left[ 0,T\right]
,G^{([p])}\right) .
\end{equation*}

As a special case of Lyons' extension theorem (Theorem \ref%
{Lyonsfirsttheorem}), for a given continuous path $X\in $ $W^{p}$ for $p\in
\lbrack 1,2)$, there is a \textit{unique} rough path $\mathbf{X}=\left( X,%
\mathbb{X}\right) \in \mathcal{W}^{p}$. (Uniqueness is lost when $p\geq 2$,
as seen by the perturbation $\mathbb{\bar{X}}_{s,t}=\mathbb{X}_{s,t}+a\left(
t-s\right) $, for some matrix $a$.)

The situation is different in presence of jumps and Lyons' First Theorem
fails, even when $p=1$. Essentially, this is due to the fact that there are
non-trivial pure jump paths of finite $q$-variation with $q<1$.

\begin{proposition}[Canonical lifts of paths in Young regime]
\label{CanLiftBV}Let $X\in W^{p}\left( \left[ 0,T\right] ,\mathbb{R}%
^{d}\right) $ be a c\'{a}dl\'{a}g path of finite p-variation for $p\in
\lbrack 1,2)$. (i)\ It is lifted to a (in general, non-geometric) rough path 
$\mathbf{X}=\left( X,\mathbb{X}\right) \in $ $\mathcal{W}^{p}$ by enhancing $%
X$ with 
\begin{equation*}
\mathbb{X}_{s,t}=\mathrm{(Young)}\int_{(s,t]}X_{s,r-}\otimes dX_{r}
\end{equation*}%
(ii) It is lifted to a Marcus-like c\'{a}dl\'{a}g rough path $\mathbf{X}^{%
\mathrm{M}}=\left( X,\mathbb{X}^{\mathrm{M}}\right) \in $ $\mathcal{W}_{%
\text{M}}^{p}$ by enhancing $X$ with 
\begin{equation*}
\mathbb{X}_{s,t}^{\mathrm{M}}=\mathbb{X}_{s,t}+\frac{1}{2}\sum_{r\in
(s,t]}\left( \Delta _{r}X\right) \otimes \left( \Delta _{r}X\right) .
\end{equation*}
\end{proposition}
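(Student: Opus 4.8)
The plan is to check, for each lift, the three conditions of Definition \ref{definitionofRP} and, for (ii), the two additional structural properties. For part (i), since $p\in[1,2)$ the integrand $r\mapsto X_{s,r-}$ is c\'agl\'ad of finite $p$-variation and the integrator $X$ is c\'adl\'ag of finite $p$-variation, so with $q=p$ we have $1/p+1/q>1$ and the Young integral defining $\mathbb{X}_{s,t}$ exists by Young's Theorem. Chen's relation is immediate from additivity of the integral: writing $\int_{(s,t]}=\int_{(s,u]}+\int_{(u,t]}$ and, on the second piece, $X_{s,r-}=X_{u,r-}+X_{s,u}$ together with $\int_{(u,t]}dX_r=X_{u,t}$ produces precisely the defect $X_{s,u}\otimes X_{u,t}$. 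For the $p/2$-variation I would apply Young's inequality (\ref{equ:Y2}) with integrand $Y=X_{s,\cdot}$, for which $Y_s=0$, to obtain $|\mathbb{X}_{s,t}|\lesssim\|X\|_{p\text{-var};[s,t]}^2$; hence $|\mathbb{X}_{s,t}|^{p/2}\lesssim\omega(s,t)$ with $\omega(s,t):=\|X\|_{p\text{-var};[s,t]}^p$ superadditive, and summing over any partition yields $\sum|\mathbb{X}_{s,t}|^{p/2}\lesssim\omega(0,T)<\infty$. C\'adl\'ag regularity of $t\mapsto\mathbb{X}_{0,t}$ follows from Chen and the same estimate: as $t'\downarrow t$ one has $\|X\|_{p\text{-var};[t,t']}\to0$, so $\mathbb{X}_{0,t'}=\mathbb{X}_{0,t}+\mathbb{X}_{t,t'}+X_{0,t}\otimes X_{t,t'}\to\mathbb{X}_{0,t}$, while the existence of left limits is the analogous Cauchy argument over intervals $[v,u]$ with $v<u\uparrow t$.

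For part (ii), set $J_{s,t}:=\frac12\sum_{r\in(s,t]}\Delta_r X\otimes\Delta_r X$, so $\mathbb{X}^{\mathrm M}=\mathbb{X}+J$. Because $J$ is additive in the interval it leaves the Chen defect unchanged, so $\mathbf X^{\mathrm M}$ inherits Chen's relation from $\mathbf X$. For finite $p/2$-variation I would use subadditivity of $x\mapsto x^{p/2}$ (legitimate as $p/2<1$) to get $|J_{s,t}|^{p/2}\lesssim\sum_{r\in(s,t]}|\Delta_r X|^p$; over a partition this double sum telescopes to $\sum_{r\in(0,T]}|\Delta_r X|^p\le\|X\|_{p\text{-var};[0,T]}^p<\infty$, and then $|\mathbb{X}^{\mathrm M}_{s,t}|^{p/2}\le|\mathbb{X}_{s,t}|^{p/2}+|J_{s,t}|^{p/2}$ together with part (i) gives finiteness. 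Since $t\mapsto J_{0,t}$ is visibly c\'adl\'ag, so is $\mathbb{X}^{\mathrm M}_{0,\cdot}$.

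The substantive point, which I expect to be the main obstacle, is that $\mathbf X^{\mathrm M}$ is geometric, and this rests on an integration-by-parts formula for Young integrals of c\'adl\'ag paths. Telescoping $X_{s,t}\otimes X_{s,t}=\sum_i\bigl(X_{s,r_i}\otimes X_{s,r_i}-X_{s,r_{i-1}}\otimes X_{s,r_{i-1}}\bigr)$ over a partition and passing to the refinement limit, I would establish
\begin{equation*}
X_{s,t}\otimes X_{s,t}=\int_{(s,t]}X_{s,r-}\otimes dX_r+\int_{(s,t]}dX_r\otimes X_{s,r-}+\sum_{r\in(s,t]}\Delta_r X\otimes\Delta_r X,
\end{equation*}
where the two off-diagonal Riemann sums converge to the stated Young integrals and the diagonal sum $\sum_i X_{r_{i-1},r_i}\otimes X_{r_{i-1},r_i}$ converges to the pure-jump quadratic variation $\sum_{r}\Delta_r X\otimes\Delta_r X$. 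This last convergence is the delicate step: one isolates the finitely many jumps exceeding a threshold $\delta$ and bounds the remaining diagonal mass by $(\mathrm{osc}_\delta)^{2-p}\,\|X\|_{p\text{-var}}^p\to0$, crucially using $p<2$. Reading off the transpose $\mathbb{X}_{s,t}^{\mathrm T}=\int_{(s,t]}dX_r\otimes X_{s,r-}$, the identity gives $\mathbb{X}_{s,t}+\mathbb{X}_{s,t}^{\mathrm T}=X_{s,t}\otimes X_{s,t}-2J_{s,t}$, hence (as $J_{s,t}$ is symmetric) $\mathrm{Sym}(\mathbb{X}^{\mathrm M}_{s,t})=\mathrm{Sym}(\mathbb{X}_{s,t})+J_{s,t}=\frac12 X_{s,t}\otimes X_{s,t}$, i.e. $\mathbf X^{\mathrm M}\in\mathcal W^p_{\mathrm g}$.

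Finally I would identify the jumps to check the Marcus condition. Using the jump rule for indefinite Young integrals against a c\'adl\'ag integrator, $\mathbb{X}_{0,t}-\mathbb{X}_{0,t-}=X_{0,t-}\otimes\Delta_t X$; combined with Chen in the form $\mathbb{X}_{u,t}=\mathbb{X}_{0,t}-\mathbb{X}_{0,u}-X_{0,u}\otimes X_{u,t}$ and letting $u\uparrow t$, this yields $\lim_{u\uparrow t}\mathbb{X}_{u,t}=0$ (the apparent jump $X_{0,t-}\otimes\Delta_t X$ of the indefinite integral is exactly cancelled by the non-abelian correction $X_{0,t-}\otimes\Delta_t X$). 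Since $\lim_{u\uparrow t}J_{u,t}=\frac12\Delta_t X\otimes\Delta_t X$, the group jump is $\Delta_t\mathbf X^{\mathrm M}=(1,\Delta_t X,\frac12\Delta_t X\otimes\Delta_t X)$, and the logarithm from Section \ref{sec:gRP}, $\log(1+a+b)=a+b-\frac12 a\otimes a$ with $a=\Delta_t X$ and $b=\frac12\Delta_t X\otimes\Delta_t X$, gives $\log\Delta_t\mathbf X^{\mathrm M}=\Delta_t X\in\mathbb{R}^d\oplus\{0\}$. Hence $\mathbf X^{\mathrm M}\in\mathcal W^p_{\mathrm M}$, which completes the plan.
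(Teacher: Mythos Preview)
Your proposal is correct and follows essentially the same approach as the paper: Young's inequality plus superadditivity of $\omega(s,t)=\|X\|_{p\text{-var};[s,t]}^p$ for the $p/2$-variation of $\mathbb{X}$, the subadditivity bound $|J_{s,t}|^{p/2}\le\sum_{r\in(s,t]}|\Delta_r X|^p$ for the jump correction, an integration-by-parts identity for the geometric property, and the computation $\Delta_t\mathbb{X}^{\mathrm M}=\tfrac12(\Delta_t X)^{\otimes 2}$ for the Marcus condition. In fact you supply considerably more detail than the paper, which omits the verification of Chen's relation and c\'adl\'ag regularity entirely and dismisses the geometric identity as ``integration by parts formula for sums''; your explicit telescoping argument with the diagonal sum converging to the jump quadratic variation is exactly what that phrase encodes.
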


\begin{proof}
As an application of Young's inequality, it is easy to see that 
\begin{equation*}
|\mathbb{X}_{s,t}|\lesssim ||X||_{p\text{-var;}[s,t]}^{2}
\end{equation*}%
Note that $\omega (s,t):=||X||_{p\text{-var;}[s,t]}^{p}$ is superadditive,
i.e. for all $s<u<t$, $\omega (s,u)+\omega (u,t)\leq \omega (s,t)$, so that 
\begin{equation*}
\sum_{\lbrack s,t]\in \mathcal{P}}|\mathbb{X}_{s,t}|^{\frac{p}{2}}\lesssim
\sum_{\lbrack s,t]\in \mathcal{P}}||X||_{p\text{-var;}[s,t]}^{p}\lesssim
||X||_{p\text{-var;}[0,T]}^{p}
\end{equation*}%
Taking sup over $\mathcal{P}$, $\mathbb{X}$ has $\frac{p}{2}$ variation. %
We then note that

\begin{equation*}
\left(\sum_{r\in (s,t]}\left( \Delta _{r}X\right) \otimes \left( \Delta
_{r}X\right)\right)^{\frac{p}{2}} \leq \left(\sum_{r\in (s,t]}|\Delta _{r}X
|^2\right)^{\frac{p}{2}} \leq \sum_{r\in (s,t]}|\Delta _{r}X |^p
\end{equation*}

Since the jumps of $X$ are $p$-summable, we immediately conclude that $%
\mathbb{X}^{\mathrm{M}}$ also is of finite $\frac{p}{2}$ variation.

Also, from \textquotedblleft integration by parts formula for sums", it can
be easily checked that $Sym(\mathbb{X}_{s,t}^{\mathrm{M}})=\frac{1}{2}%
X_{s,t}\otimes X_{s,t}$. The fact that $\left( X,\mathbb{X}^{\mathrm{M}%
}\right) $ forms a Marcus-like rough path comes from the underlying idea of
the Marcus integral replaces jumps by straight lines which do not create
area. Precisely, 
\begin{equation*}
\lim_{s\uparrow t}\mathbb{X}_{s,t}^{\mathrm{M}}=:\Delta _{t}\mathbb{X}^{%
\mathrm{M}}=\frac{1}{2}\left( \Delta _{t}X\right) ^{\otimes 2}
\end{equation*}%
which is symmetric. Thus $\Delta _{t}\mathbb{A}=Anti(\Delta _{t}\mathbb{X}^{%
\mathrm{M}})$ = 0.
\end{proof}

\bigskip

Clearly, in the continuous case every geometric rough path is Marcus-like
and so there is need to distinguish them. The situation is different with
jumps and there are large classes of Marcus-like as well as non-Marcus-like
geometric rough paths. We give some examples.

\begin{example}[\textbf{Pure area jump rough path}]
\label{ex:PAJRP}Consider a $\mathfrak{so}\left( d\right) $-valued path $%
\left( A_{t}\right) $ of finite $1$-variation, started at $A_{0}=0$. Then%
\begin{equation*}
\mathbf{X}_{0,t}:=\exp \left( A_{t}\right)
\end{equation*}%
defines a geometric rough path, for any $p\geq 2$, i.e. $\mathbf{X}\left(
\omega \right) \mathbf{\in }$ $\mathcal{W}_{\text{g}}^{p}$ but, unless $A$
is continuous,%
\begin{equation*}
\mathbf{X}\left( \omega \right) \mathbf{\notin }\mathcal{W}_{\text{\textrm{M}%
}}^{p}.
\end{equation*}
\end{example}

It is not hard to randomize the above non-Marcus $M$ rough path example.

\begin{example}[\textbf{Pure area Poisson process}]
\label{PACPRP} Consider an i.i.d. sequence of a $\mathfrak{so}\left(
d\right) $-valued r.v. $\left( \mathfrak{a}^{n}\left( \omega \right) \right) 
$ and a standard Poisson process $N_{t}$ with rate $\lambda >0$. Then, with
probability one,%
\begin{equation*}
\mathbf{X}_{0,t}\left( \omega \right) :=\exp \left( \sum_{n=1}^{N_{t}}%
\mathfrak{a}^{n}\left( \omega \right) \right)
\end{equation*}%
yields a geometric, non-Marcus like c\'{a}dl\'{a}g rough path for any $p\geq
2$.
\end{example}

It is instructive to compare the last examples with the following two
classical examples from (continuous) rough path theory.

\begin{example}[\textbf{Pure area rough path}]
\label{ex:pacCPP}Fix $\mathfrak{a\in so}\left( d\right) $. Then 
\begin{equation*}
\mathbf{X}_{0,t}:=\exp \left( \mathfrak{a\,}t\right) ,\,\,\,\,
\end{equation*}%
yields a geometric rough path, $\mathbf{X}\in \mathcal{C}_{g}^{p}\left( %
\left[ 0,T\right] ,\mathbb{R}^{d}\right) $, above the trivial path $X\equiv
0 $, for any $p\in \lbrack 2,3)$.
\end{example}

\begin{example}[\textbf{Brownian rough path in magnetic field}]
\label{ex:ncBRP}Write%
\begin{equation*}
\mathbf{B}^{\mathrm{S}}_{s,t}=\left( B_{s,t},\int_s^t B_{s,r}\otimes \circ
dB_r\right)
\end{equation*}%
for the Brownian rough path based on iterated \underline{S}tratonovich
integration.\ If one considers the (zero-mass) limit of a physical Brownian
particle, with non-zero charge, in a constant \underline{m}agnetic field 
\cite{Friz--Gassiat--Lyons} one finds the (non-canonical) Brownian rough
path 
\begin{equation*}
\mathbf{B}_{0,t}^{\mathrm{m}}:=\mathbf{B}^{\mathrm{S}}{}_{0,t}+\left( 0,%
\mathfrak{a\,}t\right) ,
\end{equation*}%
for some $\mathfrak{a} \in \mathfrak{so}(d)$. This yields a continuous,
non-canonical geometric rough path lift of Brownian motion.\ More precisely, 
$\mathbf{B}^{\mathrm{m}}\in \mathcal{C}_{g}^{p}\left( \left[ 0,T\right] ,%
\mathbb{R}^{d}\right) $ a.s, for any $p\in (2,3)$.
\end{example}


As is well-known in rough path theory, it is not trivial to construct suitable $\mathbb{X}$
given some (irregular) path $X$, and most interesting constructions are of stochastic
nature. At the same time, $X$ does not determine $\mathbb{X}$, as was seen 
in the above examples.  That said, 
once in possession of a (c\'{a}dl\'{a}g) rough path, there are immediate
ways to obtain further rough paths, of which we mention in particular
perturbation of $\mathbb{X}$ by increments of some $p/2$-variation path,
and, secondly, subordination of $(X, \mathbb{X})$ by some
increasing (c\'{a}dl\'{a}g) path. For instance, in a stochastic setting, any time change of the
(canonical)  Brownian rough path, by some L\'evy subordinator for instance, will
yield a general random rough path, corresponding to the 
(c\'{a}dl\'{a}g) rough path associated to a specific semimartingale. 

For Brownian motion, as for (general) semimartingales, there are two ``canoncial"
candidates for $\mathbb{X}$, obtained by It\^o- and Marcus canonical
(=Stratonovich in absence of jumps) integration, respectively. We have

\begin{proposition}
\label{prop:semimartlift} Consider a $d$-dimensional (c\'{a}dl\'{a}g)
semimartingale $X$ and let $p\in \left( 2,3\right) $. Then the following
three statements are equivalent.\newline
(i)$\ \mathbf{X}^{\text{\textrm{I}}}\left( \omega \right) \in \mathcal{W}%
^{p} $ a.s where $\mathbf{X}^{\text{\textrm{I}}}=(X,\mathbb{X}^{\text{%
\textrm{I}}})$ and%
\begin{equation*}
\mathbb{X}_{s,t}^{\text{\textrm{I}}}:=\int_{s}^{t}X_{s,r-}\otimes dX_{r}\,\ 
\mathrm{(\underline{I}t\hat{o})}
\end{equation*}%
(ii) $\mathbf{X}^{\text{\textrm{M}}}\left( \omega \right) \in \mathcal{W}_{%
\text{\textrm{M}}}^{p}\,(\subset \mathcal{W}_{\text{\textrm{g}}}^{p})$ a.s.
where $\mathbf{X}^{\text{\textrm{M}}}=(X,\mathbb{X}^{\text{\textrm{M}}})$ and%
\begin{equation*}
\mathbb{X}_{s,t}^{\text{\textrm{M}}}:=\int_{s}^{t}X_{s,r-}\diamond \otimes
dX_{r}\,\ \mathrm{(\underline{M}arcus).}
\end{equation*}%
(iii)\ The stochastic area \ (identical for both It\^{o}- and Marcus lift)%
\begin{equation*}
\mathbb{A}_{s,t}:=\mathrm{Anti}\left( \mathbb{X}_{s,t}^{\text{\textrm{I}}%
}\right) =\mathrm{Anti}\left( \mathbb{X}_{s,t}^{\text{M}}\right)
\end{equation*}%
has a.s. finite $p/2$-variation.
\end{proposition}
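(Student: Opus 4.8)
The plan is to collapse all three statements to the single condition that the area $\mathbb{A}$ has finite $p/2$-variation, by showing that the symmetric part of each lift is \emph{automatically} of finite $p/2$-variation and that the two lifts differ only by a symmetric, finite-variation correction. Throughout I would use that a c\'adl\'ag semimartingale has a.s. finite $p$-variation for every $p>2$ (Monroe/BDG, as recalled in the preliminaries), so that the first-level requirement $\|X\|_{p\text{-var}}<\infty$ of Definition \ref{definitionofRP} holds a.s. in all three cases; moreover Chen's relation and the c\'adl\'ag regularity of $t\mapsto\mathbf{X}^{\mathrm{I}}_{0,t}$ and $t\mapsto\mathbf{X}^{\mathrm{M}}_{0,t}$ are standard algebraic/pathwise properties of the (It\^o, resp.\ Marcus) iterated integrals. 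Hence membership in $\mathcal{W}^p$ hinges solely on the $p/2$-variation of the second level.

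First I would compute the two symmetric parts. By the It\^o product (integration-by-parts) formula applied componentwise, $X_{s,t}\otimes X_{s,t}=\mathbb{X}^{\mathrm{I}}_{s,t}+(\mathbb{X}^{\mathrm{I}}_{s,t})^{T}+[X,X]_{s,t}$, where $[X,X]^{ij}=[X^i,X^j]$, so that $\mathrm{Sym}(\mathbb{X}^{\mathrm{I}}_{s,t})=\frac{1}{2}(X_{s,t}\otimes X_{s,t}-[X,X]_{s,t})$. Comparing the Marcus and It\^o integrals via the diamond formula of Section \ref{sec:Marcus} (the Marcus prescription replaces each jump by a straight segment, contributing $\frac{1}{2}(\Delta_r X)^{\otimes 2}$ at every jump, together with $\frac{1}{2}[X,X]^c$) gives $\mathbb{X}^{\mathrm{M}}_{s,t}=\mathbb{X}^{\mathrm{I}}_{s,t}+\frac{1}{2}[X,X]_{s,t}$. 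Consequently $\mathrm{Sym}(\mathbb{X}^{\mathrm{M}}_{s,t})=\frac{1}{2}X_{s,t}\otimes X_{s,t}$ (geometricity), and, crucially, $\mathrm{Anti}(\mathbb{X}^{\mathrm{M}})=\mathrm{Anti}(\mathbb{X}^{\mathrm{I}})=\mathbb{A}$, which justifies the common area appearing in (iii).

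Next I would observe that both symmetric parts have a.s.\ finite $p/2$-variation, \emph{unconditionally}: one has $\|X\otimes X\|_{p/2\text{-var}}\lesssim\|X\|_{p\text{-var}}^{2}<\infty$ a.s., while the matrix quadratic covariation $[X,X]$ has sample paths of finite total variation, hence finite $q$-variation for every $q\geq 1$, in particular for $q=p/2>1$. Since $p/2\geq 1$, the $p/2$-variation satisfies the triangle inequality, and $\mathrm{Sym},\mathrm{Anti}$ are bounded linear projections on increments; applying these to $\mathbb{X}^{\bullet}=\mathrm{Sym}(\mathbb{X}^{\bullet})+\mathbb{A}$ for $\bullet\in\{\mathrm{I},\mathrm{M}\}$ shows $\|\mathbb{X}^{\bullet}\|_{p/2\text{-var}}<\infty\iff\|\mathbb{A}\|_{p/2\text{-var}}<\infty$. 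This delivers (i) $\Leftrightarrow$ (iii) and (ii) $\Leftrightarrow$ (iii). For (ii) it remains only to check that $\mathbf{X}^{\mathrm{M}}$ is genuinely Marcus-like once its $p/2$-variation is finite: $\Delta_t\mathbb{A}=\mathrm{Anti}(\frac{1}{2}(\Delta_t X)^{\otimes 2})=0$, so the jumps carry no area, exactly as in Proposition \ref{CanLiftBV}.

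The main obstacle I anticipate is purely bookkeeping within the stochastic calculus: pinning down the It\^o product formula and the It\^o-to-Marcus correction with the correct tensorial ordering and signs, and verifying rigorously that the matrix-valued $[X,X]$ has finite-variation paths, so that its $p/2$-variation is dominated by its total variation. Once these identities are secured, the equivalence is immediate, since the entire second-level ``roughness'' is concentrated in the antisymmetric area $\mathbb{A}$, the symmetric parts being tame for either lift.
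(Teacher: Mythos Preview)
Your proof is correct and follows essentially the same route as the paper: reduce everything to the antisymmetric part by showing the symmetric parts of both lifts have finite $p/2$-variation unconditionally, and that $\mathbb{X}^{\mathrm{M}}-\mathbb{X}^{\mathrm{I}}=\tfrac12[X,X]$ is symmetric and of finite $p/2$-variation. The only cosmetic difference is that the paper splits $[X,X]$ into its continuous part and the jump sum $\sum_r(\Delta_r X)^{\otimes 2}$ and estimates each separately, whereas you invoke directly that $[X,X]$ has bounded-variation paths; your version is slightly cleaner.
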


\begin{proof}
Clearly%
\begin{equation*}
\mathrm{Sym}(\mathbb{X}_{s,t}^{\text{\textrm{M}}})=\frac{%
{\acute{}}%
1}{2}X_{s,t}\otimes X_{s,t}
\end{equation*}%
is of finite $p/2$-variation, a consequence of $X\in W^{p}$ a.s., for any $%
p>2$. Note that $\mathbb{X}^{\text{\textrm{M}}}-\mathbb{X}^{\text{\textrm{I}}%
}$ is symmetric, 
\begin{equation*}
(\mathbb{X}_{s,t}^{\text{\textrm{M}}})^{i,j}-(\mathbb{X}_{s,t}^{\text{%
\textrm{I}}})^{i,j}=\frac{1}{2}[X^{i},X^{j}]_{s,t}^{c}+\frac{1}{2}\sum_{r\in
(s,t]}\Delta _{r}X^{i}\Delta _{r}X^{j}.
\end{equation*}%
and is of finite $\frac{p}{2}$ variation as $[X^{i},X^{j}]^{c}$ is of
bounded variation, while

\begin{equation*}
\left\vert \sum_{r\in (s,t]}\Delta _{r}X^{i}\Delta _{r}X^{j}\right\vert^{%
\frac{p}{2}} \leq\left\vert \frac{1}{2}\sum_{r\in (s,t]}|\Delta
_{r}X|^{2}\right\vert^{\frac{p}{2}} \lesssim \sum_{r \in (s,t]}
|\Delta_rX|^p <\infty \text{ a.s.}
\end{equation*}
because jumps of semimartingale is square summable and thus $p \geq2$
summable.
\end{proof}

We now given an elegant criterion which allows to check finite $2^+$%
-variation of $G^{(2)}$-valued processes.

\begin{proposition}
\label{prop:SMRP} Consider a $G^{(2)}(\mathbb{R}^{d})$-valued strong Markov
process $\mathbf{X}_{s,t}:=\mathbf{X}_{s}^{-1}\otimes \mathbf{X}_{t}=\exp
\left( X_{s,t},\mathbb{A}_{s,t}\right) $. Assume%
\begin{eqnarray*}
\mathbb{E}\left\vert X_{s,t}\right\vert ^{2} &\lesssim &\left\vert
t-s\right\vert , \\
\mathbb{E}\left\vert \mathbb{A}_{s,t}\right\vert ^{2} &\lesssim &\left\vert
t-s\right\vert ^{2},
\end{eqnarray*}%
uniformly in $s,t\in \left[ 0,T\right] $. Then, for any $p>2$,%
\begin{equation*}
\left\Vert X\right\Vert _{p\text{-var}}+\left\Vert \mathbb{A}\right\Vert
_{p/2\text{-var}}<\infty \text{ a.s.}
\end{equation*}%
Equivalently, $||\mathbf{X}||_{p\text{-var}}<\infty $ a.s.
\end{proposition}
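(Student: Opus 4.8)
The plan is to apply the Manstavicius criterion recalled in Section~\ref{sec:Man} to $\mathbf{X}$, regarded as a strong Markov process with values in the Polish space $\left(G^{(2)}(\mathbb{R}^{d}),d_{CC}\right)$. Recall from Section~\ref{sec:gRP} that, by homogeneity of the Carnot--Caratheodory norm, finite $p$-variation of $\mathbf{X}$ with respect to $d_{CC}$ is equivalent to the two-scale statement $\left\Vert X\right\Vert_{p\text{-var}}+\left\Vert \mathbb{A}\right\Vert_{p/2\text{-var}}<\infty$ (this is precisely the asserted equivalence at the end of the proposition). Hence it suffices to establish finite $p$-variation in the CC metric for every $p>2$, and for this I only need to produce a Manstavicius bound $\alpha(h,a)\lesssim h^{\beta}/a^{\gamma}$ with $\gamma/\beta=2$, where $\alpha(h,a)=\sup \mathbb{P}\left(d_{CC}(\mathbf{X}_t^{s,x},x)\geq a\right)$, the supremum being over starting states $x$ and over $s<t$ with $t-s\leq h$.

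First I would reduce $\alpha$ to a second-moment estimate on a single increment. By left-invariance of $d_{CC}$ one has $d_{CC}(\mathbf{X}_t^{s,x},x)=\left\Vert \mathbf{X}_{s,t}\right\Vert_{CC}$, so $\alpha$ is controlled by the tail of $\left\Vert \mathbf{X}_{s,t}\right\Vert_{CC}$ uniformly in the starting state. Using $\left\Vert \mathbf{X}_{s,t}\right\Vert_{CC}\asymp |X_{s,t}|+|\mathbb{A}_{s,t}|^{1/2}$, I would estimate
\begin{equation*}
\mathbb{E}\,\left\Vert \mathbf{X}_{s,t}\right\Vert_{CC}^{2}\lesssim \mathbb{E}|X_{s,t}|^{2}+\mathbb{E}|\mathbb{A}_{s,t}|\lesssim |t-s|,
\end{equation*}
where the hypothesis $\mathbb{E}|X_{s,t}|^{2}\lesssim |t-s|$ handles the first term, and Cauchy--Schwarz applied to $\mathbb{E}|\mathbb{A}_{s,t}|^{2}\lesssim |t-s|^{2}$ gives $\mathbb{E}|\mathbb{A}_{s,t}|\leq (\mathbb{E}|\mathbb{A}_{s,t}|^{2})^{1/2}\lesssim |t-s|$ for the second. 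Chebyshev's inequality then yields
\begin{equation*}
\alpha(h,a)\leq \sup_{t-s\leq h}\frac{\mathbb{E}\,\left\Vert \mathbf{X}_{s,t}\right\Vert_{CC}^{2}}{a^{2}}\lesssim \frac{h}{a^{2}},
\end{equation*}
i.e.\ a Manstavicius bound with $\beta=1$ and $\gamma=2$. Note that the quadratic area bound is used only through this Cauchy--Schwarz step, which converts it into the linear estimate $\mathbb{E}|\mathbb{A}_{s,t}|\lesssim |t-s|$ that exactly matches the scaling of $|\mathbb{A}_{s,t}|^{1/2}$ in the CC norm.

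Feeding $\gamma/\beta=2$ into the criterion then gives finite $p$-variation of $\mathbf{X}$ in the CC metric for every $p>2$, and hence, via the equivalence recalled above (together with $\left\Vert \mathbf{X}_{s,t}\right\Vert_{CC}\gtrsim |X_{s,t}|$ and $\gtrsim |\mathbb{A}_{s,t}|^{1/2}$), $\left\Vert X\right\Vert_{p\text{-var}}+\left\Vert \mathbb{A}\right\Vert_{p/2\text{-var}}<\infty$ a.s. I expect the genuinely delicate point to be the verification of the hypotheses of the Manstavicius criterion rather than the moment bound itself: one must check that $\alpha(h,a)$, a supremum over \emph{all} starting states, is indeed controlled by the single-increment estimate. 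This is exactly where the strong Markov assumption together with left-invariance of $d_{CC}$ enters, reducing $d_{CC}(\mathbf{X}_t^{s,x},x)$ to $\left\Vert \mathbf{X}_{s,t}\right\Vert_{CC}$ and thereby making the moment hypotheses—stated for the increments—applicable uniformly in $x$.
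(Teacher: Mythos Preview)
Your proposal is correct and essentially identical to the paper's proof: both reduce to Manstavicius' criterion with $\beta=1$, $\gamma=2$ via Chebyshev, using Cauchy--Schwarz to turn $\mathbb{E}|\mathbb{A}_{s,t}|^{2}\lesssim |t-s|^{2}$ into $\mathbb{E}|\mathbb{A}_{s,t}|\lesssim |t-s|$. The only cosmetic difference is that the paper applies Chebyshev separately to $|X_{s,t}|$ and $|\mathbb{A}_{s,t}|^{1/2}$ and then combines, whereas you bound $\mathbb{E}\left\Vert \mathbf{X}_{s,t}\right\Vert_{CC}^{2}$ directly and apply Chebyshev once.
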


\begin{proof}
Consider $s,t\in \left[ 0,T\right] $ with $\left\vert t-s\right\vert \leq h$%
. Then%
\begin{eqnarray*}
\mathbb{P}\left( \left\vert X_{s,t}\right\vert \geq a\right) &\leq &\frac{1}{%
a^{2}}\mathbb{E}\left\vert X_{s,t}\right\vert ^{2}\lesssim \frac{h}{a^{2}},
\\
\mathbb{P}\left( \left\vert \mathbb{A}_{s,t}\right\vert ^{1/2}\geq a\right)
&=&\mathbb{P}\left( \left\vert \mathbb{A}_{s,t}\right\vert \geq a^{2}\right)
\leq \frac{1}{a^{2}}\mathbb{E}\left\vert \mathbb{A}_{s,t}\right\vert \\
&\leq &\frac{1}{a^{2}}\left( E\left\vert \mathbb{A}_{s,t}\right\vert
^{2}\right) ^{1/2}=\frac{h}{a^{2}}.
\end{eqnarray*}%
From properties of the Carnot--Caratheodory metric $d_{CC}(\mathbf{X}_{s},%
\mathbf{X}_{t})\asymp \left\vert X_{s,t}\right\vert +\left\vert \mathbb{A}%
_{s,t}\right\vert ^{1/2}$ and the above estimates yield%
\begin{equation*}
\mathbb{P}\left( d(\mathbf{X}_{s},\mathbf{X}_{t})\geq a\right) \lesssim 
\frac{h}{a^{2}}.
\end{equation*}%
Applying the result of Manstavicius (cf. Section \ref{sec:Man}) with $\beta
=1,\gamma =2$ 
we obtain a.s. finite $p$-variation of $\mathbf{X}$, any $p>\gamma /\beta =
2 $, with respect to $d_{CC}$ and the statement follows.
\end{proof}

As will be detailed in Section \ref{sec:LKformula_and_RPreg} below, this
criterion, combined with the expected signature of a $d$-dimensional L\'evy
process, provides an immediate way to recover Williams' rough path
regularity result on L\'evy process (Theorem \ref{theo:will}) and then
significantly larger classes of jump diffusions. With the confidence that
there exists large classes of random c\'adl\'ag rough paths, we continue to
developt the deterministic theory. 

\bigskip

\section{The minimal jump extension of cadlag rough paths}

In view of Theorem \ref{Lyonsfirsttheorem}, it is natural to ask for such
extension theorem for c\'{a}dl\'{a}g rough paths. (For
continuous paths in Young regime, extension is uniquely given by $n$-fold
iterated young integrals.) However, in presence of jumps the uniqueness
part of Lyons' extension theorem fails, as already seen by elementary examples 
of  finite variation
paths.

\begin{example}
Let $p=1,N=2$ and consider the trivial path $X\equiv 0\in W^{1}\left( [0,1],{%
\mathbb{R}}^{d}\right) $, identified with $\mathbf{X}\equiv \left(
1,0\right) \in W^{1}\left( [0,1],G^{(1)}\right) $. Consider a non-trivial $%
so\left( d\right) $-valued cadlag path $a(t)$, of pure finite jump type,
i.e. 
\begin{equation*}
a_{0,t}=\sum_{\substack{ s\in (0,t]  \\ \text{(finite)}}}\Delta a_{s}.
\end{equation*}%
Then two possible lifts of\textbf{\ }$\mathbf{X}$ are given by 
\begin{equation*}
\mathbf{X}^{\left( 2\right) }\equiv \left( 1,0,0\right) ,\,\mathbf{\tilde{X}}%
_{t}^{\left( 2\right) }\equiv \left( 1,0,a_{t}-a_{0}\right) \in \mathcal{W}_{%
\mathrm{g}}^{1\text{-var}}=W^{1}\left( [0,1],G^{(2)}\right) .
\end{equation*}
\end{example}

We can generalize this example as follows. 

\begin{example}
Again $p=1,N=2$ and consider $X\in W^{1\text{-var}}$. Then%
\begin{equation*}
\mathbf{X}_{t}^{\left( 2\right) }:=\left( 1,X_{t},\mathbb{X}_{t}^{\mathrm{M}%
}\right) \in \mathcal{W}_{\mathrm{g}}^{1\text{-var}}
\end{equation*}%
and another choice is given by 
\begin{equation*}
\mathbf{\tilde{X}}_{t}^{\left( 2\right) }\equiv \left( 1,X_{t},\mathbb{X}%
_{t}^{M}+a_{t}-a_{0}\right) \in \mathcal{W}_{\mathrm{g}}^{1\text{-var}},
\end{equation*}%
whenever, $a_{t}\in \mathfrak{so}(d)$ is piecewise constant, with finitely
many jumps $\Delta a_{t}\neq 0$.
\end{example}

Note that, among all such lifts $\mathbf{\tilde{X}}_{t}^{\left( 2\right) }$,
the $\mathbf{X}_{t}^{\left( 2\right) }$ is minimal in the sense that $\log
^{(2)}\Delta \mathbf{X}_{t}^{\left( 2\right) }$ has no $2$-tensor component,
and in fact, 
\begin{equation*}
\log ^{(2)}\Delta \mathbf{X}_{t}^{\left( 2\right) }=\Delta X_{t}.
\end{equation*}%
We have the following far-reaching extension of this example. Note that we
consider $\mathfrak{g}^{n}\supset \mathfrak{g}^{m}$ in the obvious way
whenever $n\geq m$.

\begin{theorem}[Minimal jump extension]
\label{minimaljumpextension}Let $1\leq p<\infty $ and $\mathbb{N}\ni n>m:=%
\left[ p\right] $. A cadlag rough path $\mathbf{X}^{\left( m\right) }\in 
\mathcal{W}_{\mathrm{g}}^{p}=W^{p}\left( \left[ 0,T\right] ,G^{(m)}\right) $
admits an extension to a path $\mathbf{X}^{\left( n\right) }$ of with values 
$G^{(n)}\subset T^{(n)}$, unique in the class of $G^{(n)}$-valued path
starting from $1$ and of finite $p$-variation with respect to CC metric on $%
G^{(n)}$ subject to the additional constraint

\begin{equation}
\log ^{\left( n\right) }\Delta \mathbf{X}_{t}^{\left( n\right) }=\log
^{\left( m\right) }\Delta \mathbf{X}_{t}^{\left( m\right) }.
\label{MinJumpEx}
\end{equation}
\newline
\end{theorem}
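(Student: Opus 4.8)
The plan is to reduce the cadlag extension problem to the continuous extension theorem (Theorem \ref{Lyonsfirsttheorem}) by an interpolation and time-change argument in the spirit of Marcus and Williams, the constraint \eqref{MinJumpEx} serving precisely to pin down the behaviour at jumps and thereby restore the uniqueness that fails in general. Throughout I use that the canonical projection $G^{(n)}\to G^{(m)}$ is a group homomorphism whose differential is the Lie-algebra projection $\mathfrak{g}^{(n)}\to\mathfrak{g}^{(m)}$, and hence restricts to the identity on $\exp\mathfrak{g}^{(m)}$.

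\textbf{Existence.} First I would open up the jumps of $\mathbf{X}^{(m)}$. At each jump time $t$ put $w_t:=\log^{(m)}\Delta_t\mathbf{X}^{(m)}\in\mathfrak{g}^{(m)}$ and insert an interval of length $\ell_t>0$, with $\sum_t\ell_t<\infty$, on which the path travels along the one-parameter subgroup $\theta\mapsto\mathbf{X}^{(m)}_{t-}\otimes\exp(\theta w_t)$. This yields a continuous path $\tilde{\mathbf{X}}^{(m)}$ on a finite interval $[0,\tilde T]$ together with a continuous nondecreasing surjection $\tau:[0,\tilde T]\to[0,T]$ collapsing the inserted intervals to the corresponding jump times. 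The key lemma here is a Williams-type estimate $\Vert\tilde{\mathbf{X}}^{(m)}\Vert_{p\text{-var};[0,\tilde T]}\lesssim\Vert\mathbf{X}^{(m)}\Vert_{p\text{-var};[0,T]}$, which follows from superadditivity of $p$-variation together with the fact that a one-parameter-subgroup segment has $p$-variation equal to its CC-length, so that replacing a jump by such a segment does not inflate the $p$-variation. I would then apply Theorem \ref{Lyonsfirsttheorem} to obtain the continuous Lyons extension $\tilde{\mathbf{X}}^{(n)}\in C^{p\text{-var}}([0,\tilde T],G^{(n)})$, and define $\mathbf{X}^{(n)}_t:=\tilde{\mathbf{X}}^{(n)}_{\sigma(t)}$ for a right inverse $\sigma$ of $\tau$. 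Collapsing cannot increase $p$-variation and commutes with the projection to $G^{(m)}$, so $\mathbf{X}^{(n)}$ is a cadlag finite-$p$-variation extension of $\mathbf{X}^{(m)}$.

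The constraint \eqref{MinJumpEx} is then automatic. The restriction of the Lyons extension $\tilde{\mathbf{X}}^{(n)}$ to an inserted interval is itself the Lyons extension of the restricted path there, namely of the one-parameter subgroup generated by $w_t\in\mathfrak{g}^{(m)}\subset\mathfrak{g}^{(n)}$. Since $\theta\mapsto\exp^{(n)}(\theta w_t)$ is a finite-variation $G^{(n)}$-valued path whose projection to $G^{(m)}$ is exactly that subgroup, the uniqueness clause of Theorem \ref{Lyonsfirsttheorem} forces $\tilde{\mathbf{X}}^{(n)}$ to equal it on the inserted interval. Hence the collapsed jump is $\Delta_t\mathbf{X}^{(n)}=\exp^{(n)}(w_t)$, i.e. $\log^{(n)}\Delta_t\mathbf{X}^{(n)}=w_t=\log^{(m)}\Delta_t\mathbf{X}^{(m)}$.

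\textbf{Uniqueness.} Let $\mathbf{Y}^{(n)}$ be any competitor, a $G^{(n)}$-valued finite-$p$-variation path from $1$ projecting to $\mathbf{X}^{(m)}$ and obeying \eqref{MinJumpEx}. Projecting to $\mathbf{X}^{(m)}$ fixes its jumps up to level $m$, and \eqref{MinJumpEx} then gives $\log^{(n)}\Delta_t\mathbf{Y}^{(n)}=\log^{(m)}\Delta_t\mathbf{X}^{(m)}=\log^{(n)}\Delta_t\mathbf{X}^{(n)}$, so $\mathbf{X}^{(n)}$ and $\mathbf{Y}^{(n)}$ have identical jumps. Opening up the jumps of $\mathbf{Y}^{(n)}$ by the same one-parameter-subgroup segments produces a continuous finite-$p$-variation path $\tilde{\mathbf{Y}}^{(n)}$ on $[0,\tilde T]$ projecting to the same $\tilde{\mathbf{X}}^{(m)}$. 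Both $\tilde{\mathbf{X}}^{(n)}$ and $\tilde{\mathbf{Y}}^{(n)}$ are thus continuous finite-$p$-variation extensions of $\tilde{\mathbf{X}}^{(m)}$, so by the uniqueness in Theorem \ref{Lyonsfirsttheorem} they coincide; collapsing gives $\mathbf{X}^{(n)}=\mathbf{Y}^{(n)}$. The technical heart, and the main obstacle, is making rigorous the two soft-looking steps in the presence of possibly countably many accumulating jumps: constructing $\tau$ and its right inverse so that the opened-up path is genuinely continuous on a finite interval, and proving the uniform $p$-variation estimate for $\tilde{\mathbf{X}}^{(m)}$. The latter is the crucial Williams-type bound; the argument is a superadditivity/geodesic estimate, but the bookkeeping across the inserted intervals, and the interplay between partitions of $[0,T]$ and of $[0,\tilde T]$, is where the real work lies, everything else being a clean reduction to the continuous theory.
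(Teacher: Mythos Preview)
Your existence argument is essentially the paper's: open up jumps along one-parameter subgroups $\theta\mapsto\exp(\theta w_t)$, invoke the Williams-type $p$-variation bound for the resulting continuous path, apply Lyons' extension, collapse back, and read off \eqref{MinJumpEx} from the fact that the Lyons extension of a one-parameter subgroup in $G^{(m)}$ is the corresponding one-parameter subgroup in $G^{(n)}$. The paper states the interpolation slightly differently (linear interpolation of $\log^{(m)}$), but then uses Campbell--Baker--Hausdorff to reduce exactly to your formulation, so this is the same proof.

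Your uniqueness argument, however, is genuinely different from the paper's. You open up the competitor $\mathbf{Y}^{(n)}$ by the same segments and appeal again to the uniqueness clause of Theorem~\ref{Lyonsfirsttheorem}. This is correct, but note that it requires the Williams-type $p$-variation bound at level $n$, not just level $m$; this goes through because the jump constraint forces $\log^{(n)}\Delta_t\mathbf{Y}^{(n)}\in\mathfrak{g}^{(m)}$, so the inserted segments still satisfy $\|\exp^{(n)}(\lambda w)\|_{CC}^p\lesssim\lambda\|\exp^{(n)}(w)\|_{CC}^p$ (using $p\geq m$ and that $w$ has no components above level $m$). The paper instead gives a short algebraic argument: assuming w.l.o.g.\ $n=m+1$, write $S_t:=\mathbf{Z}^{(n)}_t\otimes(\mathbf{Y}^{(n)}_t)^{-1}=1+(M_t-N_t)$ with $M_t-N_t$ in the top tensor level, hence central; show $S$ has finite $p$-variation and, from the matching jumps, is continuous; conclude $M-N$ is a continuous $(\mathbb{R}^d)^{\otimes n}$-valued path of finite $p/n<1$ variation, hence constant. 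The paper's route is slicker and avoids re-running the Williams estimate, while yours has the virtue of being symmetric with the existence proof and making the role of \eqref{MinJumpEx} transparent as exactly the data needed to canonically ``continuize'' any competitor.
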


For the proof, we will adopt the Marcus / Willliams idea of introducing an
artificial additional time interval at each jump times of $\mathbf{X}^m$,
during which the jump will be suitably traversed. Since $\mathbf{X}^m$ has
countably infinite many jumps, we number the jumps as follows. Let $t_1$ is
such that 
\begin{equation*}
||\Delta_{t_1}\mathbf{X}^{(m)}||_{CC} = \sup_{t \in [0,T]}\{ ||\Delta_{t}%
\mathbf{X}^{(m)}||_{CC} \}
\end{equation*}
Similarly, define $t_2$ with 
\begin{equation*}
||\Delta_{t_2}\mathbf{X}^{(m)}||_{CC} = \sup_{t \in [0,T], t\neq t_1}\{
||\Delta_{t}\mathbf{X}^{(m)}||_{CC} \}
\end{equation*}%
and so on. Note that the suprema are always attained and if $||\Delta_t 
\mathbf{X}^{(m)}||_{CC} \neq 0 $, then $t = t_k $ for some $k$. Indeed, it
readily follows from the c\'adl\'ag (or regulated) property that for any $%
\epsilon > 0$, there are only finitely many jumps with $||\Delta_t \mathbf{X}%
^{(m)} ||_{CC} > \epsilon $.

Choose any sequence $\delta_k > 0$ such that $\sum_k \delta_k < \infty$. 
Starting from $t_1$, we recursively introduce an interval of length $%
\delta_k $ at $t_n$, during which the jump $\Delta_{t_k}\mathbf{X}^{(m)}$ is
traversed suitably, to get a continuous curve $\tilde{\mathbf{X}}^{(m)}$ on
the (finite) interval $[0, \tilde{T}]$ where 

\begin{equation*}
\tilde T = T+ \sum_{k=1}^\infty \delta_k < \infty.
\end{equation*}


Taking motivation from simple examples, in order to get minimal jump
extensions, we choose the \textquotedblleft best possible" curve traversing
the jump, so that it doesn't create additional terms in $\log ^{\left(
n\right) }\Delta \mathbf{X}_{t}^{\left( n\right) }$. If $[a,a+\delta
_{k}]\subset \lbrack 0,\tilde{T}]$, is the jump segment corresponding to the 
$k^{\text{th}}$ jump, define 
\begin{equation*}
\gamma _{t}^{k}=\exp ^{(m)}\left( \frac{a+\delta _{k}-t}{\delta _{k}}\log
^{(m)}\mathbf{X}_{t_{k}-}^{(m)}+\frac{t-a}{\delta _{k}}\log ^{(m)}\mathbf{X}%
_{t_{k}}^{(m)}\right)
\end{equation*}

\begin{lemma}
\label{traversebound} $\gamma ^{k}:[a,a+\delta _{k}]\rightarrow G^{(m)}$ is
a continuous path of finite $p$ variation w.r.t. the CC metric and we have
the bound 
\begin{equation}
||\gamma ^{k}||_{p\text{-var;}[a,a+\delta _{k}]}^{p}\lesssim ||\Delta
_{t_{k}}\mathbf{X}^{(m)}||^{p}.
\end{equation}
\end{lemma}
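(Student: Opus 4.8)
The plan is to reparametrise to the unit interval and reduce the whole statement to a single H\"older-type estimate of the increments in the CC metric. Since $p$-variation is invariant under increasing reparametrisations, I may replace $[a,a+\delta_k]$ by $[0,1]$ and write, for $u\in[0,1]$,
\[
\gamma_u=\exp^{(m)}\!\big(\xi(u)\big),\qquad\xi(u)=(1-u)A+uB,
\]
with $A=\log^{(m)}\mathbf{X}^{(m)}_{t_k-}$ and $B=\log^{(m)}\mathbf{X}^{(m)}_{t_k}$. Continuity of $\gamma$ into $(G^{(m)},d_{CC})$ is immediate: $\exp^{(m)}$ is an analytic diffeomorphism of $\mathfrak{g}^{(m)}$ onto $G^{(m)}$, $u\mapsto\xi(u)$ is affine, and $d_{CC}$ induces the manifold topology. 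Setting $g_0=\mathbf{X}^{(m)}_{t_k-}$, $g_1=\mathbf{X}^{(m)}_{t_k}$ and $\Delta:=\Delta_{t_k}\mathbf{X}^{(m)}=g_0^{-1}\otimes g_1$, the definition of $d_{CC}$ gives $\|\Delta\|_{CC}=d_{CC}(g_0,g_1)$, so the target becomes $\|\gamma\|^p_{p\text{-var};[0,1]}\lesssim\|\Delta\|_{CC}^p$.

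Everything reduces to the claim that, for $0\le u\le v\le1$, one has the estimate $d_{CC}(\gamma_u,\gamma_v)\lesssim|v-u|^{1/m}\,\|\Delta\|_{CC}$, with a constant depending only on $p$ and $d$. Granting it, for any partition $\mathcal{P}$ of $[0,1]$,
\[
\sum_{[u,v]\in\mathcal{P}}d_{CC}(\gamma_u,\gamma_v)^p\lesssim\|\Delta\|_{CC}^p\sum_{[u,v]\in\mathcal{P}}|v-u|^{p/m}\le\|\Delta\|_{CC}^p,
\]
where the last step uses $p/m=p/[p]\ge1$, so $|v-u|^{p/m}\le|v-u|$ and the interval lengths sum to $1$; taking the supremum over $\mathcal{P}$ gives the assertion. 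It is essential that the implied constant be \emph{uniform in the base point} $g_0$, because in the application to Theorem \ref{minimaljumpextension} one sums over all jumps $t_k$ and invokes $\sum_k\|\Delta_{t_k}\mathbf{X}^{(m)}\|_{CC}^p\le\|\mathbf{X}^{(m)}\|_{p\text{-var}}^p<\infty$.

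To prove the estimate I would combine the homogeneous-norm equivalence $\|\exp^{(m)}(\zeta)\|_{CC}\asymp\sum_{j=1}^m|\pi_j\zeta|^{1/j}$ with the Baker--Campbell--Hausdorff formula applied to
\[
\log^{(m)}\!\big(\gamma_u^{-1}\otimes\gamma_v\big)=\mathrm{BCH}\big(-\xi(u),\,\xi(v)\big).
\]
The degree-one part is exactly $(v-u)\,\Delta_{t_k}X$, and $|\Delta_{t_k}X|=|\pi_1\log^{(m)}\Delta|\le\|\Delta\|_{CC}$. The goal is then the degreewise bound
\[
\big|\pi_j\log^{(m)}\!\big(\gamma_u^{-1}\otimes\gamma_v\big)\big|\lesssim|v-u|\,\|\Delta\|_{CC}^{\,j},\qquad2\le j\le m,
\]
which upon taking $j$-th roots and summing yields the estimate (the exponent $1/m$ being the worst since $|v-u|\le1$). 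In the step-two case $m=2$ (that is $p\in[2,3)$, the regime of primary interest here) one has the clean identity $\gamma_u^{-1}\otimes\gamma_v=\exp^{(2)}\!\big((v-u)\log^{(2)}\Delta\big)$: indeed $[\xi(u),\xi(v)]=(v-u)[A,B]$, so both the degree-one and degree-two components of the log-increment equal $(v-u)$ times those of $\log^{(2)}\Delta$, making the bound transparent.

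The main obstacle is exactly this base-point uniformity for general $m\ge3$. Since $A=\log^{(m)}g_0$ is unbounded, a naive estimate of the nested BCH brackets produces coefficients growing with $\|A\|$, whereas the right-hand side sees only $\Delta$. The key structural point, already visible in the step-two identity, is that the $A$-dependence reorganises itself: writing $\xi(s)=A+sC$ with $C=B-A$, one has
\[
\log^{(m)}\!\big(\gamma_u^{-1}\otimes\gamma_v\big)=P\big(A+uC,(v-u)C\big),\qquad\log^{(m)}\Delta=P(A,C),
\]
for the single Lie series $P(X,Y)=\mathrm{BCH}(-X,X+Y)$. Since $P(X,0)=0$, the log-increment is $O(v-u)$, and the task is to show that, degree by degree, the brackets of $A$ and $C$ in the first-order (in $Y$) part of $P$ can be re-expressed through $\log^{(m)}\Delta=P(A,C)$ and thereby bounded by $\|\Delta\|_{CC}^{\,j}$ uniformly in $A$. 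Carrying this cancellation through all degrees --- e.g.\ by an induction on the grading that peels off one bracket at a time --- is the only genuinely technical part of the argument.
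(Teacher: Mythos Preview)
For the case $m=2$ of primary interest your argument is correct and, once you arrive at the identity $\gamma_u^{-1}\otimes\gamma_v=\exp^{(2)}\bigl((v-u)\log^{(2)}\Delta\bigr)$, it coincides with the paper's. The paper reaches this point by a cleaner route that you should know: rather than expanding $\mathrm{BCH}(-\xi(u),\xi(v))$ degree by degree, it claims via BCH that the whole curve is a \emph{left translate of a one-parameter subgroup},
\[
\gamma_t=\exp^{(m)}\bigl((1-t)A+tB\bigr)=\exp^{(m)}(A)\otimes\exp^{(m)}\bigl(t\log^{(m)}\Delta\bigr),
\]
so by left-invariance of $d_{CC}$ one may assume $A=0$ (i.e.\ $x=1$). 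Then $\gamma_{s,t}=\exp^{(m)}\bigl((t-s)\log^{(m)}\Delta\bigr)$ immediately, and the elementary bound $\|\exp^{(m)}(\lambda z)\|_{CC}^{p}\lesssim\lambda\,\|\exp^{(m)}z\|_{CC}^{p}$ for $\lambda\in[0,1]$ (valid because $p\ge m$, so the worst homogeneity exponent satisfies $\lambda^{p/m}\le\lambda$) finishes the estimate. This left-translation idea is exactly the structural explanation you were seeking for why the base point $A$ should drop out, and it replaces your proposed bracket-by-bracket induction entirely.

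One caveat for general $m\ge3$: the displayed identity above is a genuine BCH consequence only when $m\le2$. Already in the free step-$3$ algebra with $A=e_1$, $B=e_2$ the two sides of that identity differ at level $3$ by $\tfrac{t(1-t)}{12}\bigl([e_1,[e_1,e_2]]-[e_2,[e_1,e_2]]\bigr)$, so the reduction to $x=1$ does not go through verbatim. Your instinct that the higher-step case requires a separate argument is therefore well founded; neither your sketch nor the paper's shortcut settles $m\ge3$ as written. For $m=2$, however, both proofs are complete and equivalent.
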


\begin{proof}
Omit $k$. W.l.o.g. we can assume that $\gamma _{t}=\exp ^{(m)}((1-t)\log
^{(m)}x+t\log ^{(m)}y)$ for $t\in \lbrack 0,1]$ for some $x,y\in G^{(m)}$.
Also, as an application of Campbell-Baker-Hausdorff formula, 
\begin{equation*}
\exp ^{(m)}(\log ^{(m)}x)\otimes \exp ^{(m)}(t\log ^{(m)}(x^{-1}\otimes
y))=\exp ^{(m)}((1-t)\log ^{(m)}x+t\log ^{(m)}y)
\end{equation*}%
so that we can assume $x=1$. At this point, we have 
\begin{equation*}
\gamma _{s,t}=\exp ^{(m)}((t-s)\log ^{(m)}y)
\end{equation*}%
Also, since $p\geq m$, it is easy to check that for $z\in \mathfrak{g}^{m}$
and $\lambda \in \lbrack 0,1]$, 
\begin{equation*}
||\exp ^{(m)}(\lambda x)||^{p}\lesssim \lambda ||\exp ^{(m)}(z)||^{p}
\end{equation*}%
So, 
\begin{equation*}
||\gamma _{s,t}||^{p}\lesssim (t-s)||y||^{p}
\end{equation*}%
which finishes the claim.
\end{proof}

\begin{lemma}
The curve $\tilde{\mathbf{X}}^{(m)}:[0,\tilde{T}]\rightarrow G^{(m)}$
constructed as above from $\mathbf{X}^{\left( m\right) }\in W^{p}\left( %
\left[ 0,T\right] ,G^{(m)}\right) $ is a continuous path of finite $p$
variation w.r.t. the CC metric and we have the bound 
\begin{equation}
||\tilde{\mathbf{X}}^{(m)}||_{p\text{-var;}[0,\tilde{T}]}\lesssim ||{\mathbf{%
X}}^{(m)}||_{p\text{-var;}[0,T]}
\end{equation}
\end{lemma}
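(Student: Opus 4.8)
The plan is to produce a single \emph{superadditive control} $\tilde\omega$ on the simplex $\{0\le s\le t\le \tilde T\}$ with $d_{CC}(\tilde{\mathbf X}^{(m)}_s,\tilde{\mathbf X}^{(m)}_t)^p\lesssim \tilde\omega(s,t)$ and $\tilde\omega(0,\tilde T)\lesssim \|\mathbf X^{(m)}\|_{p\text{-var};[0,T]}^p$. Once this is in hand, superadditivity gives $\sum_{[u,v]\in\mathcal P}d_{CC}^p\le\sum_{[u,v]\in\mathcal P}\tilde\omega(u,v)\le\tilde\omega(0,\tilde T)$ for every partition $\mathcal P$, so both finite $p$-variation and the stated estimate follow at once by taking the supremum. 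Continuity of $\tilde{\mathbf X}^{(m)}$ is immediate from the construction: on each inserted interval it equals the continuous curve $\gamma^k$ of Lemma \ref{traversebound}, matching $\mathbf X^{(m)}_{t_k-}$ and $\mathbf X^{(m)}_{t_k}$ at the endpoints, and since every jump has been replaced in this way, the regulated property (only finitely many jumps of CC-size exceeding any $\epsilon$) forces genuine continuity.

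To build $\tilde\omega$, I would introduce the monotone surjection $\tau:[0,\tilde T]\to[0,T]$ that collapses each inserted interval $I_k:=[a_k,a_k+\delta_k]$ to the jump time $t_k$ and is the identity (up to the accumulated shift) off the $I_k$, and set $\omega(s,t):=\|\mathbf X^{(m)}\|_{p\text{-var};[s,t]}^p$, which is superadditive on $[0,T]$. Then define
\[
\tilde\omega(s,t):=\omega(\tau(s),\tau(t))+\sum_k \theta_k(s,t),\qquad \theta_k(s,t):=C_0\,\frac{|[s,t]\cap I_k|}{\delta_k}\,\|\Delta_{t_k}\mathbf X^{(m)}\|^p,
\]
with $C_0$ the constant from Lemma \ref{traversebound} and $|\cdot|$ denoting length. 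The map $(s,t)\mapsto\omega(\tau(s),\tau(t))$ is superadditive because $\tau$ is nondecreasing and $\omega$ is superadditive, each $\theta_k$ is additive in the overlap length, and the series converges since $\sum_k\theta_k(0,\tilde T)=C_0\sum_k\|\Delta_{t_k}\mathbf X^{(m)}\|^p<\infty$; hence $\tilde\omega$ is superadditive.

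The crux — and the step I expect to be the main obstacle — is verifying $d_{CC}(\tilde{\mathbf X}^{(m)}_s,\tilde{\mathbf X}^{(m)}_t)^p\le 3^{p-1}\tilde\omega(s,t)$ for all $s\le t$, the difficulty being that an interval may \emph{cross} arbitrarily many inserted intervals, so one cannot naively split and sum. When $[s,t]$ lies inside a single $I_k$ this is exactly Lemma \ref{traversebound}. In the crossing case, with $s$ in the interior of some $I_j$ and $t$ in the interior of some $I_k$ (the cases where an endpoint lies in a non-inserted region being easier), the triangle inequality for $d_{CC}$ gives
\[
d_{CC}(\tilde{\mathbf X}^{(m)}_s,\tilde{\mathbf X}^{(m)}_t)\le d_{CC}(\tilde{\mathbf X}^{(m)}_s,\mathbf X^{(m)}_{t_j})+d_{CC}(\mathbf X^{(m)}_{t_j},\mathbf X^{(m)}_{t_k-})+d_{CC}(\mathbf X^{(m)}_{t_k-},\tilde{\mathbf X}^{(m)}_t).
\]
The decisive observation is that the \emph{middle} term is a genuine increment of the original c\`adl\`ag path: at the right end of $I_j$ the curve is $\mathbf X^{(m)}_{t_j}$ and at the left end of $I_k$ it is $\mathbf X^{(m)}_{t_k-}$, so this term is bounded by $\|\mathbf X^{(m)}\|_{p\text{-var};[t_j,t_k]}=\omega(\tau(s),\tau(t))^{1/p}$ and carries no inserted-interval cost. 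The two outer terms are the leftover tail of $\gamma^j$ and head of $\gamma^k$, dominated by $\theta_j(s,t)^{1/p}$ and $\theta_k(s,t)^{1/p}$ through Lemma \ref{traversebound}. Raising to the $p$-th power with $(x+y+z)^p\le 3^{p-1}(x^p+y^p+z^p)$ closes the estimate.

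It then remains to evaluate $\tilde\omega(0,\tilde T)=\omega(0,T)+C_0\sum_k\|\Delta_{t_k}\mathbf X^{(m)}\|^p$ and to invoke the standard fact that jumps of a finite $p$-variation path are $p$-summable, $\sum_k\|\Delta_{t_k}\mathbf X^{(m)}\|^p\le\|\mathbf X^{(m)}\|_{p\text{-var};[0,T]}^p$ (isolate each of the finitely many large jumps in a thin partition interval and pass to the left limit). This yields $\tilde\omega(0,\tilde T)\le(1+C_0)\|\mathbf X^{(m)}\|_{p\text{-var};[0,T]}^p$, and therefore $\|\tilde{\mathbf X}^{(m)}\|_{p\text{-var};[0,\tilde T]}^p\le 3^{p-1}(1+C_0)\|\mathbf X^{(m)}\|_{p\text{-var};[0,T]}^p$, which is the assertion.
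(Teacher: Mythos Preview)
Your argument is correct and rests on the same decomposition as the paper's own proof: the time-change $\tau$ collapsing inserted intervals, the case split according to whether the endpoints lie in inserted jump segments, the triangle inequality into at most three pieces (two jump tails plus one genuine increment of $\mathbf X^{(m)}$), Lemma~\ref{traversebound} for the jump pieces, and the $p$-summability of jumps at the end. The paper derives the uniform bound $\|\tilde{\mathbf X}_{s,t}\|^p\lesssim\omega(s',t')+\|\Delta_{s'}\mathbf X\|^p+\|\Delta_{t'}\mathbf X\|^p$ and then sums over an arbitrary partition; your repackaging of the same ingredients into an explicit superadditive control $\tilde\omega=\omega\circ\tau+\sum_k\theta_k$, with $\theta_k$ proportional to the overlap length $|[s,t]\cap I_k|$, is a cleaner way to reach the same conclusion and in particular handles partitions with arbitrarily many points inside a single inserted interval $I_k$ without any extra bookkeeping.

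One small point of precision: to dominate the leftover tail of $\gamma^j$ and head of $\gamma^k$ by $\theta_j(s,t)^{1/p}$ and $\theta_k(s,t)^{1/p}$ you need the \emph{scaled} estimate $\|\gamma^k_{u,v}\|^p\lesssim\tfrac{v-u}{\delta_k}\|\Delta_{t_k}\mathbf X^{(m)}\|^p$, not merely the total-interval bound stated in Lemma~\ref{traversebound}. That scaled estimate is exactly what the \emph{proof} of Lemma~\ref{traversebound} establishes (via $\|\exp^{(m)}(\lambda z)\|^p\lesssim\lambda\|\exp^{(m)}(z)\|^p$ for $\lambda\in[0,1]$, $p\ge m$), so you should cite the proof rather than the lemma statement at that step.
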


\begin{proof}
For simpler notation, omit $m$ and write $\tilde{\mathbf{X}},\mathbf{X}$.
The curve $\tilde{\mathbf{X}}$ is continuous by construction. To see the
estimate, %
%
introduce $\omega (s,t)=||\mathbf{X}||_{p-var,[s,t]}^{p}$ with the notation $%
\omega (s,t-):=||\mathbf{X}||_{p\text{-var,}[s,t)}^{p}$. Note that $\omega
(s,t)$ is superadditive. Call $t^{\prime }$ the preimage of $t\in \lbrack 0,%
\tilde{T}]$ under the time change from $\left[ 0,T\right] \rightarrow
\lbrack 0,\tilde{T}]$. Note that $[0,\tilde{T}]$ contains (possibly
countably many) jump segements $I_{n}$ of the form $[a,a+\delta _{k})$. 
Let us agree that point in these jump segments are \textquotedblleft red"
and all remaining points are \textquotedblleft blue". 
Note that jump segements correspond to one point in the pre-image. For $%
0\leq s<t\leq \tilde{T}$, there are following possiblities;

\begin{itemize}
\item Both $s,t$ are blue, in which case $||\tilde{\mathbf{X}}_{s,t}||^{p}=||%
\mathbf{X}_{s^{\prime },t^{\prime }}||^{p}\leq ||\mathbf{X}||_{p\text{-var,}%
[s^{\prime },t^{\prime }]}^{p}=\omega (s^{\prime },t^{\prime })$

\item Both $s,t$ are red and in same jump segment $[a,a+\delta _{k})$, in
which case 
\begin{equation*}
||\tilde{\mathbf{X}}_{s,t}||^{p}\leq ||\gamma ||_{p\text{-var;}[a,a+\delta
_{k}]}^{p}
\end{equation*}

\item Both $s,t$ are red but in different jump segment $s\in \lbrack
a,a+\delta _{k})$ and $t\in \lbrack b,b+\delta _{l})$, in which case $%
s^{\prime }=(a+\delta _{k})^{\prime }$, $t^{\prime }=(b+\delta _{l})^{\prime
}$ and

\begin{eqnarray*}
\left\Vert \mathbf{\tilde{X}}_{s,t}\right\Vert ^{p} &\leq &3^{p-1}\left(
\left\Vert \mathbf{\tilde{X}}_{s,a+\delta _{k}}\right\Vert ^{p}+\left\Vert 
\mathbf{\tilde{X}}_{a+\delta _{k},b}\right\Vert ^{p}+\left\Vert \mathbf{%
\tilde{X}}_{b,t}\right\Vert ^{p}\right) . \\
&\leq &3^{p-1}\left( ||\gamma ^{k}||_{p\text{-var;}[a,a+\delta _{k}]}^{p}+||%
\mathbf{X}_{s^{\prime },t^{\prime }-}||^{p}+||\gamma ^{l}||_{p\text{-var,}%
[b,b+\delta _{l}]}^{p}\right) \\
&\leq &3^{p-1}\left( ||\gamma ^{k}||_{p\text{-var;}[a,a+\delta
_{k}]}^{p}+\omega (s^{\prime },t^{\prime }-)+||\gamma ^{l}||_{p\text{-var,}%
[b,b+\delta _{l}]}^{p}\right)
\end{eqnarray*}

\item $s$ is blue and $t\in \lbrack a,a+\delta _{k})$ is red, in which case 
\begin{eqnarray*}
\left\Vert \mathbf{\tilde{X}}_{s,t}\right\Vert ^{p} &\leq &2^{p-1}\left(
\left\Vert \mathbf{\tilde{X}}_{s,a}\right\Vert ^{p}+\left\Vert \mathbf{%
\tilde{X}}_{a,t}\right\Vert ^{p}\right) \\
&\leq &2^{p-1}\left( \omega (s^{\prime },t^{\prime }-)+||\gamma ^{k}||_{p%
\text{-var;}[a,a+\delta _{k}]}^{p}\right)
\end{eqnarray*}

\item $s\in \lbrack a,a+\delta _{k})$ is red and $t$ is blue, then

\begin{eqnarray*}
\left\Vert \mathbf{\tilde{X}}_{s,t}\right\Vert ^{p} &\leq &2^{p-1}\left(
\left\Vert \mathbf{\tilde{X}}_{s,a+\delta _{k}}\right\Vert ^{p}+\left\Vert 
\mathbf{\tilde{X}}_{a+\delta _{k},t}\right\Vert ^{p}\right) \\
&\leq &2^{p-1}\left( ||\gamma ^{k}||_{p\text{-var;}[a,a+\delta
_{k}]}^{p}+\omega (s^{\prime },t^{\prime })\right)
\end{eqnarray*}
\end{itemize}

In any case, by using Lemma \ref{traversebound}, we see that 
\begin{equation*}
\left\Vert \mathbf{\tilde{X}}_{s,t}\right\Vert ^{p} \lesssim
\omega(s^{\prime },t^{\prime }) + ||\Delta_{s^{\prime }} \mathbf{X}||^p +
||\Delta_{t^{\prime }} \mathbf{X}||^p
\end{equation*}%
which implies for any partition $\mathcal{P}$ of $[0, \tilde{T}]$, 
\begin{eqnarray*}
\sum_{[s,t] \in \mathcal{P}} \left\Vert \mathbf{\tilde{X}}_{s,t}\right\Vert
^{p} &\lesssim & \sum_{[s^{\prime },t^{\prime }] } \omega(s^{\prime
},t^{\prime }) + ||\Delta_{s^{\prime }} \mathbf{X}||^p + ||\Delta_{t^{\prime
}} \mathbf{X}||^p \\
&\lesssim & \omega(0,T) + \sum_{0 < s \leq T } ||\Delta_{s} \mathbf{X}||^p
\end{eqnarray*}

Finally, note that 
\begin{equation*}
\sum_{0<s\leq T}||\Delta _{s}\mathbf{X}||^{p}\leq ||\mathbf{X}||_{p\text{%
-var;}[0,T]}^{p}
\end{equation*}%
which proves the claim.
\end{proof}

\begin{proof}[Proof of Theorem \protect\ref{minimaljumpextension}]
Since $\tilde{\mathbf{X}}^{(m)}$ is continuous path of finite $p$-variation
on $[0,\tilde{T}]$, from Theorem \ref{Lyonsfirsttheorem}, it admits an
extension $\tilde{\mathbf{X}}^{(n)}$ taking values in $G^{(n)}$ starting
from $1$ for all $n>m$. We emphasize that $S = \tilde{\mathbf{X}}^{(n)}$ can
be obtained as linear RDE solution to 
\begin{equation}
d {S}= {S}\otimes d \mathbf{X}^{(m)},\, {S}_{0}=1 \in T^{(n)}.
\label{equ:sigviaTC}
\end{equation}
We claim that for each jump segment $[a,a+\delta _{k}]$,%
\begin{equation*}
\tilde{\mathbf{X}}_{a,a+\delta _{k}}^{(n)}=\exp ^{(n)}(\log ^{(m)}(\Delta
_{t_{k}}\mathbf{X}^{(m)}))
\end{equation*}%
which amounts to proving that if $\gamma _{t}=\exp ^{(m)}((1-t)\log
^{(m)}x+t\log ^{(m)}y)$ for $t\in \lbrack 0,1]$ for some $x,y\in G^{(m)}$,
then its extension $\gamma ^{(n)}$ to $G^{(n)}$ satisfies 
\begin{equation*}
\gamma _{0,1}^{(n)}=\exp ^{(n)}(\log ^{(m)}(x^{-1}\otimes y))
\end{equation*}%
By Campbell-Baker-Hausdorff formula, 
\begin{equation*}
\exp ^{(m)}(\log ^{(m)}x)\otimes \exp ^{(m)}(t\log ^{(m)}(x^{-1}\otimes
y))=\exp ^{(m)}((1-t)\log ^{(m)}x+t\log ^{(m)}y)
\end{equation*}%
Thus, 
\begin{equation*}
\gamma _{s,t}=\exp ^{m}((t-s)\log ^{m}(x^{-1}\otimes y))
\end{equation*}%
Here we have used our crucial construction that $\log ^{(m)}\gamma _{t}$ is
linear in $t$. Now by guessing and uniqueness of Theorem \ref%
{Lyonsfirsttheorem}, 
\begin{equation*}
\gamma _{s,t}^{(n)}=\exp ^{(n)}((t-s)\log ^{(m)}(x^{-1}\otimes y))
\end{equation*}%
which proves that claim and defining $\mathbf{X}_{t^{\prime }}^{(n)}=\tilde{%
\mathbf{X}}_{t}^{(n)}$ finishes the existence part of Theorem \ref%
{minimaljumpextension}. \newline

For uniqueness, w.l.o.g., assume $n=m+1$. Let $\mathbf{Z}_{t}^{(n)}=\mathbf{X%
}_{t}^{(m)}+M_{t}$ and $\mathbf{Y}_{t}^{(n)}=\mathbf{X}_{t}^{(m)}+N_{t}$ are
two extension of $\mathbf{X}_{t}^{(m)}$ as prescribed of Theorem \ref%
{minimaljumpextension}, where $M_{t},N_{t}\in (\mathbb{R}^{d})^{\otimes n}$. 
\newline
Consider 
\begin{equation*}
S_{t}=\mathbf{Z}_{t}^{(n)}\otimes \{\mathbf{Y}_{t}^{(n)}\}^{-1}=(\mathbf{X}%
_{t}^{(m)}+M_{t})\otimes (\mathbf{X}_{t}^{(m)}+N_{t})^{-1}=1+M_{t}-N_{t}
\end{equation*}%
where the last equality is due to truncation in the (truncated) tensor
product. This in particular implies $S_{t}$ is in centre of the group $%
G^{(n)}$ (actually group $T_{1}^{n}$) and thus so is $S_{s}^{-1}\otimes
S_{t} $. So, by using symmetry and subadditivity of CC norm, 
\begin{equation*}
||S_{s}^{-1}\otimes S_{t}||=||\mathbf{Y}_{s}^{(n)}\otimes \mathbf{Z}%
_{s,t}^{(n)}\otimes \{\mathbf{Y}_{t}^{(n)}\}^{-1}||=||\mathbf{Z}%
_{s,t}^{(n)}\otimes \{\mathbf{Y}_{s,t}^{(n)}\}^{-1}||\leq ||\mathbf{Z}%
_{s,t}^{(n)}||+||\mathbf{Y}_{s,t}^{(n)}||
\end{equation*}%
which implies $S_{t}$ is of finite $p$-variation. Also, 
\begin{equation*}
\ \Delta _{t}S=\mathbf{Y}_{t-}^{(n)}\otimes \Delta _{t}\mathbf{Z}%
^{(n)}\otimes \mathbf{Y}_{t}^{(n)}
\end{equation*}%
Since $\Delta _{t}\mathbf{Z}^{(n)}=\Delta _{t}\mathbf{Y}^{(n)}$, we see that 
$\log ^{\left( n\right) }\Delta _{t}S=0$, i.e. $S_{t}$ is continuous. Thus, $%
M-N$ is a continuous path in $(\mathbb{R}^{d})^{\otimes n}$ with finite $%
\frac{p}{n}<1$ variation, which implies $M_{t}=N_{t}$ concluding the proof.
\end{proof}

\begin{remark}
\label{uniqueextensionremark}In the proof of uniqueness of minimal jump
extension, we didn't use the structure of group $G^{(n)}$. The fact that the
minimal jump extension takes value in $G^{(n)}$ follows by construction.
That said, if $\mathbf{Z}^{(n)}$ and $\mathbf{Y}^{(n)}$ are two extensions
of $\mathbf{X}^{(m)}$ taking values in $T^{(n)}(\mathbb{R}^{d})$, of finite $p$%
-variation w.r.t. norm 
\begin{equation*}
||1+g||:=|g^{1}|+|g^{2}|^{\frac{1}{2}}+..+|g^{n}|^{\frac{1}{n}}
\end{equation*}%
and 
\begin{equation*}
\Delta _{t}\mathbf{Z}^{(n)}=\Delta _{t}\mathbf{Y}^{(n)}=\exp ^{(n)}(\log
^{(m)}(\Delta _{t}\mathbf{X}^{(m)}))
\end{equation*}%
then same argument asw above implies 
\begin{equation*}
\mathbf{Z}_{t}^{(n)}=\mathbf{Y}_{t}^{(n)}
\end{equation*}
\end{remark}

\begin{definition}[Signature of a cadlag rough path]
Given $\mathbf{X}\in W^{p}\left( \left[ 0,T\right] ,G^{([p])}\right) $ call $%
\mathbf{X}^{\left( n\right) }$ constructed above the step-$n$ signature of $%
\mathbf{X}$. The $T(({\mathbb{R}}^{d}))$-valued projective limit of $\mathbf{%
X}_{0,T}^{\left( n\right) }$ as $n\rightarrow \infty $ is called signature
of $\mathbf{X}$ over $[0,T]$.
\end{definition}

\section{Rough integration with jumps}

\label{roughintegrationwithjumps}


In this section, we will define rough integration for c\'{a}dl\'{a}g rough
paths in the spirit of \cite{Young, Max} and apply this for pathwise
understanding of stochastic integral. We restrict ourselves to case $p<3$.
For $p\in \lbrack 1,2)$, Young integration theory is well established and
interesting case is for $p\in \lbrack 2,3)$. Recall the meaning of
convergence in (RRS) sense, cf. Definition \ref{def:RRSMRS}. In order to
cause no confusion between following two choices of Riemann sum
approximation 
\begin{equation*}
S(\mathcal{P}):=\sum_{[s,t]\in \mathcal{P}}Y_{s}X_{s,t}
\end{equation*}%
and 
\begin{equation*}
S^{\prime }(\mathcal{P}):=\sum_{[s,t]\in \mathcal{P}}Y_{s-}X_{s,t}
\end{equation*}%
we add that, if $X$ and $Y$ are regulated paths of finite $p$-variation for $%
p<2$, then 
\begin{equation*}
C:=\mathrm{(RRS)}\,\lim\limits_{\left\vert \mathcal{P}\right\vert
\rightarrow 0}\sum_{[s,t]\in \mathcal{P}}Y_{s}X_{s,t}
\end{equation*}%
exist if either $Y$ is c\'{a}dl\'{a}g or $Y$ is c\'{a}gl\'{a}d (left
continuous with right limit) and $X$ is c\'{a}dl\'{a}g.

This can be easily verified by carefully reviewing the proof of existence of
Young integral as in \cite{dudley}. Note that we have restricted ourselves
to left point evaluation in Riemann sums. Thus if $Y$ is a c\'{a}dl\'{a}g
path then, 
\begin{equation*}
C_{1}:=\mathrm{(RRS)}\lim\limits_{|\mathcal{P}|\rightarrow 0}\sum_{[s,t]\in 
\mathcal{P}}Y_{s}X_{s,t}
\end{equation*}%
and 
\begin{equation*}
C_{2}:=\mathrm{(RRS)}\lim\limits_{|\mathcal{P}|\rightarrow 0}\sum_{[s,t]\in 
\mathcal{P}}Y_{s-}X_{s,t}
\end{equation*}%
both exists. But it doesn't cause any ambiguity because in fact they are
equal.

\begin{proposition}
\label{youngleft=right}If $X$ and $Y$ are c\'{a}dl\'{a}g paths of finite $p$%
-variation for $p<2$, then $C_{1}=C_{2}$.
\end{proposition}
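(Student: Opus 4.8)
The plan is to prove the two sums share an RRS limit by showing directly that their difference tends to $0$; since the preceding discussion already guarantees that $C_1$ and $C_2$ both exist, this suffices. Writing $\Delta_s Y := Y_s - Y_{s-}$, the relevant object is
\[
D(\mathcal{P}) := S(\mathcal{P}) - S'(\mathcal{P}) = \sum_{[s,t]\in \mathcal{P}}(Y_s - Y_{s-})\,X_{s,t} = \sum_{[s,t]\in \mathcal{P}}\Delta_s Y\, X_{s,t},
\]
where only left endpoints $s$ enter, so the only nonzero contributions come from jump times of $Y$ that happen to be partition points. Since $Y \in W^p$ with $p<2$, its jumps are $p$-summable, $\sum_s |\Delta_s Y|^p \le \|Y\|_{p\text{-var};[0,T]}^p < \infty$; consequently, for every $\eta>0$ the set $J_\eta := \{s : |\Delta_s Y| > \eta\}$ is finite, while $\sum_{0<|\Delta_s Y|\le \eta}|\Delta_s Y|^p \to 0$ as $\eta \downarrow 0$. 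Given $\varepsilon>0$, I would fix a suitable $\eta$ and split $D(\mathcal{P})$ according to whether the left endpoint lies in $J_\eta$ (large jumps) or not (small jumps).

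For the small jumps I expect a bound that is \emph{uniform over all partitions}. Applying the Cauchy--Schwarz inequality (Hölder with exponents $2,2$, which is admissible precisely because $p<2$) and then the elementary monotonicity $\|X\|_{2\text{-var}} \le \|X\|_{p\text{-var}}$, together with $|\Delta_s Y|^2 \le \eta^{2-p}|\Delta_s Y|^p$ on the small-jump set, gives
\[
\left| \sum_{\substack{[s,t]\in \mathcal{P}\\ |\Delta_s Y|\le \eta}} \Delta_s Y\, X_{s,t}\right|
\le \left( \sum_{|\Delta_s Y|\le \eta}|\Delta_s Y|^2\right)^{1/2}\left( \sum_{[s,t]\in \mathcal{P}}|X_{s,t}|^2\right)^{1/2}
\le \eta^{\frac{2-p}{2}}\,\|Y\|_{p\text{-var};[0,T]}^{p/2}\,\|X\|_{p\text{-var};[0,T]}.
\]
Choosing $\eta$ small makes the right-hand side $<\varepsilon/2$ for \emph{every} partition $\mathcal{P}$.

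For the large jumps, $J_\eta = \{u_1,\dots,u_K\}$ is finite. Here I would construct the base partition $\mathcal{P}_0$ of the RRS definition to contain each $u_j$ together with a point strictly to its right, chosen (by right-continuity of $X$) close enough that $|X_{u_j} - X_v| < \varepsilon/\big(2K(1+|\Delta_{u_j}Y|)\big)$ for all $v$ between $u_j$ and that point. Then for any refinement $\mathcal{P}\supset \mathcal{P}_0$ the right-neighbour $t$ of $u_j$ in $\mathcal{P}$ lies in that short interval, so each $|X_{u_j,t}|$ is controlled and the total large-jump contribution is $<\varepsilon/2$. Adding the two estimates yields $|D(\mathcal{P})| < \varepsilon$ for all $\mathcal{P}\supset \mathcal{P}_0$, hence $\mathrm{(RRS)}\,\lim D(\mathcal{P})=0$ and $C_1=C_2$.

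The main obstacle is conceptual rather than computational: unlike in the continuous case, $\sup_{[s,t]\in \mathcal{P}}|X_{s,t}|$ does \emph{not} vanish under refinement, because a jump of $X$ sitting at a right endpoint survives, so one cannot annihilate the whole sum by a single mesh argument. The resolution is the split above — the finitely many large jumps are handled individually, using only right-continuity of $X$ (left-point evaluation ensures the surviving jumps of $X$ never couple to a large jump of $Y$), while the small jumps are absorbed by the slack that $p<2$ affords in Hölder's inequality, producing a partition-uniform estimate. The only routine facts to check along the way are the jump $p$-summability $\sum_s |\Delta_s Y|^p \le \|Y\|_{p\text{-var};[0,T]}^p$ and the norm inequality $\|X\|_{q\text{-var}} \le \|X\|_{p\text{-var}}$ for $q\ge p$.
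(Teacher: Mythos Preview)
Your proof is correct and follows essentially the same approach as the paper: split the difference $D(\mathcal{P})$ into large and small jumps of $Y$, control the small-jump part uniformly over partitions via Cauchy--Schwarz and the slack $p<2$ (exactly as you do, with the bound $\eta^{2-p}\|Y\|_{p\text{-var}}^{p}\|X\|_{2\text{-var}}^{2}$), and handle the finitely many large jumps using right-continuity of $X$. Your construction of the base partition $\mathcal{P}_0$ is a bit more explicit about the RRS mechanism than the paper's writeup, but the argument is the same.
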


\begin{proof}
For each $\epsilon >0$, 
\begin{equation*}
S(\mathcal{P}):=\sum_{[s,t]\in \mathcal{P}}\Delta
Y_{s}X_{s,t}=\sum_{[s,t]\in \mathcal{P}}\Delta Y_{s}{1}_{|\Delta
Y_{s}|>\epsilon }X_{s,t}+\sum_{[s,t]\in \mathcal{P}}\Delta Y_{s}{1}_{|\Delta
Y_{s}|\leq \epsilon }X_{s,t}
\end{equation*}%
Since there are finitely many jumps of size bigger than $\epsilon $ and $X$
is right continuous, 
\begin{equation*}
\lim\limits_{|\mathcal{P}|\rightarrow 0}\sum_{[s,t]\in \mathcal{P}}\Delta
Y_{s}{1}_{|\Delta Y_{s}|>\epsilon }X_{s,t}=0
\end{equation*}%
On the other hand,%
\begin{eqnarray*}
\left\vert \sum_{\lbrack s,t]\in \mathcal{P}}\Delta Y_{s}{1}_{|\Delta
Y_{s}|\leq \epsilon }X_{s,t}\right\vert ^{2} &\leq &\sum_{[s,t]\in \mathcal{P%
}}(\left\vert \Delta Y_{s}\right\vert ^{2}{1}_{|\Delta Y_{s}|\leq \epsilon
})\sum_{[s,t]\in \mathcal{P}}{|}X_{s,t}|^{2} \\
&\leq &\varepsilon ^{2-p}\sum_{[s,t]\in \mathcal{P}}\left\vert \Delta
Y_{s}\right\vert ^{p}\sum_{[s,t]\in \mathcal{P}}{|}X_{s,t}|^{2} \\
&\leq &\varepsilon ^{2-p}\left\Vert Y\right\Vert _{p\text{-var}%
}^{p}\left\Vert X\right\Vert _{2\text{-var}}^{2}
\end{eqnarray*}%
where we used $p < 2$ in the step. It thus follows that 
\begin{equation*}
\lim\limits_{\epsilon \rightarrow 0}\lim\limits_{|\mathcal{P}|\rightarrow
0}\sum_{[s,t]\in \mathcal{P}}\Delta Y_{s}{1}_{|\Delta Y_{s}|\leq \epsilon
}X_{s,t}=0
\end{equation*}%
which proves the claim.
\end{proof}

One fundamental difference between continuous and c\'adl\'ag cases is
absence of uniform continuity which implies small oscillation of a path in
small time interval. This becomes crucial in the construction of integral,
as also can be seen in construction of Young integral (see \cite{dudley})
when the integrator and integrand are assumed to have no common
discontinuity on the same side of a point. This guarantees at least one of
them to have small oscillation on small time intervals.

\begin{definition}
A pair of functions $(X_{s,t}, Y_{s,t})$ defined for $\{0 \leq s \leq t \leq
T\}$ is called compatible if for all $\epsilon > 0$, there exist a partition 
$\tau = \{0 = t_0 < t_1 \cdots < t_n = T \}$ such that for all $0 \leq i
\leq n-1$, 
\begin{equation*}
Osc(X, [t_i,t_{i+1}]) \leq \epsilon \hspace{2mm}\mbox{\textbf{OR}} \hspace{%
2mm} Osc(Y, [t_i,t_{i+1}]) \leq \epsilon
\end{equation*}
where $Osc(Z, [s,t]) := \sup\{ |Z_{u,v}| \big\vert s \leq u \leq v \leq t \}$%
.
\end{definition}

\begin{proposition}
If $X$ is a c\'adl\'ag path and $Y$ is c\'agl\'ad path, then $(X,Y)$ is a
compatible pair.
\end{proposition}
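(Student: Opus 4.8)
The plan is to reduce the statement to two elementary one-sided oscillation facts and then assemble the partition by a continuous-induction (supremum) argument, which is exactly the mechanism hinted at in the remark preceding the statement. The first fact is that since $X$ is c\'adl\'ag, hence right-continuous, for every $t\in[0,T)$ one has $\mathrm{Osc}(X,[t,t+\delta])\to 0$ as $\delta\downarrow 0$. Indeed, right-continuity at $t$ yields, for each $\eta>0$, a $\delta>0$ with $|X_s-X_t|<\eta$ for all $s\in[t,t+\delta]$, so that $|X_{u,v}|=|X_v-X_u|\le|X_v-X_t|+|X_t-X_u|<2\eta$ whenever $t\le u\le v\le t+\delta$. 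Symmetrically, since $Y$ is c\'agl\'ad, hence left-continuous, for every $t\in(0,T]$ one has $\mathrm{Osc}(Y,[t-\delta,t])\to 0$ as $\delta\downarrow 0$. Thus at every point there is a right-neighbourhood on which $X$ has arbitrarily small oscillation and a left-neighbourhood on which $Y$ has arbitrarily small oscillation; note that only the one-sided continuity is used here, not the existence of the opposite one-sided limits.

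Next I would fix $\epsilon>0$ and call a point $u\in[0,T]$ \emph{reachable} if $[0,u]$ admits a finite partition on each subinterval of which at least one of $\mathrm{Osc}(X,\cdot)\le\epsilon$ or $\mathrm{Osc}(Y,\cdot)\le\epsilon$ holds. The point $0$ is trivially reachable, so $\tau:=\sup\{u:u\text{ reachable}\}$ is well defined. I would first show $\tau$ is itself reachable: by the second fact choose $\delta>0$ with $\mathrm{Osc}(Y,[\tau-\delta,\tau])\le\epsilon$, pick a reachable $u\in(\tau-\delta,\tau]$ by definition of the supremum, and append the single interval $[u,\tau]$ to a good partition of $[0,u]$; since $[u,\tau]\subseteq[\tau-\delta,\tau]$ this last interval is good for $Y$. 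I would then show $\tau=T$: if $\tau<T$, the first fact provides $\delta'>0$ with $\mathrm{Osc}(X,[\tau,\tau+\delta'])\le\epsilon$, and appending $[\tau,v]$ with $v:=\min(\tau+\delta',T)>\tau$ to a good partition of $[0,\tau]$ exhibits a reachable point $v>\tau$, contradicting the definition of $\tau$. Hence $\tau=T$ is reachable, which is precisely the required partition.

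There is no deep obstacle here; the substance is in identifying the correct one-sided facts and in the splicing. The one place that needs genuine care is the gluing at $\tau$: the terminal $Y$-good interval furnished in the first step and the $X$-good interval appended in the second step must meet exactly at the common endpoint $\tau$, so that the concatenation is a bona fide partition, and one must check that the degenerate boundary cases (where only a right-neighbourhood exists at $0$, or only a left-neighbourhood at $T$) introduce at worst a trivial one-point interval and therefore cause no difficulty.
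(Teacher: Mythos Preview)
Your argument is correct. The paper does not actually give a proof of this proposition; it simply refers the reader to Dudley--Norvai{\v{s}}a, so there is nothing substantive to compare against. Your continuous-induction (supremum) construction is the standard way to obtain such partitions and is cleanly executed: the two one-sided oscillation facts are exactly right, and the splicing at $\tau$ works because the $Y$-good interval ends at $\tau$ and the $X$-good interval begins there. Your remark that only right-continuity of $X$ and left-continuity of $Y$ are used (not the existence of limits from the other side) is accurate and worth keeping.
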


\begin{proof}
See \cite{dudley}
\end{proof}

Ler $\mathbf{X} = (X, \mathbb{X})$ be c\'adl\'ag rough path in the sense of
Definition \ref{definitionofRP}. For the purpose of rough integration we
will use a different enhancement 
\begin{equation*}
\tilde{\mathbb{X}}_{s,t} = \mathbb{X}_{s,t} + \Delta_s X \otimes X_{s,t}
\end{equation*}%
Note clearly that $\tilde{\mathbb{X}}$ is also of finite $\frac{p}{2}$
variation, $\tilde{\mathbb{X}}_{0,t}$ is c\'adl\'ag path and for $s \leq u
\leq t $, 
\begin{equation}  \label{chen}
\tilde{\mathbb{X}}_{s,t} - \tilde{\mathbb{X}}_{s,u} - \tilde{\mathbb{X}}%
_{u,t} = X_{s,u}^{-}\otimes X_{u,t}
\end{equation}
where $X_t^- := X_{t-}$, $X_0^- = X_0 = 0$.

\begin{lemma}
For any $\epsilon > 0$, there exist a partition $\tau = \{0 = t_0 < t_1
\cdots < t_n = T \}$ such that for all $0 \leq i \leq n-1$, 
\begin{equation*}
Osc(\tilde{\mathbb{X}} ,(t_i , t_{i+1})) \leq \epsilon
\end{equation*}
\end{lemma}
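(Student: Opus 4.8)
The plan is to reduce the two-parameter oscillation of $\tilde{\mathbb{X}}$ to the classical fact that a regulated (in particular c\`adl\`ag) path admits, for every $\varepsilon>0$, a finite partition on whose \emph{open} subintervals its oscillation is below $\varepsilon$ --- the very regularity already invoked for single paths of finite $p$-variation. The starting point is the algebraic splitting
\begin{equation*}
\tilde{\mathbb{X}}_{u,v}=\mathbb{X}_{u,v}+\Delta _u X\otimes X_{u,v},
\end{equation*}
so that by the triangle inequality it suffices to make both $|\mathbb{X}_{u,v}|$ and $|\Delta _u X|\,|X_{u,v}|$ small, uniformly over $u\le v$ lying in a common open subinterval.

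For the first term I would work with the superadditive control $\omega(s,t):=\|\mathbb{X}\|_{p/2\text{-var};[s,t]}^{p/2}$, noting $|\mathbb{X}_{u,v}|\le \omega(u,v)^{2/p}$ and, by superadditivity, $\omega(u,v)\le \phi(v)-\phi(u)$ where $\phi(t):=\omega(0,t)$ is non-decreasing and bounded, hence regulated. Applying the classical partition lemma to $\phi$ produces a partition $\mathcal{P}_1$ on whose open subintervals $\phi(t_{i+1}^-)-\phi(t_i^+)\le(\varepsilon/2)^{p/2}$; since $\phi$ is monotone this bounds $\sup_{u\le v}(\phi(v)-\phi(u))$ over each open subinterval, giving $|\mathbb{X}_{u,v}|\le\varepsilon/2$ there.

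For the second term I would use $|X_{u,v}|\le\|X\|_{p\text{-var};[0,T]}=:C_X$ together with the observation that the jump $\Delta _u X$ at an interior point is dominated by the oscillation of $X$ on the enclosing open subinterval (since $\Delta_u X=\lim_{w\uparrow u}X_{w,u}$). Applying the same classical lemma to the c\`adl\`ag path $X$ yields a partition $\mathcal{P}_2$ with $\mathrm{Osc}(X,(t_i,t_{i+1}))\le \varepsilon/(2(C_X+1))$, whence $|\Delta _u X|\le\varepsilon/(2(C_X+1))$ for every interior $u$ and thus $|\Delta _u X|\,|X_{u,v}|\le\varepsilon/2$. Passing to the common refinement $\tau=\mathcal{P}_1\cup\mathcal{P}_2$ preserves both bounds (refinement only shrinks the open subintervals, and the relevant quantities are monotone in the interval), and combining them gives $|\tilde{\mathbb{X}}_{u,v}|\le\varepsilon$ for all $u\le v$ in each open subinterval of $\tau$, which is the claim.

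The step I expect to be the genuine obstacle is the \emph{simultaneous} control: one cannot make $\mathbb{X}$ small pointwise across a jump, so the essential idea is that the large jumps of both $X$ and of the $p/2$-variation content of $\mathbb{X}$ must be isolated at the partition \emph{endpoints} and excluded via the open intervals, while the superadditivity of $\omega$ is what lets the residual small-jump part be absorbed by further refinement. Verifying that $|\Delta _u X|$ is genuinely bounded by the open-interval oscillation of $X$, and that monotonicity of $\phi$ converts the open-interval oscillation into the two-sided increment $\phi(t_{i+1}^-)-\phi(t_i^+)$, are the two small checks that make the reduction to the classical regulated-function statement rigorous.
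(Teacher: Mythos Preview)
Your argument is correct, but it follows a different route from the paper's proof. The paper does not split $\tilde{\mathbb X}_{u,v}$ into $\mathbb X_{u,v}+\Delta_uX\otimes X_{u,v}$; instead it works directly with $\tilde{\mathbb X}$ via the modified Chen relation
\[
\tilde{\mathbb X}_{u,v}=\tilde{\mathbb X}_{0,v}-\tilde{\mathbb X}_{0,u}-X^{-}_{0,u}\otimes X_{u,v},
\]
and uses only the c\`adl\`ag property of the one-parameter paths $t\mapsto\tilde{\mathbb X}_{0,t}$ and $t\mapsto X_t$: near each point $y$ one finds one-sided neighbourhoods on which both $\tilde{\mathbb X}_{0,v}-\tilde{\mathbb X}_{0,u}$ and $X_{u,v}$ are small (the factor $X^{-}_{0,u}$ being merely bounded), and a compactness/covering argument then extracts the finite partition.

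The main difference is what each argument consumes. The paper's proof is purely topological---it never invokes finite $p/2$-variation of $\mathbb X$, only c\`adl\`ag regularity---whereas your proof is quantitative, resting on the superadditive control $\omega(s,t)=\|\mathbb X\|_{p/2\text{-var};[s,t]}^{p/2}$ and the monotone function $\phi(t)=\omega(0,t)$. Your approach has the advantage of avoiding the covering argument and giving an explicit mechanism (isolate the large jumps of $\phi$ and of $X$ at partition points); the paper's approach has the advantage of applying to any two-parameter object satisfying a Chen-type identity over c\`adl\`ag data, without needing a variation norm. Both are clean; the checks you flag (that $|\Delta_uX|$ is dominated by the open-interval oscillation of $X$, and that monotonicity of $\phi$ turns its open-interval oscillation into $\phi(t_{i+1}^-)-\phi(t_i^+)$) are indeed routine and your handling of them is fine.
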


\begin{proof}
Since $\tilde{\mathbb{X}}_{0,t}$ is c\'adl\'ag, from \eqref{chen}, it
follows that for each $y \in (0, T) $, there exist a $\delta_y > 0 $ such
that 
\begin{equation*}
Osc (\tilde{\mathbb{X}}, (y - \delta_y , y)) \leq \epsilon \hspace{2mm} %
\mbox{and} \hspace{2mm} Osc (\tilde{\mathbb{X}}, (y, y + \delta_y )) \leq
\epsilon
\end{equation*}
Similarly there exist $\delta_0 $ and $\delta_T $ such that $Osc (\tilde{%
\mathbb{X}}, (0, \delta_0 )) \leq \epsilon $ and $Osc (\tilde{\mathbb{X}},
(T - \delta_T, T)) \leq \epsilon $. Now family of open sets 
\begin{equation*}
[0, \delta_0) , (y - \delta_y , y+ \delta_y ),... , (T - \delta_T, T]
\end{equation*}
form a open cover of interval $[0,T]$, so it has a finite subcover $[0,
\delta_0 ), (y_1- \delta_{y_1}, y_1 + \delta_{y_1}), .. , (y_n-
\delta_{y_n}, y_n + \delta_{y_n}), (T - \delta_T, T]$. Without loss of
generality, we can assume that each interval in the finite subcover is the
first interval that intersects its previous one and the claim follows by
choosing 
\begin{equation*}
t_0 =0, t_1 \in ( y_1- \delta_{y_1}, \delta_0), t_2 = y_1, t_3 \in (y_2
-\delta_{y_2}, y_1 + \delta_{y_1} ), ..., t_{2n+1} = T
\end{equation*}
\end{proof}

\begin{lemma}
For any c\'agl\'ad path $Y$, the pair $(Y,\tilde{\mathbb{X}} )$ is a
compatible pair.
\end{lemma}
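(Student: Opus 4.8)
The plan is to prove compatibility by a covering argument that mirrors the proof of the preceding lemma, but now alternating between intervals on which $Y$ has small oscillation and intervals on which $\tilde{\mathbb{X}}$ has small oscillation. The structural observation that makes this work is that $Y$, being c\'agl\'ad, is left-continuous, so all its discontinuities are right-sided, whereas $\tilde{\mathbb{X}}_{0,\cdot}$ is c\'adl\'ag, so all its discontinuities are left-sided. Hence $Y$ and $\tilde{\mathbb{X}}$ never jump on the same side of a point, and each can be controlled on a complementary one-sided neighbourhood: $Y$ on a neighbourhood closed at its right endpoint, $\tilde{\mathbb{X}}$ on a neighbourhood closed at its left endpoint.

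First I would record the local oscillation bounds. Fix $\epsilon>0$. Left-continuity of $Y$ at a point $y$ gives $\sup_{w\in(y-\delta,y]}|Y_w-Y_y|\to0$ as $\delta\downarrow0$, hence a $\delta_y^->0$ with $Osc(Y,(y-\delta_y^-,y])\leq\epsilon$ for each $y\in(0,T]$; the interval is closed at $y$, which is exactly what left-continuity controls. For $\tilde{\mathbb{X}}$ I would use \eqref{chen} with $s=0$, i.e.
\begin{equation*}
\tilde{\mathbb{X}}_{u,v}=\tilde{\mathbb{X}}_{0,v}-\tilde{\mathbb{X}}_{0,u}-X^{-}_{0,u}\otimes X_{u,v},
\end{equation*}
together with right-continuity of the c\'adl\'ag paths $\tilde{\mathbb{X}}_{0,\cdot}$ and $X$ and boundedness of the regulated path $X^-$, to obtain for each $y\in[0,T)$ a $\delta_y^+>0$ with $Osc(\tilde{\mathbb{X}},[y,y+\delta_y^+))\leq\epsilon$; here the interval is closed at $y$, which right-continuity controls. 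This is the same computation underlying the preceding lemma, now keeping the left endpoint $y$. The open intervals $[0,\delta_0^+)$, $(y-\delta_y^-,y+\delta_y^+)$ for $y\in(0,T)$, and $(T-\delta_T^-,T]$ then cover the compact set $[0,T]$, so a finite subcover exists; splitting each chosen interval at its centre $y$ and inserting intermediate points in the overlaps (arranged exactly as in the preceding lemma, so that each resulting interval lies inside a single covering interval) produces a partition $\tau=\{0=t_0<\cdots<t_n=T\}$. Each closed interval $[t_i,t_{i+1}]$ then lies either in some $(y-\delta_y^-,y]$, giving $Osc(Y,[t_i,t_{i+1}])\leq\epsilon$, or in some $[y,y+\delta_y^+)$, giving $Osc(\tilde{\mathbb{X}},[t_i,t_{i+1}])\leq\epsilon$, which is precisely the required alternative.

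The main obstacle is the bookkeeping in passing from the finite subcover to the partition: one must ensure that after inserting the split points, every \emph{closed} partition interval sits inside a single one-sided good neighbourhood, so that the closed-interval oscillation is controlled rather than merely the open-interval oscillation of the preceding lemma. The one substantive new ingredient is the bound $Osc(\tilde{\mathbb{X}},[y,y+\delta))\leq\epsilon$ with the left endpoint \emph{included}, which is where right-continuity of $\tilde{\mathbb{X}}_{0,\cdot}$ and of $X$ enter through \eqref{chen}; everything else is a routine adaptation of the covering argument already used to control $\tilde{\mathbb{X}}$.
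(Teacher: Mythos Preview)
Your argument is correct. It differs from the paper's proof, but only at the level of packaging.

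The paper proceeds as follows: first fix a partition $\tau$ on whose \emph{open} intervals $(t_i,t_{i+1})$ all four objects $Y,X,X^-,\tilde{\mathbb{X}}$ have oscillation $\leq\epsilon$ (using the preceding lemma for $\tilde{\mathbb{X}}$ and the standard regulated-path fact for the others), and then insert into each $(t_i,t_{i+1})$ a common continuity point $c_i$ of $Y,X,X^-,\tilde{\mathbb{X}}_{0,\cdot}$. On the resulting refined partition, odd intervals $[t_i,c_i]$ carry $Osc(\tilde{\mathbb{X}},\cdot)\leq\epsilon$ (the left endpoint $t_i$ is absorbed via right-continuity of $\tilde{\mathbb{X}}_{0,\cdot}$ and $X$ through \eqref{chen}) and even intervals $[c_i,t_{i+1}]$ carry $Osc(Y,\cdot)\leq\epsilon$ (the right endpoint $t_{i+1}$ is absorbed via left-continuity of $Y$).

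You instead build the cover directly from asymmetric half-closed neighbourhoods $(y-\delta_y^-,y]$ for $Y$ and $[y,y+\delta_y^+)$ for $\tilde{\mathbb{X}}$, and then run the covering argument of the preceding lemma once. This is more self-contained and makes the complementary one-sided continuity of $Y$ and $\tilde{\mathbb{X}}$ explicit; the paper's version is a bit more modular, since it simply reuses the open-interval lemma and adds a continuity-point trick. Either way one arrives at the same alternating pattern of closed intervals, so the two proofs are really two organisations of the same idea.
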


\begin{proof}
Choose a partition $\tau$ such that for all $[s,t] \in \tau $, 
\begin{equation*}
Osc( Z, (s,t)) \leq \epsilon
\end{equation*}
for $Z = Y, X, X^-, \tilde{\mathbb{X}} $. We refine the partition $\tau $ by
adding a common continuity point of $Y_t$, $X_t$, $X_t ^- $ and $\tilde{%
\mathbb{X}}_{0,t}$ in each interval $(s,t)$. Note that such common
continuity points will exist because a regulated paths can have only
countably many discontinuities. With this choice of partition, we observe
that on every odd numbered $[s,t] \in \tau$, 
\begin{equation*}
Osc( \tilde{\mathbb{X}}, [s, t]) \leq \epsilon
\end{equation*}
and on every even numbered $[s,t] \in \tau $, 
\begin{equation*}
Osc(Y, [s, t]) \leq \epsilon
\end{equation*}
\end{proof}

\begin{definition}
Given $X\in W^{p}$, a pair of c\'adl\'ag paths $(Y, Y^{\prime })$ of finite $p$-variation is
called controlled rough path if $R_{s,t} = Y_{s,t} - Y_s^{\prime }X_{s,t} 
$ has finite $\frac{p}{2}$-variation, in the sense
\begin{equation*}
||R||_{\frac{p}{2}} := \sup_{\mathcal{P}} \bigl\{\sum_{[s,t]\in \mathcal{P}%
}|R_{s,t}|^{\frac{p}{2}}\bigl \}^{\frac{2}{p}} < \infty.
\end{equation*}
\end{definition}

It is easy to see that $1$-forms $(Y_t, Y_t^{\prime }) := (f(X_t), f^{\prime
}(X_t))$ for $f \in C^2$ is a controlled rough path. Also 
\begin{equation*}
\tilde{R}_{s,t} := Y_{s,t}^- - Y_{s-}^{\prime }X_{s,t}^-
\end{equation*}
is also of finite $\frac{p}{2}$-variation and pair $(\tilde{R}, X )$ is a
compatible pair.

\begin{theorem}
\label{roughintegration} Let $\mathbf{X}=(X,\mathbb{X})$ be a c\'{a}dl\'{a}g
rough path and $(Y,Y^{\prime })$ a controlled rough path, then 
\begin{equation*}
\int_{0}^{T}Y_{r-}d\mathbf{X}_{r}:=\lim\limits_{|\mathcal{P}|\rightarrow 0}S(%
\mathcal{P})=\lim\limits_{|\mathcal{P}|\rightarrow 0}S^{\prime }(\mathcal{P})
\end{equation*}%
where both limits exist in (RRS) sense, as introduced in Definition \ref%
{def:RRSMRS} and 
\begin{eqnarray*}
S(\mathcal{P}) &:=&\sum_{[s,t]\in \mathcal{P}}Y_{s-}X_{s,t}+Y_{s-}^{\prime }%
\tilde{\mathbb{X}}_{s,t}=\sum_{[s,t]\in \mathcal{P}}Y_{s-}X_{s,t}+Y_{s-}^{%
\prime }(\mathbb{X}_{s,t}+\Delta _{s}X\otimes X_{s,t}) \\
S^{\prime }(\mathcal{P}) &=&\sum_{[s,t]\in \mathcal{P}}Y_{s}X_{s,t}+Y_{s}^{%
\prime }\mathbb{X}_{s,t}.
\end{eqnarray*}%
Furthermore, we have the following rough path estimates: there exist a
constant $C$ depending only on $p$ such that 
\begin{eqnarray}  \label{young1}
\left\vert \int_{s}^{t}Y_{r-}d\mathbf{X}_{r}-Y_{s-}X_{s,t}-Y_{s-}^{\prime }%
\tilde{\mathbb{X}}_{s,t}\right\vert &\leq &C\left( ||\tilde{R}||_{\frac{p}{2}%
,[s,t]}||X||_{p,[s,t]}+||Y^{^{\prime }-}||_{p,[s,t]}||\tilde{\mathbb{X}}||_{%
\frac{p}{2},[s,t]}\right)
\end{eqnarray}
\begin{eqnarray}  \label{young2}
\left\vert \int_{s}^{t}Y_{r-}d\mathbf{X}_{r}-Y_{s}X_{s,t}-Y_{s}^{\prime }%
\mathbb{X}_{s,t}\right\vert &\leq &C\left( ||R||_{\frac{p}{2}%
,[s,t]}||X||_{p,[s,t]}+||Y^{^{\prime }}||_{p,[s,t]}||\mathbb{X}||_{\frac{p}{2%
},[s,t]}\right) .
\end{eqnarray}
\end{theorem}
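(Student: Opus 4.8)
The plan is to treat both Riemann sums by a Young--Dudley sewing argument adapted to the (RRS) refinement topology, the essential inputs being an algebraic identity for the local defect and the compatible-pair machinery developed above to neutralise the jumps. First I would record the two sewing defects. Writing $\tilde\Xi_{s,t}:=Y_{s-}X_{s,t}+Y'_{s-}\tilde{\mathbb{X}}_{s,t}$ and $\Xi_{s,t}:=Y_sX_{s,t}+Y'_s\mathbb{X}_{s,t}$, a direct computation using Chen's relation \eqref{Chen} for $\mathbb{X}$ and its analogue \eqref{chen} for $\tilde{\mathbb{X}}$ gives, for $s\le u\le t$,
\begin{equation*}
\tilde\Xi_{s,t}-\tilde\Xi_{s,u}-\tilde\Xi_{u,t}=-\tilde R_{s,u}\,X_{u,t}-(Y'_{u-}-Y'_{s-})\,\tilde{\mathbb{X}}_{u,t},
\end{equation*}
\begin{equation*}
\Xi_{s,t}-\Xi_{s,u}-\Xi_{u,t}=-R_{s,u}\,X_{u,t}-(Y'_{u}-Y'_{s})\,\mathbb{X}_{u,t}.
\end{equation*}
Each defect is a product of a $p/2$-variation quantity with a $p$-variation quantity, so bounding it by the joint superadditive control $\omega(s,t):=\|\tilde R\|_{p/2;[s,t]}^{p/2}+\|X\|_{p;[s,t]}^{p}+\|Y'\|_{p;[s,t]}^{p}+\|\tilde{\mathbb{X}}\|_{p/2;[s,t]}^{p/2}$ yields $|\delta\tilde\Xi_{s,u,t}|\lesssim\omega(s,t)^{3/p}$ with exponent $3/p>1$, since $p<3$. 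This is exactly the superadditivity-plus-exponent input of the sewing lemma, and in its two-control form it delivers the local estimates \eqref{young1}, \eqref{young2} once the integral is constructed (they are the base-point instances of the sewing bound for $\tilde\Xi$ and $\Xi$ respectively).

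Next I would establish (RRS) convergence of $S(\mathcal P)=\sum\tilde\Xi$. The mechanism is the standard sewing summation: passing from a partition to a refinement, the increment $S(\mathcal Q)-S(\mathcal P)$ telescopes on each interval into a sum of defects, which one controls by repeatedly dropping the interior point of minimal two-interval joint control, superadditivity together with $3/p>1$ making this sum converge. The one genuinely new point over the continuous theory is that $\omega$ carries atoms at the jump times, so the crude bound $\sum_{\mathcal P}\omega(s,t)^{3/p}$ does \emph{not} tend to $0$ under refinement. Here the compatible-pair lemmas enter: for given $\epsilon>0$ I would choose the base partition $\mathcal P_0$ (a common refinement of the partitions produced for the compatible pairs $(\tilde R,X)$ and $(Y^{\prime-},\tilde{\mathbb{X}})$) so that on every interval of $\mathcal P_0$ at least one factor of each defect term has oscillation $\le\epsilon$. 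On such an interval one interpolates the small factor against its variation, e.g. $|X_{u,t}|\le\epsilon^{1-\lambda}\|X\|_{p;[u,t]}^{\lambda}$; choosing $\lambda\in(p-2,1)$ keeps the joint exponent $\tfrac2p+\tfrac{\lambda}{p}>1$ (so the sewing sum still converges) while producing a prefactor $\epsilon^{1-\lambda}\to0$, and symmetrically for the second term with an exponent $\mu\in(\tfrac{p-1}{2},1)$. Hence for all refinements $\mathcal Q\supset\mathcal P\supset\mathcal P_0$ one gets $|S(\mathcal Q)-S(\mathcal P)|\lesssim\epsilon^{1-\lambda}+\epsilon^{1-\mu}$ (with implicit constant depending on $\omega(0,T)$), so $\{S(\mathcal P)\}$ is (RRS)-Cauchy and defines $\int_0^TY_{r-}\,d\mathbf X_r$.

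Finally I would identify the two limits. The difference of the Riemann sums is
\begin{equation*}
S(\mathcal P)-S'(\mathcal P)=\sum_{[s,t]\in\mathcal P}\Big(-\Delta_sY\,X_{s,t}+Y'_{s-}\,\Delta_sX\otimes X_{s,t}-\Delta_sY'\,\mathbb{X}_{s,t}\Big),
\end{equation*}
which tends to $0$ in (RRS) sense by the argument of Proposition \ref{youngleft=right}: only finitely many jumps exceed any threshold and their contributions carry a factor $X_{s,t}$ or $\mathbb{X}_{s,t}$ that vanishes as the intervals shrink to the right of the jump, while the remaining jumps are absorbed by $p$-summability of $\Delta X,\Delta Y,\Delta Y'$ together with finite $p/2$-variation of $\mathbb X$. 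Thus $\lim S'=\lim S$, and \eqref{young1}, \eqref{young2} follow from the two-control sewing bounds for $\tilde\Xi$ and $\Xi$.

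The main obstacle is precisely the atom issue: because the $p$-variation control does not vanish on intervals containing a jump, convergence cannot be obtained from mesh-smallness alone, and the whole argument must be routed through the compatible-pair/oscillation construction together with the interpolation above. A secondary delicate point is the treatment of increments that abut a partition endpoint at which the integrator jumps; this is absorbed by building $\mathcal P_0$ from common continuity points of $X$, $X^-$, $Y$ and $\tilde{\mathbb X}$, exactly as in the compatibility lemmas, so that every increment entering the telescoped defect genuinely carries a small factor.
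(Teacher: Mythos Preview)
Your proof is correct and follows essentially the same route as the paper: compute the sewing defects for $\tilde\Xi$ and $\Xi$, run the Young--Gubinelli point-dropping argument on each subinterval of a well-chosen base partition $\tau$, use the compatible-pair lemmas for $(\tilde R,X)$ and $(Y^{\prime-},\tilde{\mathbb X})$ to manufacture a small factor on every subinterval, and finally identify $\lim S=\lim S'$ by the jump-splitting argument of Proposition~\ref{youngleft=right}. The only cosmetic difference is that the paper absorbs the small-oscillation factor by passing to an auxiliary exponent $p'\in(p,3)$ and using $\|Z\|_{p'}\le \mathrm{Osc}(Z)^{1-p/p'}\|Z\|_p^{p/p'}$, whereas you interpolate pointwise via $|X_{u,t}|\le\epsilon^{1-\lambda}\|X\|_p^\lambda$; these are equivalent devices and lead to the same bounds.
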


\begin{proof}
We first consider the approximations given by $S(\mathcal{P})$. We first
note that if $\omega $ is a superadditive function defined on intervals,
i.e. for all $s\leq u\leq t$%
\begin{equation*}
\omega \lbrack s,u]+\omega \lbrack u,t]\leq \omega \lbrack s,t]
\end{equation*}%
then, for any partition $\mathcal{P}$ of $[s,t]$ into $r\geq 2$ intervals,
there exist intervals $[u_{-},u]$ and $[u,u_{+}]$ such that 
\begin{equation}
\omega \lbrack u_{-},u_{+}]\leq \frac{2}{r-1}\omega \lbrack s,t]
\label{superadd}
\end{equation}%
Also, we can immediately verify that for $Z$ of finite $p$-variation, 
\begin{equation*}
\omega \lbrack s,t]:=||Z||_{p,[s,t]}^{p}
\end{equation*}%
defines and superadditive function and if $\omega _{1}$ and $\omega _{2}$
are two positive superaddtive functions, then for $\alpha ,\beta \geq
0,\alpha +\beta \geq 1$, 
\begin{equation*}
\omega :=\omega _{1}^{\alpha }\omega _{2}^{\beta }
\end{equation*}%
is also a superadditive function.\newline
Now, it is enough to prove that for any $\epsilon >0$, there exist a
partition $\tau $ (to be chosen properly) such that for all refinement
partition $\mathcal{P}$ of $\tau $, 
\begin{equation*}
|S(\mathcal{P})-S(\tau )|\leq \epsilon
\end{equation*}%
Choose $p<p^{\prime }<3$ and let $[s,t]\in \tau $ and $\mathcal{P}_{s,t}$ be
the partition of $[s,t]$ by refinement points of $\mathcal{P}$. Note that 
\begin{equation*}
\omega \lbrack s,t]:=||\tilde{R}||_{\frac{p^{\prime }}{2},[s,t]}^{\frac{%
p^{\prime }}{3}}||X||_{p^{\prime },[s,t]}^{\frac{p^{\prime }}{3}%
}+||Y^{^{\prime }-}||_{p^{\prime },[s,t]}^{\frac{p^{\prime }}{3}}||\tilde{%
\mathbb{X}}||_{\frac{p^{\prime }}{2},[s,t]}^{\frac{p^{\prime }}{3}}
\end{equation*}%
is a superadditive and there exist $u_{-}<u<u_{+}\in \mathcal{P}_{s,t}$ such
that \eqref{superadd} holds. Using \eqref{chen} 
\begin{align*}
|S(\mathcal{P}_{s,t})-S(\mathcal{P}_{s,t}\setminus u)|=& |\tilde{R}%
_{u_{-},u}X_{u,u_{+}}+Y_{u_{-},u}^{^{\prime }-}\tilde{\mathbb{X}}_{u,u_{+}}|
\\
& \leq ||\tilde{R}||_{\frac{p^{\prime }}{2},[u_{-},u_{+}]}||X||_{p^{\prime
},[u_{-},u_{+}]}+||Y^{^{\prime }-}||_{p^{\prime },[u_{-},u_{+}]}||\tilde{%
\mathbb{X}}||_{\frac{p^{\prime }}{2},[u_{-},u_{+}]} \\
& \leq (||\tilde{R}||_{\frac{p^{\prime }}{2},[u_{-},u_{+}]}^{\frac{p^{\prime
}}{3}}||X||_{p^{\prime },[u_{-},u_{+}]}^{\frac{p^{\prime }}{3}%
}+||Y^{^{\prime }-}||_{p^{\prime },[u_{-},u_{+}]}^{\frac{p^{\prime }}{3}}||%
\tilde{\mathbb{X}}||_{\frac{p^{\prime }}{2},[u_{-},u_{+}]}^{\frac{p^{\prime }%
}{3}})^{\frac{3}{p^{\prime }}} \\
& \leq \frac{C}{(r-1)^{\frac{3}{p^{\prime }}}}(||\tilde{R}||_{\frac{%
p^{\prime }}{2},[s,t]}^{\frac{p^{\prime }}{3}}||X||_{p^{\prime },[s,t]}^{%
\frac{p^{\prime }}{3}}+||Y^{^{\prime }-}||_{p^{\prime },[s,t]}^{\frac{%
p^{\prime }}{3}}||\tilde{\mathbb{X}}||_{\frac{p^{\prime }}{2},[s,t]}^{\frac{%
p^{\prime }}{3}})^{\frac{3}{p^{\prime }}} \\
& \leq \frac{C}{(r-1)^{\frac{3}{p^{\prime }}}}(||\tilde{R}||_{\frac{%
p^{\prime }}{2},[s,t]}||X||_{p^{\prime },[s,t]}+||Y^{^{\prime
}-}||_{p^{\prime },[s,t]}||\tilde{\mathbb{X}}||_{\frac{p^{\prime }}{2}%
,[s,t]})
\end{align*}%
where $C$ is a generic constant. Iterating this, since $p^{\prime }<3$, we
get that 
\begin{equation*}
|S(\mathcal{P}_{s,t})-Y_{s-}X_{s,t}+Y_{s-}^{\prime }\tilde{\mathbb{X}}%
_{s,t}|\leq C(||\tilde{R}||_{\frac{p^{\prime }}{2},[s,t]}||X||_{p^{\prime
},[s,t]}+||Y^{^{\prime }-}||_{p^{\prime },[s,t]}||\tilde{\mathbb{X}}||_{%
\frac{p^{\prime }}{2},[s,t]})
\end{equation*}%
Thus, 
\begin{equation*}
|S(\mathcal{P})-S(\tau )|\leq C\sum_{[s,t]\in \tau }||\tilde{R}||_{\frac{%
p^{\prime }}{2},[s,t]}||X||_{p^{\prime },[s,t]}+||Y^{^{\prime
}-}||_{p^{\prime },[s,t]}||\tilde{\mathbb{X}}||_{\frac{p^{\prime }}{2},[s,t]}
\end{equation*}%
Note that $(\tilde{R},X)$ and $(Y^{^{\prime }-},\tilde{\mathbb{X}})$ are
compatible pairs. Properly choosing $\tau $, 
\begin{equation*}
|S(\mathcal{P})-S(\tau )|\leq C\epsilon \sum_{\lbrack s,t]\in \tau }||\tilde{%
R}||_{\frac{p}{2},[s,t]}^{\frac{p}{p^{\prime }}}||X||_{p,[s,t]}^{\frac{p}{%
p^{\prime }}}+||Y^{^{\prime }-}||_{p,[s,t]}^{\frac{p}{p^{\prime }}}||\tilde{%
\mathbb{X}}||_{\frac{p}{2},[s,t]}^{\frac{p}{p^{\prime }}}
\end{equation*}%
Finally, the term under summation sign is superadditive which thereby
implies 
\begin{equation*}
|S(\mathcal{P})-S(\tau )|\leq C\epsilon
\end{equation*}%
Also, the estimate \eqref{young1} follows immediately as a by product of the
analysis above.

At last, let us deal with the case of \ Riemann sum approximations 
\begin{equation*}
S^{\prime }(\mathcal{P})=\sum_{[s,t]\in \mathcal{P}}Y_{s}X_{s,t}+Y_{s}^{%
\prime }\mathbb{X}_{s,t}.
\end{equation*}%
It suffices to consider the difference 
\begin{equation*}
S^{\prime }(\mathcal{P})-S(\mathcal{P})=\sum_{[s,t]\in \mathcal{P}%
}R_{s-,s}X_{s,t}+\Delta Y_{s}^{\prime }\mathbb{X}_{s,t}
\end{equation*}%
and then use arguments similar as those in the proof of proposition \ref%
{youngleft=right} to see tgat 
\begin{equation*}
\mathrm{(RRS)}\lim\limits_{|\mathcal{P}|\rightarrow 0}(S^{\prime }(\mathcal{P%
})-S(\mathcal{P}))=0.
\end{equation*}%
The rest is then clear.
\end{proof}

As an immediate corollary of \eqref{young1} and \eqref{young2}, we have

\begin{corollary}
\label{roughintegraliscontrolled} For a controlled rough path $(Y,Y^{\prime })$, 
\begin{equation*}
(Z_{t},Z_{t}^{\prime }):=\left( \int_{0}^{t}Y_{r-}d\mathbf{X}%
_{r},Y_{t}\right)
\end{equation*}%
is also a controlled rough path.
\end{corollary}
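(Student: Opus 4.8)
The plan is to verify the two defining requirements of a controlled rough path for the pair $(Z,Z')=\bigl(\int_0^\cdot Y_{r-}\,d\mathbf X_r,\,Y\bigr)$: that $Z$ and $Z'$ are cadlag of finite $p$-variation, and that the remainder
\begin{equation*}
R^Z_{s,t}:=Z_{s,t}-Z'_sX_{s,t}=Z_{s,t}-Y_sX_{s,t}
\end{equation*}
has finite $p/2$-variation. Since $Z'=Y$ is by hypothesis a cadlag path of finite $p$-variation, the Gubinelli derivative requires no work, and the whole content is the analysis of $R^Z$.

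For the remainder I would read off from estimate \eqref{young2} the exact decomposition
\begin{equation*}
R^Z_{s,t}=Y'_s\,\mathbb X_{s,t}+\rho_{s,t},\qquad
\rho_{s,t}:=Z_{s,t}-Y_sX_{s,t}-Y'_s\,\mathbb X_{s,t},
\end{equation*}
where $|\rho_{s,t}|\le C\bigl(\|R\|_{p/2,[s,t]}\|X\|_{p,[s,t]}+\|Y'\|_{p,[s,t]}\|\mathbb X\|_{p/2,[s,t]}\bigr)$. The first piece is harmless: since $Y'$ is bounded on $[0,T]$ (being cadlag of finite $p$-variation), one has $|Y'_s\mathbb X_{s,t}|^{p/2}\le\|Y'\|_\infty^{p/2}\,|\mathbb X_{s,t}|^{p/2}$, and summing over any partition gives a bound $\|Y'\|_\infty^{p/2}\|\mathbb X\|_{p/2,[0,T]}^{p/2}<\infty$, so $(s,t)\mapsto Y'_s\mathbb X_{s,t}$ has finite $p/2$-variation.

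The main work is the $p/2$-variation of $\rho$, and here I would reuse the superadditivity machinery from the proof of Theorem \ref{roughintegration}. Setting $\omega_1(s,t)=\|X\|_{p,[s,t]}^p$, $\omega_2(s,t)=\|R\|_{p/2,[s,t]}^{p/2}$, $\omega_3(s,t)=\|Y'\|_{p,[s,t]}^p$, $\omega_4(s,t)=\|\mathbb X\|_{p/2,[s,t]}^{p/2}$, all superadditive, and using $(a+b)^{p/2}\le 2^{p/2-1}(a^{p/2}+b^{p/2})$ (valid since $p\ge2$), I obtain
\begin{equation*}
|\rho_{s,t}|^{p/2}\le C'\bigl(\omega_2(s,t)\,\omega_1(s,t)^{1/2}+\omega_3(s,t)^{1/2}\,\omega_4(s,t)\bigr).
\end{equation*}
By the cited fact that $\omega^{\alpha}\tilde\omega^{\beta}$ is superadditive whenever $\alpha,\beta\ge0$ and $\alpha+\beta\ge1$, both $\omega_2\,\omega_1^{1/2}$ and $\omega_3^{1/2}\,\omega_4$ are superadditive, so summing over any partition of $[0,T]$ is controlled by their values on $[0,T]$; hence $\sup_{\mathcal P}\sum_{[s,t]\in\mathcal P}|\rho_{s,t}|^{p/2}<\infty$. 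Adding the two contributions (again via the elementary $(a+b)^{p/2}$ inequality) shows $R^Z$ has finite $p/2$-variation.

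Finally, $Z$ is cadlag of finite $p$-variation: writing $Z_{s,t}=Y_sX_{s,t}+R^Z_{s,t}$ and using that finite $p/2$-variation implies finite $p$-variation (since $\sum|R^Z_{s,t}|^p\le\bigl(\sum|R^Z_{s,t}|^{p/2}\bigr)^2$) gives finite $p$-variation of $Z$; this also makes $Z$ regulated, while right-continuity follows from \eqref{young1} together with the right-continuity of $X$ and of $\tilde{\mathbb X}_{0,\cdot}$ as $t\downarrow s$ (the accompanying controls vanish as the interval shrinks). The only point to watch is that the mixed-exponent controls combine correctly, but this is exactly the bookkeeping already carried out in the proof of Theorem \ref{roughintegration}, so no new difficulty arises.
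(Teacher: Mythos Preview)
Your proof is correct and follows exactly the route the paper intends: the paper states the corollary as ``immediate from \eqref{young1} and \eqref{young2}'' and remarks that the bounds on $\|Z\|_p$, $\|Z'\|_p$ and $\|R^Z\|_{p/2}$ ``can be easily derived from \eqref{young2}'', which is precisely the decomposition $R^Z_{s,t}=Y'_s\mathbb X_{s,t}+\rho_{s,t}$ together with the superadditivity bookkeeping you carried out. You have simply written out the details that the paper chose to suppress.
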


\begin{corollary}
\label{jumpofRI}If $(Y, Y^{\prime })$ is a controlled rough path and $Z_t =
\int_0^t Y_{r-}d\mathbf{X}_r$, then 
\begin{equation*}
\Delta_tZ = \lim\limits_{s\uparrow t} \int_s^t Y_{r-}d\mathbf{X}_r =
Y_{t-}\Delta_t X + Y_{t-}^{\prime }\Delta_t\mathbb{X}
\end{equation*}%
where $\Delta_t\mathbb{X}= \lim\limits_{s\uparrow t} \mathbb{X}_{s,t}$.
\end{corollary}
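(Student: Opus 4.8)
The plan is to read off the jump of $Z$ from the one-step estimate \eqref{young1} applied to the shrinking interval $[s,t]$ as $s\uparrow t$. First I would record that, by Corollary \ref{roughintegraliscontrolled}, $(Z,Z')=(Z,Y)$ is a controlled rough path, so in particular $Z$ is a c\`adl\`ag path of finite $p$-variation and the left limit $Z_{t-}$ exists. Additivity of the Riemann-sum definition of the integral (split every partition at $s$) gives $Z_t-Z_s=\int_s^t Y_{r-}\,d\mathbf{X}_r$, whence
\begin{equation*}
\Delta_t Z = Z_t - Z_{t-} = \lim_{s\uparrow t}\bigl(Z_t-Z_s\bigr) = \lim_{s\uparrow t}\int_s^t Y_{r-}\,d\mathbf{X}_r .
\end{equation*}
Thus everything reduces to computing this last limit, and \eqref{young1} says precisely that $\int_s^t Y_{r-}\,d\mathbf{X}_r$ differs from the local expansion $Y_{s-}X_{s,t}+Y_{s-}'\tilde{\mathbb{X}}_{s,t}$ by at most $C\bigl(||\tilde R||_{\frac p2,[s,t]}||X||_{p,[s,t]}+||Y^{\prime-}||_{p,[s,t]}||\tilde{\mathbb{X}}||_{\frac p2,[s,t]}\bigr)$.

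Next I would pass to the limit in the leading term. As $s\uparrow t$ one has $Y_{s-}\to Y_{t-}$ and $Y_{s-}'\to Y_{t-}'$ (left limits of the c\`adl\`ag paths $Y,Y'$), while $X_{s,t}=X_t-X_s\to X_t-X_{t-}=\Delta_t X$. For the second-level factor, $\mathbb{X}_{s,t}\to\Delta_t\mathbb{X}$ by the very definition of $\Delta_t\mathbb{X}$, and the correction $\Delta_s X\otimes X_{s,t}$ vanishes in the limit because $X_{s,t}$ stays bounded while $\Delta_s X\to 0$ as $s\uparrow t$ (a regulated path has, for each $\varepsilon>0$, only finitely many jumps exceeding $\varepsilon$, so $\limsup_{s\uparrow t}|\Delta_s X|=0$). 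Hence $\tilde{\mathbb{X}}_{s,t}=\mathbb{X}_{s,t}+\Delta_s X\otimes X_{s,t}\to\Delta_t\mathbb{X}$, and the leading term converges to $Y_{t-}\Delta_t X+Y_{t-}'\Delta_t\mathbb{X}$, which is exactly the asserted value.

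It then remains to show that the error term in \eqref{young1} tends to $0$. In each of its two products one factor is harmless: $||X||_{p,[s,t]}$ and $||\tilde{\mathbb{X}}||_{\frac p2,[s,t]}$ decrease as $s\uparrow t$ and stay bounded by their values on $[0,T]$. The other factor in each product tends to $0$. For this I would use the standard fact that for a regulated path (or finite-variation two-parameter remainder) $W$ of finite $q$-variation, $||W||_{q,[s,t]}\to |W_t-W_{t-}|$, the size of the \emph{left} jump at $t$, as $s\uparrow t$. Now $Y^{\prime-}$ is c\`agl\`ad, hence left-continuous at $t$, so its left jump vanishes and $||Y^{\prime-}||_{p,[s,t]}\to 0$; likewise $\tilde R_{u,v}=Y^-_{u,v}-Y'_{u-}X^-_{u,v}$ is built from the c\`agl\`ad paths $Y^-,X^-,Y^{\prime-}$, its diagonal left limit satisfies $\lim_{s\uparrow t}\tilde R_{s,t}=0$, so $||\tilde R||_{\frac p2,[s,t]}\to 0$. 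Both products therefore vanish, and the limit of $\int_s^t Y_{r-}\,d\mathbf{X}_r$ equals the leading term computed above.

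I expect the only genuine obstacle to be the claim $||\tilde R||_{\frac p2,[s,t]}\to 0$: unlike the scalar factor $||Y^{\prime-}||_{p,[s,t]}$, here $\tilde R$ is a two-parameter object, so one must verify that the one-sided limit of its superadditive $\frac p2$-variation control is governed solely by the diagonal left jump $\lim_{s\uparrow t}\tilde R_{s,t}$. This is precisely the property for which the left-limit enhancement $\tilde{\mathbb{X}}$ (and the matching remainder $\tilde R$) was introduced, and I would establish it by the same one-sided-limit analysis of regulated variation controls used for ordinary paths, now applied to the control $\omega_{\tilde R}(s,t)=||\tilde R||^{p/2}_{\frac p2,[s,t]}$.
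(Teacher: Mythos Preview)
Your approach is correct and is exactly what the paper intends: Corollary~\ref{jumpofRI} is stated there as an immediate consequence of the estimates \eqref{young1}--\eqref{young2}, with no separate proof given. Your choice of \eqref{young1} over \eqref{young2} is the right one: the error in \eqref{young1} involves the c\`agl\`ad objects $Y^{\prime-}$ and $\tilde R$, whose variation norms over $[s,t]$ vanish as $s\uparrow t$, whereas the c\`adl\`ag versions in \eqref{young2} would retain the jump at $t$.

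Regarding the obstacle you flag: the argument for the two-parameter $\tilde R$ is no harder than for $Y^{\prime-}$. Set $V(s):=\|\tilde R\|_{p/2,[0,s]}^{p/2}$; by superadditivity $\|\tilde R\|_{p/2,[s,t]}^{p/2}\le V(t)-V(s)$, so it suffices that $V$ be left-continuous at $t$. For any partition $0=u_0<\dots<u_n=t$ and any $s\in(u_{n-1},t)$, the partition $u_0<\dots<u_{n-1}<s$ of $[0,s]$ gives $V(s)\ge\sum_{i<n-1}|\tilde R_{u_i,u_{i+1}}|^{p/2}+|\tilde R_{u_{n-1},s}|^{p/2}$. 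Since $\tilde R_{u,s}=Y^-_{u,s}-Y'_{u-}X^-_{u,s}$ and $Y^-,X^-$ are left-continuous at $t$, one has $\tilde R_{u_{n-1},s}\to\tilde R_{u_{n-1},t}$ as $s\uparrow t$, whence $\liminf_{s\uparrow t}V(s)\ge V(t)$. This is precisely the ``one-sided-limit analysis'' you anticipated, and it closes the argument.
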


Though we avoid to write down the long expression for the bounds of $%
||Z||_p, ||Z^{\prime }||_p$ and $||R^Z||_{\frac{p}{2}}$, it can be easily
derived from \eqref{young2}. The important point here is that we can again,
for $Z$ taking value in suitable spaces, readily define 
\begin{equation*}
\int_{0}^{t} Z_{r-}d\mathbf{X}_r
\end{equation*}%
The rough integral defined above is also compatible with Young integral. If $%
X$ is a finite $p$-variation path for $p < 2$, we can construct c\'adl\'ag
rough path $\mathbf{X}$ by 
\begin{equation*}
\mathbb{X}_{s,t} := \int_s^t( X_{r-}- X_{s})\otimes dX_r
\end{equation*}%
where right hand side is understood as a Young integral.

\begin{proposition}
\label{youngapplication} If $X,Y$ are c\'adl\'ag path of finite $p$ and $q$
variation respectively with $\frac{1}{p} + \frac{1}{q} > 1$, then for any $%
\theta > 0$ with $\frac{1}{p} + \frac{1}{q} \geq \frac{1}{\theta}$,%
\begin{equation*}
Z_{s,t}:= \int_s^t (Y_{r-}-Y_{s})dX_r
\end{equation*}%
has finite $\theta$ variation. In particular, $\mathbb{X} $ has finite $%
\frac{p}{2}$ variation.
\end{proposition}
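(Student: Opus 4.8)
The plan is to reduce the statement to Young's inequality in the form \eqref{equ:Y2} and then to the stability of superadditive controls under products. First I would record the exact algebraic identity
\[
Z_{s,t} = \int_s^t (Y_{r-} - Y_s)\, dX_r = \int_s^t Y^- dX - Y_s X_{s,t},
\]
so that $Z_{s,t}$ is literally the quantity appearing on the left of Young's inequality \eqref{equ:Y2}. Since $X \in W^p$ and $Y \in W^q$ with $\tfrac{1}{p}+\tfrac{1}{q}>1$, that inequality applies on every subinterval $[s,t] \subseteq [0,T]$ and gives
\[
|Z_{s,t}| \lesssim \|Y\|_{q\text{-var};[s,t]}\,\|X\|_{p\text{-var};[s,t]}.
\]

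Next I would raise this to the power $\theta$ and re-express the right-hand side through the controls $\omega_1(s,t):=\|X\|_{p\text{-var};[s,t]}^{p}$ and $\omega_2(s,t):=\|Y\|_{q\text{-var};[s,t]}^{q}$, both superadditive, obtaining $|Z_{s,t}|^\theta \lesssim \omega_1(s,t)^{\theta/p}\,\omega_2(s,t)^{\theta/q}$. The hypothesis $\tfrac1p+\tfrac1q\ge\tfrac1\theta$ reads exactly $\tfrac{\theta}{p}+\tfrac{\theta}{q}\ge 1$, so with the nonnegative exponents $\alpha=\theta/p$, $\beta=\theta/q$ and $\alpha+\beta\ge1$ the product $\omega:=\omega_1^{\alpha}\omega_2^{\beta}$ is again superadditive, by the elementary fact already invoked in the proof of Theorem \ref{roughintegration}. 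Summing the pointwise bound $|Z_{s,t}|^\theta \lesssim \omega(s,t)$ over an arbitrary partition $\mathcal P$ of $[0,T]$ and using superadditivity yields
\[
\sum_{[s,t]\in\mathcal P}|Z_{s,t}|^{\theta}\lesssim \sum_{[s,t]\in\mathcal P}\omega(s,t)\le \omega(0,T)=\|X\|_{p\text{-var};[0,T]}^{\theta}\,\|Y\|_{q\text{-var};[0,T]}^{\theta},
\]
and taking the supremum over $\mathcal P$ gives $\|Z\|_{\theta\text{-var};[0,T]}<\infty$. For the closing assertion I would specialize to $Y=X$, so $q=p$ and $\theta=p/2$: here $\tfrac1p+\tfrac1q=\tfrac2p>1$ is precisely the Young condition in the regime $p<2$, and $\tfrac1p+\tfrac1q=\tfrac2p=\tfrac1\theta$ saturates the constraint, so the general statement yields finite $p/2$-variation of $\mathbb X$ (the tensor-valued case being handled componentwise).

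I do not expect a serious obstacle: once $Z_{s,t}$ is identified with the left-hand side of \eqref{equ:Y2}, the remainder is essentially bookkeeping. The one point that genuinely needs care is the passage from the pointwise two-index bound to finite $\theta$-variation, and this is exactly where the superadditivity of the product $\omega_1^{\alpha}\omega_2^{\beta}$ (valid only for $\alpha+\beta\ge1$) is indispensable; this structural requirement is the reason the hypothesis is phrased as $\tfrac1p+\tfrac1q\ge\tfrac1\theta$ rather than with some other combination of exponents.
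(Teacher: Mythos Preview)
Your proposal is correct and follows essentially the same approach as the paper: apply Young's inequality \eqref{equ:Y2} to bound $|Z_{s,t}|$ by $\|Y\|_{q\text{-var};[s,t]}\|X\|_{p\text{-var};[s,t]}$, raise to the $\theta$th power, and invoke superadditivity of the product control $\omega_1^{\theta/p}\omega_2^{\theta/q}$ (valid precisely because $\theta/p+\theta/q\ge 1$). The paper's proof is simply a terser version of what you wrote.
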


\begin{proof}
From Young's inequality, 
\begin{equation*}
|Z_{s,t}|^\theta \leq C ||X||_{p,[s,t]}^\theta||Y||_{q,[s,t]}^\theta
\end{equation*}
If $\frac{1}{p} + \frac{1}{q} \geq \frac{1}{\theta}$, right hand side is
superaddtive, which implies $||Z||_\theta < \infty$
\end{proof}

\begin{theorem}
\label{youngcompatible} If $X,Y$ are c\'adl\'ag paths of of finite $p$%
-variation for $p< 2$, then 
\begin{equation*}
\int_{0}^{t} Y_{r-}d\mathbf{X}_r = \int_{0}^{t} Y_{r-}dX_r
\end{equation*}
\end{theorem}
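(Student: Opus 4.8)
The plan is to play the two available Riemann-sum descriptions against each other. By Theorem~\ref{roughintegration} the rough integral is the (RRS) limit of $S(\mathcal P)=\sum_{[s,t]\in\mathcal P}\bigl(Y_{s-}X_{s,t}+Y'_{s-}\tilde{\mathbb X}_{s,t}\bigr)$ for a controlling pair $(Y,Y')$, while Young's theorem together with Proposition~\ref{youngleft=right} realizes $\int_0^T Y_{r-}\,dX_r$ as the (RRS) limit of $\sum_{[s,t]\in\mathcal P}Y_{s-}X_{s,t}$. Both limits exist, so evaluating them along a single nested sequence of partitions with mesh tending to $0$, their difference is the limit of the correction $\sum_{[s,t]\in\mathcal P}Y'_{s-}\tilde{\mathbb X}_{s,t}$. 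Thus the whole theorem reduces to showing this correction tends to $0$, which also makes transparent that the value is independent of the choice of $Y'$.

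Next I would put the second-level increment in integral form. Unwinding $\tilde{\mathbb X}_{s,t}=\mathbb X_{s,t}+\Delta_sX\otimes X_{s,t}$ and using $X_{r-}-X_{s-}=(X_{r-}-X_s)+\Delta_sX$ gives
\begin{equation*}
\tilde{\mathbb X}_{s,t}=\int_{(s,t]}(X_{r-}-X_{s-})\otimes dX_r ,
\end{equation*}
a Young integral of the càglàd integrand $r\mapsto X_{r-}-X_{s-}$ against $X$. By Proposition~\ref{youngapplication} (equivalently, the remark following the definition of $\tilde{\mathbb X}$) this has finite $\tfrac p2$-variation, and crucially $\tfrac p2<1$. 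Since $Y'$ has finite $p$-variation it is bounded, so for any partition $\mathcal P$, writing $|a|=|a|^{p/2}\,|a|^{1-p/2}$ and using $\sum_{[s,t]\in\mathcal P}|\tilde{\mathbb X}_{s,t}|^{p/2}\le\|\tilde{\mathbb X}\|_{\frac p2}^{p/2}$,
\begin{equation*}
\Bigl|\sum_{[s,t]\in\mathcal P}Y'_{s-}\tilde{\mathbb X}_{s,t}\Bigr|
\le \|Y'\|_\infty\,\Bigl(\max_{[s,t]\in\mathcal P}|\tilde{\mathbb X}_{s,t}|\Bigr)^{1-\frac p2}\,\|\tilde{\mathbb X}\|_{\frac p2}^{p/2}.
\end{equation*}
Hence everything comes down to the diagonal estimate $\max_{[s,t]\in\mathcal P}|\tilde{\mathbb X}_{s,t}|\to0$ as $|\mathcal P|\to0$.

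This uniform smallness on small intervals is the main obstacle, and it is precisely where the jumps must be controlled. I would prove $\phi(\delta):=\sup\{|\tilde{\mathbb X}_{s,t}|:0\le s<t\le T,\ t-s\le\delta\}\to0$ as $\delta\downarrow0$ by contradiction and compactness. If it failed there would be $\varepsilon_0>0$ and $s_n<t_n$ with $t_n-s_n\to0$ and $|\tilde{\mathbb X}_{s_n,t_n}|\ge\varepsilon_0$; along a subsequence $s_n,t_n\to\tau$. Using Chen's relation \eqref{chen} I would write $\tilde{\mathbb X}_{s_n,t_n}=\tilde{\mathbb X}_{0,t_n}-\tilde{\mathbb X}_{0,s_n}-X^-_{0,s_n}\otimes X_{s_n,t_n}$ and pass to the limit using only the one-sided limits of the càdlàg path $\tilde{\mathbb X}_{0,\cdot}$ and of $X$. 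If $\tau$ is a continuity point of $X$ the first difference vanishes and $X_{s_n,t_n}\to0$. If $\tau$ is a jump time one splits according to whether $s_n,t_n$ lie left of, at, or right of $\tau$; in the only nontrivial (straddling) case the jump of $\tilde{\mathbb X}_{0,\cdot}$, namely $\Delta_\tau\tilde{\mathbb X}=(X_{\tau-}-X_0)\otimes\Delta_\tau X$ — computed from \eqref{chen} together with $\lim_{s\uparrow\tau}\tilde{\mathbb X}_{s,\tau}=0$ — exactly cancels the boundary term $X^-_{0,s_n}\otimes X_{s_n,t_n}\to(X_{\tau-}-X_0)\otimes\Delta_\tau X$. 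In every case $\tilde{\mathbb X}_{s_n,t_n}\to0$, contradicting $|\tilde{\mathbb X}_{s_n,t_n}|\ge\varepsilon_0$.

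Combining the estimate with $\phi(|\mathcal P|)\to0$ shows the correction $\sum_{[s,t]\in\mathcal P}Y'_{s-}\tilde{\mathbb X}_{s,t}$ tends to $0$, whence $\int_0^T Y_{r-}\,d\mathbf X_r=\int_0^T Y_{r-}\,dX_r$; the same argument restricted to $[0,t]$ gives the identity for every $t$. The one delicate point is the diagonal vanishing of the third paragraph: it is the $\ell^{p/2}\hookrightarrow\ell^{1}$ comparison, valid because $\tfrac p2<1$, that turns the finite $\tfrac p2$-variation of $\tilde{\mathbb X}$ into a genuinely negligible first-order contribution and makes the Young and rough integrals coincide.
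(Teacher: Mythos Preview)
Your argument is correct and complete, but it proceeds along a genuinely different route from the paper's proof. Both arguments reduce to showing that the correction $\sum_{[s,t]\in\mathcal P}Y'_{s-}\tilde{\mathbb X}_{s,t}$ tends to zero, but they kill it differently. The paper picks $p<p'<2$, uses Young's inequality to bound $|\tilde{\mathbb X}_{s,t}|\lesssim \|X^-\|_{p',[s,t]}\|X\|_{p',[s,t]}$, and then invokes compatibility of the pair $(X^-,X)$: on a well-chosen partition one of the two factors is uniformly $\le\varepsilon$, while the remaining product $\|X^-\|_{p,[s,t]}^{p/p'}\|X\|_{p,[s,t]}^{p/p'}$ is superadditive and therefore sums to a fixed constant. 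This is exactly the same compatibility/interpolation mechanism that drives the proof of Theorem~\ref{roughintegration}.

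You instead exploit directly that $\tfrac p2<1$: the $\ell^{p/2}\hookrightarrow\ell^1$ comparison gives $\sum|\tilde{\mathbb X}_{s,t}|\le(\max|\tilde{\mathbb X}_{s,t}|)^{1-p/2}\|\tilde{\mathbb X}\|_{p/2}^{p/2}$, and you then establish the diagonal vanishing $\sup_{t-s\le\delta}|\tilde{\mathbb X}_{s,t}|\to0$ via a compactness argument using Chen's relation \eqref{chen} and the explicit jump formula $\Delta_\tau\tilde{\mathbb X}_{0,\cdot}=(X_{\tau-}-X_0)\otimes\Delta_\tau X$. Your approach avoids the compatibility machinery and the auxiliary exponent $p'$ altogether; the trade-off is the somewhat longer case analysis in the diagonal-vanishing step, though that step actually yields the stronger (MRS) rather than merely (RRS) vanishing of the correction.
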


\begin{proof}
The difference between Riemann sum approximation of corresponding integrals
can be written as 
\begin{equation*}
S(\mathcal{P})= \sum_{[s,t]\in \mathcal{P}} Y_{s-}^{\prime }\tilde{\mathbb{X}%
}_{s,t}
\end{equation*}
Choose $p< p^{\prime }<2$. From Young's inequality,%
\begin{equation*}
|\tilde{\mathbb{X}}_{s,t}| = |\int_s^t (X_{r-} - X_{s-})\otimes dX_r| \leq C
||X^-||_{p^{\prime },[s,t]}||X||_{p^{\prime },[s,t]}
\end{equation*}
$(X^-, X)$ is a compatible pair, which implies for properly chosen $\mathcal{%
P}$, 
\begin{equation*}
|S(\mathcal{P})| \leq C\epsilon \sum_{[s,t]\in \mathcal{P}}
||X^-||_{p,[s,t]}^{\frac{p}{p^{\prime }}}||X||_{p,[s,t]}^{\frac{p}{p^{\prime
}}}
\end{equation*}%
Noting again that the term under summation sign is superadditive,%
\begin{equation*}
(RRS)\lim\limits_{|\mathcal{P}|\rightarrow 0 } S(\mathcal{P}) = 0
\end{equation*}
\end{proof}

\section{Rough differential equations with jumps}

%

In the case of \textit{continuous} RDEs the difference between non-geometric
(It\^o-type) and geometric situations, is entirely captured in one's choice
of the second order information $\mathbb{X}$, so that both cases are handled
with the \textit{same} notion of (continuous) RDE solution. In the jump
setting, the situation is different and a geometric notion of RDE solution
requires additional terms in the equation in the spirit of Marcus' canonical
(stochastic) equations \cite{Marcus1,Marcus2,kpp,applebaum}. We now define
both solution concepts for RDEs with jumps, or course they coincide in
absence of jumps, $(\Delta X_s, \Delta \mathbb{X}_{s}) \equiv (0,0)$.

\begin{definition}
(i) For suitable $f$ and a c\'{a}dl\'{a}g geometric $p$-rough path $\mathbf{X%
}=(X,\mathbb{X})\in \mathcal{W}_{\mathrm{g}}^{p}$, call a path $Z$ (or
better: controlled rough path $(Z,f(Z))$) solution to the \textit{rough canonical
equation} 
\begin{equation*}
dZ_{t}=f\left( Z_{t}\right) \diamond d\mathbf{X}_{t}
\end{equation*}%
if, by definition,%
\begin{equation*}
{Z}_{t}=Z_{0}+\int_{0}^{t}f({Z}_{s-})d\mathbf{X}_{s}+\sum_{0<s\leq t}\phi
\left( f\Delta X_{s}+\frac{1}{2}\left[ f,f\right] \Delta \mathbb{X}%
_{s};Z_{s-}\right) -Z_{s-}-f(Z_{s-})\Delta X_{s}-f^{\prime }f\left(
Z_{s-}\right) \Delta \mathbb{X}_{s}\}
\end{equation*}%
where, as in Section \ref{sec:Marcus}, $\phi (g,x)$ is the time $1$ solution
to $\dot{y}=g(y),\hspace{2mm}y(0)=x$. \ When $\mathbf{X}$ is Marcus like,
i.e. $\mathbf{X}\in \mathcal{W}_{\mathrm{M}}^{p}$ so that $\Delta \mathbb{X}%
_{s}=$ $\left( \Delta X_{s}\right) ^{\otimes 2}/2$, this becomes%
\begin{equation*}
{Z}_{t}=Z_{0}+\int_{0}^{t}f({Z}_{s-})d\mathbf{X}_{s}+\sum_{0<s\leq t}\{\phi
(f\Delta X_{s},Z_{s-})-Z_{s-}-f(Z_{s-})\Delta X_{s}-f^{\prime }f\left(
Z_{s-}\right) \frac{1}{2}\left( \Delta X_{s}\right) ^{\otimes 2}\}.
\end{equation*}%
(ii) For suitable $f$ and a c\'{a}dl\'{a}g $p$-rough path call a path $Z$
(or better: controlled rough path $(Z,f(Z))$) solution to the \textit{(general)
rough differential equation} 
\begin{equation*}
dZ_{t}=f\left( Z_{t-}\right) d\mathbf{X}_{t}
\end{equation*}%
if, by definition,%
\begin{equation*}
{Z}_{t}=Z_{0}+\int_{0}^{t}f({Z}_{s-})d\mathbf{X}_{s}.
\end{equation*}
\end{definition}

We shall not consider the solution type (ii) further here.

\begin{theorem}
Fix initial data $Z_{0}$. Then $Z$ is a solution to $dZ_{t}=f\left(
Z_{t}\right) \diamond d\mathbf{X}_{t}$ if and only if $\tilde{Z}$ is a
solution to the (continuous)\ RDE%
\begin{equation*}
d\tilde{Z}_{t}=f\left( \tilde{Z}_{t}\right) d\mathbf{\tilde{X}}_{t}
\end{equation*}%
where $\mathbf{\tilde{X}}\in \mathcal{C}_{g}^{p}$ is constructed from $%
\mathbf{X}\in \mathcal{W}_{g}^{p}$ as in Theorem \ref{minimaljumpextension}.
\end{theorem}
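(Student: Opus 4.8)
The plan is to transport solutions across the time change $[0,T]\to[0,\tilde{T}]$ built in Theorem \ref{minimaljumpextension}. Write $t\mapsto t'$ for the associated collapse map (so $t'$ is the pre-image in $[0,T]$), and recall the splitting of $[0,\tilde{T}]$ into the \emph{blue} set, on which the time change is a genuine reparametrisation of $[0,T]$, and the \emph{red} jump segments $[a_k,a_k+\delta_k]$ attached at each jump time $t_k$. Given a solution $\tilde Z$ of the continuous RDE I would define $Z$ by $Z_{t'}:=\tilde Z_t$ at blue points; this is càdlàg, with $Z_{t_k-}=\tilde Z_{a_k}$ and $Z_{t_k}=\tilde Z_{a_k+\delta_k}$, so that the jump of $Z$ at $t_k$ is exactly the net displacement of $\tilde Z$ across the $k$-th segment. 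The claimed equivalence then reduces to two local comparisons, an increment comparison on blue intervals and a jump comparison across each red segment, with the converse direction obtained by reading the same identities backwards and invoking uniqueness for the continuous RDE (Section \ref{sec:CRI}) and for the càdlàg rough integral (Theorem \ref{roughintegration}).

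The decisive local input is the behaviour over a single segment. By construction the driving path there is the log-linear curve $\tilde{\mathbf X}_{a_k,a_k+s\delta_k}=\exp^{(2)}\big(s\log^{(2)}\Delta_{t_k}\mathbf X\big)$, $s\in[0,1]$, whose level-one part is the straight line $s\mapsto s\,\Delta X_{t_k}$ and whose antisymmetric second level is the linear pure area $s\mapsto s\,\Delta\mathbb A_{t_k}$; thus $\tilde{\mathbf X}$ on the segment is the canonical lift of a straight line superimposed with a pure-area rough path of the type in Example \ref{ex:pacCPP}. Since both the path and the area are of bounded variation and move at constant rate in $s$, the continuous rough integral collapses to a Riemann--Stieltjes integral and the RDE $d\tilde Z=f(\tilde Z)\,d\tilde{\mathbf X}$ becomes the autonomous ODE $\dot{\tilde Z}=V_{t_k}(\tilde Z)$ with $V_{t_k}:=f\,\Delta X_{t_k}+\tfrac12[f,f]\,\Delta\mathbb X_{t_k}$, the straight-line part contributing $f\,\Delta X_{t_k}$ and the linear pure area contributing the bracket term $\tfrac12[f,f]\,\Delta\mathbb X_{t_k}$. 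Hence $\tilde Z_{a_k+\delta_k}=\phi(V_{t_k};\tilde Z_{a_k})=\phi(V_{t_k};Z_{t_k-})$, with $\phi$ the time-one flow from Section \ref{sec:Marcus}.

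With this in hand the matching is straightforward. On each blue interval $\mathbf X$ has no jumps, so the càdlàg rough integral $\int f(Z_{r-})\,d\mathbf X$ agrees there with a continuous rough integral, and since $\tilde{\mathbf X}$ is on the blue set merely $\mathbf X$ reparametrised, invariance of rough integration under continuous time change forces the blue increments of $\int f(Z_{r-})\,d\mathbf X$ and of $\int f(\tilde Z)\,d\tilde{\mathbf X}$ to coincide. For the jump at $t_k$, Corollary \ref{jumpofRI} gives $\Delta_{t_k}\!\int f(Z_{r-})\,d\mathbf X=f(Z_{t_k-})\Delta X_{t_k}+f'f(Z_{t_k-})\Delta\mathbb X_{t_k}$, whence the rough canonical equation assigns to $Z$ the jump $f(Z_{t_k-})\Delta X_{t_k}+f'f(Z_{t_k-})\Delta\mathbb X_{t_k}+\{\phi(V_{t_k};Z_{t_k-})-Z_{t_k-}-f(Z_{t_k-})\Delta X_{t_k}-f'f(Z_{t_k-})\Delta\mathbb X_{t_k}\}=\phi(V_{t_k};Z_{t_k-})-Z_{t_k-}$, which is exactly the net displacement of $\tilde Z$ across the $k$-th segment from the previous paragraph. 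Thus $Z$ solves $dZ=f(Z)\diamond d\mathbf X$ precisely when $\tilde Z$ solves $d\tilde Z=f(\tilde Z)\,d\tilde{\mathbf X}$.

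I expect the main obstacle to be the segment lemma: rigorously identifying the continuous RDE over the log-linear segment with the time-one flow of $V_{t_k}$, i.e. extracting the bracket correction $\tfrac12[f,f]\,\Delta\mathbb X_{t_k}$ from the linear pure-area component while checking that the symmetric part only reproduces the straight-line contribution. The remaining work is bookkeeping: confirming that $\tilde Z$ is a bona fide controlled rough path on all of $[0,\tilde{T}]$ so that the blue and red pieces glue into a global solution, and making the time-change invariance precise so that the countably many segments, of total length $\sum_k\delta_k<\infty$, introduce no spurious $p$-variation. Both follow from the $p$-variation estimates already established for $\tilde{\mathbf X}$ in the construction preceding Theorem \ref{minimaljumpextension}.
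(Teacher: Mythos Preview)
Your strategy is the paper's: transport solutions through the time change of Theorem \ref{minimaljumpextension} and identify the continuous RDE on each red segment with the time-one flow of the Marcus-type vector field. The paper, however, only carries this out for finite-variation $X$ with a single jump, working directly with Riemann sums and (MRS)/(RRS) convergence, and then dismisses the genuine rough case as ``just more notational effort''; you instead argue at the level of rough integrals, invoking Corollary \ref{jumpofRI} for the jump of $\int f(Z_-)\,d\mathbf X$ and the log-linear structure of the segment to obtain the ODE driven by $V_{t_k}=f\Delta X_{t_k}+\tfrac12[f,f]\Delta\mathbb X_{t_k}$, which is actually more explicit for general $\mathbf X\in\mathcal W_{\mathrm g}^{p}$ with nontrivial area jumps.

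One point to tighten: the phrase ``on each blue interval $\mathbf X$ has no jumps'' presupposes nondegenerate jump-free subintervals of $[0,T]$, but jump times may be dense, so the blue set need not decompose into intervals at all. The paper handles the gluing of countably many segments by a truncation argument---treat the finitely many jumps of size $>\eta$ first, then send $\eta\downarrow 0$ using absolute summability of the correction terms $\{\phi(\cdot)-Z_{s-}-f(Z_{s-})\Delta X_s-f'f(Z_{s-})\Delta\mathbb X_s\}$---and your ``bookkeeping'' step will need exactly this device rather than a direct blue/red decomposition.
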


\begin{proof}
We illustrate the idea by considering $X$ of finite $1$-variation, with 
\textit{one} jump at $\tau \in \lbrack 0,T]$. This jump time becomes an
interval $\tilde{I}=\left[ a,a+\delta \right] \subset \lbrack 0,\tilde{T}%
]=[0,T+\delta ]$ in the stretched time scale. Now 
\begin{equation*}
\tilde{Z}_{0,\tilde{T}}\approx \sum_{\left[ s,t\right] \in \mathcal{P}%
}f\left( \tilde{Z}_{s}\right) \tilde{X}\,_{s,t}
\end{equation*}%
in the sense of (MRS) convergence, as $\left\vert \mathcal{P}\right\vert
\rightarrow 0$. In particular, noting that $\tilde{X}\,_{s,t}=\frac{\left(
t-s\right) }{\delta }\Delta X_{\tau }$ whenever $\left[ s,t\right] \subset
\lbrack a,a+\delta ]$%
\begin{eqnarray*}
\tilde{Z}_{a,a+\delta } &=&\lim_{\left\vert \mathcal{\tilde{P}}\right\vert
\rightarrow 0}\sum_{\left[ s,t\right] \in \mathcal{\tilde{P}}}f\left( \tilde{%
Z}_{s}\right) \tilde{X}\,_{s,t}=\frac{1}{\delta }\int_{a}^{a+\delta }f\left( 
\tilde{Z_r}\right) \Delta X_{\tau }dr \\
&\implies &\tilde{Z}_{a,a+\delta }=\phi (f\Delta X_{\tau },\tilde{Z}_{a})-%
\tilde{Z}_{a}
\end{eqnarray*}%
On the other hand, by refinement of $\mathcal{P}$, we may insist that the
end-point of $\tilde{I}$ are contained in $\mathcal{P}$ which thus has the
form 
\begin{equation*}
\mathcal{P=P}_{1}\cup \mathcal{\tilde{P}}\cup \mathcal{P}_{2}
\end{equation*}%
and so 
\begin{equation*}
\tilde{Z}_{0,\tilde{T}}\approx \sum_{\left[ s,t\right] \in \mathcal{P}%
_{1}}f\left( \tilde{Z}_{s}\right) \tilde{X}\,_{s,t}+\sum_{\left[ s,t\right]
\in \mathcal{\tilde{P}}}f\left( \tilde{Z}_{s}\right) \tilde{X}\,_{s,t}+\sum_{%
\left[ s,t\right] \in \mathcal{P}_{2}}f\left( \tilde{Z}_{s}\right) \tilde{X}%
\,_{s,t}
\end{equation*}%
from which we learn, by sending $\left\vert \mathcal{\tilde{P}}\right\vert
\rightarrow 0$, that%
\begin{equation*}
\tilde{Z}_{0,\tilde{T}}\approx \sum_{\left[ s,t\right] \in \mathcal{P}%
_{1}}f\left( \tilde{Z}_{s}\right) \tilde{X}\,_{s,t}+\phi (f\Delta X_{\tau },%
\tilde{Z}_{a})-\tilde{Z}_{a}+\sum_{\left[ s,t\right] \in \mathcal{P}%
_{2}}f\left( \tilde{Z}_{s}\right) \tilde{X}\,_{s,t}.
\end{equation*}%
We now switch back to the original time scale. Of course, $Z\equiv \tilde{Z}$
on $[0,\tau )$ while $Z_{t}=\tilde{Z}_{t+\delta }$ on $[\tau ,T]$ and in
particular 
\begin{eqnarray*}
Z_{0,T} &=&\tilde{Z}_{0,\tilde{T}} \\
Z_{\tau -} &=&\tilde{Z}_{a} \\
Z_{\tau } &=&\tilde{Z}_{a+\delta }.
\end{eqnarray*}
\ But then, with $\mathcal{P}_{1}^{\prime }$ and $\mathcal{P}_{2}^{\prime }$
partitions of $\left[ 0,\tau \right] $ and $\left[ \tau ,T\right] $,
respectively, 
\begin{eqnarray*}
Z_{0,T} &\approx &\sum_{\substack{ \left[ s^{\prime },t^{\prime }\right] \in 
\mathcal{P}_{1}^{\prime }  \\ t^{\prime }<\tau }}f\left( Z_{s^{\prime
}}\right) X\,_{s^{\prime },t^{\prime }}+\sum_{\left[ s^{\prime },t^{\prime }%
\right] \in \mathcal{P}_{2}^{\prime }}f\left( Z_{s^{\prime }}\right)
X\,_{s^{\prime },t^{\prime }}+\phi (f\Delta X_{\tau },Z_{\tau -})-Z_{\tau -}
\\
&\approx &\sum_{\left[ s^{\prime },t^{\prime }\right] \in \mathcal{P}%
^{\prime }}f\left( Z_{s^{\prime }}\right) X\,_{s^{\prime },t^{\prime }}+\phi
(f\Delta X_{\tau },\tilde{Z}_{a})+\left\{ \phi (f\Delta X_{\tau },Z_{\tau
-})-Z_{\tau -}-f\left( Z_{\tau -}\right) \Delta X_{\tau }\right\}
\end{eqnarray*}%
since $f\left( Z_{s^{\prime }}\right) X\,_{s^{\prime },\tau }\rightarrow $ $%
f\left( Z_{\tau -}\right) \Delta X_{\tau }$ as $\left\vert \mathcal{P}%
^{\prime }\right\vert \rightarrow 0$, with $[s^{\prime },\tau ]\in \mathcal{P%
}^{\prime }$. By passing to the \textrm{(RRS)}\ limit, find%
\begin{equation*}
Z_{0,T}=\int_{0}^{T}f\left( Z_{s}^{-}\right) dX+\left\{ \phi (f\Delta
X_{\tau },Z_{\tau -})-Z_{\tau -}-f\left( Z_{\tau -}\right) \Delta X_{\tau
}\right\} .
\end{equation*}%
This argument extends to countable many jumps. We want to show that%
\begin{eqnarray*}
{Z}_{T} &=&Z_{0}+\int_{0}^{T}f({Z}_{s-})dX_{s}+\sum_{0<s\leq T}\left\{
...\right\} \\
&=&Z_{0}+\mathrm{(RRS)}\,\lim\limits_{\left\vert \mathcal{P}\right\vert
\rightarrow 0}\sum_{\left[ s,t\right] \in \mathcal{P}}f({Z}%
_{s})X_{s,t}+\lim_{\eta \downarrow 0}\sum_{\substack{ s\in (0,t]:  \\ %
\left\vert \Delta X_{s}\right\vert >\eta }}\left\{ ...\right\} .
\end{eqnarray*}%
What we know is $\mathrm{(MRS)}$-convergence of the time-changed problem.
That is, given $\varepsilon >0$, there exists $\delta $ s.t. $\left\vert 
\mathcal{P}\right\vert <\delta $ implies%
\begin{equation*}
\tilde{Z}_{0,\tilde{T}}\approx _{\varepsilon }\sum_{\left[ s,t\right] \in 
\mathcal{P}}f\left( \tilde{Z}_{s}\right) \tilde{X}\,_{s,t}
\end{equation*}%
where $a\approx _{\varepsilon }b$ means $\left\vert a-b\right\vert \leq
\varepsilon $. For fixed $\eta >0$, include all (but only finitely many, say 
$N$) points $s\in (0,t]:$ $\left\vert \Delta X_{s}\right\vert >\eta $ in $%
\mathcal{P}$, giving rise to $\left( \mathcal{\tilde{P}}_{j}:1\leq j\leq
N\right) $. Sending the mesh of these to zero gives, as before,%
\begin{equation*}
Z_{0,T}\approx _{\varepsilon }\sum_{\left[ s^{\prime },t^{\prime }\right]
\in \mathcal{P}^{\prime }}f\left( Z_{s^{\prime }}\right) X\,_{s^{\prime
},t^{\prime }}+\sum_{\substack{ s\in (0,t]:  \\ \left\vert \Delta
X_{s}\right\vert >\eta }}\left\{ \phi (f\Delta X_{s},Z_{s-})-Z_{s-}-f\left(
Z_{s-}\right) \Delta X_{s}\right\}
\end{equation*}%
In fact, due to summability of $\sum_{s\in (0,t]}\left\{ ...\right\} $, we
can pick $\eta >0$ such that%
\begin{equation*}
Z_{0,T}\approx _{2\varepsilon }\sum_{\left[ s^{\prime },t^{\prime }\right]
\in \mathcal{P}^{\prime }}f\left( Z_{s^{\prime }}\right) X\,_{s^{\prime
},t^{\prime }}+\sum_{s\in (0,t]:}\left\{ \phi (f\Delta
X_{s},Z_{s-})-Z_{s-}-f\left( Z_{s-}\right) \Delta X_{s}\right\}
\end{equation*}
and this is good enough to take the $\mathrm{(RRS)}\,\lim $ as $\left\vert 
\mathcal{P}^{\prime }\right\vert \rightarrow 0$. Going from finite variation 
$X$ to the rough case, is just more notational effort. After all, the rough
integral is a sort of (abstract) Riemann integral.
\end{proof}

\bigskip 

\bigskip We will need to the following corollary.

\begin{corollary}
\label{DEforminimaljumpextension}For a c\'{a}dl\'{a}g rough path $\mathbf{X}%
=1+X+\mathbb{X}=\exp \left( X+\mathbb{A}\right) \in \mathcal{W}_{\mathrm{g}%
}^{p}$ for $p\in \lbrack 2,3)$, the minimal jump extenstion $\mathbf{X}%
^{(n)} $ taking values in $G^{(n)}(\mathbb{R}^{d})$ satisfies the
Marcus-type differential equation 
\begin{equation}
\mathbf{X}_{t}^{(n)}=1+\int_{0}^{t}\mathbf{X}_{r-}^{(n)}\otimes d\mathbf{X}%
_{r}+\sum_{0<s\leq t}\mathbf{X}_{s-}^{(n)}\otimes \{\exp ^{(n)}(\log
^{(2)}\Delta \mathbf{X}_{s})-\Delta \mathbf{X}_{s}\}  \label{marcusRDE}
\end{equation}%
where the integral is understood as a rough integral and summation term is
well defined as absolutely summable series.
\end{corollary}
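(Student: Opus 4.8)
The plan is to recognize $\mathbf{X}^{(n)}$ as the solution of a rough canonical equation driven by the \emph{linear} vector field given by left tensor multiplication, and then to obtain \eqref{marcusRDE} by specializing the defining integral identity of that equation to this vector field. Let $(e_i)_{i=1}^d$ be the standard basis of $\mathbb{R}^d$ and let $f$ be the vector field on $T^{(n)}(\mathbb{R}^d)$ with components $f_i(y)=y\otimes e_i$, so that $f(y)\,d\mathbf{X}=y\otimes d\mathbf{X}$. By \eqref{equ:sigviaTC} the continuous linear RDE $d\tilde S=f(\tilde S)\,d\tilde{\mathbf{X}}$ on $[0,\tilde T]$ has solution the signature $\tilde{\mathbf{X}}^{(n)}$, and $\tilde{\mathbf{X}}$ is precisely the continuous path produced from $\mathbf{X}=\mathbf{X}^{(m)}$ by the time change of Theorem~\ref{minimaljumpextension}. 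Hence the equivalence between the rough canonical equation and its time-changed continuous RDE (the theorem preceding this corollary), applied with this linear $f$ on the finite-dimensional space $T^{(n)}$, shows that $Z:=\mathbf{X}^{(n)}$ solves $dZ=f(Z)\diamond d\mathbf{X}$. Reading off the defining integral form of the canonical equation with $Z_0=1$, and noting $f(\mathbf{X}^{(n)}_{r-})\,d\mathbf{X}_r=\mathbf{X}^{(n)}_{r-}\otimes d\mathbf{X}_r$, the integral already matches the one in \eqref{marcusRDE}; it remains to simplify the jump summand.

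The core computation is the flow of the Marcus field $g:=f\Delta X_s+\tfrac{1}{2}[f,f]\Delta\mathbb{X}_s$. Since each $f_i$ is linear, $Df_j(y)[w]=w\otimes e_j$, so $[f_i,f_j](y)=y\otimes(e_i\otimes e_j-e_j\otimes e_i)$, i.e. left multiplication by the bracket $[e_i,e_j]$ of $\mathfrak{g}^{(2)}(\mathbb{R}^d)$. Using $\mathrm{Sym}(\Delta\mathbb{X}_s)=\tfrac{1}{2}\Delta X_s\otimes\Delta X_s$ and antisymmetry of the bracket, $g$ collapses to the single linear field $y\mapsto y\otimes(\Delta X_s+\Delta\mathbb{A}_s)=y\otimes\log^{(2)}\Delta\mathbf{X}_s$; its time-$1$ flow from $x$ is $x\otimes\exp^{(n)}(\log^{(2)}\Delta\mathbf{X}_s)$, so that
\begin{equation*}
\phi\big(f\Delta X_s+\tfrac{1}{2}[f,f]\Delta\mathbb{X}_s;\,\mathbf{X}^{(n)}_{s-}\big)=\mathbf{X}^{(n)}_{s-}\otimes\exp^{(n)}\big(\log^{(2)}\Delta\mathbf{X}_s\big).
\end{equation*}
On the other hand $f(\mathbf{X}^{(n)}_{s-})\Delta X_s=\mathbf{X}^{(n)}_{s-}\otimes\Delta X_s$ and $f^{\prime}f(\mathbf{X}^{(n)}_{s-})\Delta\mathbb{X}_s=\mathbf{X}^{(n)}_{s-}\otimes\Delta\mathbb{X}_s$, whence $\mathbf{X}^{(n)}_{s-}+f(\mathbf{X}^{(n)}_{s-})\Delta X_s+f^{\prime}f(\mathbf{X}^{(n)}_{s-})\Delta\mathbb{X}_s=\mathbf{X}^{(n)}_{s-}\otimes\Delta\mathbf{X}_s$. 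Subtracting, the jump summand becomes $\mathbf{X}^{(n)}_{s-}\otimes\{\exp^{(n)}(\log^{(2)}\Delta\mathbf{X}_s)-\Delta\mathbf{X}_s\}$, exactly the series in \eqref{marcusRDE}.

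It remains to verify absolute convergence of this series. As $r\mapsto\mathbf{X}^{(n)}_{r-}$ is c\'adl\'ag and hence bounded on $[0,T]$, it suffices to estimate $\sum_s|\exp^{(n)}(\log^{(2)}\Delta\mathbf{X}_s)-\Delta\mathbf{X}_s|$ in the ambient norm of $T^{(n)}$. Since $\Delta\mathbf{X}_s=\exp^{(2)}(\log^{(2)}\Delta\mathbf{X}_s)$, the two elements agree through level $2$ and their difference lives in levels $3,\dots,n$; by homogeneity of the grading each such component of degree $k\geq3$ is controlled by $\|\Delta\mathbf{X}_s\|_{CC}^k$ (using $|\Delta X_s|\lesssim\|\Delta\mathbf{X}_s\|_{CC}$ and $|\Delta\mathbb{A}_s|\lesssim\|\Delta\mathbf{X}_s\|_{CC}^2$), so that $|\exp^{(n)}(\log^{(2)}\Delta\mathbf{X}_s)-\Delta\mathbf{X}_s|\lesssim\|\Delta\mathbf{X}_s\|_{CC}^3$ for the all-but-finitely-many jumps with $\|\Delta\mathbf{X}_s\|_{CC}\leq1$. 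Because $\mathbf{X}\in\mathcal{W}_{\mathrm{g}}^p$ with $p<3$ has $p$-summable jumps and $\|\Delta\mathbf{X}_s\|_{CC}^3\leq\|\Delta\mathbf{X}_s\|_{CC}^p$ there, the series converges absolutely.

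The main obstacle is the second step: showing that the combined Marcus field $f\Delta X_s+\tfrac{1}{2}[f,f]\Delta\mathbb{X}_s$ is again left multiplication, now by $\log^{(2)}\Delta\mathbf{X}_s$. This rests on the identity $[f_i,f_j](y)=y\otimes[e_i,e_j]$ and on the bookkeeping that turns $\tfrac{1}{2}[f,f]\Delta\mathbb{X}_s$ into left multiplication by $\Delta\mathbb{A}_s$; one also has to confirm that the linear, $T^{(n)}$-valued $f$ legitimately falls under the hypotheses of the preceding correspondence theorem, which is where the ``suitable $f$'' clause must be made precise.
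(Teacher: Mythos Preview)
Your proof is correct and follows the same route the paper intends: the paper's proof is the single line ``This follows from \eqref{equ:sigviaTC}'', which packages exactly the argument you spell out --- $\tilde{\mathbf{X}}^{(n)}$ solves the continuous linear RDE \eqref{equ:sigviaTC}, the preceding correspondence theorem transports this to the rough canonical equation for $\mathbf{X}^{(n)}$, and specializing to $f_i(y)=y\otimes e_i$ collapses the jump corrections to the stated form. Your explicit computation of $[f_i,f_j](y)=y\otimes[e_i,e_j]$, the identification $g(y)=y\otimes\log^{(2)}\Delta\mathbf{X}_s$, and the absolute summability via $\|\Delta\mathbf{X}_s\|_{CC}^3\leq\|\Delta\mathbf{X}_s\|_{CC}^p$ are all the details the paper leaves implicit; the caveat you flag about ``suitable $f$'' is real but harmless here, since the vector field is linear on the finite-dimensional $T^{(n)}$ and the time-changed RDE \eqref{equ:sigviaTC} is already known to have the global solution $\tilde{\mathbf{X}}^{(n)}$.
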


\begin{proof}
This follows from \eqref{equ:sigviaTC}.
\end{proof}

\section{Rough path stability}

We briefly discuss stability of rough integration and rough differential
equations. In the context of cadlag rough integration, Section \ref%
{roughintegrationwithjumps}, it is a natural to estimate $Z^{1}-Z^{2}$, in $p
$-variation norm, where%
\begin{equation*}
Z^{i}=\int_{0}Y^{i}d\mathbf{X}^{i}\text{ for }i=1,2\text{.}
\end{equation*}%
Now, the analysis presented in Section \ref{roughintegrationwithjumps}
adapts without difficulties to this situation. For instance, when $%
Y^{i}=F\left( X^{i}\right) $, one easily find%
\begin{equation*}
\left\vert Z^{1}-Z^{2}\right\vert _{p\text{-var}}\leq C_{F,M}(\left\vert
X_{0}^{1}-X_{0}^{2}\right\vert +\left\Vert X^{1}-X^{2}\right\Vert _{p\text{%
-var}}+\left\Vert \mathbb{X}^{1}\mathbb{-X}^{2}\right\Vert _{p\text{-var}})
\end{equation*}%
provided $F\in C^{2}$ and $\left\vert X_{0}^{i}\right\vert +\left\Vert
X^{i}\right\Vert _{p\text{-var}}+\left\Vert \mathbb{X}^{i}\right\Vert _{p%
\text{-var}}\leq M$. (The situation can be compared with \cite[Sec 4.4]%
{FrizHairer} where analogous estimate in the $\alpha $-H\"{o}lder setting.)

The situation is somewhat different in the case of Marcus type RDEs, $%
dY_{t}^{i}=f\left( Y_{t}^{i}\right) \diamond d\mathbf{X}_{t}^{i}$. The
observation here, quite possibly already contained implicitly in the works
of Williams \cite{williams}, is simply that the difference $Y^{1}-Y^{2}$, in
$p$-variation norm, is controlled, as above uniformly on bounded sets, by%
\begin{equation*}
\left\Vert \tilde{X}^{1}-\tilde{X}^{2}\right\Vert _{p\text{-var}}+\left\Vert 
\mathbb{\tilde{X}}^{1}\mathbb{-\tilde{X}}^{2}\right\Vert _{p\text{-var}}
\end{equation*}%
where $\mathbf{\tilde{X}}^{i}=(\tilde{X}^{i},\mathbb{\tilde{X}}^{i})\in 
\mathcal{C}_{g}^{p}$ is constructed from $\mathbf{X}^{i}\in \mathcal{W}%
_{g}^{p}$ as in Theorem \ref{minimaljumpextension}. (It should be possible
to rederive the convergence results of \cite{kpp} within this framework.)

\section{Rough versus stochastic integration}

Consider a $d$-dimensiona L\'{e}vy process $X_{t}$ enhanced with 
\begin{equation*}
\mathbb{X}_{s,t}:=\mathrm{(It\hat{o})}\int_{(s,t]}(X_{r-}-X_{s})\otimes
dX_{r}.
\end{equation*}

We show that rough integration against the It\^{o} lift actually yields
standard stochastic integral in It\^{o} sense. \ An immediate benefit, say
when taking $Y=f\left( X\right) $ with $f\in C^{2}$, is the universality of
the resulting stochastic integral, defined on a set of full measure
simultanously for all such integrands.

\begin{theorem}
\label{rough=ito} Let $X$ be a $d$-dimensional L\'{e}vy process, and
consider adapted processes $Y$ and $Y^{\prime }$ such that $(Y,Y^{\prime })$
is controlled rough path. Then It\^{o}- and rough integral coincide, 
\begin{equation*}
\int_{(0,T]}Y_{s-}dX_{s}=\int_{0}^{T}Y_{s-}d\mathbf{X}_{s}\hspace{2mm}a.s.
\end{equation*}
\end{theorem}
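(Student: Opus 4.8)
The plan is to compare the two integrals at the level of their defining Riemann sums and to show that the rough correction term is asymptotically negligible. Recall from Theorem~\ref{roughintegration} that the rough integral is the (RRS) limit of
\begin{equation*}
S(\mathcal{P})=\sum_{[s,t]\in\mathcal{P}}Y_{s-}X_{s,t}+Y_{s-}^{\prime}\tilde{\mathbb{X}}_{s,t},\qquad \tilde{\mathbb{X}}_{s,t}=\mathbb{X}_{s,t}+\Delta_sX\otimes X_{s,t},
\end{equation*}
while the It\^o integral is the limit in probability of $\sum_{[s,t]\in\mathcal{P}}Y_{s-}X_{s,t}$ along the dyadic partitions. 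The first observation I would record is the algebraic identity, valid for the It\^o lift,
\begin{equation*}
\tilde{\mathbb{X}}_{s,t}=\int_{(s,t]}(X_{r-}-X_{s-})\otimes dX_r\quad(\mathrm{It\hat{o}}),
\end{equation*}
which follows by inserting $X_{r-}-X_{s-}=(X_{r-}-X_s)+\Delta_sX$ into the definition of $\mathbb{X}_{s,t}$. This is exactly the reason the enhancement $\tilde{\mathbb{X}}$ (rather than $\mathbb{X}$) was built into the definition of the rough integral.

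Next I would rewrite the difference of the two families of Riemann sums as a single stochastic integral. Since $Y_{s-}^{\prime}$ is $\mathcal{F}_{s}$-measurable and constant on each $(s,t]\in\mathcal{P}$,
\begin{equation*}
\Xi(\mathcal{P}):=\sum_{[s,t]\in\mathcal{P}}Y_{s-}^{\prime}\tilde{\mathbb{X}}_{s,t}=\int_{(0,T]}H^{\mathcal{P}}_r\,dX_r,\qquad H^{\mathcal{P}}_r:=Y_{s(r)-}^{\prime}\bigl(X_{r-}-X_{s(r)-}\bigr),
\end{equation*}
where $s(r)$ denotes the left endpoint of the partition interval containing $r$ and the product is read with the obvious contraction. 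The integrand $H^{\mathcal{P}}$ is left-continuous and adapted, hence previsible, and it is a good It\^o integrand because $Y^{\prime}$ is locally bounded. As $|\mathcal{P}|\to0$ one has $s(r)\uparrow r$, so $X_{s(r)-}\to X_{r-}$ and $Y^{\prime}_{s(r)-}\to Y^{\prime}_{r-}$; therefore $H^{\mathcal{P}}_r\to0$ for every $r$. Moreover $|H^{\mathcal{P}}_r|\le 2\bigl(\sup_{u\le T}|Y^{\prime}_u|\bigr)\bigl(\sup_{u\le T}|X_u|\bigr)$, a bound independent of $\mathcal{P}$ which, after localisation by stopping times, is locally bounded. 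The dominated convergence theorem for stochastic integrals (Protter \cite{Protter}) then gives $\Xi(\mathcal{P})\to0$ in probability as $|\mathcal{P}|\to0$. I expect this step -- the passage to a single stochastic integral together with the stochastic dominated convergence -- to be the technical heart of the argument.

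It remains to run both limits along one common sequence of partitions, and here is the only other delicate point. For a L\'evy process $\mathbb{P}(\Delta X_t\neq0)=0$ for each fixed $t$, so almost surely $X$ (and hence $Y,Y^{\prime},\tilde{\mathbb{X}}$) has no jump at any dyadic time. Consequently the partition $\tau$ produced in the proof of Theorem~\ref{roughintegration} may be chosen with dyadic endpoints, and since the dyadic partitions $\mathcal{P}_n=\{iT/2^n\}$ are nested and eventually refine any finite set of dyadics, the (RRS) convergence is witnessed along $(\mathcal{P}_n)$: almost surely $S(\mathcal{P}_n)\to\int_0^T Y_{r-}\,d\mathbf{X}_r$. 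Along the same sequence $\sum_{[s,t]\in\mathcal{P}_n}Y_{s-}X_{s,t}\to\int_{(0,T]}Y_{s-}\,dX_s$ in probability by definition of the It\^o integral, and $\Xi(\mathcal{P}_n)\to0$ in probability by the previous step. Writing $\sum_{[s,t]\in\mathcal{P}_n}Y_{s-}X_{s,t}=S(\mathcal{P}_n)-\Xi(\mathcal{P}_n)$, the right-hand side converges in probability to $\int_0^T Y_{r-}\,d\mathbf{X}_r$, while the left-hand side converges in probability to the It\^o integral; uniqueness of limits in probability then forces the two integrals to coincide almost surely. The matching of the refinement-type rough limit with the dyadic It\^o limit is exactly what the absence of jumps at dyadic times buys us.
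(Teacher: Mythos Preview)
Your dominated-convergence argument for $\Xi(\mathcal P)\to 0$ is correct and is a genuinely different device from the paper's: the paper decomposes $X=M+V$, disposes of the $V$-terms and of $\mathrm{Sym}(\mathbb M)$ by compatibility and F\"ollmer-type arguments, and then kills the antisymmetric part via the L\'evy-specific $L^2$ area bound $\mathbb E|\mathbb A_{s,t}|^2\lesssim|t-s|^2$ together with martingale orthogonality. Your route, rewriting $\Xi(\mathcal P)$ as a single stochastic integral $\int H^{\mathcal P}\,dX$ and invoking stochastic DCT, is softer and would in principle work for any semimartingale integrator, not just L\'evy.

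The gap is in the alignment of limits. The inference ``$X$ has no jump at dyadic times, hence $Y,Y'$ have none'' is false: $Y,Y'$ are arbitrary adapted c\`adl\`ag processes and may jump at dyadic (even deterministic) times. More to the point, the claim that the compatibility partition $\tau$ from Theorem~\ref{roughintegration} can be chosen dyadic is not justified. That proof places partition points at \emph{common continuity points} of the paths involved; when $X$ (hence $\tilde{\mathbb X}$) jumps at a non-dyadic time $\sigma$ and $Y'$ jumps there too---as happens already for $Y'=f'(X)$---the dyadic interval containing $\sigma$ has large oscillation for \emph{both} members of the pair $(Y'^-,\tilde{\mathbb X})$, so compatibility along dyadics fails. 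Consequently you have not shown $S(D_n)\to\int Y^-\,d\mathbf X$, and without this the two limits cannot be matched. The paper circumvents the issue by working along \emph{random}, $\omega$-dependent partitions $\mathcal P_n\supset D_n$ chosen pathwise to witness the (RRS) limit; your DCT step does not transfer to such $\mathcal P_n$ because $H^{\mathcal P_n}$ need not be predictable when the partition points are not stopping times. To rescue your approach you would need an independent argument that $S(D_n)$ converges to the rough integral almost surely, and this does not follow from (RRS) convergence alone.
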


\begin{proof}[Proof of Theorem \protect\ref{rough=ito}]
By Theorem \ref{roughintegration}, there exist partitions $\mathcal{P}_{n}$
with 
\begin{equation*}
|S(\mathcal{P}_{n})-\int_{0}^{T}Y_{s-}d\mathbf{X}_{s}|\leq \frac{1}{n}
\end{equation*}%
where 
\begin{equation*}
S(\mathcal{P}_{n}):=\sum_{[s,t]\in \mathcal{P}_{n}}Y_{s}X_{s,t}+Y_{s}^{%
\prime }\mathbb{X}_{s,t}.
\end{equation*}%
Let $X_{t}=M_{t}+V_{t}$ be the L\'{e}vy Ito decomposition with martingale $M$
and bounded variation part $V$. Since $(V^{-},V)$, $(V^{-},M)$, $(M^{-},V)$
are compatible pairs we can choose the corresponding $\tau _{n}$ for $%
\epsilon =\frac{1}{n}$ from their compatibility. W.l.o.g., we can assume $%
\mathcal{P}_{n-1}\cup \tau _{n}\cup D_{n}\subset \mathcal{P}_{n}$, where $%
D_{n}$ is the $n$-th dyadic partition. We know from general stochastic
integration theory that, possibly along some subsequence, almost surely, 
\begin{equation*}
S^{\prime }(\mathcal{P}_{n})=\sum_{[s,t]\in \mathcal{P}_{n}}Y_{s}X_{s,t}%
\rightarrow \int_{(0,T]}Y_{s-}dX_{s}\hspace{3mm}%
\mbox{as $n \rightarrow
\infty $}
\end{equation*}%
Thus it suffices to prove that almost surely, along some subsequence, 
\begin{equation*}
S^{^{\prime \prime }}(\mathcal{P}_{n})=\sum_{[s,t]\in \mathcal{P}%
_{n}}Y_{s}^{\prime }\mathbb{X}_{s,t}\rightarrow 0\hspace{3mm}
\end{equation*}%
Now, 
\begin{equation*}
\mathbb{X}_{s,t}=\mathbb{M}_{s,t}+\mathbb{V}_{s,t}+%
\int_{(s,t]}(M_{r-}-M_{s})\otimes dV_{r}+\int_{(s,t]}(V_{r-}-V_{s})\otimes
dM_{r}
\end{equation*}%
Using a similar argument as in Theorem \ref{youngcompatible}, 
\begin{equation*}
\sum_{\lbrack s,t]\in \mathcal{P}_{n}}Y_{s}^{\prime }\biggl(\mathbb{V}%
_{s,t}+\int_{(s,t]}(M_{r-}-M_{s})\otimes
dV_{r}+\int_{(s,t]}(V_{r-}-V_{s})\otimes dM_{r}\biggl)\rightarrow 0
\end{equation*}

We are left to show that%
\begin{equation*}
\sum_{\lbrack s,t]\in \mathcal{P}_{n}}Y_{s}^{\prime }\mathbb{M}%
_{s,t}\rightarrow 0
\end{equation*}%
By the very nature of It\^{o} lift%
\begin{equation*}
\mathrm{Sym}\left( \mathbb{M}_{s,t}\right) =\frac{1}{2}M_{s,t}\otimes
M_{s,t}-\frac{1}{2}\left[ M,M\right] _{s,t}
\end{equation*}%
and it follows from standard (convergence to) quadratic variation results
for semimartingale (due to F\"{o}llmer \cite{F81}) that one is left with 
\begin{equation*}
\sum_{\lbrack s,t]\in \mathcal{P}_{n}}Y_{s}^{\prime }\mathbb{A}_{s,t}
\rightarrow 0
\end{equation*}%
where $\mathbb{A}_{s,t}=\mathrm{Anti}\left( \mathbb{M}_{s,t}\right) $. At
this point, let us first assume that $\left\vert Y^{\prime }\right\vert
_{\infty }\leq K$ uniformly in $\omega $. We know from Theorem \ref%
{theo:will} (or Corollary \ref{cor:will3} below) 
\begin{equation}
\mathbb{E}[|\mathbb{A}_{s,t}|^{2}]\leq C|t-s|^{2}  \label{equ:areaL2est}
\end{equation}%
and using standard martingale argument (orthogonal increment property), 
\begin{equation*}
\mathbb{E}[|\sum_{[s,t]\in \mathcal{P}_{n}}Y_{s}^{\prime }\mathbb{A}%
_{s,t}|^{2}]=\sum_{[s,t]\in \mathcal{P}_{n}}\mathbb{E}[|Y_{s}^{\prime }%
\mathbb{A}_{s,t}|^{2}]\leq K^{2}C\sum_{[s,t]\in \mathcal{P}_{n}}|t-s|^{2}=%
\mathcal{O}(|\mathcal{P}_{n}|)
\end{equation*}%
which implies, along some subsequence, almost surely, 
\begin{equation}
\sum_{\lbrack s,t]\in \mathcal{P}_{n}}Y_{s}^{\prime }\mathbb{A}%
_{s,t}\rightarrow 0  \label{equ:Ypzero}
\end{equation}%
Finally, for unbounded $Y^{\prime }$, introduce stopping times 
\begin{equation*}
T_{K}=\inf \{t\in \lbrack 0,T]:\sup_{s\in \lbrack 0,t]}|Y_{s}^{\prime }|\geq
K\}
\end{equation*}%
Similarly as in the previous case, 
\begin{equation*}
\mathbb{E}[|\sum_{[s,t]\in \mathcal{P}_{n},s\leq T_{K}}Y_{s}^{\prime }%
\mathbb{A}_{s,t}|^{2}]=\mathcal{O}(\mathcal{P}_{n})
\end{equation*}%
Thus almost surely on the event $\{T_{K}>T\}$ 
\begin{equation*}
\sum_{\lbrack s,t]\in \mathcal{P}_{n}}Y_{s}^{\prime }\mathbb{A}%
_{s,t}\rightarrow 0
\end{equation*}%
and sending $K\rightarrow \infty $ concludes the proof.
\end{proof}

\bigskip

We remark that the identification of rough with stochastic integrals is by
no means restricted to L\'{e}vy processes, and the method of proof here
obviously applies to semi-martingale situation. As a preliminary remark, one
can always drop the bounded variation part (and thereby gain integrability).
Then, with finite $p$-variation rough path regularity, for some $p<3$, of (It%
\^{o} -, by Proposition \ref{prop:semimartlift} equivalently: Stratonovich)
lift, see Section \ref{sec:semi}, the proof proceeds along the same lines
until the moment where one shows (\ref{equ:Ypzero}). For the argument then
to go through, one only needs 
\begin{equation*}
\sum_{\lbrack s,t]\in \mathcal{P}}\mathbb{E}[|\mathbb{A}_{s,t}|^{2}]%
\rightarrow 0\text{ as }|\mathcal{P}|\rightarrow 0,
\end{equation*}%
which follows from (\ref{equ:areaL2est}), an estimate which will be extended
to general classes of Markov jump processes in Section \ref{sec:Markov}.
That said, we note that much less than (\ref{equ:areaL2est}) is necessary
and clearly this has to exploited in a general semimartingale context.

\part{Stochastic processes as rough paths and expected signatures}

\section{L\'{e}vy processes}

\subsection{A L\'{e}vy--Kintchine formula and rough path regularity}

\label{sec:LKformula_and_RPreg}

In this section, we assume $\left( X_{t}\right) $ is a $d$-dimensional L\'{e}%
vy process with triplet $\left( a,b,K\right) $. The main insight of section
is that the expected signature is well-suited to study rough path
regularity. More precisely, we consider the Marcus canoncial signature $%
S=S\left( X\right) $, given as solution to%
\begin{eqnarray*}
dS &=&S\otimes \diamond dX \\
S_{0} &=&\left( 1,0,0\dots \right) \in T((\mathbb{R}^{d})).
\end{eqnarray*}%
With $S_{s,t}=S_{s}^{-1}\otimes S_{t}$ as usual this givens random
group-like elements 
\begin{equation*}
S_{s,t}=\left( 1,\mathbf{X}_{s,t}^{1},\mathbf{X}_{s,t}^{2},\dots \right)
=\left( 1,X_{s,t},\mathbb{X}_{s,t}^{\mathrm{M}},\dots \right)
\end{equation*}%
and then the step-$n$ signature of $X|_{[s,t]}$ by projection,%
\begin{equation*}
\mathbf{X}_{s,t}^{\left( n\right) }=\left( 1,\mathbf{X}_{s,t}^{1},\mathbf{%
\dots ,X}_{s,t}^{n}\right) \in G^{(N)}\left( \mathbb{R}^{d}\right) .
\end{equation*}%
The expected signature is obtained by taking component-wise expectation and
exists under a natural assumption on the tail behaviour of the L\'{e}vy
measure $K=K\left( dy\right) $. In fact, it takes ``L\'evy--Kintchine" form
as detailed in the followig theorem. We stress that fact that expected
signature contains significant information about \textit{the process} $%
(X_t:0\le t \le T)$, where a classical moment generating function of $X_T$
only carries information about the \textit{random variable} $X_T$.

\begin{theorem}[L\'evy--Kintchine formula]
\label{lk_formula}If the measure $K1_{|y|\geq 1}$ has moments up to order $N$%
, then 
\begin{equation*}
\mathbb{E}[\mathbf{X}_{0,T}^{\left( N\right) }]=\exp (CT)
\end{equation*}%
with tensor algebra valued exponent%
\begin{equation*}
C=\left( 0,b+\int_{\left\vert y\right\vert \geq 1}yK\left( dy\right) ,\frac{a%
}{2}+\int \frac{y^{\otimes 2}}{2!}K\left( dy\right) ,\dots ,\int \frac{%
y^{\otimes N}}{N!}K\left( dy\right) \right) \in T^{(N)}\left( \mathbb{R}%
^{d}\right) .
\end{equation*}%
In particular, if $KI_{|y|\geq 1}$ has finite moments of all orders, the
expected signature is given by 
\begin{equation*}
\mathbb{E}[S\left( X\right) _{0,T}]=\exp \left[ T\left( b+\frac{1}{2}a+\int
(\exp (y) -1-y1_{|y|<1})K\left( dy\right) \right) \right] \in T((\mathbb{R}%
^{d})).
\end{equation*}
\end{theorem}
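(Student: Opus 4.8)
The plan is to show that the $T^{(N)}(\mathbb{R}^d)$-valued function $m(t) := \mathbb{E}[\mathbf{X}_{0,t}^{(N)}]$ satisfies a \emph{linear} ordinary differential equation of the form $\dot m = m\otimes C$ with $m(0)=1$, whose unique solution is the tensor exponential $m(t)=\exp(tC)$. The coefficient $C$ is the generator of the Marcus-lifted process read off at $t=0$, and its Lévy--Khintchine shape will emerge directly from the structure of that generator. Setting $t=T$ then gives the claim, and expanding $C$ by tensor degree reproduces the stated components.

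First I would write down the Marcus canonical SDE satisfied by the signature itself. Since $S=S(X)$ solves $dS = S_{-}\diamond dX$, the relevant vector fields are $f(S)\cdot v = S\otimes v$, so that $f'f(S)$ contracted against the continuous quadratic (co)variation $[X^i,X^j]^c_t = a^{ij}t$ yields $\tfrac12 S\otimes a\,dt$ with $a=\sum_{i,j}a^{ij}e_i\otimes e_j$, while the time-$1$ flow appearing in the Marcus jump term is $\phi(f\Delta X_s,S_{s-}) = S_{s-}\otimes\exp(\Delta X_s)$ (tensor exponential). Substituting into the canonical equation of Section \ref{sec:Marcus} turns it into the It\^o-type identity
\begin{equation*}
S_{0,t} = 1 + \int_0^t S_{0,r-}\otimes dX_r + \tfrac12\int_0^t S_{0,r-}\otimes a\,dr + \sum_{0<r\leq t} S_{0,r-}\otimes\bigl(\exp(\Delta X_r)-1-\Delta X_r\bigr),
\end{equation*}
where the jump correction $\exp(\Delta X_r)-1-\Delta X_r = O(|\Delta X_r|^2)$ is absolutely summable, consistent with the fact that a Marcus jump replaces $\Delta X$ by the group element $\exp(\Delta X)$.

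Next I would take expectations. Using the It\^o--L\'evy decomposition \eqref{ItoLevyDecomp}, the Brownian and compensated small-jump parts of $\int S_{0,r-}\otimes dX_r$ are true martingales and drop out, leaving the drift $\int_0^t m(r)\otimes b\,dr$ and, by the compensation (L\'evy system) formula, the large-jump contribution $\int_0^t\!\int_{|y|\geq1} m(r)\otimes y\,K(dy)\,dr$. The same formula applied to the absolutely convergent jump sum gives $\int_0^t\!\int m(r)\otimes(\exp(y)-1-y)\,K(dy)\,dr$. Adding the diffusion term $\tfrac12\int_0^t m(r)\otimes a\,dr$ and regrouping via $\int_{|y|\geq1}y\,K(dy) + \int(\exp(y)-1-y)\,K(dy) = \int(\exp(y)-1-y1_{|y|<1})\,K(dy)$ yields the closed linear equation
\begin{equation*}
m(t) = 1 + \int_0^t m(r)\otimes C\,dr, \qquad C = b + \tfrac12 a + \int\bigl(\exp(y)-1-y1_{|y|<1}\bigr)\,K(dy).
\end{equation*}
Since this is a linear ODE in the finite-dimensional algebra $T^{(N)}(\mathbb{R}^d)$ (at each level driven only by lower levels), the unique solution is $m(t)=\exp(tC)$; reading off $C$ degree by degree gives $(0,\,b+\int_{|y|\geq1}yK,\,\tfrac{a}{2}+\int\tfrac{y^{\otimes2}}{2!}K,\dots,\int\tfrac{y^{\otimes N}}{N!}K)$, and the ``in particular'' statement is the $N\to\infty$ case under finite moments of all orders.

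The main obstacle is integrability, and this is exactly where the hypothesis that $K1_{|y|\geq1}$ has moments up to order $N$ is used. One must know a priori that $\mathbb{E}|\mathbf{X}_{0,t}^{(k)}|<\infty$ and is locally bounded in $t$ for each $k\leq N$, in order to justify that the stochastic integrals are genuine (not merely local) martingales, to apply the compensation formula to the large-jump Poisson integral and to the jump sum, and to invoke Fubini when pulling $\mathbb{E}$ through the time integral. I would establish this by induction on the level $k$: the level-$k$ component of the displayed SDE is driven only by the already-controlled lower levels, so a Gr\"onwall/moment estimate propagates integrability upward, the $|y|^k$-moment of $K1_{|y|\geq1}$ controlling the large-jump contribution at level $k$. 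An alternative, more structural route to the exponential form is to observe that the Marcus lift makes $S_{0,t}$ a left L\'evy process on $G^{(N)}$ in Hunt's sense, whence $m(s+t)=m(s)\otimes m(t)$ and $m(t)=\exp(tC)$ with $C=\dot m(0)$; but identifying $C$ still requires the same short-time generator computation, so the direct ODE derivation above is the cleaner path.
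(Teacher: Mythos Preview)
Your proposal is correct and follows essentially the same route as the paper: write the Marcus SDE for $S$ in It\^o form, verify integrability of $S$ level-by-level by induction (this is the paper's Lemma~\ref{finiteexpectation}), use the compensation formula for the jump sums (the paper's Lemma~\ref{cruciallemma}), take expectations so that the martingale pieces vanish, and solve the resulting linear ODE in $T^{(N)}(\mathbb{R}^d)$. The only cosmetic difference is bookkeeping: you integrate against the full $dX$ and afterwards split off the large-jump compensator, whereas the paper separates the martingale part $M$ from the outset and absorbs the large-jump indicator into the jump sum; both lead to the same $C$.
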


The proof is based on the Marcus SDE $dS=S\otimes \diamond dX$ in $%
T^{(N)}\left( \mathbb{R}^{d}\right) $, so that $\mathbf{X}_{0,T}^{\left(
n\right) }=S$ and will be given in detail below. 
We note that Fawcett's formula \cite{fawcett, MR2052260, BaudoinBook} for
the expected value of iterated Stratonovich integrals of of $d$-dimensional
Brownian motion (with covariance matrix $a=I$ in the afore-mentioned
references)%
\begin{equation*}
\mathbb{E}[S\left( B\right) _{0,T}]=\mathbb{E}\left[ \left(
1,\int_{0<s<T}\circ dB,\int_{0<s<t<T}\circ dB\otimes \circ dB,\dots \right) %
\right] =\exp \left[ \frac{T}{2}a\right]
\end{equation*}%
is a special case of the above formula. Let us in fact give a (novel)
elementary argument for the validity of Fawcett's formula. The form $\mathbb{%
E}[S\left( B\right) _{0,T}]=\exp \left( TC\right) $ for some $C\in T((%
\mathbb{R}^{d}))$ is actually an easy consequence of independent increments
of Brownian motion. But Brownian scaling implies the $k^{\text{th}}$ tensor
level of $S(B)_{0,T}$ scales as $T^{k/2}$, which alreay implies that $C$
must be a pure $2$-tensor. The identification $C=a/2$ is then an immediate
computation. Another instructive case which allows for an elementary proof
is the case of when If $X$ is a compound Poisson process, i.e. $%
X_{t}=\sum_{i=1}^{N_{t}}J_{i}$ for some i.i.d. $d$-dimensional random
variables $J_{i}$ and $N_{t}$ a Poisson process with intensity $\lambda $.
In L\'evy terminology, one has triplet $(0,0,K)$ where $K$ is $\lambda$
times the law of $J_i$. Since jumps are to be traversed along straight
lines, Chen's rule implies 
\begin{equation*}
\mathbb{E}[S^{N}(X)_{0,1}|N_{1}=n]=\mathbb{E}[\exp (J_{1})\otimes ..\otimes
\exp (J_{n})]=\mathbb{E}[\exp (J_{1})]^{\otimes n}
\end{equation*}%
and thus 
\begin{equation*}
\mathbb{E}[S^{N}(X)_{0,1}]=\exp [(\lambda (\mathbb{E}\exp (J)-1)]
\end{equation*}%
which gives, with all integrations over $\mathbb{R}^{d}$, 
\begin{eqnarray*}
C &=&\int (\exp (y)-1)K(dy).
\end{eqnarray*}%
%
%
%
%
%
%
%
%
%
%
%

Before turning to the proof of Theorem \ref{lk_formula} we give the
following application. It relies on the fact that the expected signature
allows to extract easily information about stochastic area.

\begin{corollary}
\label{cor:will3}Let $X$ be a $d$-dimensional L\'{e}vy process. Then, for
any $p>2$, a.s. 
\begin{equation*}
\left( X,\mathbb{X}^{\mathrm{M}}\right) \in \mathcal{W}_{\mathrm{M}%
}^{p}\left( \left[ 0,T\right] ,\mathbb{R}^{d}\right) \text{ a.s. }
\end{equation*}
We call the resulting Marcus like (geometric) rough path the Marcus lift of $%
X$.
\end{corollary}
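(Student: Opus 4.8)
The plan is to obtain the result from Proposition \ref{prop:SMRP}, verifying its hypotheses through the L\'evy--Kintchine formula of Theorem \ref{lk_formula}. First observe that the Marcus lift $\mathbf{X}^{\mathrm{M}}=(X,\mathbb{A})$ is already \emph{geometric} and \emph{Marcus-like} by construction: one has $\mathrm{Sym}(\mathbb{X}^{\mathrm{M}}_{s,t})=\tfrac12 X_{s,t}\otimes X_{s,t}$, so $\mathbf{X}^{\mathrm{M}}$ is $G^{(2)}(\mathbb{R}^d)$-valued, and since jumps are traversed by straight lines one has $\Delta_t\mathbb{A}=\mathrm{Anti}\bigl(\tfrac12(\Delta_tX)^{\otimes 2}\bigr)=0$, exactly as in Proposition \ref{CanLiftBV}(ii) and the proof of Proposition \ref{prop:semimartlift}. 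Moreover $\mathbf{X}^{\mathrm{M}}_{0,\cdot}=S(X)$ inherits stationary independent increments from $X$, hence is a $G^{(2)}(\mathbb{R}^d)$-valued L\'evy, in particular strong Markov, process in the sense of Hunt (Section \ref{sec:Levy}). The only substantive point is therefore a.s.\ finite $p$-variation for every $p>2$.

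Since a general L\'evy process need not have second moments, I would not apply Proposition \ref{prop:SMRP} verbatim, but reduce to a compactly supported L\'evy measure and re-run the Manstavicius estimate inside its proof. Write $X=\hat X+\check X$ via the L\'evy--It\^o decomposition \eqref{ItoLevyDecomp}, where $\check X$ is the compound-Poisson process of jumps with $|y|\ge 1$ (finitely many on $[0,T]$, rate $\lambda$ say) and $\hat X$ carries the Gaussian part and the jumps with $|y|<1$. On the event that $\check X$ has no jump in $(s,t]$ one has $dX=d\hat X$ and $X_{s,\cdot-}=\hat X_{s,\cdot-}$ there, so the two lifts agree, $\mathbf{X}^{\mathrm{M}}_{s,t}=\hat{\mathbf{X}}^{\mathrm{M}}_{s,t}$. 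Hence, by left-invariance and stationarity, the Manstavicius quantity obeys
\begin{equation*}
\alpha(h,a)=\sup_{t-s\le h}\mathbb{P}\bigl(\Vert\mathbf{X}_{s,t}\Vert_{CC}\ge a\bigr)\le \lambda h+\sup_{t-s\le h}\mathbb{P}\bigl(\Vert\hat{\mathbf{X}}_{s,t}\Vert_{CC}\ge a\bigr).
\end{equation*}
For $a$ in a right neighbourhood of $0$ the term $\lambda h$ is dominated by $h/a^2$, so it suffices to bound the second term by $h/a^2$; using $\Vert\hat{\mathbf{X}}_{s,t}\Vert_{CC}\asymp|\hat X_{s,t}|+|\hat{\mathbb{A}}_{s,t}|^{1/2}$ and Chebyshev, this reduces precisely to $\mathbb{E}|\hat X_{s,t}|^2\lesssim|t-s|$ and $\mathbb{E}|\hat{\mathbb{A}}_{s,t}|^2\lesssim|t-s|^2$, the two inputs of Proposition \ref{prop:SMRP}.

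These estimates are where Theorem \ref{lk_formula} enters, and the area bound is the crux. As $\hat X$ has a compactly supported L\'evy measure, all moments are finite and the expected signature is available up to level $N=4$, namely $\mathbb{E}[\hat{\mathbf{X}}^{(4)}_{s,t}]=\exp\bigl(\hat C(t-s)\bigr)$ with $\hat C=(0,\hat c_1,\hat c_2,\hat c_3,\hat c_4)$ and $\hat c_4=\int_{|y|<1}\tfrac{y^{\otimes4}}{4!}K(dy)$. Expanding the exponential, every level-$4$ entry is $O(t-s)$ with leading coefficient $\hat c_4$, while the level-$2$ part gives $\mathbb{E}|\hat X_{s,t}|^2=O(t-s)$. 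Writing $\mathbb{X}^{ij}$ for $\mathbb{X}^{\mathrm{M},ij}_{s,t}$ and using the shuffle product formula to express products of second-level entries as level-$4$ entries, I would obtain $\mathbb{E}[\mathbb{X}^{ij}\mathbb{X}^{kl}]=6\,\hat c_4^{\,ijkl}(t-s)+O((t-s)^2)$. The decisive observation is that $\hat c_4^{\,ijkl}$ is totally symmetric in its indices, so in the antisymmetric combination
\begin{equation*}
\mathbb{E}\bigl[(\hat{\mathbb{A}}^{ij}_{s,t})^2\bigr]=\tfrac14\bigl(\mathbb{E}[(\mathbb{X}^{ij})^2]-2\,\mathbb{E}[\mathbb{X}^{ij}\mathbb{X}^{ji}]+\mathbb{E}[(\mathbb{X}^{ji})^2]\bigr)
\end{equation*}
the three $O(t-s)$ contributions are equal and cancel, leaving $\mathbb{E}|\hat{\mathbb{A}}_{s,t}|^2\lesssim|t-s|^2$ uniformly on $[0,T]$. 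With both moment bounds secured, Manstavicius (with $\beta=1,\gamma=2$) yields a.s.\ finite $p$-variation of $\mathbf{X}^{\mathrm{M}}$ for every $p>2$, which together with the geometric and Marcus-like properties noted above gives $\mathbf{X}^{\mathrm{M}}\in\mathcal{W}^p_{\mathrm{M}}$. The hard part is exactly this cancellation: showing that the Marcus area enjoys the Brownian-type scaling $O(|t-s|^2)$ rather than the naive $O(|t-s|)$, which is what the full symmetry of the top-degree expected-signature tensor delivers.
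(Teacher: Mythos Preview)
Your proof is correct and follows essentially the same route as the paper: reduce to jumps of bounded size so that the level-$4$ expected signature exists, then use the shuffle product formula together with the total symmetry of $\pi_4 C=\tfrac{1}{4!}\int y^{\otimes 4}K(dy)$ to see that the $O(t-s)$ contributions to $\mathbb{E}[(\mathbb{A}^{ij}_{s,t})^2]$ cancel, leaving $\mathbb{E}|\mathbb{A}_{s,t}|^2\lesssim|t-s|^2$, and conclude via Manstavicius (Proposition~\ref{prop:SMRP}). The only cosmetic difference is in the reduction step: the paper simply drops the big-jump part as a bounded-variation perturbation (which does not affect $p$- or $p/2$-variation by elementary Young/Riemann--Stieltjes estimates), whereas you keep it and absorb its effect directly into the Manstavicius bound via $\mathbb{P}(\check X\text{ jumps in }(s,t])\le\lambda h\lesssim h/a^2$ for small $a$. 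Both reductions are valid; yours avoids invoking the stability of rough-path variation under BV perturbations, at the price of re-running the Chebyshev/Manstavicius step rather than quoting Proposition~\ref{prop:SMRP} verbatim.
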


\begin{proof}
W.l.o.g. all jumps have size less than $1$. (This amounts to drop a bounded
variation term in the It\^{o}-L\'{e}vy decomposition. This does not affect
the $p$-variation sample path properties of $X$, nor - in view of basic
Young (actually Riemann--Stieltjes) estimates - those of $\mathbb{X}^{%
\mathrm{M}}$). We establish the desired rough path regularity as application
of Proposition \ref{prop:SMRP} which requires as to show%
\begin{eqnarray*}
\mathbb{E}\left\vert X_{s,t}\right\vert ^{2} &\lesssim &\left\vert
t-s\right\vert \\
\mathbb{E}\left\vert \mathbb{A}_{s,t}\right\vert ^{2} &\lesssim &\left\vert
t-s\right\vert ^{2}.
\end{eqnarray*}%
While the first estimate is immediate from the $L^{2}$-isometry of
stochastic integrals against Poisson random measures\ (drift and Brownian
component obviously pose no problem), the second one is more subtle in
nature and indeed fails - in presence of jumps - when $\mathbb{A}$ is
replaced by the full second level $\mathbb{X}^{\mathrm{M}}$. (To see this
take, $d=1$ so that $\mathbb{X}_{s,t}^{\mathrm{M}}=X_{s,t}^{2}/2$ and note
that even for standard Poisson process $\mathbb{E}\left\vert
X_{s,t}\right\vert ^{4}\lesssim \left\vert t-s\right\vert $ but not $%
\left\vert t-s\right\vert ^{2}.$)

It is clearly enough to consider $\mathbb{A}_{s,t}^{i,j}$ for indices $i\neq
j$. It is enough to work with $S^{4}\left( X\right) =:\mathbf{X}$. Using the
geometric nature of $\mathbf{X}$, by using shuffle product formula,%
\begin{eqnarray*}
\left( \mathbb{A}_{s,t}^{i,j}\right) ^{2} &=&\frac{1}{4}\left( \mathbb{X}%
_{s,t}^{i,j}-\mathbb{X}_{s,t}^{j,i}\right) \left( \mathbb{X}_{s,t}^{i,j}-%
\mathbb{X}_{s,t}^{j,i}\right) \\
&=&\mathbf{X}_{s,t}^{iijj}-\mathbf{X}_{s,t}^{ijji}-\mathbf{X}_{s,t}^{jiij}+%
\mathbf{X}_{s,t}^{jjii}
\end{eqnarray*}%
On the other hand, 
\begin{equation*}
\mathbb{E}\mathbf{X}_{s,t}=\exp \left[ \left( t-s\right) C\right] =1+\left(
t-s\right) C+O\left( t-s\right) ^{2}
\end{equation*}%
so that it is enough to check that $C^{iijj}-C^{ijji}-C^{jiij}+C^{jjii}=0$.
But this is obvious from the symmetry of 
\begin{equation*}
\pi _{4}C=\frac{1}{4!}\int y^{\otimes 4}K\left( dy\right) .
\end{equation*}
\end{proof}

We now given the proof of the L\'evy--Kintchine formula for the expected
signature of L\'evy--processes. We first state some lemmas required. \newline

The following lemma, a generalization of \cite[Ch. 1, Thm. 38]{Protter}, is
surely well-known but since we could not find a precise reference we include
the short proof.

\begin{lemma}
\label{cruciallemma} Let $F_s $ be a c\'agl\'ad adapted process with $%
\sup_{0< s\leq t }\mathbb{E}[|F_s|] < \infty $ and $g$ be a measurable
function with $|g(x)| \leq C|x|^k $ for some $C> 0, k\geq 2$ and $g \in
L^1(K)$. Then 
\begin{equation*}
\mathbb{E}[\sum_{0< s\leq t }F_s g(\Delta X_s) ] = \int_0^t \mathbb{E}%
[F_s]ds\int_{\mathbb{R}^d}g(x)K(dx)
\end{equation*}
\end{lemma}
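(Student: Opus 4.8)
The plan is to read the sum over jumps as an integral against the jump measure of $X$ and then invoke the compensation formula. Writing $N=\sum_{s:\,\Delta X_s\neq 0}\delta_{(s,\Delta X_s)}$ for the Poisson random measure of the jumps of the L\'evy process $X$ on $(0,\infty)\times(\mathbb{R}^d\setminus\{0\})$, whose predictable compensator is the deterministic measure $ds\,K(dx)$, we have
\begin{equation*}
\sum_{0<s\leq t}F_s\,g(\Delta X_s)=\int_{(0,t]\times\mathbb{R}^d}F_s\,g(x)\,N(ds,dx).
\end{equation*}
The key structural observation is that the integrand $H(\omega,s,x):=F_s(\omega)\,g(x)$ is predictable: since $F$ is c\'{a}gl\'{a}d and adapted, the map $(\omega,s)\mapsto F_s(\omega)$ is left-continuous and adapted, hence predictable, and $g$ is a fixed Borel function.

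Before anything else I would check that both sides make sense. The growth bound $|g(x)|\leq C|x|^k$ with $k\geq 2$ guarantees pathwise absolute convergence of the defining sum: using the standard fact that $\sum_{0<s\leq t}|\Delta X_s|^2<\infty$ a.s.\ and that a c\'{a}gl\'{a}d path is bounded on $[0,t]$, one gets $\sum_{0<s\leq t}|F_s|\,|g(\Delta X_s)|\leq\big(\sup_{s\leq t}|F_s|\big)\,C\sum_{0<s\leq t}|\Delta X_s|^2<\infty$ a.s.\ (after separating off the finitely many jumps of size $>1$, for which the bound $|x|^k\leq|x|^2$ fails). For the right-hand integral, $\int_{\mathbb{R}^d}|g|\,dK<\infty$ by the assumption $g\in L^1(K)$, so together with $\sup_{0<s\leq t}\mathbb{E}|F_s|<\infty$,
\begin{equation*}
\mathbb{E}\int_{(0,t]\times\mathbb{R}^d}|F_s|\,|g(x)|\,ds\,K(dx)\leq t\Big(\sup_{0<s\leq t}\mathbb{E}|F_s|\Big)\int_{\mathbb{R}^d}|g|\,dK<\infty.
\end{equation*}

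With integrability in hand, the compensation formula (applied to $H$ after the usual reduction to $H\geq 0$ via the positive and negative parts of $F$ and $g$, each of which is again c\'{a}gl\'{a}d adapted, resp.\ nonnegative Borel) together with Fubini's theorem --- legitimate since $g$ and $K$ are deterministic and the double integral is absolutely convergent --- yields
\begin{equation*}
\mathbb{E}\Big[\sum_{0<s\leq t}F_s\,g(\Delta X_s)\Big]=\mathbb{E}\int_{(0,t]\times\mathbb{R}^d}F_s\,g(x)\,ds\,K(dx)=\int_0^t\mathbb{E}[F_s]\,ds\int_{\mathbb{R}^d}g(x)\,K(dx),
\end{equation*}
which is exactly the claim.

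The main obstacle, and the only genuine content, is the compensation identity $\mathbb{E}\int H\,dN=\mathbb{E}\int H\,ds\,K(dx)$ for nonnegative predictable $H$; if one prefers a self-contained argument to a citation, I would prove it by a monotone-class argument. It holds for simple predictable integrands $H=G\,\mathbf{1}_{(u,v]}(s)\,\mathbf{1}_A(x)$ with $G$ bounded and $\mathcal{F}_u$-measurable and $K(A)<\infty$, because then $\mathbb{E}[G\,N((u,v]\times A)]=\mathbb{E}[G]\,(v-u)\,K(A)$ follows from the independence of the increment $N((u,v]\times A)$ from $\mathcal{F}_u$ and its Poisson law with mean $(v-u)K(A)$, both consequences of the independent-increment structure of $X$. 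The identity extends to all nonnegative predictable $H$ by monotone convergence, and to the integrable signed case treated here by difference; the remaining bookkeeping (predictability of $F_s$, the reductions to nonnegative integrands) is routine.
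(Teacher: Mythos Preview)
Your proof is correct and is essentially the same approach as the paper's: both rest on the compensation identity for the Poisson random measure of jumps, applied to the predictable integrand $F_s\,g(x)$. The paper packages this slightly differently---it first establishes the $L^1$ bound $\mathbb{E}\sum|F_s||g(\Delta X_s)|\leq t\|g\|_{L^1(K)}\sup_s\mathbb{E}|F_s|$ by approximation with simple predictable processes and Fatou, then writes the compensated sum as the stochastic integral $\int F_s\,dM_s$ against the martingale $M_t=\sum_{s\leq t}g(\Delta X_s)-t\int g\,dK$ and uses the bound to upgrade this local martingale to a true martingale with zero mean---but this is precisely the content of the compensation formula you invoke, and your monotone-class sketch is the same approximation-by-simple-predictable step.
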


\begin{proof}
At first we prove the following, 
\begin{equation}  \label{bound}
\mathbb{E}[\sum_{0 <s \leq t }|F_s||g(\Delta X_s)|] \leq t ||g||_1 \sup_{0<
s \leq t} \mathbb{E}[|F_s|]
\end{equation}
To this end, w.l.o.g, we can assume $g $ vanishes in a neighbourhood of
zero. The general case follows by an application of Fatou's lemma. Also, it
is easy to check the inequality when $F_s $ is a simple predictable process.
For general $F_s$, we choose a sequence of simple predictable process $F_s^n
\rightarrow F_s$ pointwise. Since there are only finitely many jumps away
from zero, we see that 
\begin{equation*}
\sum_{0< s \leq t }|F_s^n||g(\Delta X_s)| \rightarrow \sum_{0< s \leq t
}|F_s||g(\Delta X_s)| \hspace{2mm} a.s.
\end{equation*}
and the claim follows again by Fatou's lemma.

Now, define $\bar{g}= \int_{\mathbb{R}^d \setminus 0}g(x)K(dx) $ and $M_t =
\sum_{0< s \leq t}g(\Delta X_s) - t\bar{g} $. Then it is easy to check that $%
M_t$ is a martingale. Also, 
\begin{equation*}
N_t := \int_{(0,t]} F_sdM_s = \sum_{0< s \leq t }F_sg(\Delta X_s) - \bar{g}%
\int_0^t F_sds
\end{equation*}
is a local martingale. From \eqref{bound}, $\mathbb{E}[\sup_{0< s\leq t }
|N_s|] < \infty$. So, $N_t$ is a martingale, which thereby implies that $%
\mathbb{E}[N_t] = 0 $ finishing the proof.
\end{proof}

\begin{lemma}
\label{finiteexpectation} If the measure $K\mathbb{I}_{|y|\geq 1}$ has
moments upto order $N$ then with $S_{t}=S^{N}(X)_{0,t}$, 
\begin{equation*}
\mathbb{E}[\sup_{0<s\leq t}|S_{s}|]<\infty
\end{equation*}
\end{lemma}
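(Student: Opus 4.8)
The plan is to bypass the linear Marcus SDE and instead exploit the multiplicative structure of the signature together with the independence of large and small jumps. First, fix a \emph{submultiplicative} norm $|\cdot|$ on $T^{(N)}(\mathbb{R}^d)$ (all norms being equivalent, finiteness of the expectation is insensitive to this choice), so that $|a\otimes b|\le|a|\,|b|$ and, at the first tensor level, $|y^{\otimes k}|\le|y|^k$, whence $|\exp^{(N)}(y)|\le\sum_{k=0}^N|y|^k/k!=:\psi(|y|)$. The decisive observation is that, because we truncate at level $N$, the jump factor $\psi$ is a \emph{polynomial of degree $N$} in $|y|$; it is precisely this that will be matched against the hypothesis $\int_{|y|\ge1}|y|^kK(dy)<\infty$ for $k\le N$.

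I would then decompose $X=Y+J$, where $J_t=\sum_{0<s\le t}\Delta X_s\,1_{|\Delta X_s|\ge1}$ is the compound Poisson process of large jumps (rate $\lambda=K(|y|\ge1)$, jump law $\nu=\lambda^{-1}K1_{|y|\ge1}$) and $Y=X-J$ is an independent L\'evy process all of whose jumps are bounded by $1$, so $Y$ has exponential moments. Writing the large-jump times as $\tau_1<\tau_2<\cdots$, the large-jump part is constant on each $[\tau_i,\tau_{i+1})$, so there $S_{\tau_i,u}$ coincides with the signature $S^Y_{\tau_i,u}$ of the nice part, while at a large jump the Marcus straight-line convention produces the factor $S_{\tau_i-,\tau_i}=\exp^{(N)}(\Delta X_{\tau_i})$. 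Multiplicativity of the signature then factorizes $S_{0,s}$ into the $S^Y$-pieces interleaved with the $\exp^{(N)}(J_i)$, and submultiplicativity of the norm (all factors having norm $\ge1$) yields the clean domination
\begin{equation*}
\sup_{0<s\le t}|S_s|\ \le\ \prod_{i=0}^{N_t}G_i\ \prod_{i=1}^{N_t}\psi(|J_i|),\qquad G_i:=\sup_{u\in[\tau_i,\tau_{i+1}\wedge t]}|S^Y_{\tau_i,u}|,
\end{equation*}
with $\tau_0=0$ and $N_t$ the number of large jumps up to $t$.

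Next I would take expectations by conditioning on $\mathcal{G}=\sigma(N_t,\tau_i,J_i)$, which is generated by $J$ alone and hence independent of $Y$. Given $\mathcal{G}$ the lengths $\ell_i=(\tau_{i+1}\wedge t)-\tau_i$ are fixed with $\sum_i\ell_i=t$, the $G_i$ are independent (increments of $Y$ over disjoint intervals) and independent of the $J_i$, so $\mathbb{E}[\sup_s|S_s|\mid\mathcal{G}]\le\big(\prod_i g(\ell_i)\big)\prod_i\psi(|J_i|)$ with $g(\ell):=\mathbb{E}\,\sup_{0\le u\le\ell}|S^Y_{0,u}|$. The key property of $g$ is submultiplicativity: from $|S^Y_{0,u}|\le|S^Y_{0,s}|\,|S^Y_{s,u}|$ and independent stationary increments one gets $g(s+h)\le g(s)g(h)$, so once $g(\ell_0)<\infty$ for a single small $\ell_0$ one has $g(\ell)\le\kappa\,e^{c\ell}$ for constants $\kappa,c$, giving $\prod_i g(\ell_i)\le\kappa^{N_t+1}e^{ct}$. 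It then remains to evaluate, using that the $J_i$ are i.i.d.\ $\nu$ given $N_t=n$,
\begin{equation*}
\mathbb{E}\Big[\kappa^{N_t}\prod_{i=1}^{N_t}\psi(|J_i|)\Big]=\sum_{n\ge0}e^{-\lambda t}\frac{(\lambda t)^n}{n!}\Big(\kappa\lambda^{-1}\!\!\int_{|y|\ge1}\!\!\psi(|y|)K(dy)\Big)^{\!n}=\exp\!\big(t(\kappa\,m-\lambda)\big),
\end{equation*}
where $m=\int_{|y|\ge1}\psi(|y|)K(dy)<\infty$ \emph{exactly} by the moment hypothesis; combining this with $\kappa e^{ct}$ proves $\mathbb{E}[\sup_{0<s\le t}|S_s|]<\infty$.

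The only genuinely analytic input, and the step I expect to be the main obstacle to write carefully, is the single finiteness $g(\ell_0)<\infty$ for the \emph{bounded-jump} process $Y$: here one argues by induction on the tensor level $k\le N$, using the Burkholder--Davis--Gundy and Doob maximal inequalities together with the fact that $Y$ (having jumps bounded by $1$) is an $L^q$-semimartingale for every $q$, to see that each iterated-integral component $\sup_{u\le\ell_0}|(S^Y)^{(k)}_{0,u}|$ is integrable. Everything else is bookkeeping: the interleaving factorization at large-jump times, submultiplicativity of the norm and of $g$, and the elementary Poisson computation. One could alternatively localize a Gronwall argument on the linear Marcus SDE $dS=S\otimes\diamond dX$, controlling $\mathbb{E}\big[\sum_{s\le t}|S_{s-}|\,\psi(|\Delta X_s|)\big]$ via Lemma \ref{cruciallemma}, but the decomposition keeps the large-jump integrability transparent and aligns the degree-$N$ growth of $\psi$ directly with the order-$N$ moment assumption on $K1_{|y|\ge1}$.
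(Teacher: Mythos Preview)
Your approach is correct and takes a genuinely different route from the paper. The paper proceeds by induction on $N$ directly for the full process $X$: it writes the top tensor level of $S_t$ via the Marcus SDE, observes that the right-hand side only involves the projection $\pi_{N,N-1}(S_{r-})$ (levels $\le N-1$), invokes the induction hypothesis on that projection, and then bounds the three resulting pieces separately --- the drift integral trivially, the martingale part via Burkholder--Davis--Gundy after the L\'evy--It\^o decomposition, and the jump sum via Lemma~\ref{cruciallemma}. Large and small jumps are thus handled together inside the induction, with Lemma~\ref{cruciallemma} absorbing the large-jump integrability condition.

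Your decomposition $X=Y+J$ instead isolates the large jumps upfront and exploits the group-like multiplicativity of the Marcus signature to factorize $S$ over the (random, finitely many) intervals between large-jump times. This makes the role of the moment hypothesis completely transparent --- it is exactly the finiteness of $m=\int_{|y|\ge1}\psi(|y|)\,K(dy)$ for the degree-$N$ polynomial $\psi$ --- and reduces all the genuine analysis to the bounded-jump process $Y$, where every $L^q$-moment is available and the level-by-level BDG induction is cleaner. The price is that you still need to run essentially the paper's induction for $Y$ to obtain $g(\ell_0)<\infty$, so the two proofs share the same analytic core; you have effectively traded Lemma~\ref{cruciallemma} (which couples $S_{s-}$ with the jump compensator) for an explicit compound-Poisson computation plus the submultiplicativity of $g$. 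Your argument is more modular and makes the structural reason for the order-$N$ moment assumption visible; the paper's is shorter because it avoids the interleaving bookkeeping and does not treat the bounded-jump case separately.
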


\begin{proof}
We will prove it by induction on $N$. For $N=1$, $S_{t}=\mathbf{1}+X_{t}$,
and the claim follows from the classical result that $\mathbb{E}%
[\sup_{0<s\leq t}|X_{s}|]<\infty $ iff $KI_{|y|\geq 1}$ has finite first
moment. Now, note that 
\begin{equation*}
S_{t}=1+\int_{0}^{t}\pi _{N,N-1}(S_{r-})\otimes dX_{r}+\frac{1}{2}%
\int_{0}^{t}\pi _{N,N-1}(S_{r})\otimes adr+\sum_{0<s\leq t}\pi
_{N,N-1}(S_{r-})\otimes \{e^{\Delta X_{s}}-\Delta X_{s}-1\}
\end{equation*}%
where $\pi_{N, N-1} : T_1^N(\mathbb{R}^{d}) \rightarrow T_1^{N-1}(\mathbb{R}^{d}) $ is the
projection map. From induction hypothesis and lemma \eqref{cruciallemma},
last two terms on right hand side has finite expectation in supremum norm.
Using L\'{e}vy-Ito decomposition, 
\begin{align*}
\int_{0}^{t}\pi _{N,N-1}(S_{r-})\otimes dX_{r}=\int_{0}^{t}\pi
_{N,N-1}(S_{r-})\otimes dM_{r}+& \int_{0}^{t}\pi _{N,N-1}(S_{r-})\otimes bdr
\\
+& \sum_{0<s\leq t}\pi _{N,N-1}(S_{r-})\otimes \Delta X_{s} {1}_{|\Delta
X_{s}|\geq 1}
\end{align*}

where $M$ is the martingale. Again by induction hypothesis and Lemma \ref%
{cruciallemma}, last two terms are of finite expectation in supremum norm.
Finally,%
\begin{equation*}
L_t = \int_{0}^{t}\pi _{N,N-1}(S_{r-})\otimes dM_{r}
\end{equation*}%
is a local martingale. By Burkholder-Davis-Gundy inequality and noting that 
\begin{equation*}
[M]_t = at + \sum_{0<s\leq t } (\Delta X_s)^2 {1}_{|\Delta X_s| < 1}
\end{equation*}%
we see that 
\begin{align*}
\mathbb{E}[\sup_{0 < s \leq t } |L_s|] \lesssim & \mathbb{E}[ \biggl\{ %
\int_0 ^t|\pi _{N,N-1}(S_{r-})|^2d[M]_r \biggr\}^{\frac{1}{2}} ] \\
\lesssim & \mathbb{E}[ \biggl\{ \int_0 ^t|\pi _{N,N-1}(S_{r-})|^2dr \biggr\}%
^{\frac{1}{2}} ]+ \mathbb{E}[ \bigg\{\sum_{0 < r \leq t}{|\pi
_{N,N-1}(S_{r-})|^2 |\Delta X_r|^2 1_{|\Delta X_r | <1 }} \biggr\}^{\frac{1}{%
2}} ] \\
\lesssim & \mathbb{E}[ \sup_{r \leq t}| \pi _{N,N-1}(S_{r-})| ] + \mathbb{E}
[ \biggl\{ \sup_{r \leq t } |\pi _{N,N-1}(S_{r-}|\biggr\}^{\frac{1}{2}} %
\biggl\{ \sum_{0 < r \leq t}|\pi _{N,N-1}(S_{r-})| |\Delta X_r|^2 1_{|\Delta
X_r | <1} \biggr\}^{\frac{1}{2}}] \\
\lesssim & \mathbb{E}[ \sup_{r \leq t}| \pi _{N,N-1}(S_{r-})| ] + \mathbb{E}
[ \sup_{r \leq t } |\pi _{N,N-1}(S_{r-}|] + \mathbb{E}[ \sum_{0 < r \leq
t}|\pi _{N,N-1}(S_{r-})| |\Delta X_r|^2 1_{|\Delta X_r | <1} ]
\end{align*}
where in the last line, we have used $\sqrt{ab} \lesssim a + b$. Again by
induction hypothesis and \eqref{cruciallemma}, we conclude that 
\begin{equation*}
\mathbb{E}[\sup_{0 < s \leq t } |L_s|] < \infty
\end{equation*}
finishing the proof.
\end{proof}

\begin{proof}
(Theorem \ref{lk_formula}) As before, 
\begin{equation*}
S_{t}=1+\int_{0}^{t}S_{r-}\otimes dM_{r}+\int_{0}^{t}S_{r}\otimes (b+\frac{a%
}{2})dr+\sum_{0<s\leq t}S_{s-}\otimes \{e^{\Delta X_{s}}-\Delta
X_{s}1_{|\Delta X_{s}|<1}-1\}
\end{equation*}%
By Lemma \ref{finiteexpectation} below $\int_{0}^{t}S_{r-}\otimes dM_{r}$ is
indeed a martingale. Also note that $S_{t}$ has a jump iff $X_{t}$ has a
jump, so that almost surely $S_{t-}=S_{t}$. Thanks to Lemma \ref%
{cruciallemma} below 
\begin{equation*}
\mathbb{E}S_{t}=1+\int_{0}^{t}\mathbb{E}S_{r}\otimes (b+\frac{a}{2}%
)dr+\int_{0}^{t}\mathbb{E}S_{r}dr\otimes \int (e^{y}-y1_{|y|<1}-1)K(dy)
\end{equation*}%
and solving this linear ODE in $T_{1}^{N}(\mathbb{R}^{d})$ completes the
proof.
\end{proof}

\subsection{L\'{e}vy rough paths}

Corollary \ref{cor:will3} tells us that the Marcus lift of some $d$%
-dimensional L\'{e}vy process $X$ has sample paths of finite $p$-variation
with respect to the CC\ norm on $G^{(2)}$, that is 
\begin{equation*}
\mathbf{X}^{\mathrm{M}}:=(1,X,\mathbb{X}^{\mathrm{M}})\in W_{\mathrm{g}%
}^{p}\left( \left[ 0,T\right] ,G^{(2)}\left( \mathbb{R}^{d}\right) \right) .
\end{equation*}%
It is clear from the nature of Marcus integration that $\mathbf{X}_{s,t}^{%
\mathrm{M}}$ is $\sigma \left( X_{r}:r<s\leq t\right) $-measurable. It
easily follows that $\mathbf{X}^{\mathrm{M}}$ is a Lie group valued L\'{e}vy
process, with values in the Lie group $G^{(2)}\left( \mathbb{R}^{d}\right) $%
, and in fact a L\'{e}vy rough path in the following sense

\begin{definition}
\label{def:LRP}Let $p\in \lbrack 2,3)$. A $G^{(2)}(\mathbb{R}^{d})$-valued
process $\left( \mathbf{X}\right) $ with (c\'{a}dl\'{a}g) rough sample paths 
$\mathbf{X}\left( \omega \right) \in W_{\text{\textrm{g}}}^{p}$ a.s. (on any
finite time horizon) is called \textbf{L\'{e}vy }$p$-\textbf{rough path} iff
it has stationary independent left-increments\ (given by $\mathbf{X}%
_{s,t}\left( \omega \right) =\mathbf{X}_{s}^{-1}\otimes \mathbf{X}_{t}$).
\end{definition}

Not every L\'{e}vy rough path arises as Marcus lift of some $d$-dimensional L%
\'{e}vy process. For instance, the \textit{pure area Poisson process }from
Example\textit{\ \ref{ex:pacCPP}} and then \textit{the non-canonical
Brownian rough path (\textquotedblleft Brownian motion in a magnetic field")}
from Example \ref{ex:ncBRP} plainly do not arise from iterated Marcus
integration.\textit{\ }

Given any L\'{e}vy rough path $\mathbf{X}=\left( 1,X,\mathbb{X}\right) $, it
is clear that its projection $X=\pi _{1}\left( \mathbf{X}\right) $ is a
classical L\'{e}vy process on $\mathbb{R}^{d}$ which then admits, thanks to
Corollary \ref{cor:will3}, a L\'{e}vy rough path lift $\mathbf{X}^{\mathrm{M}%
} $. This suggests the following terminology. We say that $\mathbf{X}$ is a 
\textbf{canoncial} \textbf{L\'{e}vy rough path }if $\mathbf{X}$ and $\mathbf{%
X}^{\mathrm{M}}$ are indistinguisable, call $\mathbf{X}$ a \textbf{%
non-canonical} \textbf{L\'{e}vy rough path }otherwise.

Let us also note that there are $G^{(2)}(\mathbb{R}^{d})$-valued L\'{e}vy
processes which are not \textbf{L\'{e}vy }$p$-\textbf{rough path} in the
sense of the above definition, for the may fail to have finite $p$-variation
for $p\in \lbrack 2,3)$ (and thereby missing the in rough path theory
crucial link between regularity and level of nilpotency, $\left[ p\right] =2$%
.) To wit, \textit{area-valued Brownian motion} 
\begin{equation*}
\mathbf{X}_{t}:=\exp ^{\left( 2\right) }\left( B_{t}[e_{1},e_{2}]\right) \in
G^{(2)}(\mathbb{R}^{d})
\end{equation*}%
is plainly a $G^{(2)}(\mathbb{R}^{d})$-valued L\'{e}vy processes, but 
\begin{equation*}
\sup_{\mathcal{P}}\sum_{[s,t]\in \mathcal{P}}\left\vert \left\vert \mathbf{X}%
_{s,t}\right\vert \right\vert _{CC}^{p}\sim \sup_{\mathcal{P}}\sum_{[s,t]\in 
\mathcal{P}}\left\vert B_{s,t}\right\vert ^{p/2}<\infty
\end{equation*}%
if and only if $p>4$.

\begin{remark}
One could define $G^{4}(\mathbb{R}^{d})$-valued L\'{e}vy rough
paths, with $p$-variation regularity where $p\in \lbrack 4,5)$, an example
of which is given by area-valued Brownian motion. But then again not every $%
G^{4}(\mathbb{R}^{d})$-valued L\'{e}vy process will be a $G^{4}(\mathbb{R}%
^{d})$-valued L\'{e}vy rough path and so on. In what follows we remain in
the step-$2$ setting of Definition \ref{def:LRP}.
\end{remark}

We now characterize L\'{e}vy rough paths among $G^{(2)}(\mathbb{R}^{d})$%
-valued L\'{e}vy processes, themselves characterized by Hunt's theory of Lie
group valued L\'{e}vy porcesses, cf. Section \ref{sec:Levy}. To this end,
let us recall $G^{(2)}\left( \mathbb{R}^{d}\right) =\exp \left( \mathfrak{g}%
^{2}\left( \mathbb{R}^{d}\right) \right) $, where%
\begin{equation*}
\mathfrak{g}^{(2)}\left( \mathbb{R}^{d}\right) =\mathbb{R}^{d}\oplus
so\left( d\right) .
\end{equation*}%
For $g\in G^{(2)}\left( \mathbb{R}^{d}\right) $, let $\left\vert
g\right\vert $ be the Euclidean norm of $\log g\in \mathfrak{g}^{(2)}\left( 
\mathbb{R}^{d}\right) $. With respect to the canoncial basis, any element in 
$\mathfrak{g}^{(2)}\left( \mathbb{R}^{d}\right) $ can be written as in
coordinates as $\left( x^{v}\right) _{v\in J}$ where 
\begin{equation*}
J:=\{i:1\leq i\leq d\}\cup \{jk:1\leq j<k\leq d\}.
\end{equation*}%
Write also%
\begin{equation*}
I:=\{i:1\leq i\leq d\}.
\end{equation*}

\begin{theorem}
\label{theo:UnifiedLK_RP}Every $G^{(2)}\left( \mathbb{R}^{d}\right) $-valued
L\'{e}vy process $\left( \mathbf{X}\right) $ is characterized by a triplet $%
\left( \mathbf{a},\mathbf{b},\mathbf{K}\right) $ with 
\begin{eqnarray*}
\mathbf{a} &=&\left( a^{v,w}:v,w\in J\right) , \\
\mathbf{b} &\mathbf{=}&\left( b^{v}:v\in J\right) , \\
\mathbf{K} &\mathbf{\in }&\mathcal{M}\left( G^{(2)}\left( \mathbb{R}%
^{d}\right) \right) :\int_{G^{(2)}(\mathbb{R}^{d})}\left( \left\vert
g\right\vert ^{2}\wedge 1\right) \mathbf{K}\left( dg\right) .
\end{eqnarray*}%
The projection $X:=\pi _{1}\left( \mathbf{X}\right) $ is a standard $d$%
-dimensional L\'{e}vy process, with triplet%
\begin{equation*}
\left( a,b,K\right) :=\left( \left( a^{i,j}:i,j\in I\right) ,\left(
b^{k}:k\in I\right) ,\left( \pi _{1}\right) _{\ast }\mathbf{K}\right)
\end{equation*}%
where $K$ is the pushforward of $\mathbf{K}$ under the projection map. Call $%
\left( \mathbf{a},\mathbf{b},\mathbf{K}\right) $ \textbf{enhanced L\'{e}vy
triplet}, and $\mathbf{X}$ \textbf{enhanced L\'{e}vy process}.
\end{theorem}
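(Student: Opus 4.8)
The plan is to obtain the characterization of $\mathbf{X}$ directly from Hunt's theory of Lie group valued L\'evy processes (cf. Section \ref{sec:Levy} and \cite{Hunt}), and then to identify the triplet of the projection $X=\pi_1(\mathbf{X})$ by pushing the generator \eqref{LKLieMarkov} forward under $\pi_1$. First I would record that $G^{(2)}(\mathbb{R}^d)$ is a connected, simply connected nilpotent Lie group, hence exponential, so that Hunt's theorem applies verbatim: every $G^{(2)}(\mathbb{R}^d)$-valued L\'evy process is determined by a triplet $(\mathbf{a},\mathbf{b},\mathbf{K})$ through the generator \eqref{LKLieMarkov}, with $\{\mathfrak{u}_v:v\in J\}$ the basis of $\mathfrak{g}^{(2)}(\mathbb{R}^d)=\mathbb{R}^d\oplus\mathfrak{so}(d)$ indexed by $J$, where $\mathbf{a}=(a^{v,w})$ is positive semidefinite symmetric, $\mathbf{b}=(b^v)$, and $\mathbf{K}$ is a L\'evy measure on the group with $\int(|g|^2\wedge 1)\,\mathbf{K}(dg)<\infty$. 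This is precisely the first assertion.

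Next I would show that $\pi_1\colon G^{(2)}(\mathbb{R}^d)\to(\mathbb{R}^d,+)$ is a surjective continuous group homomorphism. Indeed, from the group law in Section \ref{sec:gRP}, $\pi_1\big((1,a,b)\otimes(1,a',b')\big)=a+a'$, whence $X_{s,t}=\pi_1(\mathbf{X}_s^{-1}\otimes\mathbf{X}_t)=\pi_1(\mathbf{X}_t)-\pi_1(\mathbf{X}_s)=X_t-X_s$; stationarity and independence of the left-increments of $\mathbf{X}$, together with stochastic continuity and the c\'adl\'ag property, are thereby transported to $X$, so $X$ is a classical $\mathbb{R}^d$-valued L\'evy process. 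To pin down its triplet I would compute the generator of $\mathbf{X}$ on test functions of the form $f\circ\pi_1$. A short computation in canonical coordinates of the first kind gives $\mathfrak{u}_i(f\circ\pi_1)=(\partial_i f)\circ\pi_1$ for $i\in I$, while $\mathfrak{u}_{v}(f\circ\pi_1)\equiv 0$ for the area directions $v\in J\setminus I$ (a one-parameter group generated by an $\mathfrak{so}(d)$-element leaves the first tensor level unchanged). Substituting into \eqref{LKLieMarkov}, every term carrying an index outside $I$ drops out: the diffusion part collapses to $\frac{1}{2}\sum_{i,j\in I}a^{i,j}\partial_i\partial_j f$, the drift to $\sum_{k\in I}b^k\partial_k f$, and, using $\pi_1(xy)=\pi_1(x)+\pi_1(y)$, the jump term becomes an integral over $\mathbb{R}^d$ against the pushforward $(\pi_1)_*\mathbf{K}$. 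Comparing with \eqref{LKclassicMarkov} identifies the triplet of $X$ as $(a,b,K)=\big((a^{i,j})_{i,j\in I},(b^k)_{k\in I},(\pi_1)_*\mathbf{K}\big)$.

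The main obstacle is the bookkeeping around the two compensators. The group-level small-jump cutoff $1_{\{|y|<1\}}$ refers to the norm $|y|=|\log y|$ on $G^{(2)}$, whereas the $\mathbb{R}^d$ L\'evy--Khintchine representation \eqref{LKclassicMarkov} uses $1_{\{|x|<1\}}$; moreover pure-area jumps satisfy $\pi_1(y)=0$, so $(\pi_1)_*\mathbf{K}$ a priori charges the origin and must be read as its restriction to $\{\pi_1(y)\neq0\}$ (pure-area jumps simply do not move $X$). I would resolve both points at once by checking that $(\pi_1)_*\mathbf{K}$, restricted away from $0$, is a genuine L\'evy measure: since $\log y=\pi_1(y)+(\text{area part})$ one has $|\pi_1(y)|\le|y|$, so $\int\min(|\pi_1 y|^2,1)\,\mathbf{K}(dy)\le\int\min(|y|^2,1)\,\mathbf{K}(dy)<\infty$, and any discrepancy between the two cutoffs is absorbed into a finite redefinition of the drift $b$ in the standard way. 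This reconciles the two representations and completes the identification of the triplet.
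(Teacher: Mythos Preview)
Your argument is correct. Both you and the paper begin identically, by noting that $G^{(2)}(\mathbb{R}^d)$ is an exponential Lie group so that Hunt's theory applies verbatim and yields the triplet $(\mathbf{a},\mathbf{b},\mathbf{K})$. Where you diverge is in the identification of the triplet of $X=\pi_1(\mathbf{X})$: you push the generator \eqref{LKLieMarkov} forward through the homomorphism $\pi_1$ and read off $(a,b,K)$ directly, carefully tracking the compensator cutoffs. The paper instead supplies an explicit structural representation: it writes every such $\mathbf{X}$ as $\mathbf{X}_t=\exp\!\big(X_t,\mathbb{A}_t+Z_t\big)$, where $(X,Z)$ is a \emph{flat} $\mathfrak{g}^{(2)}(\mathbb{R}^d)$-valued L\'evy process with triplet $\big((a^{v,w}),(b^v),\log_*\mathbf{K}\big)$ and $\mathbb{A}$ is the stochastic area of $X$; the triplet of $X$ then drops out as the $I$-block. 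Your route is cleaner for the bare statement and makes the cutoff bookkeeping explicit. The paper's route costs a little more but purchases a concrete decomposition that is reused immediately afterwards (in the proof of Theorem~\ref{theo:LRPChar} and in the expected-signature computation), where one needs to separate the ``canonical'' area $\mathbb{A}$ from the extra $\mathfrak{so}(d)$-valued L\'evy perturbation $Z$.
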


\begin{proof}
This is really a special case of Hunt's theory. Let us detail, however, an
explicit construction which we will be useful later on:\ every $%
G^{(2)}\left( \mathbb{R}^{d}\right) $-valued L\'{e}vy process $\mathbf{X}$
(started at $1$) can be written as in terms of a $\mathfrak{g}^{(2)}\left( 
\mathbb{R}^{d}\right) $-valued (standard) L\'{e}vy process $(X,Z)$, started
at $0$, as \ 
\begin{equation*}
\mathbf{X}_{t}=\exp \left( X_{t},\mathbb{A}_{t}+Z_{t}\right)
\end{equation*}%
where $\mathbb{A}_{t}=\mathbb{A}_{0,t}$ is the stochastic area associated to 
$X$. \ Indeed, for $v,w\in J$, write $x=\left( x^{v}\right) $ for a generic
element in $\mathfrak{g}^{(2)}$ and then 
\begin{equation*}
\left( \left( a^{v,w}\right) ,\left( b^{v}\right) ,\mathfrak{K}\right)
\end{equation*}%
for the L\'{e}vy-triplet of $\left( X,Z\right) $. Of course, $X$ and $Z$ are
also ($\mathbb{R}^{d}$- and $\mathfrak{so}\left( d\right) $-valued) L\'{e}vy
process with triplets 
\begin{equation*}
\left( \left( a^{i,j}\right) ,\left( b^{i}\right) ,K\right) \text{ and }%
\left( \left( a^{jk,lm}\right) ,\left( b^{jk}\right) ,\mathbb{K}\right) ,
\end{equation*}%
respectively, where $K$ and $\mathbb{K}$ are the image measures of $%
\mathfrak{K}$ under the obvious projection maps, onto $\mathbb{R}^{d}$ and $%
\mathfrak{so}\left( d\right) $, respectively. Define also the image measure
under $\exp $, that is $\mathbf{K=}\exp _{\ast }$ $\mathbb{K}$. It is then
easy to see that $\mathbf{X}$ is a L\'{e}vy process in the sense of Hunt
(cf. Section \ref{sec:Levy}) with triplet $\left( \mathbf{a},\mathbf{b},%
\mathbf{K}\right) $. \ Conversely, given $\left( \mathbf{a},\mathbf{b},%
\mathbf{K}\right) $, one constructs a $\mathfrak{g}^{(2)}\left( \mathbb{R}%
^{d}\right) $-valued L\'{e}vy process $\left( X,Z\right) $ with triplet $%
\left( \left( a^{v,w}\right) ,\left( b^{v}\right) ,\log _{\ast }\mathbf{K}%
\right) $ and easily checks that the $\exp \left( X,\mathbb{A}+Z\right) $ is
the desired $G^{(2)}$-valued L\'{e}vy process.
\end{proof}

\bigskip

Recall that definition of the Carnot--Caratheodory (short:\ CC) norm on $%
G^{(2)}\left( \mathbb{R}^{d}\right) $ from Section \ref{sec:gRP}. The
definition below should be compared with the classical definition of
Blumenthal--Getoor (short: BG) index.

\begin{definition}
Given a L\'{e}vy measure $\mathbf{K}$ on the Lie group $G^{(2)}\left( 
\mathbb{R}^{d}\right) $, call
\begin{equation*}
\beta :=\inf \left\{ q >0 :\int_{G^{(2)}(\mathbb{R}^{d})}\left(
||g||_{CC}^{q}\wedge 1\,\right) \mathbf{K}\left( dg\right) \right\}  
\end{equation*}
the Carnot--Caratheodory Blumenthal--Getoor (short: CCBG) index.
\end{definition}

Unlike the classical BG index, the CCBG index is not restricted
to $[0,2]$.

\begin{lemma}
The CCBG index takes values in $\left[ 0,4\right] $.
\end{lemma}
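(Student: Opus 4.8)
The plan is to prove the two bounds separately. The lower bound $\beta\geq 0$ is immediate, since $\beta$ is by definition the infimum of a set of strictly positive reals. The entire content is therefore the upper bound $\beta\leq 4$, which I would establish by showing that every exponent $q>4$ belongs to the set
\[
\left\{\, q>0 : \int_{G^{(2)}(\mathbb{R}^d)}\bigl(\|g\|_{CC}^q\wedge 1\bigr)\,\mathbf{K}(dg)<\infty \,\right\},
\]
so that the infimum defining $\beta$ cannot exceed $4$.

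First I would split the integral at $\|g\|_{CC}=1$. On the region $\{\|g\|_{CC}\geq 1\}$ the integrand equals $1$, so the contribution is just $\mathbf{K}(\{\|g\|_{CC}\geq 1\})$. Writing $\log g=(x^v)_{v\in J}$ so that $|g|^2=\sum_{v\in J}(x^v)^2$, the homogeneous--norm equivalence $\|g\|_{CC}\asymp |a|+|\mathrm{Anti}(b)|^{1/2}$ recalled in Section \ref{sec:gRP} shows that $\|g\|_{CC}\geq 1$ forces $\max(|a|,|\tilde b|^{1/2})$, and hence $|g|$, to be bounded below by a fixed positive constant $c>0$. Thus $\{\|g\|_{CC}\geq 1\}\subset\{|g|\geq c\}$, a set of finite $\mathbf{K}$-mass because $\int(|g|^2\wedge 1)\,\mathbf{K}(dg)<\infty$. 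This part is finite for every $q$.

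The decisive region is $\{\|g\|_{CC}<1\}$, near the identity. Here I would write $\log g=a+\tilde b$ with $a\in\mathbb{R}^d$ (the $I$-coordinates) and $\tilde b\in\mathfrak{so}(d)$ (the remaining coordinates), so that $|g|^2=|a|^2+|\tilde b|^2$; since $g=\exp(a+\tilde b)=1+a+\tilde b+\tfrac12 a\otimes a$ gives $\mathrm{Anti}(b)=\tilde b$, the equivalence reads $\|g\|_{CC}\asymp |a|+|\tilde b|^{1/2}$. On this region both $|a|$ and $|\tilde b|$ are bounded, so for $q\geq 4$ the elementary estimate
\[
\|g\|_{CC}^q\lesssim \bigl(|a|+|\tilde b|^{1/2}\bigr)^q\leq 2^{q-1}\bigl(|a|^q+|\tilde b|^{q/2}\bigr)\lesssim |a|^2+|\tilde b|^2\asymp |g|^2
\]
holds, using $|a|^q\lesssim |a|^2$ (as $q\geq 2$) and $|\tilde b|^{q/2}\lesssim |\tilde b|^2$ (as $q/2\geq 2$). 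Integrating against $\mathbf{K}$ and again invoking $\int(|g|^2\wedge 1)\,\mathbf{K}(dg)<\infty$ shows the near-identity contribution is finite. Combining the two regions gives finiteness for all $q>4$, hence $\beta\leq 4$.

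The only genuine subtlety — and the point I would emphasize — is the homogeneity mismatch between the two graded layers: the area (second-level) directions enter $\|g\|_{CC}$ only through a square root, so the worst case for integrability is a pure-area L\'evy measure, for which $\|g\|_{CC}^q\asymp|\tilde b|^{q/2}$ must be dominated by the available second moment $|\tilde b|^2$. This is exactly what forces $q/2\geq 2$, giving the threshold $4$ rather than the classical Blumenthal--Getoor value $2$; the first-level directions demand only $q\geq 2$ and are never binding. No estimates beyond the CC-norm equivalence and the defining integrability of a L\'evy measure are required.
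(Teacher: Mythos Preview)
Your proof is correct and follows essentially the same approach as the paper's: both rely on the equivalence $\|g\|_{CC}\asymp |x|+|\tilde b|^{1/2}$ for $\log g=x+\tilde b\in\mathbb{R}^d\oplus\mathfrak{so}(d)$, together with the defining Lévy integrability $\int(|g|^2\wedge 1)\,\mathbf{K}(dg)<\infty$, to conclude that $q=4$ suffices since the area coordinates enter only through a square root. The paper's argument is a terse two-line version of what you have written out in full, with your explicit near/far splitting and the bounds $|x|^q\lesssim|x|^2$, $|\tilde b|^{q/2}\lesssim|\tilde b|^2$ making precise what the paper leaves implicit.
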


\begin{proof}
Set $\log \left( g\right) =x+a\in \mathbb{R}^{d}\oplus \mathfrak{so}\left(
d\right) $. Then%
\begin{equation*}
||g||_{CC}^{q}\asymp \sum_{i}\left\vert x^{i}\right\vert
^{q}+\sum_{j<k}\left\vert a^{jk}\right\vert ^{q/2}.
\end{equation*}%
By the very nature of $\mathbf{K}$, it integrates $\left\vert
x^{i}\right\vert ^{2}$ and $\left\vert a^{jk}\right\vert ^{2}$ and hence $%
\beta \leq 4$. (The definition of CC Blumenthal--Getoor extends immediately
to $G^{(N)}\left( \mathbb{R}^{d}\right) $, in which case $\beta \leq 2N$.)
\end{proof}

\begin{theorem}
\label{theo:LRPChar}Consider a $G^{(2)}\left( \mathbb{R}^{d}\right) $-valued
L\'{e}vy process $\mathbf{X}$ with enhanced triplet $\left( \mathbf{a},%
\mathbf{b},\mathbf{K}\right) $. Assume \newline
(i) the sub-ellipticity condition 
\begin{equation*}
a^{v,w}\equiv 0\text{ unless }v,w\in I=\left\{ i:1\leq i\leq d\right\} ;
\end{equation*}%
(ii)\ the following bound on the CCBG index%
\begin{equation*}
\beta <3.
\end{equation*}%
Let $p\in (2,3)$. Then a.s. $\mathbf{X}$ is a L\'{e}vy $p$-rough path if $%
p>\beta $ and this condition is sharp.
\end{theorem}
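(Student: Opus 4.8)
The plan is to verify the defining property of a L\'evy $p$-rough path directly. Since $\mathbf{X}$ is a $G^{(2)}(\mathbb{R}^{d})$-valued L\'evy process it has stationary independent left-increments by construction, so by Definition~\ref{def:LRP} the only thing to prove is that a.s. $\mathbf{X}(\omega)\in W_{\mathrm{g}}^{p}$, i.e. finite $p$-variation with respect to $d_{CC}$. I would work from the explicit representation of Theorem~\ref{theo:UnifiedLK_RP}, $\mathbf{X}_{t}=\exp(X_{t},\mathbb{A}_{t}+Z_{t})$, where $(X,Z)$ is a $\mathfrak{g}^{(2)}$-valued L\'evy process and $\mathbb{A}$ is the stochastic area of $X$. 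A short computation with the group law (using $X_{0}=0$) shows that the area part of $\log\mathbf{X}_{s,t}$ equals $\mathbb{A}_{s,t}+Z_{s,t}$, where $\mathbb{A}_{s,t}$ is the stochastic area of $X$ over $[s,t]$ and $Z_{s,t}=Z_{t}-Z_{s}$ is a genuine linear increment; consequently
\begin{equation*}
\|\mathbf{X}_{s,t}\|_{CC}\lesssim |X_{s,t}|+|\mathbb{A}_{s,t}|^{1/2}+|Z_{s,t}|^{1/2},
\end{equation*}
and it suffices to bound each of the three resulting variation contributions.

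For the first two, nothing new is needed: $X$ is a classical $\mathbb{R}^{d}$-valued L\'evy process, hence of finite $p$-variation a.s. for every $p>2$, and Corollary~\ref{cor:will3} gives that its stochastic area $\mathbb{A}$ has finite $p/2$-variation a.s. for every $p>2$. The content of the theorem is the third term, and this is exactly where the two hypotheses enter. Sub-ellipticity~(i) forces the Gaussian block of $(X,Z)$ to live in $I$, so $Z$ has no Brownian component and is a pure-jump (plus drift) L\'evy process on $\mathfrak{so}(d)$; this is what keeps us in the regime $p<3$ rather than forcing $p>4$ as for area-valued Brownian motion. The CCBG bound controls the small jumps of $Z$: for every $q>\beta$ one has $\int(\|g\|_{CC}^{q}\wedge 1)\,\mathbf{K}(dg)<\infty$, and since $\|g\|_{CC}^{2}\gtrsim|\mathrm{Anti}(\log g)|$ this forces the L\'evy measure of $Z$ to integrate $|a|^{q/2}\wedge 1$, i.e. $Z$ has Blumenthal--Getoor index at most $\beta/2<3/2$. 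By classical L\'evy $p$-variation theory $Z$ is then a.s. of finite $r$-variation for every $r>\beta/2$; taking $r=p/2$ shows $t\mapsto|Z_{s,t}|^{1/2}$ contributes finite $p$-variation for every $p>\beta$. Combining the three estimates, for $p\in(2,3)$ with $p>\beta$ we get $\|\mathbf{X}\|_{p\text{-var}}<\infty$ a.s. (Equivalently one could route the same estimate through the Manstavicius criterion of Section~\ref{sec:Man}, in the spirit of Proposition~\ref{prop:SMRP}, bounding $\mathbb{P}(\|\mathbf{X}_{0,h}\|_{CC}\ge a)$; I find the term-by-term decomposition cleaner.)

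For sharpness I would use that $p$-variation always dominates the accumulated jumps,
\begin{equation*}
\|\mathbf{X}\|_{p\text{-var};[0,T]}^{p}\ \ge\ \sum_{0<t\le T}\|\Delta_{t}\mathbf{X}\|_{CC}^{p},
\end{equation*}
which follows by choosing partitions that isolate the (for each threshold, finitely many) jumps and using right-continuity together with left-invariance of $d_{CC}$. The jumps $\Delta_{t}\mathbf{X}$ of the group-valued L\'evy process form a Poisson point process with intensity $\mathbf{K}$, so by the standard almost-sure dichotomy for such sums, $\sum_{0<t\le T}\|\Delta_{t}\mathbf{X}\|_{CC}^{p}<\infty$ a.s. if and only if $\int_{G^{(2)}}(\|g\|_{CC}^{p}\wedge 1)\,\mathbf{K}(dg)<\infty$. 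For $p<\beta$ the latter integral diverges by the very definition of the CCBG index, hence $\|\mathbf{X}\|_{p\text{-var}}=\infty$ a.s., giving the claimed optimality.

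The main obstacle, and the step I would be most careful about, is the algebraic bookkeeping linking $\mathbf{K}$ to the constituent L\'evy measures: verifying that $\log\Delta_{t}\mathbf{X}=(\Delta X_{t},\Delta Z_{t})$ (so that the $\mathbb{A}$-jump is absorbed by the group cross-term) and that the area part of $\log\mathbf{X}_{s,t}$ splits as $\mathbb{A}_{s,t}+Z_{s,t}$. Once this is pinned down the CC-norm triangle bound makes the three contributions genuinely separate, and the remaining probabilistic inputs---the Blumenthal--Getoor $p$-variation result for $Z$, Corollary~\ref{cor:will3} for $\mathbb{A}$, and the Poisson-point-process sum for sharpness---are essentially off the shelf.
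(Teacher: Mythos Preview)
Your argument is correct and closely parallels the paper's. For sufficiency both routes rest on the decomposition $\log\mathbf{X}_{s,t}=(X_{s,t},\mathbb{A}_{s,t}+Z_{s,t})$ from Theorem~\ref{theo:UnifiedLK_RP}; the paper bundles the three pieces into a single tail bound $\mathbb{P}(\|\mathbf{X}_{s,t}\|_{CC}>a)\lesssim h/a^{2\rho}$ (using the moment estimate $\mathbb{E}|Z_{s,t}|^{\rho}\lesssim h$ for $2\rho>\beta$) and then invokes Manstavicius, whereas you control each term's $p$-variation separately---classical L\'evy regularity for $X$, Corollary~\ref{cor:will3} for $\mathbb{A}$, and the Blumenthal--Getoor $p$-variation theorem for the pure-jump process $Z$. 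You explicitly flag Manstavicius as an equivalent route, so on the sufficiency side the two proofs are essentially interchangeable.

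The genuine difference is in sharpness. The paper argues by example: it exhibits $\alpha$-stable processes in an area direction $[e_{j},e_{k}]$ to show that the threshold $p>\beta$ cannot be improved, and separately points to area-valued Brownian motion to show that dropping sub-ellipticity forces $p>4$. Your Poisson-point-process argument---$\|\mathbf{X}\|_{p\text{-var}}^{p}\ge\sum_{t}\|\Delta_{t}\mathbf{X}\|_{CC}^{p}$, with the right-hand side a.s.\ infinite whenever $\int(\|g\|_{CC}^{p}\wedge 1)\,\mathbf{K}(dg)=\infty$---is more intrinsic: it shows that for \emph{the given} process, not merely for a well-chosen example, $p<\beta$ already fails. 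This is a cleaner sharpness statement for the condition $p>\beta$. On the other hand, the paper's example-based argument also addresses sharpness of hypothesis~(i), which your jump-sum argument cannot touch since it is blind to the Gaussian part.
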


\begin{proof}
Set $\log \left( g\right) =x+a\in \mathbb{R}^{d}\oplus \mathfrak{so}\left(
d\right) $. Then%
\begin{equation*}
||g||_{CC}^{2\rho }\asymp \sum_{i}\left\vert x^{i}\right\vert ^{2\rho
}+\sum_{j<k}\left\vert a^{jk}\right\vert ^{\rho }.
\end{equation*}%
Let $K$ denote the image measure of $\mathbf{K}$ under the projection map $%
g\mapsto x\in \mathbb{R}^{d}$. Let also $\mathbb{K}$ denote the image
measure under the map $g\mapsto a\in \mathfrak{so}\left( d\right) $. Since $%
\mathbf{K}$ is a L\'{e}vy measure on $G^{(2)}(\mathbb{R}^{d})$, we know that 
\begin{equation}
\int_{\mathfrak{so}\left( d\right) }\left( \left\vert a\right\vert ^{\rho
}\wedge 1\right) \mathbb{K}\left( da\right) <\infty .  \label{BCfora}
\end{equation}%
whenever $\beta <2\rho <3$. We now show that $\mathbf{X}$ enjoys $p$%
-variation. We have seen in the proof of Theorem \ref{theo:UnifiedLK_RP}
that any such L\'{e}vy process can be written as 
\begin{equation*}
\log \mathbf{X}=\left( X,\mathbb{A}+Z\right)
\end{equation*}%
where $X$ is a $d$-dimensional L\'{e}vy process with triplet%
\begin{equation*}
\left( \left( a^{i,j}\right) ,\left( b^{i}\right) ,K\right)
\end{equation*}%
with $\mathfrak{so}\left( d\right) $-valued area $\mathbb{A}=\mathbb{A}%
_{s,t} $ and a $\mathfrak{so}\left( d\right) $-valued L\'{e}vy process $Z$
with triplet%
\begin{equation*}
\left( 0,\left( b^{jk}\right) ,\mathbb{K}\right) .
\end{equation*}

We know that $\mathbb{E[}\left\vert X_{s,t}\right\vert ^{2}]\lesssim
\left\vert t-s\right\vert $ and $\mathbb{E[}\left\vert A_{s,t}\right\vert
^{2}]\lesssim \left\vert t-s\right\vert ^{2}$ and so, for $\left\vert
t-s\right\vert \leq h$, 
\begin{eqnarray*}
\mathbb{P}\left( \left\vert X_{s,t}\right\vert >a\right) &\leq &\frac{h}{%
a^{2}} \\
\mathbb{P(}\left\vert A_{s,t}\right\vert ^{1/2} &>&a)\leq \frac{h}{a^{2}}.
\end{eqnarray*}%
On the other hand,%
\begin{equation*}
\mathbb{P(}\left\vert Z_{s,t}\right\vert ^{1/2}>a)\leq \frac{1}{a^{2\rho }}%
\mathbb{E}\left( \left\vert Z_{s,t}\right\vert ^{\rho }\right) \sim \frac{h}{%
a^{2\rho }}\int_{\mathfrak{so}\left( d\right) }\left( \left\vert
a\right\vert ^{\rho }\wedge 1\right) \mathbb{K}\left( da\right)
\end{equation*}%
and so%
\begin{equation*}
\mathbb{P}\left( \left\vert \left\vert \mathbf{X}_{s,t}\right\vert
\right\vert _{CC}>a\right) \lesssim \frac{h}{a^{2\rho \vee 2}}.
\end{equation*}%
It then follows from Manstavicius' criterion, cf. Section \ref{sec:Man},
applied with $\beta =1,\gamma =2\rho \vee 2$, that $\mathbf{X}$ has indeed $%
p $-variation, for any $p>2\rho \vee 2$, and by taking the infimum, for all $%
p>\beta \vee 2$.

It remains to see that the {conditions are sharp. Indeed, if the
sup-ellipticity condition is violated, say }if $a^{v,w}\neq 0$ for some $%
v=jk $, say, this means (Brownian) diffusity (and hence finite $2^{+}$- but
not $2 $-variation) in direction $\left[ e_{j},e_{k}\right] \in \mathfrak{so}%
\left( d\right) $. As a consequence, $\mathbf{X}$ has $4^{+}$-variation (but
not $4$-variation), in particular, it fails to have $p$-variation for some $%
p\in \lbrack 2,3)$. Similarly, if one considers an $\alpha $-stable process
in direction $\left[ e_{j},e_{k}\right] $, with well-known finite $\alpha
^{+}$- but not $\alpha $-variation, we see that the condition $p>\beta $
cannot be weakened.
\end{proof}

\subsection{Expected signatures for L\'{e}vy rough paths}

Let us return to the Theorem \ref{lk_formula}, where we computed, subject to
suitable integrability assumptions of the L\'{e}vy measure, the expected
signature of a L\'{e}vy process, lifted by means of \textquotedblleft
Marcus" iterated integrals. There we found that the expected signature over $%
\left[ 0,T\right] $ takes L\'{e}vy--Kintchine form 
\begin{equation*}
\mathbb{E}[\mathbf{X}_{0,T}]=\exp \Bigl\{T\Bigl(b+\frac{a}{2}+\int_{\mathbb{R%
}^{d}}(\exp (y)-1-y\mathbb{I}_{|y|<1})K(dy)\Bigr)\Bigl\}
\end{equation*}%
for some symmetric, positive semidefinite matrix $a$, a vector $b$ and a L%
\'{e}vy measure $K$, provided $K\mathbb{I}_{|y|\geq 1}$ has moments of all
orders. In absence of a drift $b$ and jumps, the formula degenerate to
Fawcett's form, that is 
\begin{equation*}
\exp (T\frac{a}{2})
\end{equation*}%
for a \textit{symmetric} $2$-tensor $a$. Let us present two examples of L%
\'{e}vy rough paths, for which the expected signature is computable and 
\textit{different} from the above form.

\begin{example}
\label{Ex41} We return to the non-canonical Brownian rough path $\mathbf{B}^{%
\mathrm{m}}$, the zero-mass limit of physcial Brownain motion in a magnetic
field, as discucssed in Example \ref{ex:ncBRP}. The signature\textbf{\ }$%
S=S^{\mathrm{m}}$ is then given by Lyons' extension theorem applied to $%
\mathbf{B}^{\mathrm{m}}$, or equivalently, by solving the following rough
differential equation%
\begin{equation*}
dS_{t}=S_{t}\otimes d\mathbf{B}_{t}^{\mathrm{m}}\left( \omega \right)
,\,S_{0}=1
\end{equation*}%
In \cite{Friz--Gassiat--Lyons} it was noted that the expected signature
takes the Fawcett form, 
\begin{equation*}
\mathbb{E}[S_{0,T}^{\mathrm{m}}]=exp\Bigl\{T\frac{\tilde{a}}{2}\Big\}
\end{equation*}%
but now for a not necessarily symmetric $2$-tensor $\tilde{a}$, the
antisymmetric part of which depends on the charge of the particle and the
strength of the magnetic field.
\end{example}

\begin{example}
Consider the \textit{pure area Poisson process }from Example \ref{PACPRP}.
Fix some $\mathfrak{a}\in \mathfrak{so}\left( d\right) $ and let $\left(
N_{t}\right) $ be standard Poisson process, rate $\lambda >0$. We set%
\begin{equation*}
\mathbf{X}_{t}:=\otimes _{i=1}^{N_{t}}\exp ^{\left( 2\right) }\left( 
\mathfrak{a}\,\right) \in G^{(2)}(\mathbb{R}^{d});
\end{equation*}%
noting that the underlying path is trivial, $X=\pi _{1}\left( \mathbf{X}%
\right) \equiv 0$ and clearly $\mathbf{X}$ is a non-Marcus L\'{e}vy $p$%
-rough path, any $p\geq 2$. The signature of $\mathbf{X}$ is by definition
the minimal jump extension of $\mathbf{X}$ as provided by Theorem \ref%
{minimaljumpextension}. We leave it as easy exercise to the reader to see
that the signature $S$ is given by 
\begin{equation*}
S_{t}=\otimes _{i=1}^{N_{t}}\exp \left( \mathfrak{a}\,\right) \in T((\mathbb{%
R}^{d})).
\end{equation*}%
With due attention to the fact that computations take places in the
(non-commutative) tensor algebra, we then compute explicitly%
\begin{eqnarray*}
\mathbb{E}S_{T} &=&\sum_{k\geq 0}e^{\mathfrak{a}k}e^{-\lambda T}\left(
\lambda T\right) ^{k}/k! \\
&=&e^{-\lambda T}\sum_{k\geq 0}\left( \lambda Te^{\mathfrak{a}}\right)
^{k}/k! \\
&=&\exp [\lambda T(e^{\mathfrak{a}}-1\mathbf{)].}
\end{eqnarray*}%
Note that the jump is not described by a L\'{e}vy-measure on $\mathbb{R}^{d}$
but rather by a Dirac measure on $G^{(2)}$, assigning unit mass to $\exp 
\mathfrak{a}\in G^{(2)}$.
\end{example}

We now give a general result that covers all these examples. Indeed, Example %
\ref{Ex41} is precisely the case of $\tilde{a}=a+2\mathfrak{b}$ with
antisymmetric $\mathfrak{b}=(b^{j,k})\neq 0$, and symmetric $a=\left(
a^{i,j}\right) $. As for example (ii), everything is trivial but $\mathbf{K}$%
, which assigns unit mass to the element $\exp \mathfrak{a}$.)

\begin{theorem}
Consider a L\'{e}vy rough path $\mathbf{X}$ with enhanced triplet $\left( 
\mathbf{a},\mathbf{b},\mathbf{K}\right) $. Assume that $\mathbf{K}1_{\left\{
|g|>1\right\} }$ integrates all powers of $\left\vert g\right\vert
:=\left\vert \log g\right\vert _{\mathbb{R}^{d}\oplus \mathfrak{so}\left( d\right) }$%
. Them the signature of $\mathbf{X}$, by definition the minimal jump
extension of $\mathbf{X}$ as provided by Theorem \ref{minimaljumpextension},
is given by {\ } 
\begin{equation}
\mathbb{E}S_{0,T}=\exp \left[ T\left( \frac{1}{2}%
\sum_{i,j=1}^{d}a^{i,j}e_{i}\otimes
e_{j}+\sum_{i=1}^{d}b^{i}e_{i}+\sum_{j<k}b^{j,k}\left[ e_{j},e_{k}\right]
+\int_{G^{(2)}}\{\exp ({\log _{\left( 2\right) }g})-g1_{\{|g|<1\}}\}\mathbf{K%
}\left( dg\right) \right) \right]  \label{equ:LKgen}
\end{equation}
\end{theorem}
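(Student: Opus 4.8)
The plan is to mirror the proof of the L\'evy--Kintchine formula (Theorem \ref{lk_formula}), replacing the scalar L\'evy data by the enhanced triplet and the ordinary Marcus signature by the minimal jump extension. Since $\mathbf{X}$ is a L\'evy $p$-rough path with $p\in(2,3)$, Theorem \ref{theo:LRPChar} forces the sub-ellipticity $a^{v,w}=0$ unless $v,w\in I$, so $\mathbf{X}$ is Brownian only in the $\mathbb{R}^{d}$-directions; this is what will leave no area-diffusion term in the generator. Write $S=S^{N}$ for the signature of $\mathbf{X}$ truncated at level $N$. By Corollary \ref{DEforminimaljumpextension} it solves the Marcus-type equation
\begin{equation*}
S_{t}=1+\int_{0}^{t}S_{r-}\otimes d\mathbf{X}_{r}+\sum_{0<s\leq t}S_{s-}\otimes\{\exp^{(N)}(\log_{(2)}\Delta\mathbf{X}_{s})-\Delta\mathbf{X}_{s}\}.
\end{equation*}
Using the representation $\log\mathbf{X}=(X,\mathbb{A}+Z)$ from Theorem \ref{theo:UnifiedLK_RP} and the It\^o--L\'evy decomposition of the $G^{(2)}$-valued process $\mathbf{X}$, I would split $\int S_{r-}\otimes d\mathbf{X}_{r}$ into a martingale integral $\int S_{r-}\otimes d\mathbf{M}_{r}$, a drift $\int S_{r}\otimes\beta\,dr$, and a compensated jump part, where
\begin{equation*}
\beta=\tfrac{1}{2}\sum_{i,j=1}^{d}a^{i,j}e_{i}\otimes e_{j}+\sum_{i=1}^{d}b^{i}e_{i}+\sum_{j<k}b^{j,k}[e_{j},e_{k}].
\end{equation*}
Here $\tfrac{1}{2}a$ is the Marcus (Stratonovich) correction in the $\mathbb{R}^{d}$-directions, exactly as in Theorem \ref{lk_formula} and Fawcett's formula, while $\sum_{j<k}b^{j,k}[e_{j},e_{k}]$ is the level-$2$ drift carried by the area process $Z$.

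The main technical step is integrability: I would show $\mathbb{E}[\sup_{0<s\leq T}|S_{s}|]<\infty$ by induction on the tensor level $N$, following the scheme of Lemma \ref{finiteexpectation}. The inductive step estimates the martingale term $\int_{0}^{t}\pi_{N,N-1}(S_{r-})\otimes d\mathbf{M}_{r}$ by Burkholder--Davis--Gundy (the bracket of $\mathbf{M}$ now combining the $\mathbb{R}^{d}$-Brownian part and the compensated group-jumps), and the jump sum by the a priori $L^{1}$-bound established in the proof of Lemma \ref{cruciallemma}, applied with the group-valued integrand $g\mapsto\exp^{(N)}(\log_{(2)}g)-g$. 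The hypothesis that $\mathbf{K}1_{\{|g|>1\}}$ integrate all powers of $|g|=|\log g|$ is exactly what closes the induction for large jumps; near the identity the integrand is harmless, since $g$ being group-like forces $\exp(\log_{(2)}g)$ and $g$ to agree up to level $2$, so their difference starts at level $3$ and is $O(|\log g|^{3})$, hence $\mathbf{K}$-integrable. I expect this integrability induction to be the principal obstacle.

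With integrability secured, I would take expectations. The martingale integral has zero mean, and the jump sum is evaluated by Lemma \ref{cruciallemma} in its $G^{(2)}$-valued form (its proof applies verbatim with $K$ replaced by $\mathbf{K}$), using the integrand $g\mapsto\exp(\log_{(2)}g)-g1_{\{|g|<1\}}$, whose growth bound $O(|\log g|^{3})$ near the identity was just noted; the small-jump compensator $-g1_{\{|g|<1\}}$ is inherited from the L\'evy--It\^o decomposition of $\mathbf{X}$. Since $S_{t-}=S_{t}$ for a.e.\ $t$ a.s., this produces the linear tensor ODE
\begin{equation*}
\mathbb{E}S_{t}=1+\int_{0}^{t}\mathbb{E}S_{r}\otimes C\,dr,\qquad C=\beta+\int_{G^{(2)}}\{\exp(\log_{(2)}g)-g1_{\{|g|<1\}}\}\mathbf{K}(dg),
\end{equation*}
whose unique solution is $\mathbb{E}S_{0,T}=\exp(TC)$, which is precisely \eqref{equ:LKgen}. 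Alternatively, the exponential form follows at once from the stationary independent multiplicative increments of $S$, giving the semigroup identity $\mathbb{E}S_{0,t+s}=\mathbb{E}S_{0,s}\otimes\mathbb{E}S_{0,t}$; the ODE then only serves to identify the generator $C$. Finally I would check that the two displayed examples are special cases: the non-canonical Brownian rough path yields $C=\tfrac{1}{2}\tilde{a}$ with antisymmetric part $\sum_{j<k}b^{j,k}[e_{j},e_{k}]$, and the pure area Poisson process yields $C=\lambda(e^{\mathfrak{a}}-1)$ with $\mathbf{K}$ the Dirac mass at $\exp\mathfrak{a}$.
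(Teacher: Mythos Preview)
Your overall strategy is correct and close to the paper's, but you skip the one step that is genuinely new compared with Theorem \ref{lk_formula}: converting the \emph{rough} integral $\int_0^t S_{r-}\otimes d\mathbf{X}_r$ into an honest It\^o stochastic integral before you are allowed to invoke martingale arguments (zero mean, BDG, Lemma \ref{cruciallemma}). The sentence ``split $\int S_{r-}\otimes d\mathbf{X}_r$ into a martingale integral $\int S_{r-}\otimes d\mathbf{M}_r$, a drift \ldots'' is precisely where the gap lies. The object $\int S_{r-}\otimes d\mathbf{X}_r$ is defined pathwise via compensated Riemann sums that involve \emph{both} $X_{s,t}$ and $\mathbb{X}_{s,t}$; it is not a priori a semimartingale stochastic integral, and there is no It\^o--L\'evy decomposition of a rough integral to appeal to.

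The paper closes this gap as follows. Writing $\mathbb{X}_{s,t}=\mathbb{X}^{\mathrm{I}}_{s,t}+\Xi_{s,t}$ with $\Xi_{s,t}=\tfrac12[X,X]_{s,t}+Z_{s,t}$ additive in $(s,t)$, one has
\[
\int_0^t S_{s-}\otimes d\mathbf{X}_s=\int_0^t S_{s-}\otimes d\mathbf{X}^{\mathrm{I}}_s+\int_0^t S_{s-}\otimes d\Xi_s,
\]
the second integral being Lebesgue--Stieltjes. Now Theorem \ref{rough=ito} identifies the first term with the genuine It\^o integral $\int_0^t S_{s-}\otimes dX_s$, and only \emph{then} does one have a proper It\^o equation for $S$ to which the It\^o--L\'evy decomposition of $X$ and $Z$, the inductive integrability (your Lemma \ref{finiteexpectation} scheme), and Lemma \ref{cruciallemma} apply. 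In particular the $\tfrac12 a$ you attribute to a ``Marcus (Stratonovich) correction'' actually enters here via the continuous part of $\Xi$, not through the mechanism of Theorem \ref{lk_formula}. Once this bridge is in place, the rest of your outline --- the computation of $C$, the linear ODE for $\mathbb{E}S_t$, and the verification on the two examples --- is exactly right.
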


\begin{proof}
We saw in Corollary \ref{DEforminimaljumpextension} that $S$ solves 
\begin{equation*}
S_{t}=1+\int_{0}^{t}S_{s-}\otimes d\mathbf{X}_{s}\mathbf{+}\sum_{0<s\leq
t}S_{s-}\otimes \{\exp (\log ^{(2)}\Delta \mathbf{X}_{s})-\Delta \mathbf{X}%
_{s}\}.
\end{equation*}%
With notation as in the proof of Theorem \ref{theo:LRPChar}, 
\begin{eqnarray*}
\mathbb{X}_{s,t} &=&\pi _{2}\exp \left( X_{s,t}+\mathbb{A}%
_{s,t}+Z_{t}-Z_{s}\right) =\frac{1}{2}X_{s,t}\otimes X_{s,t}+\mathbb{A}%
_{s,t}+Z_{s,t} \\
\mathbb{X}_{s,t}^{\mathrm{I}} &=&\frac{1}{2}\left( X_{s,t}\otimes X_{s,t}-%
\left[ X,X\right] _{s,t}\right) +Z_{s,t}
\end{eqnarray*}%
where we recall that $\left( X,Z\right) $ is a $\mathbb{R}^{d}\oplus 
\mathfrak{so}\left( d\right) $ valued L\'{e}vy process. With $%
Z_{s,t}=Z_{t}-Z_{s}$, we note additivity of $\Xi :=\mathbb{X}-\mathbb{X}^{%
\mathrm{I}}$ given by%
\begin{eqnarray*}
\Xi _{s,t} &:&=\frac{1}{2}\left[ X,X\right] _{s,t}+Z_{s,t} \\
&=&\frac{1}{2}a\left( t-s\right) +\frac{1}{2}\sum_{r\in (s,t]}|\Delta
X_{r}|^{\otimes 2}+Z_{s,t}.
\end{eqnarray*}%
But then%
\begin{equation*}
\int_{0}^{t}S_{s-}\otimes d\mathbf{X}_{s}=\int_{0}^{t}S_{s-}\otimes d\mathbf{%
X}_{s}^{\mathrm{I}}+\int_{0}^{t}S_{s-}\otimes d\Xi
\end{equation*}%
and so, thanks to Theorem \ref{rough=ito} on consistency of It\^{o}- with
rough integration, we can express $S$ as solution to a proper It\^{o}
integral equation,%
\begin{eqnarray*}
S_{t} &=&1+\int_{0}^{t}S_{s-}\otimes dX_{s}\mathbf{+}\int_{0}^{t}S_{s-}%
\otimes d\Xi +\sum_{0<s\leq t}S_{s-}\otimes \{\exp (\log ^{(2)}\Delta 
\mathbf{X}_{s})-\Delta \mathbf{X}_{s}\} \\
&\equiv &1+\left( 1\right) +(2)+(3).
\end{eqnarray*}%
Let $M^{X}$ be the martingale part in the It\^{o}--L\'{e}vy decomposition of 
$X$, write also $N^{\mathbb{K}}$ for the Poisson random measure with
intensity $ds\mathbb{K}\left( dy\right) $. Then, with $\mathfrak{b}\equiv
\sum_{j<k}b^{jk}\left[ e_{j},e_{k}\right] $, 
\begin{eqnarray*}
X_{t} &=&M_{t}^{X}+bt+\int_{(0,t]\times \left\{ \left\vert y\right\vert
+\left\vert a\right\vert \geq 1\right\} }yN^{\mathbb{K}}\left( ds,d\left( y,%
\mathfrak{a}\right) \right) \,\,\,\,\,\in \mathbb{R}^{d} \\
Z_{t} &=&M_{t}^{Z}+\mathfrak{b}t+\int_{(0,t]\times \left\{ \left\vert
y\right\vert +\left\vert a\right\vert \geq 1\right\} }\mathfrak{a}N^{\mathbb{%
K}}\left( ds,d\left( y,\mathfrak{a}\right) \right) \in \mathfrak{so}\left(
d\right) \\
\Xi _{t} &=&\frac{1}{2}at+\frac{1}{2}\int_{(0,t]\times \left\{ \left\vert
y\right\vert \geq 1\right\} }y^{\otimes 2}N^{\mathbb{K}}\left( ds,d\left( y,%
\mathfrak{a}\right) \right) +Z_{t}\in \left( \mathbb{R}^{d}\right) ^{\otimes
2}.
\end{eqnarray*}%
Check (inductively) integrability of $S_{t}$ and note that $\int
S_{s-}dM_{s} $ has zero mean, for either martingale choice. It follows that 
\begin{eqnarray*}
\Phi _{t} &=&1+\int_{0}^{t}\Phi _{s}\otimes (C_{1}+C_{2}+C_{3})ds\text{
where } \\
C_{1} &=&b+\int_{g^{2}(\mathbb{R}^{d})}y1_{\left\{ \left\vert y\right\vert
+\left\vert a\right\vert >1\right\} }\mathbb{K}\left( y,\mathfrak{a}\right) ,
\\
C_{2} &=&\frac{1}{2}a+\frac{1}{2}\int_{g^{2}(\mathbb{R}^{d})}y^{\otimes
2}1_{\left\{ \left\vert y\right\vert +\left\vert a\right\vert >1\right\} }%
\mathbb{K}\left( y,\mathfrak{a}\right) +\mathfrak{b}+\int_{g^{2}(\mathbb{R}%
^{d})}\mathfrak{a}1_{\left\{ \left\vert y\right\vert +\left\vert
a\right\vert >1\right\} }\mathbb{K}\left( y,\mathfrak{a}\right) , \\
C_{3} &=&\int_{G^{(2)}\left( \mathbb{R}^{d}\right) }\{\exp ({\log }^{\left(
2\right) }{g})-g\}\mathbf{K}\left( dg\right) .
\end{eqnarray*}%
Recall $\mathbb{K}=\log _{\ast }^{(2)}\mathbf{K}$ so that the sum of the
three integrals over $g^{2}(\mathbb{R}^{d})$ is exactly%
\begin{equation*}
\int_{G^{(2)}}g1_{\left\{ |g|\geq 1\right\} }\}\mathbf{K}\left( dg\right)
\end{equation*}%
where $\left\vert g\right\vert =\left\vert \log g\right\vert =\left\vert
y\right\vert +\left\vert \mathfrak{a}\right\vert $. And it follows that%
\begin{equation*}
C_{1}+C_{2}+C_{3}=\frac{1}{2}a+b+\mathfrak{b+}\int_{G^{(2)}\left( \mathbb{R}%
^{d}\right) }\{\exp ({\log _{\left( 2\right) }g})-g1_{\left\{ |g|<1\right\}
}\}\mathbf{K}\left( dg\right)
\end{equation*}%
which concludes our proof.
\end{proof}

\subsection{The moment problem for random signatures}

Any L\'{e}vy rough path $\mathbf{X}\left( \omega \right) $ over some fixed
time horizon $\left[ 0,T\right] $ determines, via minimal jump exension
theorem, a random group-like element, say $S_{0,T}\left( \omega \right) \in
T((\mathbb{R}^{d}))$. What information does the expected signature really
carry? This was first investigated by Fawcett \cite{fawcett}, and more
recently by Chevyrev \cite{chevyrev}. Using his criterion we can show

\begin{theorem}
The law of $S_{0,T}\left( \omega \right) $ is uniquely determined from its
expected signature whenever%
\begin{equation*}
\forall \lambda >0:\int_{y\in G^{(2)}:\left\vert y\right\vert >1}\exp \left(
\lambda \left\vert y\right\vert \right) \mathbf{K}\left( dy\right) <\infty .
\end{equation*}
\end{theorem}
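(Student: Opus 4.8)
The plan is to apply the moment-problem criterion of Chevyrev \cite{chevyrev}, which guarantees that the law of a random group-like element is recovered from its expected signature as soon as a suitable integrability condition on the size of that element holds. Concretely, Chevyrev's criterion reduces the determinacy of the law of $S_{0,T}(\omega)$ to the existence of exponential moments of a homogeneous norm of the random signature, i.e.\ to a bound of the form $\mathbb{E}[\exp(\lambda \, \|S_{0,T}\|)] < \infty$ for $\lambda$ in a suitable range (ideally all $\lambda > 0$). The entire argument then consists in translating the hypothesis, namely exponential integrability of the enhanced L\'evy measure $\mathbf{K}$, into such exponential moments for $S_{0,T}$.

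First I would control the size of the signature by the size of the underlying rough path. By Theorem \ref{minimaljumpextension} the signature $S_{0,T}$ is the minimal jump extension of $\mathbf{X}$, obtained from Lyons' extension theorem (Theorem \ref{Lyonsfirsttheorem}) applied to the time-changed continuous path $\tilde{\mathbf{X}}$, whose $p$-variation is dominated by $\|\mathbf{X}\|_{p\text{-var};[0,T]}$. Lyons' extension estimate together with the neoclassical (factorial) decay of iterated integrals yields, for every level $n$,
\[
|\pi_n(S_{0,T})| \lesssim \frac{C^n \, \|\mathbf{X}\|_{p\text{-var};[0,T]}^{\,n}}{(n/p)!},
\]
so that any reasonable homogeneous norm $\|S_{0,T}\|$ is bounded by a fixed increasing function of the single scalar random variable $R := \|\mathbf{X}\|_{p\text{-var};[0,T]}$. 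Consequently it suffices to show that $R$ has finite exponential moments of every order.

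The key step is therefore to prove $\mathbb{E}[\exp(\lambda R)] < \infty$ for all $\lambda > 0$. Here I would use the representation $\log \mathbf{X} = (X, \mathbb{A}+Z)$ from Theorem \ref{theo:UnifiedLK_RP} and split $\mathbf{X}$ into its big-jump part and its remainder. The big-jump part is a compound Poisson process on $G^{(2)}(\mathbb{R}^d)$ whose jumps are distributed according to $\mathbf{K}\,1_{\{|g|>1\}}$; since this measure integrates $\exp(\lambda|g|)$ for every $\lambda$, the (a.s.\ finitely many on $[0,T]$) big jumps contribute a $p$-variation controlled by a compound-Poisson sum of exponentially integrable increments, and hence possessing all exponential moments (this is the group-valued analogue of the classical criterion for exponential moments of a L\'evy process, cf.\ \cite{sato}). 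The complementary part has bounded jumps together with a Gaussian and drift component; its $p$-variation satisfies Gaussian-type tail bounds, following the second-moment/Manstavicius analysis already used in Proposition \ref{prop:SMRP}, upgraded to exponential concentration. Combining the two contributions through subadditivity of the $p$-variation and H\"older's inequality gives the required exponential integrability of $R$.

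Finally, feeding this exponential integrability into Chevyrev's criterion \cite{chevyrev} yields that $\mathbb{E}[S_{0,T}]$ determines the law of $S_{0,T}(\omega)$, which is the assertion. I expect the main obstacle to be the third step: establishing all-order exponential moments of the full $p$-variation norm $R$ from exponential integrability of $\mathbf{K}$ alone. The delicate point is that $p$-variation is a path functional rather than an endpoint evaluation, so one must combine a clean large-jump decomposition with a uniform (exponential) control of the ``small-noise'' part, being careful that the group structure of $G^{(2)}(\mathbb{R}^d)$ does not spoil the additivity used in the estimates.
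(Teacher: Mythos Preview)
Your approach is genuinely different from the paper's, and considerably harder. The paper does not go through exponential moments of the $p$-variation norm $R=\|\mathbf X\|_{p\text{-var}}$ at all. Instead, it exploits the explicit L\'evy--Kintchine formula (\ref{equ:LKgen}) already established for the expected signature: $\mathbb E[S_{0,T}]=\exp(CT)$ with $C=(C^0,C^1,C^2,\dots)$ given in closed form. Chevyrev's criterion \cite{chevyrev} is then applied directly to this $C$: it suffices that $\sum_m \lambda^m |C^m|<\infty$ for all $\lambda>0$. Since for $m\ge 3$ the only contribution to $C^m$ is the jump integral $\pi_m\int_{G^{(2)}}(\exp(\log^{(2)}g)-g)\,\mathbf K(dg)$, whose norm is bounded by $\tfrac{1}{m!}\int |{\log g}|^m\,\mathbf K(dg)$, the required summability is an immediate consequence of the exponential moment hypothesis on $\mathbf K$. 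The entire proof is two lines once the expected-signature formula is in hand.

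By contrast, your route requires establishing $\mathbb E[\exp(\lambda\|\mathbf X\|_{p\text{-var};[0,T]})]<\infty$ for all $\lambda$, which is a substantially stronger and more delicate statement than anything proved in the paper. The step you flag as the main obstacle is indeed a real gap: the Manstavicius argument used in Proposition~\ref{prop:SMRP} gives a.s.\ finiteness of $p$-variation, not exponential tails, and ``upgrading to exponential concentration'' for the small-jump-plus-Gaussian part of a $G^{(2)}$-valued L\'evy process (in particular for the stochastic area contribution) is not a routine consequence of the second-moment estimates available here. Your strategy may well be completable, but it would require new ingredients beyond the paper; the paper's approach sidesteps all of this by working with the explicit generator $C$ rather than with pathwise bounds on $S_{0,T}$.
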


\begin{proof}
As in \cite{chevyrev}, we need to show that $\exp \left( C\right) $,
equivalently $C=\left( C^{0},C^{1},C^{2},...\right) \in T((\mathbb{R}^{d}))$%
, has sufficiently fast decay as the tensor levels grow. In particular, only
the the jumps matter. More precisely, by a criterion put forward in \cite%
{chevyrev} we need to show that 
\begin{equation*}
\sum \lambda ^{m}C^{m}<\infty
\end{equation*}%
where (for $m\geq 3$)$,$%
\begin{equation*}
C^{m}=\pi _{m}\left( \int_{G^{(2)}}\left( e_{\left( n\right) }^{\log
_{\left( 2\right) }g}-g\right) \mathbf{K}\left( dg\right) \right) \in \left( 
\mathbb{R}^{d}\right) ^{\otimes m}.
\end{equation*}%
We leave it as elementary exercise to see that this is implied by the
exponential moment condition on $\mathbf{K}$.
\end{proof}

\section{Further classes of stochastic processes}

\subsection{Markov jump diffusions}

\label{sec:Markov}

Consider a $d$-dimensional strong Markov with generator 
\begin{eqnarray*}
\left( \mathcal{L}f\right) \left( x\right) &=&\frac{1}{2}\sum_{i,j\in
I}a^{i,j}\left( x\right) \partial _{i}\partial _{j}f+\sum_{i\in
I}b^{i}\left( x\right) \partial _{i}f \\
&&+\int_{\mathbb{R}^{d}}\{f\left( x+y\right) -f\left( x\right) -1_{\left\{
y\leq 1\right\} }\sum_{i\in I}y^{i}\partial _{i}f\}K\left( x,dy\right) .
\end{eqnarray*}%
Throughout, assume $a=\sigma \sigma ^{T}$ and $\sigma ,b$ bounded Lipschitz, 
$K\left( x,\cdot \right) $ a L\'{e}vy measure, with uniformly integrable
tails. Such a process can be constructed as jump diffusion \cite{Jacod}, the
martingale problem is discussed in Stroock \cite{Stroock75}. As was seen,
even in the L\'{e}vy case, with (constant) L\'{e}vy triplet $\left(
a,b,K\right) $, showing finite $p$-variation in rough path sense is
non-trivial, the difficulty of course being the stochastic area 
\begin{equation*}
A_{s,t}\left( \omega \right) =\mathrm{Anti}\int_{(s,t]}(X_{r}^{-}-X_{s})%
\otimes dX\,\,\,\in \mathfrak{so}\left( d\right) ;
\end{equation*}%
where stochastic integration is understood in It\^{o} sense. In this section
we will prove

\begin{theorem}
\label{theo:MarkovRP} With probability one, $X\left( \omega \right) $ lifts
to a $G^{(2)}$-valued path, with increments given by%
\begin{equation*}
\mathbf{X}_{s,t}:=\exp ^{\left( 2\right) }\left( X_{s,t}+A_{s,t}\right) =%
\mathbf{X}_{s}^{-1}\otimes \mathbf{X}_{t}
\end{equation*}%
and $\mathbf{X}$ is a c\'{a}dl\'{a}g Marcus like, geometric $p$-rough path,
for any $p>2$. 
\end{theorem}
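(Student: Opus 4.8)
The plan is to deduce everything from Proposition~\ref{prop:SMRP}, whose hypotheses are precisely a $G^{(2)}(\mathbb{R}^d)$-valued strong Markov structure together with the two second-moment bounds $\mathbb{E}|X_{s,t}|^2 \lesssim |t-s|$ and $\mathbb{E}|A_{s,t}|^2 \lesssim |t-s|^2$. First I would verify the structural claims. The area is an additive functional of $X$ (it satisfies Chen's relation), and $X$ is strong Markov, so the pair $(X_t, A_{0,t})$ is strong Markov; since $X_t = \pi_1(\log \mathbf{X}_t)$ is recovered from the lift, $(\mathbf{X}_t)$ is itself a $G^{(2)}$-valued strong Markov process, and it is c\'adl\'ag because $X$ is. That $\mathbf{X}$ is geometric is immediate from $\mathbf{X}_{s,t} = \exp^{(2)}(X_{s,t} + A_{s,t}) \in G^{(2)}$, so that $\mathrm{Sym}(\mathbb{X}_{s,t}) = \tfrac12 X_{s,t}\otimes X_{s,t}$.

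Next, the first-level bound. Writing the It\^o--L\'evy decomposition $dX_r = b(X_r)\,dr + dM_r$ with $M$ the (Brownian $+$ compensated-jump) martingale, boundedness of $b$ and of $a=\sigma\sigma^T$, together with the uniform integrability of the tails of $K(x,\cdot)$, give via the $L^2$-isometry against the Poisson random measure the estimate $\mathbb{E}^x|X_{s,t}|^2 \lesssim |t-s|$ uniformly in the starting point $x$ (the drift only contributes $O(|t-s|^2)$). This is the direct analogue of the easy half of Corollary~\ref{cor:will3}.

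The main obstacle is the area estimate $\mathbb{E}^x|A_{s,t}|^2 \lesssim |t-s|^2$, uniform in $x$; in the L\'evy case this was obtained through the L\'evy--Kintchine form of the expected signature (Theorem~\ref{theo:will}, Corollary~\ref{cor:will3}), which is unavailable here, so I would argue directly. Set $\xi_r := X_{r-}-X_s$, so that $A^{i,j}_{s,t} = \tfrac12\int_s^t (\xi^i_r\,dX^j_r - \xi^j_r\,dX^i_r)$. Splitting $dX_r = b\,dr + dM_r$, the martingale part $\int_s^\cdot \xi^i_r\,dM^j_r$ is a martingale whose It\^o isometry yields
\[
\mathbb{E}\int_s^t (\xi^i_r)^2\,d\langle M^j\rangle_r \lesssim \int_s^t \mathbb{E}^x|\xi_r|^2\,dr \lesssim \int_s^t |r-s|\,dr \lesssim |t-s|^2,
\]
where the bounded density of $\langle M^j\rangle$ uses boundedness of $a$ and of $\int|y|^2 K(x,dy)$, and the first-level estimate is applied to the centred increments $\xi_r = X_{s,r-}$ (uniformity in the starting point coming from the Markov property). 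The drift and cross terms are $O(|t-s|^3)$ by Cauchy--Schwarz, since $\mathbb{E}|\xi_r|^2 \lesssim |r-s|$. The structural point making the extra power of $|t-s|$ appear is exactly the centring $\xi_r = X_{r-}-X_s$; note that the full second level $\mathbb{X}^{\mathrm M}$ enjoys no such bound, since $\mathrm{Sym}(\mathbb{X}^{\mathrm M}_{s,t}) = \tfrac12 X_{s,t}^{\otimes 2}$, which is why one must work with the antisymmetric area, just as in Corollary~\ref{cor:will3}.

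Finally, Chebyshev turns the two moment bounds into $\mathbb{P}^x(|X_{s,t}|\ge a) \lesssim h/a^2$ and $\mathbb{P}^x(|A_{s,t}|^{1/2}\ge a)\lesssim h/a^2$ for $|t-s|\le h$, uniformly in $x$; since $d_{CC}(\mathbf{X}_s,\mathbf{X}_t)\asymp |X_{s,t}| + |A_{s,t}|^{1/2}$, this is Manstavicius' hypothesis with $\beta=1,\ \gamma=2$ (equivalently, the hypotheses of Proposition~\ref{prop:SMRP}), yielding a.s. finite $p$-variation in the CC metric for every $p>2$. It remains to check that $\mathbf{X}$ is Marcus-like, i.e. $\Delta_t\mathbb{A}=0$: since $\Delta_t\mathbb{A} = \lim_{s\uparrow t}\mathrm{Anti}\,A_{s,t}$ and the integrand $X_{s,r-} = X_{r-}-X_s \to 0$ as $s\uparrow t$, the limiting area increment vanishes, so $\log\Delta_t\mathbf{X} = \Delta_t X \in \mathbb{R}^d\oplus\{0\}$, completing the proof.
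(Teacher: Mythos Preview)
Your argument is correct and reaches the same endpoint as the paper---the moment estimates $\mathbb{E}|X_{s,t}|^2\lesssim|t-s|$ and $\mathbb{E}|A_{s,t}|^2\lesssim|t-s|^2$ uniformly in the starting point, followed by Proposition~\ref{prop:SMRP} (i.e.\ Manstavicius)---but the route to the area estimate is genuinely different. The paper's proof invokes the PIDE for the expected signature established just before the theorem, from which $\Phi(x,t)=1+C(x)t+O(t^2)$ with an $O$-term uniform in $x$; it then repeats verbatim the shuffle-product trick of Corollary~\ref{cor:will3}, writing $(A^{i,j}_{s,t})^2$ as a signed sum of level-$4$ signature components and using that $\pi_4 C(x)=\tfrac{1}{4!}\int y^{\otimes 4}K(x,dy)$ is symmetric, so the $O(t)$ contribution cancels. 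You instead bound $\int_s^t\xi^i_r\,dM^j_r$ directly by It\^o isometry, feeding in the first-level estimate $\mathbb{E}|\xi_r|^2\lesssim r-s$; this is more elementary, avoids the expected-signature machinery entirely, and in fact yields the (slightly stronger) bound $\mathbb{E}|\mathbb{X}^{\mathrm I}_{s,t}|^2\lesssim|t-s|^2$ for each component of the It\^o second level. What the paper's approach buys is a unified illustration of its theme---that expected signatures encode rough path regularity---at the cost of the preceding PIDE theorem as a prerequisite.

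Two small points. First, your semimartingale decomposition $dX=b\,dr+dM$ silently absorbs the large jumps into $M$; for this you need $\sup_x\int_{|y|\ge 1}|y|^2K(x,dy)<\infty$, which is the intended reading of ``uniformly integrable tails'' here (alternatively one removes large jumps first, as done in the proof of Corollary~\ref{cor:will3}). Second, your remark that ``the full second level $\mathbb{X}^{\mathrm M}$ enjoys no such bound'' is correct but slightly beside the point: your estimate actually controls all of $\mathbb{X}^{\mathrm I}$, and it is only the Marcus symmetric part $\tfrac12 X_{s,t}^{\otimes 2}$ that fails; since the CC norm sees only $X$ and $A$, this is harmless.
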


Note the immediate consequences of this theorem: the minimal jump extension
of the geometeric rough $\left( X,\mathbb{X}^{\mathrm{M}}\right) $ can be
identified with the Marcus lift, stochastic integrals and differential
equations driven by $X$ can be understood deterministically as function of $%
\mathbf{X}\left( \omega \right) $ and are identified with corresponding
rough integrals and canonical equations. As in the L\'{e}vy case discussed
earlier, we base the proof on the expected signature and point out some
Markovian aspects of independent interest. Namely, we exhibit the step-$N$
Marcus lift as $G^{(N)}$-valued Markov process and compute its generator. To
this end, recall (e.g. \cite[Remark 7.43]{FrizVictoir}) the generating
vector fields $U_{i}\left( g\right) =g\otimes e_{i}$ on $G^{(N)}$, with the
property that 
\begin{equation*}
\mathrm{Lie}\left( U_{1},\dots ,U_{d}\right) |_{g}=\mathcal{T}_{g}G^{(N)}.
\end{equation*}

\begin{proposition}
Consider a $d$-dimensional Markov process $\left( X\right) $ with generator
as above and the Marcus canonical equation $dS=S\otimes \diamond dX$,
started from%
\begin{equation*}
1\equiv \left( 1,0,\dots ,0\right) \in G^{\left( N\right) }\left( \mathbb{R}%
^{d}\right) \subset T^{\left( N\right) }\left( \mathbb{R}^{d}\right) .
\end{equation*}%
Then $S$ takes values in $G^{\left( N\right) }\left( \mathbb{R}^{d}\right) $
and is Markov with generator, for $f\in C_{c}^{2}$,%
\begin{eqnarray*}
(\mathcal{L}f)(x)=(\mathcal{L}^{\left( N\right) }f)\left( x\right) &=&\frac{1%
}{2}\sum_{i,j\in I}a^{i,j}\left( \pi _{1}\left( x\right) \right)
U_{i}U_{j}f+\sum_{i\in I}b^{i}\left( \pi _{1}\left( x\right) \right) U_{i}f
\\
&&+\int_{\mathbb{R}^{d}}\{f\left( x\otimes Y\right) -f\left( x\right)
-1_{\left\{ y\leq 1\right\} }\sum_{i\in I}y^{i}U_{i}f\}K\left( x,dy\right) ,
\\
\text{with }Y &\equiv &\exp ^{\left( n\right) }\left( y\right) .
\end{eqnarray*}
\end{proposition}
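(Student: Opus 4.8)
The plan is to establish invariance of $G^{(N)}$, the Markov property, and the form of the generator in turn, treating the first two quickly and spending the effort on the third. For invariance, first I would note that $dS=S\otimes\diamond dX$ is a Marcus canonical equation driven by the left-invariant generating vector fields $U_i(g)=g\otimes e_i$, which by the recalled identity $\mathrm{Lie}(U_1,\dots,U_d)|_g=\mathcal{T}_gG^{(N)}$ are everywhere tangent to the submanifold $G^{(N)}\subset T^{(N)}$. The equation is linear in $S$, hence admits a unique non-exploding solution, and by the manifold-stability of Marcus canonical equations (Theorem \ref{manifoldstability}) a solution started at $1\in G^{(N)}$ remains in $G^{(N)}$ for all $t$, almost surely; in particular the first level satisfies $\pi_1(S_t)=X_t$.

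For the Markov property I would exploit precisely this identity $\pi_1(S)=X$. Realising the jump diffusion $X$ through a Brownian motion $B$ and a Poisson random measure with (state-dependent) intensity $ds\,K(X_{s-},dy)$, the process $S$ solves a Marcus equation driven by this same noise whose coefficients $a^{i,j}(\pi_1(S))$, $b^i(\pi_1(S))$ and $K(\pi_1(S),\cdot)$ are functions of the current value of $S$ alone. Thus $S$ is an autonomous $G^{(N)}$-valued jump diffusion and is therefore a (strong) Markov process.

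The generator is then read off from the semimartingale decomposition of $f(S_t)$, $f\in C_c^2$, via Dynkin's formula. Along the continuous trajectory Marcus integration coincides with Stratonovich, so $df(S)=\sum_{i\in I}U_if(S)\circ dX^i$; the standard fact that Stratonovich development replaces $\partial_i$ by $U_i$ in the generator (cf.\ \cite{FrizVictoir}) yields the diffusion part $\tfrac12\sum_{i,j\in I}a^{i,j}(\pi_1(x))U_iU_jf+\sum_{i\in I}b^i(\pi_1(x))U_if$. At a jump of $X$ of size $y$ the Marcus prescription traverses the jump along a straight line, which---exactly as in Lemma \ref{traversebound} and Corollary \ref{DEforminimaljumpextension}---multiplies $S$ by $\exp^{(N)}(y)=Y$, so $f$ jumps by $f(x\otimes Y)-f(x)$; compensating the small jumps and using $\tfrac{d}{d\epsilon}f(x\otimes\exp^{(N)}(\epsilon y))|_{\epsilon=0}=\sum_{i\in I}y^iU_if(x)$ produces the integral term $\int_{\mathbb{R}^d}\{f(x\otimes Y)-f(x)-1_{\{|y|\le1\}}\sum_{i\in I}y^iU_if\}K(\pi_1(x),dy)$. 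Summing the two contributions gives $\mathcal{L}^{(N)}$, the remainder being a genuine martingale by the integrability established as in Lemma \ref{finiteexpectation}.

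The step I expect to be the main obstacle is the clean identification of the generator, not the invariance or Markov statements. Two points need care: in passing from the Marcus (Stratonovich) continuous dynamics to Itô form one must track the Itô--Stratonovich correction and check that it recombines the drift so that the first-order contribution is exactly $\sum_{i\in I}b^i(\pi_1(x))U_if$---the correction cancelling against the one hidden in the generator-to-SDE drift conversion---while the symmetry of $a$ renders the ordering in $U_iU_j$ immaterial; and in the jump part one must justify the small-jump compensation and the interchange of integration and expectation on the non-compact group $G^{(N)}$, verifying that $f\in C_c^2$ together with the uniform tail assumptions on $K$ makes every term finite and the error a true martingale.
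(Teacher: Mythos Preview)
Your proposal is correct and follows essentially the same route as the paper, which gives only a four-line sketch: write $dS=\sum_{i\in I}U_i(S)\diamond dX^i$, use the semimartingale decomposition $X=M+V$, derive the evolution of $f(S_t)$ and take expectations. You have filled in exactly the details the paper omits---the $G^{(N)}$-invariance via Theorem~\ref{manifoldstability}, the autonomy argument for the Markov property, and the split into Stratonovich continuous part plus Marcus jump part $S_{s-}\mapsto S_{s-}\otimes\exp^{(N)}(\Delta X_s)$---and your flagged concerns (the It\^o--Stratonovich correction in the drift, and justifying the martingale property of the remainder) are precisely the points the paper suppresses by saying ``similar to the proof of Theorem~\ref{lk_formula}''. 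One small caveat: Lemma~\ref{finiteexpectation} is stated for L\'evy processes, so in the Markov setting you should note that the same inductive argument goes through under the boundedness assumptions on $\sigma,b$ and the uniform tail control on $K(x,\cdot)$ rather than cite it directly.
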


\begin{proof}
(Sketch) Similar to the proof of Theorem \ref{lk_formula}. Write $X=M+V$ for
the semimartingale decomposition of $X$. We have 
\begin{equation*}
dS=S\otimes \diamond dX=\sum_{i\in I}U_{i}\left( S\right) \diamond dX^{i}
\end{equation*}%
and easily deduce an evolution equation for $f\left( S_{t}\right) =f\left(
1\right) $. Taking the expected value leads to the form $(\mathcal{L}f)$.
\end{proof}

\bigskip

Since $N$ was arbitrary, this leads to the expected signature. We note that
in the (L\'evy) case of $x$-independent characteristics, $\Phi$ does not
depend on $x$ in which case the PIDE reduces to the ODE $\partial_t \Phi = C
\otimes \Phi$ which leads to the L\'evy--Kinthchine form $\Phi(t) = \exp (C
t )$ obtained previously. We also that the solution $\Phi =\left( 1,\Phi
^{1},\Phi ^{2},\dots \right) $ to the PIDE system given in the next theorem
can be iteratively constructed. In absence of jumps this systems reduces to
a system of PDEs derived by Ni Hao \cite{nihao, lyonsni}.

\begin{theorem}[PIDE for expected signature]
Assume uniformly bounded jumps, $\sigma ,b$ bounded and Lipschitz, $a=\sigma
\sigma ^{T}$, the expected signautre $\Phi \left( x,t\right) =E^{x}S_{0,t}$
exists. Set%
\begin{eqnarray*}
C\left( x\right) &:=&\sum_{i\in I}b^{i}\left( x\right) e_{i}+\frac{1}{2}%
\sum_{i,j\in I}a^{i,j}\left( x\right) e_{i}\otimes e_{j}+\int_{\mathbb{R}%
^{d}}\left( Y-1-\mathbb{I}_{\left\{ y\leq 1\right\} }\sum_{i\in
I}y^{i}e_{i}\right) K\left( x,dy\right) \\
& & \text{with }Y =\exp \left( y\right) \in T((\mathbb{R}^{d}))\text{.}
\end{eqnarray*}%
Then $\Phi \left( x,t\right) $ solves%
\begin{equation*}
\left\{ 
\begin{array}{l}
\partial _{t}\Phi =C\otimes \Phi +\mathcal{L}\Phi +\sum_{i,j\in
I}a^{i,j}\left( \partial _{j}\Phi \right) \left( x\right) e_{i} \\ 
\text{ \ }+\int_{\mathbb{R}^{d}}\left( Y-1\right) \otimes \left( \Phi \left(
x\otimes Y\right) -\Phi \left( x\right) \right) K\left( x,dy\right) \\ 
\Phi \left( x,0\right) =1.%
\end{array}%
\right.
\end{equation*}
\end{theorem}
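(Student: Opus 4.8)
The plan is to identify $\Phi$ through the Kolmogorov backward equation for the joint Markov process $(X_t,S_t)$, where $S_t=S_{0,t}$ denotes the step-$N$ Marcus signature, and then use multiplicativity of the signature to eliminate the $G^{(N)}$-variable. Fix $N$ and work in $T^{(N)}(\mathbb{R}^d)$. Since the tensor levels form a triangular system (the level-$m$ component of $\Phi$ is driven only by lower levels, as noted above), it is natural to run the whole argument inductively in $N$, the base case $N=1$ and all martingale/integrability bookkeeping being exactly as in Lemma \ref{finiteexpectation}.

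First I would record the generator $\mathcal{A}$ of the pair $(X_t,S_t)$ on $\mathbb{R}^d\times G^{(N)}$. Its $x$-part is $\mathcal{L}$, its $g$-part is the generator of the step-$N$ Marcus lift computed in the Proposition above (with generating vector fields $U_ig=g\otimes e_i$), and there is one cross term from the common Brownian driver of $X$ and $S$. Concretely, for $F\in C^2_c$,
\begin{align*}
\mathcal{A}F(x,g) &= \tfrac12\sum_{i,j\in I}a^{ij}(x)\partial_{x_i}\partial_{x_j}F + \sum_{i\in I}b^i(x)\partial_{x_i}F + \sum_{i\in I}b^i(x)U_iF + \tfrac12\sum_{i,j\in I}a^{ij}(x)U_iU_jF \\
&\quad + \sum_{i,j\in I}a^{ij}(x)\partial_{x_i}(U_jF) + \int_{\mathbb{R}^d}\big\{F(x+y,g\otimes Y)-F(x,g)-1_{\{|y|\le1\}}\textstyle\sum_{i\in I}y^i\big(\partial_{x_i}F+U_iF\big)\big\}K(x,dy),
\end{align*}
with $Y=\exp^{(N)}(y)$. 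This comes from Itô's formula applied to the Marcus-type representation of $S$ (Corollary \ref{DEforminimaljumpextension} applied pathwise to $\mathbf{X}(\omega)$, with the Lévy case of Theorem \ref{lk_formula} replaced by the present jump diffusion), using Theorem \ref{rough=ito} to read the rough integral as an Itô integral; the cross term is the $d[X^i,S]^c$ contribution, and it is the genuinely new feature absent from the $x$-independent Lévy computation.

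The decisive structural input is multiplicativity: started from $S_0=g$ one has $S_t=g\otimes\bar S_t$, where $\bar S$ is the signature started at $1$, and $\bar S_t$ depends on the $X$-increments over $[0,t]$ but not on $g$. Hence $u(x,g,t):=\mathbb{E}^{(x,g)}[S_t]=g\otimes\Phi(x,t)$, and the backward equation $\partial_tu=\mathcal{A}u$ becomes, after inserting $u=g\otimes\Phi$ and factoring the (left) $g$ by evaluating at $g=1$, an equation for $\Phi$ alone. Here one uses $U_jF=g\otimes e_j\otimes\Phi$, $U_iU_jF=g\otimes e_i\otimes e_j\otimes\Phi$, $\partial_{x_i}F=g\otimes\partial_{x_i}\Phi$, and, at a jump, $F(x+y,g\otimes Y)=g\otimes Y\otimes\Phi(x+y)$, with $x+y$ the post-jump position of $X$ (this is what the statement denotes in its final term).

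It remains to sort the resulting terms into the four groups of the claimed PIDE. The $x$-diffusion, the $x$-drift, and the jump piece $\int\{\Phi(x+y)-\Phi(x)-1_{\{|y|\le1\}}\sum_iy^i\partial_{x_i}\Phi\}K$ assemble into $\mathcal{L}\Phi$; the $g$-drift $\sum_ib^ie_i\otimes\Phi$, the $g$-diffusion $\tfrac12\sum_{i,j}a^{ij}e_i\otimes e_j\otimes\Phi$, and the jump piece $\int\{(Y-1)-1_{\{|y|\le1\}}\sum_iy^ie_i\}K\otimes\Phi$ assemble into $C\otimes\Phi$; the cross term $\sum_{i,j}a^{ij}\partial_{x_i}(U_jF)$ reduces at $g=1$, using $a^{ij}=a^{ji}$, to the third term $\sum_{i,j}a^{ij}(\partial_j\Phi)e_i$; and the expansion
\[
Y\otimes\Phi(x+y)=\Phi(x)+\big(\Phi(x+y)-\Phi(x)\big)+(Y-1)\otimes\Phi(x)+(Y-1)\otimes\big(\Phi(x+y)-\Phi(x)\big)
\]
isolates the mixed term $\int(Y-1)\otimes(\Phi(x+y)-\Phi(x))K(x,dy)$, the three pieces distributing respectively into $\mathcal{L}\Phi$, $C\otimes\Phi$, and the last integral. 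The initial condition $\Phi(x,0)=1$ is immediate. I expect the main obstacle to be analytic rather than algebraic: one must justify that $\Phi(\cdot,t)$ carries the $C^2$-regularity in $x$ needed even to write $\partial_j\Phi$ and $\mathcal{L}\Phi$ and to apply the backward equation, together with the integrability guaranteeing that the $dM$-integrals are true mean-zero martingales and that expectation commutes with the limiting Riemann sums defining the rough integral. Both are handled by the level-by-level induction, the a priori bounds being supplied by the uniformly integrable jump tails and bounded Lipschitz $\sigma,b$ exactly as in Lemma \ref{finiteexpectation}.
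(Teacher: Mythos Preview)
Your proposal is correct and follows essentially the same route as the paper. The paper works directly with the $G^{(N)}$-valued Markov process $S$ (since $X=\pi_1(S)$, no separate $X$-coordinate is carried), sets $u(x,t)=\mathbb{E}^x S_t$ for $x\in G^{(N)}$ and $\Phi=x^{-1}\otimes u$, and then derives the PIDE by applying a carr\'e-du-champ product rule $\mathcal{L}^{(N)}[x\otimes\Phi]=(\mathcal{L}^{(N)}x)\otimes\Phi+x\otimes\mathcal{L}^{(N)}\Phi+\Gamma(x,\Phi)$ for the lifted generator; your explicit sorting of the joint $(X,S)$-generator into the four groups is exactly this product rule written out, and your jump identity $Y\otimes\Phi(x+y)=\Phi(x)+(\Phi(x+y)-\Phi(x))+(Y-1)\otimes\Phi(x)+(Y-1)\otimes(\Phi(x+y)-\Phi(x))$ is precisely what produces the $\Gamma$-term.
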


\begin{proof}
It is enough to establish this in $T^{(N)}\left( \mathbb{R}^{d}\right) $,
for arbitrary integer $N$. We can see that 
\begin{equation*}
\mathbb{E}^{x}\mathbf{X}_{t}^{\left( N\right) }=:u\left( x,t\right) ,
\end{equation*}%
for $x\in G^{(N)}\left( \mathbb{R}^{d}\right) \subset T^{(N)}\left( \mathbb{R%
}^{d}\right) $ is well-defined, in view of the boundedness assumptions made
on the coefficients, and then a (vector-valued, unique linear growth)
solution to the backward equation%
\begin{eqnarray*}
\partial _{t}u &=&\mathcal{L}u, \\
u\left( x,0\right) &=&x\in T^{(N)}\left( \mathbb{R}^{d}\right) .
\end{eqnarray*}%
It is then clear that 
\begin{equation*}
E^{x}\mathbf{X}_{0,t}^{\left( N\right) }=x^{-1}\otimes u\left( x,t\right)
=:\Phi \left( x,t\right)
\end{equation*}%
also satisfies a PDE.\ Indeed, noting the product rule for second order
partial-integro operators, 
\begin{eqnarray*}
\left( \mathcal{L}\left[ fg\right] \right) \left( x\right) &=&\left( \left( 
\mathcal{L}\left[ f\right] \right) g\right) \left( x\right) +\left( f%
\mathcal{L}\left[ g\right] \right) \left( x\right) +\Gamma (f,g), \\
\Gamma (f,g) &=&\sum_{i,j\in I}a^{i,j}\left( U_{i}fU_{j}g\right) \left(
x\right) +\int_{G^{(2)}}\left( f\left( x\otimes Y\right) -f\left( x\right)
\right) \left( g\left( x\otimes Y\right) -g\left( x\right) \right) \mathbf{K}%
\left( dy\right)
\end{eqnarray*}%
and also noting the action of $U_{v}$ on $f\left( x\right) \equiv x$, namely 
$U_{i}f=x\otimes e_{v}$, we have 
\begin{eqnarray*}
\mathcal{L}x &=&x\otimes C:=x\otimes \left\{ \sum_{v\in J}b^{v}\otimes e_{v}+%
\frac{1}{2}\sum_{i,j\in I}a^{i,j}e_{i}\otimes e_{j}+\int_{G^{(2)}}\left( Y-1-%
\mathbb{I}_{\left\{ y\leq 1\right\} }\sum_{v\in J}Y^{v}\otimes e_{v}\right) 
\mathbf{K}\left( dy\right) \right\} , \\
\Gamma (x,g) &=&x\otimes \left\{ \sum_{i,j\in I}a^{i,j}\left( U_{j}g\right)
\left( x\right) e_{i}+\int_{G^{(2)}}\left( Y-1\right) \left( g\left(
x\otimes Y\right) -g\left( x\right) \right) \mathbf{K}\left( dy\right)
\right\} .
\end{eqnarray*}%
As a consequence, 
\begin{equation*}
x\otimes \partial _{t}\Phi =\partial _{t}u=\mathcal{L}u=\mathcal{L}\left(
x\otimes \Phi \right) =\left( \mathcal{L}x\right) \otimes \Phi +x\otimes 
\mathcal{L}\left[ \Phi \right] +\Gamma (x,\Phi )
\end{equation*}%
and hence%
\begin{equation}
\partial _{t}\Phi =C\otimes \Phi +\left\{ \mathcal{L}\left[ \Phi \right]
+\sum_{i,j\in I}a^{i,j}\left( U_{j}\Phi \right) \left( x\right)
e_{i}+\int_{G^{(2)}}\left( Y-1\right) \left( \Phi \left( x\otimes Y\right)
-\Phi \left( x\right) \right) \mathbf{K}\left( dy\right) \right\} .
\label{equ:PIDEsig}
\end{equation}
\end{proof}

\bigskip

We can now show rough path regularity for general jump diffusions.

\begin{proof}
(Theorem \ref{theo:MarkovRP}) Only $p$-variation statement requires a proof.
The key remark is that the above PIDE implies%
\begin{eqnarray*}
\Phi _{t} &=&1+\left( \partial _{t}|_{t=0}\phi \right) t+O\left(
t^{2}\right) = 1+Ct+O\left( t^{2}\right)
\end{eqnarray*}%
where our assumptions on $a,b,K$ guarantee uniformity of the $O$-term in $x$%
. We can then argue exactly as in the proof of Corollary \ref{cor:will3}.
\end{proof}

\subsection{Semimartingales}

\label{sec:semi}

In \cite{lepingle} L\'{e}pingle established finite $p$-variation of general
semimartingales, any $p>2$, together with powerful Burkholder--Davis--Gundy
type estimates. For \textit{continuous} semimartingales the extension to the
(Stratonovich=Marcus) rough path lift was obtained in \cite{FVsemi08}, see
also \cite[Chapter 14]{FrizVictoir}, but so far the general (discontinuous)
case eluded us. (By Proposition \ref{prop:semimartlift} it does not matter
if one establishes finite $p$-variation in rough path sense for the It\^o-
or Marcus lift.)

As it is easy to explain, let us just point to the difficulty in extending L%
\'{e}pingle in the first place: he crucially relies on Monroe's result \cite%
{Mo72}, stating that every (scalar!) c\'{a}dl\'{a}g semimartingale can be
written as a time-changed scalar Brownian motion for a (c\'{a}dl\'{a}g)
family of stopping times (on a suitably extended probability space). This,
however, fails to hold true in higher dimensions and not every (Marcus or
It\^o) lifted general semimartingale\footnote{%
... and certainly not every Markov jump diffusion as considered in the last
section ....} will be a (c\'{a}dl\'{a}g) time-change of some enhanced
Brownian motion \cite[Chapter 13]{FrizVictoir}, in which case the finite $p$%
-variation would be an immediate consequence of known facts about the
enhanced Brownian motion (a.k.a. Brownian rough path) and invariance of $p$%
-variation under reparametrization.

A large class of general semimartingales for which finite $p$-variation (in
rough path sense, any $p>2$) can easily be seen, consists of those with
summable jumps. Following Kurtz et al. \cite[p. 368]{kpp}, the Marcus
version" of such a s semimartingale, i.e. with jump replaced by straight
lines over stretched time, may be interpreted as \textit{continuous
semimartingale.} One can then apply \cite{FVsemi08, FrizVictoir} and again
appeal to invariance of $p$-variation under reparametrization, to see that
such (enhanced) semimartingales have a.s. $p$-rough sample paths, any $p>2$.

Another class of general semimartingales for which finite $p$-variation can
easily be seen, consists of time-changed L\'evy processes (a popular class
of processes used in mathematical finance). Indeed, appealing once more to
invariance of $p$-variation under reparametrization, the statement readily
follows from the corresponding $p$-variation regularity of L\'evy rough
paths.

\subsection{Gaussian processes}

\label{sec:gauss}

We start with a brief review of some aspects of the work of Jain--Monrad 
\cite{JM83}. Given a (for the moment, scalar) zero-mean, separable Gaussian
process on $\left[ 0,T\right] $, set $\sigma^2 \left( s,t\right) =\mathbb{E}%
X_{s,t}^{2}=|X_{t}-X_{s}|_{L^{2}}^{2}$.We regard the process $X$ as Banach
space valued path $\left[ 0,T\right] \rightarrow H=L^{2}\left( P\right) $
and assume finite $2\rho $-variation, in the sense of Jain--Monrad's condition
\begin{equation}
F\left( T\right) :=\sup_{\mathcal{P}}\sum_{\left[ u,v\right] \in \mathcal{P}%
}|\sigma^2 \left( u,v\right) |^{\rho }=\sup_{\mathcal{P}}\sum_{\left[ u,v%
\right] \in \mathcal{P}}|X_{t}-X_{s}|_{L^{2}}^{2\rho }<\infty  \label{JMcond}
\end{equation}%
with partitions $\mathcal{P}$ of $\left[ 0,T\right] $. It is elementary to
see that $p$-variation paths can always be written as time-changed H\"{o}%
lder continuous paths with exponent $1/p$ (see e.g. Lemma 4.3. in \cite%
{dudley}). Applied to our setting, with $\alpha ^{\ast }=1/(2\rho )$, $%
\tilde{X}\in C^{\alpha ^{\ast }\text{-H\"{o}l }}\left( [0,F(T)],H\right) $
so that%
\begin{equation*}
\tilde{X}\circ F=X\in W^{2\rho }\left( \left[ 0,T\right] ,H\right) .
\end{equation*}%
Now in view of the classical Kolmogorov criterion, and equivalence of
moments for Gaussian random variables, knowing 
\begin{equation*}
\left\vert \tilde{X}_{t}-\tilde{X}_{s}\right\vert _{L^{2}}\leq C  \left\vert t-s\right\vert ^{\alpha ^{\ast }}
\end{equation*}%
implies that $\tilde{X}$ (or a modification thereof) has a.s. $\alpha $-H%
\"{o}lder samples paths, any $\alpha <\alpha ^{\ast }$. But then, trivialy, $%
\tilde{X}$ has a.s. finite $p$-variation sampe paths, any $p>1/\alpha =2\rho 
$, and so does $X$ by invariance of $p$-variation under reparametrization.
(I should be noted that such $X$ has only discontinuities at deterministic times,
inherited from the jumps of $F$.)
In a nutshell, this is one of the main results of Jain--Monrad \cite{JM83},
as summarized in by Dudley--Norvai{\v{s}}a in \cite[Thm 5.3]{dudley}. We have the following
extension to Gaussian rough paths.

\begin{theorem}
\label{theo:Gauss} Consider a \thinspace $d$-dimensional zero-mean,
separabale Gaussian process $\left( X\right) $ with independent components.
Let $\rho \in \lbrack 1,3/2)$ and assume 
\begin{equation}
\sup_{\mathcal{P},\mathcal{P}^{\prime }}\sum_{\substack{ \left[ s,t\right]
\in \mathcal{P}  \\ \lbrack u,v]\in \mathcal{P}^{\prime }}}|\mathbb{E(}%
X_{s,t}\otimes X_{u,v})|^{\rho }<\infty  \label{FVcond}
\end{equation}%
Then $X$ has a c\'adl\'ag modification, denoted by the same letter, which
lifts a.s. to a random geometric c\'{a}dl\'{a}g rough path, with $\mathbb{A}=\mathrm{%
Anti}\left( \mathbb{X}\right) $ given as $L^{2}$-limit of Riemann--Stieltjes
approximations.
\end{theorem}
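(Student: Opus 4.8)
The plan is to reduce this discontinuous statement to the continuous Gaussian rough path theory via the deterministic Jain--Monrad time change reviewed above, and then to transport the lift back using invariance of $p$-variation under reparametrization. First I would observe that the diagonal of (\ref{FVcond}) gives finite $2\rho$-variation of the $H$-valued path $t\mapsto X_t$, so the reparametrization lemma of \cite{JM83, dudley} yields a deterministic increasing $F:[0,T]\to[0,F(T)]$ and a H\"older-continuous $H$-valued path $\tilde X$ with $\tilde X\circ F=X$; the discontinuities of $X$ sit precisely at the (deterministic) jumps of $F$. On each flat interval $[F(t_0-),F(t_0)]$ I would define $\tilde X$ to be the straight line joining $X_{t_0-}$ to $X_{t_0}$, which keeps $\tilde X$ continuous and Gaussian and, being a geodesic, sweeps no area.

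Second, I would verify that $\tilde X$ is a continuous centred Gaussian process with independent components whose covariance still has finite two-dimensional $\rho$-variation. Away from the flat intervals this is immediate from invariance of the mixed $\rho$-variation in (\ref{FVcond}) under a common reparametrization of both arguments; on a flat cell the linear interpolation contributes a covariance of bounded two-dimensional variation, and these countably many contributions must be summed against the off-diagonal variation of $X$ without loss. Granting this, the Friz--Victoir construction (\cite{FV06var, FrizVictoir}) applies to $\tilde X$ and produces a continuous geometric rough path $\tilde{\mathbf X}=(\tilde X,\tilde{\mathbb X})\in\mathcal{C}_{g}^{p}$ for any $p\in(2\rho,3)$, a range nonempty since $\rho<3/2$, whose area $\tilde{\mathbb A}=\mathrm{Anti}(\tilde{\mathbb X})$ is the $L^2$-limit of its Riemann--Stieltjes approximations.

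Third, I would set $\mathbf X_{s,t}:=\tilde{\mathbf X}_{F(s),F(t)}$. Reparametrization invariance gives $\Vert\mathbf X\Vert_{p\text{-var};[0,T]}=\Vert\tilde{\mathbf X}\Vert_{p\text{-var};[0,F(T)]}<\infty$ a.s., the $G^{(2)}$-valuedness is inherited from $\tilde{\mathbf X}$, and c\'adl\'ag regularity from $F$ together with the continuity of $\tilde{\mathbf X}$; hence $\mathbf X\in\mathcal{W}_{g}^{p}$ a.s. For the area identification, note that a Riemann sum $\sum\mathrm{Anti}(X_{s,r_{i-1}}\otimes X_{r_{i-1},r_i})$ over a partition of $[s,t]$ is literally the corresponding sum for $\tilde X$ over the image partition of $[F(s),F(t)]$, the sole difference being that the image partition never subdivides the flat cells. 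Since $\tilde X$ is linear there, those cells carry zero antisymmetric part, so refining $[s,t]$ sends the Riemann sum to $\tilde{\mathbb A}_{F(s),F(t)}=\mathrm{Anti}(\mathbb X_{s,t})$ in $L^2$, which is exactly the asserted presentation of $\mathbb A$.

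The step I expect to be the main obstacle is the second one: controlling the two-dimensional $\rho$-variation of the covariance of the gap-filled process $\tilde X$, that is, showing that inserting straight-line bridges over the a priori complicated set of jump times does not destroy the hypothesis needed to invoke the continuous theory. This is precisely where independence of the components and the superadditivity of the mixed-variation control enter, and where one must estimate the countably many linear cells against the off-diagonal variation of $X$. Equivalently, one may bypass the gap-filling and run the Friz--Victoir second-moment estimates for the area directly on the c\'adl\'ag process, the point being that those estimates are driven by the two-dimensional $\rho$-variation of the covariance and are insensitive to continuity; either way the crux is that (\ref{FVcond}) survives the passage to the discontinuous setting.
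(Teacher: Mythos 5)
Your proposal follows essentially the same route as the paper's proof: deduce the Jain--Monrad condition from the diagonal of (\ref{FVcond}), pass to a continuous Gaussian process $\tilde X$ with $\tilde X\circ F=X$, invoke the continuous Gaussian rough path construction for $\tilde X$, and transport the lift back via invariance of $p$-variation under reparametrization. You are in fact more explicit than the paper on the two points it merely asserts --- that the covariance of the gap-filled process retains finite two-dimensional $\rho$-variation (using $\rho\geq 1$ to control the linearly interpolated cells), and that the flat cells contribute no antisymmetric part to the Riemann sums --- so your identification of the technical crux is accurate.
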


\begin{proof}
In a setting of continuous Gaussian processes, condition (\ref{FVcond}),
i.e. finite $\rho $-variation of the covariance, is well-known \cite%
{FrizVictoir, FrizHairer}. It plainly implies the Jain--Monrad condition (%
\ref{JMcond}), for each component $\left( X^{i}\right) $. With $F\left(
t\right) :=\sum_{i=1}^{d}F^{i}\left( t\right) $ we can then write 
\begin{equation*}
\tilde{X}\circ F=X
\end{equation*}%
for some $d$-dimensional, zero mean, (by Kolmogorov criterion: continuous) Gaussian process $\tilde{X}$, whose
covariance also enjoys finite $\rho $-variation. We can now emply standard
(continuous) Gaussian rough path theory \cite{FrizVictoir, FrizHairer} and construct a canoncial geometric
rough path lift of $\tilde{X}$. That is,%
\begin{equation*}
\mathbf{\tilde{X}=(}\tilde{X},\mathbb{\tilde{X}})\in \mathcal{C}^{\rho }
\end{equation*}%
with probability $1$. The desired geometric c\'{a}dl\'{a}g rough path lift is
then given by. 
\begin{equation*}
\left( X,\mathbb{X}\right) =\mathbf{X:=\tilde{X}}\circ F.
\end{equation*}%
The statement about $L^{2}$-convergence of Riemann--Stieltjes approximations
follows immediately for the corresponding statements for $\mathrm{Anti}(%
\mathbb{\tilde{X})}$, as found in \cite[Ch. 10.2]{FrizHairer}.
\end{proof}

%


\bibliographystyle{plain}
\bibliography{refs}

\def\cprime{$'$}
\begin{thebibliography}{10}

\bibitem{applebaum}
D.~Applebaum.
\newblock {\em L{\'e}vy Processes and Stochastic Calculus}.
\newblock Cambridge studies in advanced mathematics. Cambridge University
  Press, 2004.

\bibitem{BaudoinBook}
Fabrice Baudoin.
\newblock {\em An introduction to the geometry of stochastic flows}.
\newblock Imperial College Press, London, 2004.

\bibitem{bertoin}
J.~Bertoin.
\newblock {\em L{\'e}vy Processes}.
\newblock Cambridge Tracts in Mathematics. Cambridge University Press, 1998.

\bibitem{2014arXiv1406.7871B}
H.~{Boedihardjo}, X.~{Geng}, T.~{Lyons}, and D.~{Yang}.
\newblock {The Signature of a Rough Path: Uniqueness}.
\newblock {\em ArXiv e-prints}, June 2014.

\bibitem{chevyrev}
I.~{Chevyrev}.
\newblock {Unitary representations of geometric rough paths}.
\newblock {\em ArXiv e-prints}, July 2013.

\bibitem{dudley}
R.M. Dudley, R.~Norvai{\v{s}}a, Aarhus Universitet.~Centre for
  Mathematical~Physics, and Stochastics.
\newblock {\em An Introduction to P-variation and Young Integrals: With
  Emphasis on Sample Functions of Stochastic Processes}.
\newblock Lecture notes. MaPhySto, Department of Mathematical Sciences,
  University of Aarhus, 1998.

\bibitem{fawcett}
T.A. Fawcett.
\newblock Ph.{D}. thesis.
\newblock Mathematical Institute, University of Oxford, 2002.

\bibitem{F81}
H.~F{\"o}llmer.
\newblock Calcul d'{I}t\^o sans probabilit\'es.
\newblock In {\em Seminar on {P}robability, {XV} ({U}niv. {S}trasbourg,
  {S}trasbourg, 1979/1980) ({F}rench)}, volume 850 of {\em Lecture Notes in
  Math.}, pages 143--150. Springer, Berlin, 1981.

\bibitem{FrizHairer}
P.~Friz and M.~Hairer.
\newblock {\em A Course on Rough Path Analysis, with an 
  Regularity Structures, Springer 2014}.
\newblock Universitext. Springer, 2014.

\bibitem{FrizVictoir}
P.~Friz and N.~Victoir.
\newblock {\em Multidimensional Stochastic Processes as Rough Paths}, volume
  120 of {\em Cambridge Studies in Advanced Mathematics}.
\newblock Cambridge University Press, Cambridge, 2010.

\bibitem{Friz--Gassiat--Lyons}
Peter Friz, Paul Gassiat, and Terry Lyons.
\newblock {Physcial Brownian motion in magnetic field as rough path}.
\newblock {\em ArXiv e-prints (to appear in Trans. AMS)}, 2013.

\bibitem{FV06var}
Peter Friz and Nicolas Victoir.
\newblock A variation embedding theorem and applications.
\newblock {\em J. Funct. Anal.}, 239(2):631--637, 2006.

\bibitem{FVsemi08}
Peter Friz and Nicolas Victoir.
\newblock The {B}urkholder-{D}avis-{G}undy inequality for enhanced martingales.
\newblock In {\em S\'eminaire de probabilit\'es {XLI}}, volume 1934 of {\em
  Lecture Notes in Math.}, pages 421--438. Springer, Berlin, 2008.

\bibitem{Max}
M.~Gubinelli.
\newblock Controlling rough paths.
\newblock {\em J. Funct. Anal.}, 216(1):86--140, 2004.

\bibitem{Lectures}
M.~Hairer.
\newblock Introduction to regularity structures.
\newblock {\em ArXiv e-prints}, January 2014.

\bibitem{HL10}
Ben Hambly and Terry Lyons.
\newblock Uniqueness for the signature of a path of bounded variation and the
  reduced path group.
\newblock {\em Ann. of Math. (2)}, 171(1):109--167, 2010.

\bibitem{Hunt}
G.~A. Hunt.
\newblock Semi-groups of measures on {L}ie groups.
\newblock {\em Trans. Amer. Math. Soc.}, 81:264--293, 1956.

\bibitem{Jacod}
Jean Jacod and Albert~N. Shiryaev.
\newblock {\em Limit theorems for stochastic processes}, volume 288 of {\em
  Grundlehren der Mathematischen Wissenschaften [Fundamental Principles of
  Mathematical Sciences]}.
\newblock Springer-Verlag, Berlin, second edition, 2003.

\bibitem{JM83}
Naresh~C. Jain and Ditlev Monrad.
\newblock Gaussian measures in {$B_{p}$}.
\newblock {\em Ann. Probab.}, 11(1):46--57, 1983.

\bibitem{K95}
Rajeeva~L. Karandikar.
\newblock On pathwise stochastic integration.
\newblock {\em Stochastic Process. Appl.}, 57(1):11--18, 1995.

\bibitem{kpp}
Thomas~G. Kurtz, {\'E}tienne Pardoux, and Philip Protter.
\newblock Stratonovich stochastic differential equations driven by general
  semimartingales.
\newblock {\em Ann. Inst. H. Poincar\'e Probab. Statist.}, 31(2):351--377,
  1995.

\bibitem{LeJQ13}
Yves Le~Jan and Zhongmin Qian.
\newblock Stratonovich's signatures of {B}rownian motion determine {B}rownian
  sample paths.
\newblock {\em Probab. Theory Related Fields}, 157(1-2):209--223, 2013.

\bibitem{LLQ02}
M.~Ledoux, T.~Lyons, and Z.~Qian.
\newblock L\'evy area of {W}iener processes in {B}anach spaces.
\newblock {\em Ann. Probab.}, 30(2):546--578, 2002.

\bibitem{lepingle}
D.~L{\'e}pingle.
\newblock La variation d'ordre {$p$} des semi-martingales.
\newblock {\em Z. Wahrscheinlichkeitstheorie und Verw. Gebiete},
  36(4):295--316, 1976.

\bibitem{lyonsni}
T.~{Lyons} and H.~{Ni}.
\newblock {Expected signature of Brownian Motion up to the first exit time from
  a bounded domain}.
\newblock {\em ArXiv e-prints}, January 2011.

\bibitem{LyonsQian}
Terry Lyons and Zhongmin Qian.
\newblock {\em System control and rough paths}.
\newblock Oxford Mathematical Monographs. Oxford University Press, Oxford,
  2002.
\newblock Oxford Science Publications.

\bibitem{MR2052260}
Terry Lyons and Nicolas Victoir.
\newblock Cubature on {W}iener space.
\newblock {\em Proc. R. Soc. Lond. Ser. A Math. Phys. Eng. Sci.},
  460(2041):169--198, 2004.
\newblock Stochastic analysis with applications to mathematical finance.

\bibitem{Lyons}
Terry~J. Lyons.
\newblock Differential equations driven by rough signals.
\newblock {\em Rev. Mat. Iberoamericana}, 14(2):215--310, 1998.

\bibitem{LyonsStFlour}
Terry~J. Lyons, Michael Caruana, and Thierry L{\'e}vy.
\newblock {\em Differential equations driven by rough paths}, volume 1908 of
  {\em Lecture Notes in Mathematics}.
\newblock Springer, Berlin, 2007.
\newblock Lectures from the 34th Summer School on Probability Theory held in
  Saint-Flour, July 6--24, 2004, With an introduction concerning the Summer
  School by Jean Picard.

\bibitem{Manstavicius}
Martynas Manstavi{\v{c}}ius.
\newblock {$p$}-variation of strong {M}arkov processes.
\newblock {\em Ann. Probab.}, 32(3A):2053--2066, 2004.

\bibitem{Marcus1}
S.I. Marcus.
\newblock Modeling and analysis of stochastic differential equations driven by
  point processes.
\newblock {\em Information Theory, IEEE Transactions on}, 24(2):164--172, Mar
  1978.

\bibitem{Marcus2}
Steven~I Marcus.
\newblock Modeling and approximation of stochastic differential equations
  driven by semimartingales.
\newblock {\em Stochastics: An International Journal of Probability and
  Stochastic Processes}, 4(3):223--245, 1981.

\bibitem{MN00}
Thomas Mikosch and Rimas Norvai{\v{s}}a.
\newblock Stochastic integral equations without probability.
\newblock {\em Bernoulli}, 6(3):401--434, 2000.

\bibitem{Mo72}
Itrel Monroe.
\newblock On embedding right continuous martingales in {B}rownian motion.
\newblock {\em Ann. Math. Statist.}, 43:1293--1311, 1972.

\bibitem{nihao}
H.~Ni.
\newblock Ph.{D}. thesis.
\newblock Mathematical Institute, University of Oxford, 2013.

\bibitem{Protter}
Philip~E. Protter.
\newblock {\em Stochastic integration and differential equations}, volume~21 of
  {\em Stochastic Modelling and Applied Probability}.
\newblock Springer-Verlag, Berlin, 2005.
\newblock Second edition. Version 2.1, Corrected third printing.

\bibitem{RogersWilliams}
L.~C.~G. Rogers and David Williams.
\newblock {\em Diffusions, {M}arkov processes, and martingales. {V}ol. 2}.
\newblock Cambridge Mathematical Library. Cambridge University Press,
  Cambridge, 2000.
\newblock It{\^o} calculus, Reprint of the second (1994) edition.

\bibitem{sato}
K.~Sato.
\newblock {\em L{\'e}vy Processes and Infinitely Divisible Distributions}.
\newblock Cambridge Studies in Advanced Mathematics. Cambridge University
  Press, 1999.

\bibitem{Simon}
Thomas Simon.
\newblock Small deviations in {$p$}-variation for multidimensional {L}\'evy
  processes.
\newblock {\em J. Math. Kyoto Univ.}, 43(3):523--565, 2003.

\bibitem{STZ11}
H.~Mete Soner, Nizar Touzi, and Jianfeng Zhang.
\newblock Quasi-sure stochastic analysis through aggregation.
\newblock {\em Electron. J. Probab.}, 16:no. 67, 1844--1879, 2011.

\bibitem{Stroock75}
Daniel~W. Stroock.
\newblock Diffusion processes associated with {L}\'evy generators.
\newblock {\em Z. Wahrscheinlichkeitstheorie und Verw. Gebiete},
  32(3):209--244, 1975.

\bibitem{williams}
David~R.E. Williams.
\newblock Path-wise solutions of stochastic differential equations driven by
  {L}\'evy processes.
\newblock {\em Rev. Mat. Iberoamericana}, 17(2):295--329, 2001.

\bibitem{Young}
L.~C. Young.
\newblock An inequality of the {H}\"older type, connected with {S}tieltjes
  integration.
\newblock {\em Acta Math.}, 67(1):251--282, 1936.

\end{thebibliography}

%
%
%
%
%

\end{document}